\pdfoutput=1

\documentclass[12pt,a4paper,reqno]{amsart}
\usepackage[utf8]{inputenc}
\usepackage[T1]{fontenc}
\usepackage{indentfirst}
\usepackage{textcomp,lscape,rotating}
\usepackage[english]{babel}
\usepackage{enumerate}
\usepackage[colorlinks=true,citecolor=DarkOrchid,linkcolor=NavyBlue]{hyperref}
\usepackage{amsmath,amsfonts,amssymb,amsopn,amscd,amsthm}
\usepackage{mathrsfs,bbm}
\usepackage{stmaryrd}
\usepackage{graphicx}
\usepackage[dvipsnames]{xcolor}
\usepackage{tikz}
\usepackage{mathpazo}
\usepackage{todonotes}

\setlength{\hoffset}{0cm}
\setlength{\textwidth}{16cm}
\setlength{\voffset}{-1cm}
\setlength{\textheight}{24cm}
\setlength{\oddsidemargin}{0cm}
\setlength{\evensidemargin}{0cm}

\newtheorem{lemma}{Lemma}[section]
\newtheorem{theorem}[lemma]{Theorem}
\newtheorem{proposition}[lemma]{Proposition}
\newtheorem{corollary}[lemma]{Corollary}
\newtheorem{definition}[lemma]{Definition}

\theoremstyle{remark}
\newtheorem{example}[lemma]{Example}
\newtheorem{remark}[lemma]{Remark}

\def\lle{[\![}
\def\rre{]\!]}
\newcommand{\N}{\mathbb{N}}
\newcommand{\Z}{\mathbb{Z}}
\newcommand{\R}{\mathbb{R}}
\newcommand{\C}{\mathbb{C}}
\renewcommand{\AA}{\mathbb{A}}
\newcommand{\ST}{\mathrm{ST}}
\newcommand{\SF}{\mathrm{F}}
\newcommand{\E}{\mathrm{e}}
\newcommand{\I}{\mathrm{i}}
\newcommand{\esper}{\mathbb{E}}
\newcommand{\proba}{\mathbb{P}}
\newcommand{\comment}[1]{}
\newcommand{\eps}{\varepsilon}

\newcommand{\alphan}{\alpha_n}
\newcommand{\betan}{\beta_n}
\newcommand{\SubGraph}{X^{(n)}}
\newcommand{\RCV}{X^{(n)}_{\rho}}
\newcommand{\RCVk}{X^{(n)}_{(k)}}
\newcommand{\fall}[1]{^{\downarrow #1}}
\newcommand{\arr}{\mathfrak{A}}
\newcommand{\qym}{\mathfrak{Q}}
\newcommand{\sym}{\mathfrak{S}}
\newcommand{\pym}{\mathfrak{P}}
\newcommand{\pa}{\alpha}

\newcommand{\supp}{\mathrm{supp}}
\DeclareMathOperator\Var{Var}
\DeclareMathOperator\Set{\textsc{Set}}

\renewcommand{\Re}{\mathrm{Re}}
\newcommand{\GraphTwoLoops}{
\begin{tikzpicture}[scale=.7]
    \tikzstyle{vertex}=[circle,fill=black,inner sep=0pt,minimum size=2mm]
    \node (v1) at (0,0) [vertex] {};
    \draw (v1) .. controls +(150:8mm) and +(100:8mm) .. (v1);
    \draw (v1) .. controls +(80:8mm) and +(30:8mm) .. (v1);
\end{tikzpicture}}
\newcommand{\GraphOneLoop}{
\begin{tikzpicture}[scale=.7]
    \tikzstyle{vertex}=[circle,fill=black,inner sep=0pt,minimum size=2mm]
    \node (v1) at (0,0) [vertex] {};
    \draw (v1) .. controls +(125:8mm) and +(55:8mm) .. (v1);
\end{tikzpicture}}
\def\B{\mathbb{B}}
\def\ka{\kappa}
\renewcommand{\footnote}[1]{}

\title[Mod-$\phi$ convergence, I]{Mod-$\phi$ convergence, I: \\ 
Normality zones and precise deviations}
\author{Valentin F\'eray}
\address{Institut f\"ur Mathematik, Universit\"at Z\"urich --- Winterthurerstrasse 190, 8057 Z\"urich, Switzerland}
\email[V.~F\'eray]{valentin.feray@math.uzh.ch}
\author{Pierre-Lo\"ic M\'eliot}
\address{Laboratoire de Math\'ematiques, B\^atiment 425 --- Facult\'e des Sciences d'Orsay --- Universit\'e Paris-Sud, 91400 Orsay, France}
\email[P.-L.~M\'eliot]{pierre-loic.meliot@math.u-psud.fr}
\author{Ashkan Nikeghbali}
\address{Institut f\"ur Mathematik, Universit\"at Z\"urich --- Winterthurerstrasse 190, 8057 Z\"urich, Switzerland}
\email[A.~Nikeghbali]{ashkan.nikeghbali@math.uzh.ch}

\date{\today}

\begin{document}

\begin{abstract}
In this paper, we use the framework of mod-$\phi$ convergence to prove precise large or moderate deviations for quite general sequences of real valued random variables $(X_{n})_{n \in \N}$, which can be lattice or non-lattice distributed. We establish precise estimates of the fluctuations $\proba[X_{n} \in t_{n}B]$, instead of the usual estimates for the rate of exponential decay  $\log( \proba[X_{n}\in t_{n}B])$. Our approach provides us with a systematic way to characterise the normality zone, that is the zone in which the Gaussian approximation for the tails is still valid. Besides, the residue function measures the extent to which this approximation fails to hold at the edge of the normality zone.\medskip

The first sections of the article are devoted to a proof of these abstract results and comparisons with existing results. We then propose new examples covered by this theory and coming from various areas of mathematics: classical probability  theory, number theory (statistics of additive arithmetic functions), combinatorics (statistics of random permutations), random matrix theory (characteristic polynomials of random matrices in compact Lie groups), graph theory (number of subgraphs in a random Erd\H{o}s-R\'enyi graph), and non-commutative probability theory (asymptotics of random character values of symmetric groups). In particular, we complete our theory of precise deviations by a concrete method of cumulants and dependency graphs, which applies to many examples of sums of ``weakly dependent'' random variables. The large number as well as the variety of examples hint at a universality class for second order fluctuations. 
\end{abstract}

\maketitle

\clearpage

\tableofcontents

\clearpage

\section{Introduction}
\subsection{Mod-\texorpdfstring{$\phi$}{phi} convergence}
The notion of mod-$\phi$ convergence has been studied in the articles \cite{JKN11,DKN11,KN10,KN12,BKN09}, in connection with problems from number theory, random matrix theory and  probability theory. The main idea was to look for a natural renormalization of the characteristic functions of random variables which do not converge in law (instead of a renormalization of the random variables themselves). After this renormalization, the sequence of characteristic functions converges to some non-trivial limiting function. Here is the definition of mod-$\phi$ convergence that we will use throughout this article
(see Section~\ref{subsec:discussion_hypotheses} for a discussion on the different parts of this definition).

\begin{definition}\label{def:modphi}
    Let $(X_{n})_{n \in \N}$ be a sequence of real-valued random variables,
    and let us denote by \hbox{$\varphi_{n}(z)=\esper[\E^{zX_{n}}]$} their moment generating functions, which we assume to all exist in a strip 
    $$\mathcal{S}_{(c,d)}=\{z,\,\,c < \Re\, z <d\},$$
     with $c<d$ extended real numbers (we allow $c=-\infty$ and $d=+\infty$). We assume that there exists a non-constant infinitely divisible distribution $\phi$ with moment generating function $\int_{\R} \E^{zx}\, \phi(dx)=\exp(\eta(z))$ that is well defined on $\mathcal{S}_{(c,d)}$, and an analytic function $\psi(z)$ that does not vanish on the real part of $\mathcal{S}_{(c,d)}$, such that locally uniformly in $z \in \mathcal{S}_{(c,d)}$, 
\begin{equation}
\exp(-t_{n}\,\eta(z))\,\varphi_{n}(z) \to \psi(z),\label{eq:modphi}
\end{equation}
where $(t_{n})_{n \in \N}$  is some sequence going to $+\infty$. We then say that $(X_{n})_{n \in \N}$ \emph{converges mod-$\phi$} on $\mathcal{S}_{(c,d)}$, with parameters $(t_{n})_{n \in \N}$ and limiting function $\psi$. In the following we denote $\psi_{n}(z)$ the left-hand side of \eqref{eq:modphi}.
\end{definition}\medskip

\noindent When $\phi$ is the standard Gaussian (resp.~Poisson) distribution, we will speak of mod-Gaussian (resp.~mod-Poisson) convergence. 
Besides, unless explicitely stat\-ed, we shall always assume that $0$ belongs to the band of convergence $\mathcal{S}_{(c,d)}$, \emph{i.e.}, $c<0<d$. Under this assumption, Definition \ref{def:modphi} implies mod-$\phi$ convergence in the sense of \cite[Definition 1.1]{JKN11} or \cite[Section 2]{DKN11}.\bigskip    

It is immediate to see that mod-$\phi$ convergence implies a \emph{central limit theorem} if the sequence of parameters $t_{n}$ goes to infinity (see the remark after Theorem \ref{thm:cltlattice}). But in fact there is much more information encoded in mod-$\phi$ convergence than merely the central limit theorem. Indeed, mod-$\phi$ convergence appears as a natural extension of the framework of sums of independent random variables (see Example \ref{ex:sumiid}): many interesting asymptotic results  that hold for sums of independent random variables can also be established for sequences of random variables converging in the mod-$\phi$ sense (\cite{JKN11,DKN11,KN10, KN12, BKN09}). For instance, under some general extra assumptions on the convergence in Equation \eqref{eq:modphi}, it is proved in \cite{DKN11,KN12,FMN14} that one can establish local limit theorems for the random variables $X_n$. Then the local limit theorem of Stone appears as a special case of the local limit theorem for mod-$\phi$ convergent sequences.
But the latter also applies to a variety of situations where the random variables under consideration exhibit some dependence structure (\emph{e.g.} the Riemann zeta function on the critical line,  some probabilistic models of primes, the winding number for the planar Brownian motion, the characteristic polynomial of random  matrices, finite fields $L$-functions, \emph{etc.}). It is also shown in \cite{BKN09} that mod-Poisson convergence (in fact mod-$\phi$ convergence for $\phi$ a lattice distribution) implies very sharp distributional approximation in the total variation distance (among other distances) for a large class of random variables. 
In particular, the total number of distinct prime divisors $\omega(n)$ of an integer $n$ chosen at random can be approximated in the total variation distance with an arbitrary precision by explicitly computable measures. \bigskip

Besides these quantitative aspects, mod-$\phi$ convergence also sheds some new light on the nature of some conjectures in analytic number theory. Indeed it is shown in \cite{KN10} that the structure of the limiting function appearing in the moments conjecture for the Riemann zeta function by Keating and Snaith \cite{KS1} is shared by other arithmetic functions and that the limiting function $\psi$ accounts for the fact that prime numbers do not behave independently of each other. More precisely, the limiting function $\psi$ can be used to measure the deviation of the true result from what the  probabilistic models based on a naive independence assumption would predict. One should note that these naive probabilistic models are usually enough to predict central limit theorems for arithmetic functions (\emph{e.g.} the naive probabilistic model made with a sum of independent Bernoulli random variables to predict the Erd\"os-Kac central limit theorem  for $\omega(n)$ or the stochastic zeta function to predict Selberg's central limit theorem for the Riemann zeta function) but fail to predict accurately mod-$\phi$ convergence by a factor which is contained in $\psi$.
There is another example, where dependence appears in the limiting function $\psi$,
while we have independence at the scale of central limit theorem: the log of the characteristic polynomial of a random unitary matrix, as a vector in $\mathbb R^2$, converges in the mod-Gaussian sense to a limiting function which is not the product of the limiting functions of each component considered individually although when properly normalized it converges to a Gaussian vector with independent components \cite{KN12}. 
\bigskip

\subsection{Theoretical results}
The goal of this paper is to prove that the framework of mod-$\phi$ convergence as described in Definition~\ref{def:modphi} is suitable to obtain {\em precise}  large and moderate deviation results for the sequence $(X_n)_{n \in \N}$
(throughout the paper, we call {\em precise deviation result} an equivalent of the deviation probability itself and not on its logarithm). Namely, our results are the following.\vspace{2mm}

\begin{itemize}
\item We give equivalents for the quantity $\proba[X_n \ge t_n x]$, where $x$ is a fixed positive real number (see  Theorem~\ref{thm:mainlattice} for a lattice distribution $\phi$ and Theorem~\ref{thm:mainnonlattice} for a non-lattice distribution). This can be viewed as an analog (or an extension, see Section \ref{subsect:OnInfDiv}) of Bahadur-Rao theorem \cite{BR60}.\vspace{2mm}

\item We also consider probabilities of the kind $\proba[X_n \in t_n B]$ where $B$ is a Borelian set, and we give upper and lower bounds on this probability which coincide at first order for a {\em nice} Borelian set $B$, see Theorem~\ref{thm:hyperellis}. This result is an analog of Ellis-G\"artner theorem \cite[Theorem 2.3.6]{DZ98} (see also the original papers \cite{Gar77,Ellis84}): we have stronger hypotheses than in Ellis-Gärtner theorem, but also a more precise conclusion (the bounds involve the probability itself, not its logarithm).\vspace{2mm}

\item Besides, we give an equivalent for the probability $\proba[X_n -\esper[X_n] \ge s_n t_n]$, where $s_n=o(1)$, covering all intermediate scales between central limit theorem and deviations of order $t_n$ (Theorem~\ref{thm:cltlattice} in the lattice case, and Theorem~\ref{thm:cltnonlattice} in the non-lattice case).
\end{itemize}    
We also address the question of {\em normality zone}, {\it i.e.} the scale up to which the Gaussian approximation 
(coming from the central limit theorem) for the tail of the distribution of $X_n$ is valid.
In particular, our methods provide us with a systematic way to detect it and also explains how this approximation breaks at the edge of this zone; see Section \ref{sec:cumulantechnic}. 
The problem of detecting the normality zone for sums of i.i.d. random variables has received some attention in the literature on limit theorems (origninally in \cite{Cramer38}, see also \cite{IL71}).
Our framework enables an extension of such results, going beyond the setting of independent random variables:\vspace{1mm}
\begin{itemize}
\item we cover more situations, {\em e.g.}~sums of dependent random variables with a sparse dependency graph, or integer valued random variables, such as random additive functions, satisfying mod-Poisson convergence;\vspace{1mm}
\item we describe the correction to the normal approximation needed at the edge of the normality zone.
\end{itemize}

\noindent An interesting fact in our deviation results is the appearance of the limiting function $\psi$ in deviations at scale $t_n$. This means that, at smaller scales, a sequence $X_n$ converging mod-$\phi$ behaves exactly as a sum of $t_n$ i.i.d.~variables with distribution $\phi$. However, at scale $t_n$, this is not true any more and the limiting function $\psi$ gives us exactly the correcting factor.\medskip

In particular, in the case of mod-Gaussian convergence, the scale $t_n$ is the first scale where the equivalent given by the central limit theorem is not valid anymore. In this case, one often observes a symmetry breaking phenomenon
which is explained by the appearance of function $\psi$; see Section \ref{subsec:normalityzone}.
\bigskip

A special case of mod-Gaussian convergence is the case where \eqref{eq:modphi} is proved using bounds on the cumulants of $X_n$ --- see Section \ref{subsec:modgaussfromcumulants}. This case is particularly interesting as:\vspace{2mm}
\begin{itemize}
    \item it contains a large class of examples, see below in Section \ref{subsec:applications};\vspace{2mm}
    \item in this setting, one can obtain deviation results at a scale larger that $t_n$ (typically, $o((t_n)^{5/4})$, see Proposition \ref{prop:largedeviationscumulants}).\vspace{2mm}
\end{itemize}

The arguments involved in the proofs of our deviation results are standard, but they nonetheless need to be carefully adapted to the framework of Definition \ref{def:modphi}: elementary complex analysis, the method of change of probability measure or tilting due to Cram\'er, or adaptations of Berry-Esseen type inequalities with smoothing techniques. 
\bigskip

\begin{remark}
    \label{rem:hwang}
    We should here mention the work of Hwang \cite{Hwa96}, with some similarities with ours. Hwang works with hypotheses similar to Definition~\ref{def:modphi}, except that the convergence takes place uniformly on all compact sets contained in a given disk centered at the origin (while we assume convergence in a strip; thus this is weaker than our hypothesis, see Remark~\ref{rem:diskorstrip} for a discussion on this point).
    Under this hypothesis (and an hypothesis of the convergence speed), Hwang obtains an equivalent of the probability $\proba[X_n -\esper[X_n] \ge s_n t_n]$ with $s_n=o(1)$, and even gives some asymptotic expansion of this probability. However, Hwang does not give any deviation result at the scale $t_n$ and hence, none of his results show the role of $\psi$ in deviation probabilities. Besides, he has no results in the multi-dimensional setting.
\end{remark}
\bigskip

\subsection{Applications}\label{subsec:applications}
After proving our abstract results, we provide a large set of (new) examples where these results can be applied. 
We have thus devoted the second half of the paper to examples, from a variety of different areas.\bigskip

Section \ref{sec:firstexamples} contains examples where the moment generating function is explicit,
or given as a path integral of an explicit function.
%
%
First, in Section \ref{subsec:arithmetic},
we recover results of Radziwill \cite{Rad09} on precise large deviations for additive arithmetic functions,
by carefully recalling the principle of the Selberg-Delange method.
The next examples --- Sections \ref{subsec:cycles} and \ref{subsec:rises} --- involve the total number of cycles (resp. rises) for random permutations.
The precise large deviation result in the case of cycles was announced in a recent paper of Nikeghbali and Zeindler \cite{NZ13},
where the mod-convergence was proved by the singularity analysis method.
Finally, in Section \ref{subsec:rmtpol}, we compute deviation probabilities of the characteristic polynomial of random matrices in compact Lie groups.
This completes previous results by Hughes, Keating and O'Connell \cite{HKO01} on large deviations for the characteristic polynomial.
\medskip 

Surprisingly, mod-Gaussian convergence can also be established in some cases,
even if neither the moment generating function nor an appropriate bivariate generating series is known explicitly.
A first example of this situation is given in Section \ref{sec:zeros}.
We give a criterion based on the location of the zeroes of the probability generating function,
which ensures mod-Gaussian convergence. We then apply this result to the number of blocks in a uniform random set-partition of size $n$. As a consequence, we obtain the normality zone for this statistics,
refining the central limit theorem of Harper \cite{HarperCLTBlocks}.
\medskip

Our next examples lie in the framework in which mod-Gaussian convergence
is obtained {\em via} bounds on cumulants (Section \ref{subsec:modgaussfromcumulants}). 
In Section \ref{sec:depgraph}, we show that such bounds on cumulants typically arise 
for $X_n=\sum_{i=1}^{N_n} Y_i$,
where the $Y_i's$ have a {\em sparse dependency graph}
 (references and details are provided in Section \ref{sec:depgraph}).
With weak hypothesis on the second and third cumulants, this implies mod-Gaussian convergence of a renormalized version of $X_n$
(Theorem \ref{thm:modgaussiangeneralsparsegraph}).
This allows us to provide new examples of variables converging in the mod-Gaussian sense.\vspace{2mm}
%
%
\begin{itemize}
    \item First, we consider subgraph count statistics $X_\gamma$ in Erd\"os-R\'enyi random graph $G(n,p)$ (Theorem \ref{thm:moddevgraphs}) for a fixed $p$ between $0$ and $1$. 
        Moderate deviation probabilities in this case are given and compared with the literature on the subject in Section~\ref{sec:erdosrenyi}. We are also able to determine the size of the normality zone of $X_\gamma$.\vspace{2mm}

    \item In our last application in Section \ref{sec:central}, we use the machinery of depen\-den\-cy gra\-phs in non-commutative probability spaces, namely, the algebras $\C\sym(n)$ of the symmetric groups, all endowed with the restriction of a trace of the infinite symmetric group $\sym(\infty)$. The technique of cumulants still works and it gives the fluctuations of random integer partitions under so-called {\em central measures} in the terminology of Kerov and Vershik. Thus, one obtains a central limit theorem and moderate deviations for the values of the random irreducible characters of symmetric groups under these measures.\vspace{2mm}
\end{itemize}    
The variety of the many examples that fall in the seemingly more restrictive setting of mod-Gaussian convergence makes it tempting to assert that it can be considered as a universality class for second order fluctuations. 
\bigskip

\begin{remark}
The idea of using bounds on  cumulants to show moderate deviations for a family of random variable with some given dependency graph is not new --- see in particular \cite{DE12}. Nevertheless, the bounds we obtain in Theorem \ref{thm:dependencygraphs} (and also in Theorem \ref{thm:dependencygraphsrefined}) are stronger than those which were previously known and, as a consequence, we obtain deviation results at a larger scale. Another advantage of our method is that it gives estimates of the deviation probability itself,
and not only of its logarithm.
\end{remark}
\bigskip

\subsection{Forthcoming works}
As an intermediate step for our deviation estimates,
we give Berry-Esseen estimates for random variables 
that converge mod-$\phi$ (Proposition \ref{prop:berryesseen}).
These estimates are optimal for this setting,
though they can be improved in some special cases,
such as sums of independent or weakly dependent variables.
In a companion paper \cite{FMN14}, we establish optimal 
Berry-Esseen bounds in these cases,
providing a mod-$\phi$ alternative to Stein's method.
\medskip

In another direction, in \cite{Multidim},
we extend some results of this paper to a multi-dimensional framework.
This situation requires more care since the geometry of the Borel set $B$,
when considering $\mathbb P[X_n\in t_n B]$, plays a crucial role.
\bigskip

\subsection{Discussion on our hypotheses}
\label{subsec:discussion_hypotheses}
The following remarks explain the role of each hypothesis of Definition~\ref{def:modphi}. As we shall see later, some assumptions can be removed in some of our results (\emph{e.g.}, the infinite-divisibility of the reference law), but Definition \ref{def:modphi} provides a coherent setting where all the techniques presented in the paper do apply without further verification. 

\begin{remark}[Analyticity]
The existence of the relevant moment generating function on a strip is crucial in our proof, as we consider in Section \ref{sec:nonlattice} the Fourier transform of $\widetilde{X_n}$, obtained from $X_n$ by an exponential change of measure. We also use respectively the existence of continuous derivatives up to order $3$ for $\eta$ and $\psi$ on the strip $\mathcal{S}_{(c,d)}$, and the local uniform convergence of $\psi_n$ and its first derivatives (say, up to order $3$) toward those of $\psi$. By Cauchy's formula, the local uniform convergence of analytic functions imply those of their derivatives, so it provides a natural framework where convergence of derivatives are automatically verified.
\medskip

Let us mention however that these assumptions of analyticity are a bit restrictive, as they imply that the $X_{n}$'s and $\phi$ have moments of all order; in particular, $\phi$ cannot be any infinitely divisible distribution (for instance the Cauchy distribution is excluded). That explains that the theory of mod-$\phi$ convergence was initially developed with characteristic functions on the real line  rather than moment generating functions in a complex domain. With this somehow weaker hypothesis, one can find many examples for instance of mod-Cauchy convergence (see \emph{e.g.} \cite{DKN11,KNN13}), while the concept of mod-Cauchy convergence does not even make sense in the sense of Definition~\ref{def:modphi}. We are unfortunately not able to give precise deviation results  in this framework.
\end{remark}\medskip

\begin{remark}[Infinite divisibility and non-vanishing of the terms of mod-$\phi$ convergence]
The non-vanishing of $\psi$ is a natural hypothesis since evaluations of $\psi$ appear in many estimates of non-zero probabilities, and also in denominators in fractions, see for instance Lemma \ref{lem:exponchange}. 
The assumption that $\phi$ is an infinitely divisible distribution 
will be discussed in Section \ref{subsect:OnInfDiv}.
\end{remark}
\bigskip

\section*{Acknowledgements}
The authors would like to thank Martin Wahl for his input at the beginning of this project and for sharing with us his ideas. We would also like to address special thanks to Andrew Barbour, Reda Chhaibi and Kenny Maples for many fruitful discussions which helped us improve some of our arguments.
\bigskip
\bigskip

\section{Preliminaries}
\subsection{Basic examples of mod-convergence}
\label{subsec:basicexamples}
Let us give a few examples of mod-$\phi$ convergence, which will guide our intuition throughout the paper. In these examples, it will be useful sometimes to precise the speed of convergence in Definition \ref{def:modphi}.

\begin{definition}
We say that the sequence of random variables $(X_{n})_{n \in \N}$ converges mod-$\phi$ at speed $O((t_{n})^{-v})$ if the difference of the two sides of Equation \eqref{eq:modphi} can be bounded by $C_{K}\,(t_{n})^{-v}$ for any $z$ in a given compact subset $K$ of $\mathcal{S}_{(c,d)}$. We use the analogue definition with the $o(\cdot)$ notation.
\end{definition}\medskip

\begin{example}\label{ex:sumiid}
Let $(Y_{n})_{n \in \N}$ be a sequence of centered, independent and identically distributed real-valued random variables, with $\esper[\E^{zY}]=\esper[\E^{zY_{1}}]$ analytic and non-vani\-shing on a strip $\mathcal{S}_{(c,d)}$, possibly with $c=-\infty$ and/or $d=+\infty$. Set $S_{n}=Y_{1}+\cdots+Y_{n}$. If the distribution of $Y$ is infinitely divisible,
then $S_n$ converges mod-$Y$ towards the limiting function $\psi \equiv 1$ with parameter $t_n=n$.\medskip

But there is another mod-convergence hidden in this framework (we now drop the assumption of infinite divisibility of the law of $Y$). The cumulant generating series of $S_{n}$ is
$$\log \esper[\E^{zS_{n}}]=n \,\log \esper[\E^{zY}] = n \sum_{r = 2}^{\infty}\frac{\kappa^{(r)}(Y)}{r!}\,z^{r},$$
which is also analytic on $\mathcal{S}_{(c,d)}$ --- the coefficients $\kappa^{(r)}(Y)$ are the \emph{cumulants} of the variable $Y$. Let $v\geq 3$ be an integer such that $\kappa^{(r)}(Y)=0$ for each integer $r$ strictly between $3$ and $v-1$, and set $X_{n}=\frac{S_{n}}{n^{1/v}}$. It is always possible to take $v=3$, but sometimes we can also consider higher value of $v$, for instance $v=4$ as soon as $Y$ is a symmetric random random variable, and has therefore its odd moments and cumulants that vanish. One has
$$\log \varphi_{n}(z) = n^{\frac{v-2}{v}} \,\frac{\kappa^{(2)}(Y)}{2}\,z^{2} + \frac{\kappa^{(v)}(Y)}{v!}\,z^{v} + \sum_{r = v+1}^{\infty} \frac{\kappa^{(r)}(Y)}{r!\,n^{r/v-1}}\,z^{r},$$
and locally uniformly on $\C$ the right-most term is bounded by $\frac{C}{n^{1/v}}$. Consequently, 
$$\psi_{n}(z)=\exp\left(-n^{\frac{v-2}{v}}\,\frac{\sigma^{2}z^{2}}{2}\right)\,\varphi_{n}(z) \to \exp\left(\frac{\kappa^{(v)}(Y)}{v!}\,z^{v}\right) + O(n^{-1/v}),$$
that is, $(X_{n})_{n \in \N}$ converges in the mod-Gaussian sense with parameters $t_{n}=\sigma^{2}\,n^{\frac{v-2}{v}}$, speed $O(n^{-1/v})$ and limiting function $\psi(z)=\exp(\kappa^{(v)}(Y)\,z^{v}/v!)$.  Note that this first example was used in \cite{KNN13} to characterize the set of limiting functions in the setting of mod-$\phi$ convergence.
\bigskip

Through this article, we shall commonly rescale random variables in order to get estimates of fluctuations at different regimes. In order to avoid any confusion, we provide the reader with the following scheme, which details each possible scaling, and for each scaling, the regimes of fluctuations that can be deduced from the mod-$\phi$ convergence, as well as their scope. We also underline or frame the scalings and regimes that will be studied in this paper, and give references for the other kinds of fluctuations.

\begin{center}
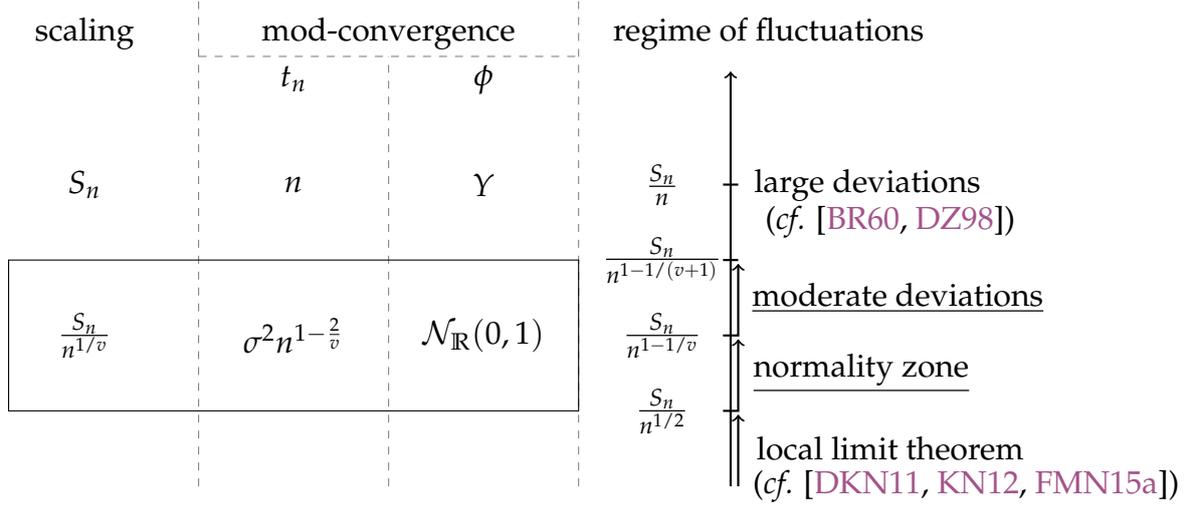
\begin{figure}[ht]
\begin{tikzpicture}
\draw (1,6) node {scaling};
\draw (1,4) node {$S_n$};
\draw (1,2) node {$\frac{S_n}{n^{1/v}}$};
\draw (5,6) node {mod-convergence};
\draw (3.75,5.4) node {$t_n$};
\draw (6.25,5.4) node {$\phi$};
\draw (3.75,4) node {$n$};
\draw (6.25,4) node {$Y$};
\draw (3.75,2) node {$\sigma^2 n^{1-\frac{2}{v}}$};
\draw (6.25,2) node {$\mathcal{N}_{\R}(0,1)$};
\draw[->,thick] (9.5,0) -- (9.5,5.5);
\draw[thick] (9.4,4) -- (9.6,4);
\draw (8.6,4) node {$\frac{S_n}{n}$};
\draw (8.6,3) node {$\frac{S_n}{n^{1-1/(v+1)}}$};
\draw (8.6,2) node {$\frac{S_n}{n^{1-1/v}}$};
\draw (8.6,1) node {$\frac{S_n}{n^{1/2}}$};
\draw (11.3,4) node {large deviations};
\draw (11.6,3.5) node {(\emph{cf.} \cite{BR60,DZ98})};
\draw (11.7,2.5) node {\underline{moderate deviations}};
\draw[thick,->] (9.4,2) -- (9.6,2) -- (9.6,2.95);
\draw[thick] (9.4,3) -- (9.6,3) ;
\draw (10,6) node {regime of fluctuations};
\draw[dashed,gray] (2.5,0) -- (2.5,6.5);
\draw[dashed,gray] (7.5,0) -- (7.5,6.5);
\draw[dashed,gray] (5,0) -- (5,5.7);
\draw[dashed,gray] (2.5,5.7) -- (7.5,5.7);
\draw[thick,->] (9.4,1) -- (9.6,1) -- (9.6,1.95);
\draw[thick,->] (9.6,0) -- (9.6,0.95) ;
\draw (11.22,1.5) node {\underline{normality zone}};
\draw (11.6,0.5) node {local limit theorem};
\draw (12.6,0.) node {(\emph{cf.} \cite{DKN11,KN12,FMN14})};
\draw (0,1) rectangle (7.5,3);
\end{tikzpicture}
\caption{Panorama of the fluctuations of a sum of $n$ i.i.d.~random variables.\label{fig:panoramaiid}}
\end{figure}
\end{center}
The content of this scheme will be fully explained in Section \ref{sec:nonlattice} (see in particular Section \ref{subsec:normalityzone}).
\end{example}\medskip

\begin{example}\label{ex:cycle}
Denote $X_{n}$ the number of disjoint cycles (including fixed points) of a random permutation chosen uniformly in the symmetric group $\mathfrak{S}(n)$. Feller's coupling (\emph{cf.} \cite[Chapter 1]{ABT03}) shows that $X_{n}=_{(\text{law})}\sum_{i=1}^{n}\mathcal{B}_{(1/i)},$
where $\mathcal{B}_{p}$ denotes a Bernoulli variable equal to $1$ with probability $p$ and to $0$ with probability $1-p$, and the Bernoulli variables are independent in the previous expansion. So,
$$\esper[\E^{zX_{n}}]=\prod_{i=1}^{n} \left(1+\frac{\E^{z}-1}{i}\right) = \E^{H_{n}(\E^{z}-1)}\,\prod_{i=1}^{n}\frac{1+\frac{\E^{z}-1}{i}}{\E^{\frac{\E^{z}-1}{i}}}$$
where $H_{n}=\sum_{i=1}^{n}\frac{1}{i}=\log n+\gamma+O(n^{-1})$. The Weierstrass infinite product in the right-hand side converges locally uniformly to an entire function, therefore (see \cite{WW27}),
$$\esper[\E^{zX_{n}}]\,\E^{-(\E^{z}-1)\log n} \to \E^{\gamma\,(\E^{z}-1)}\prod_{i=1}^{\infty}\frac{1+\frac{\E^{z}-1}{i}}{\E^{\frac{\E^{z}-1}{i}}}=\frac{1}{\Gamma(\E^{z})}$$
locally uniformly, \emph{i.e.}, one has mod-Poisson convergence with parameters $t_{n}=\log n$ and limiting function $1/\Gamma(\E^{z})$. Moreover, the speed of convergence is a $O(n^{-1})$, hence, a $o((t_{n})^{-v})$ for any integer $v$. We shall study generalizations of this example in Section \ref{subsec:cycles}.

\begin{center}
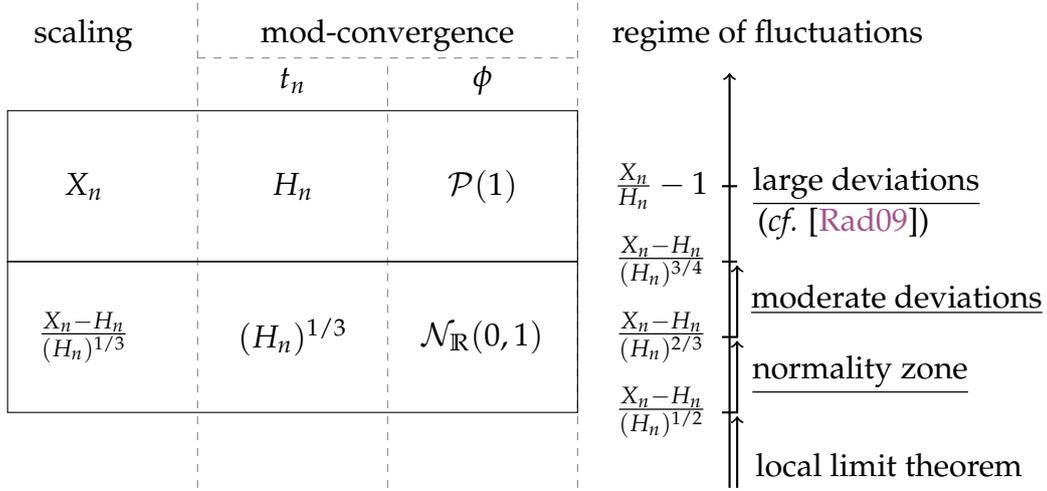
\begin{figure}[ht]
\begin{tikzpicture}
\draw (1,6) node {scaling};
\draw (1,4) node {$X_n$};
\draw (1,2) node {$\frac{X_n-H_n}{(H_n)^{1/3}}$};
\draw (5,6) node {mod-convergence};
\draw (3.75,5.4) node {$t_n$};
\draw (6.25,5.4) node {$\phi$};
\draw (3.75,4) node {$H_n$};
\draw (6.25,4) node {$\mathcal{P}(1)$};
\draw (3.75,2) node {$(H_n)^{1/3}$};
\draw (6.25,2) node {$\mathcal{N}_{\R}(0,1)$};
\draw[->,thick] (9.5,0) -- (9.5,5.5);
\draw[thick] (9.4,4) -- (9.6,4);
\draw (8.6,4) node {$\frac{X_n}{H_n}-1$};
\draw (8.6,3) node {$\frac{X_n-H_n}{(H_n)^{3/4}}$};
\draw (8.6,2) node {$\frac{X_n-H_n}{(H_n)^{2/3}}$};
\draw (8.6,1) node {$\frac{X_n-H_n}{(H_n)^{1/2}}$};
\draw (11.3,4) node {\underline{large deviations}};
\draw (11,3.5) node {(\emph{cf.} \cite{Rad09})};
\draw (11.7,2.5) node {\underline{moderate deviations}};
\draw[thick,->] (9.4,2) -- (9.6,2) -- (9.6,2.95);
\draw[thick] (9.4,3) -- (9.6,3) ;
\draw (10,6) node {regime of fluctuations};
\draw[dashed,gray] (2.5,0) -- (2.5,6.5);
\draw[dashed,gray] (7.5,0) -- (7.5,6.5);
\draw[dashed,gray] (5,0) -- (5,5.7);
\draw[dashed,gray] (2.5,5.7) -- (7.5,5.7);
\draw[thick,->] (9.4,1) -- (9.6,1) -- (9.6,1.95);
\draw[thick,->] (9.6,0) -- (9.6,0.95) ;
\draw (11.22,1.5) node {\underline{normality zone}};
\draw (11.6,0.3) node {local limit theorem};
\draw (0,1) rectangle (7.5,3);
\draw (0,3) rectangle (7.5,5);
\end{tikzpicture}
\caption{Panorama of the fluctuations of the number of cycles $X_n$ of a random permutation of size $n$.}
\end{figure}
\end{center}
Once again, there is another mod-convergence hidden in this example. Indeed, consider $Y_n=\frac{X_n-H_n}{(H_n)^{1/3}}$. Its generating function has asymptotics
$$\esper[\E^{zY_n}] = \E^{H_n\left(\E^{\frac{z}{(H_n)^{1/3}}}-1\right) - z(H_n)^{2/3}}\,(1+o(1)) = \E^{(H_n)^{1/3}\,\frac{z^2}{2}}\,\exp\left(\frac{z^3}{6}\right)\,(1+o(1)).$$
Therefore, one has mod-Gaussian convergence of $Y_n$ with parameters $t_n=(H_n)^{1/3}$ and limiting function $\exp(z^3/6)$.
\medskip

This is in fact a particular case of a more general phenomenon:
every sequence that converges mod-$\phi$
converges with a different rescaling in the mod-Gaussian sense.
\begin{proposition}
    Assume $X_n$ converges mod-$\phi$ with parameters $t_n$ and limiting function $\psi$,
    where $\phi$ is not the Gaussian distribution.
    Let 
    \[m = \min_{i \ge 3} \{i \mid \eta^{(i)}(0) \ne 0 \}.\]
    Then, the sequence of random variables $Y_n=(X_n-t_n \eta'(0))/(t_n)^{1/m}$ converges in the mod-Gaussian sense
    with parameters $(t_n)^{1-2/m} \eta''(0)$ towards the limiting function $\Psi(z)=\exp\big(\eta^{(m)}(0) z^m/m!)$.
\end{proposition}
\begin{proof}
    This follows from a simple computation
    \begin{align*}
    &\esper\! \left[  \exp \left( \frac{z (X_n-t_n \eta'(0))}{(t_n)^{1/m}}  \right)\right] \\
    &=\exp\!\left( \frac{-t_n \eta'(0))}{(t_n)^{1/m}} \right) \, \exp \!\left( t_n\, \eta\!\left(\frac{z}{(t_n)^{1/m}}\right) \right)\, \psi\!\left(\frac{z}{(t_n)^{1/m}}\right) (1+o(1)).
    \end{align*}
    The factor $\psi(\frac{z}{(t_n)^{1/m}})$ tends to $1$ and we do a Taylor expansion of $\eta(\frac{z}{(t_n)^{1/m}})$. We get
    \[\esper \!\left[  \exp \left( \frac{z (X_n-t_n \eta'(0))}{(t_n)^{1/m}}  \right)\right]             
        = \exp\!\left( (t_n)^{1-2/m} \eta''(0) \frac{z^2}{2} + \eta^{(m)}(0) \frac{z^m}{m!} + o(1) \right)( 1+o(1)). \qedhere\]
\end{proof}
Naturally, the mod-$\phi$ convergence gives more information than the implied mod-Gaussian convergence:
our deviation results --- Theorems \ref{thm:mainlattice} and \ref{thm:mainnonlattice} ---
for the former involve deviation probabilities of $X_n$ at scale $O(t_n)$,
while with the mod-Gaussian convergence, we get deviation probabilities of $Y_n$ at scale $O((t_n)^{1-2/m})$,
that is deviations of $X_n$ at scale $O((t_n)^{1-1/m})$.
\end{example}
\bigskip

\subsection{Legendre-Fenchel transforms}
We now present the definition and some properties of the Legendre-Fenchel transform, a classical tool in large deviation theory (see \emph{e.g.} \cite[Section 2.2]{DZ98}) that we shall use a lot in this paper. The Legendre-Fenchel transform is the following operation on (convex) functions:
\begin{definition}
The Legendre-Fenchel transform of a function $\eta$ is defined by:
$$F(x)=\sup_{h \in \R}\,( hx-\eta(h)).$$
This is an involution on convex lower semi-continuous functions. 
\end{definition}\medskip

Assume that $\eta$ is the logarithm of the moment generating series of a random variable. In this case, $\eta$ is a convex function (by H\"older's inequality). Then $F$ is always non-negative, and the unique $h$ maximizing $hx-\eta(h)$, if it exists, is then defined by the implicit equation $\eta'(h)=x$ (note that $h$ depends on $x$, but we have chosen not to write $h(x)$ to make notation lighter). This implies the following useful identities:
$$ F(x)=xh-\eta(h)\qquad;\qquad F'(x)=h\qquad;\qquad F''(x)=h'(x)=\frac{1}{\eta''(h)}.$$
\begin{example} If $\eta(z)=mz+\frac{\sigma^{2}z^{2}}{2}$ (Gaussian variable with mean $m$ and variance $\sigma^{2}$), then 
$$h=\frac{x-m}{\sigma^{2}} \qquad;\qquad F_{\mathcal{N}_{\R}(m,\sigma^{2})}(x)=\frac{(x-m)^{2}}{2\sigma^{2}}$$
whereas if $\eta(z)=\lambda(\E^{z}-1)$ (Poisson law with parameter $\lambda$), then
$$h=\log\frac{x}{\lambda}\qquad;\qquad F_{\mathcal{P}(\lambda)}(x)=\begin{cases} x\log\frac{x}{\lambda}-(x-\lambda) & \text{if }x>0,\\
+ \infty & \text{otherwise}\end{cases}.$$
\begin{center}
\vspace{-3mm}
\begin{figure}[ht]
\includegraphics{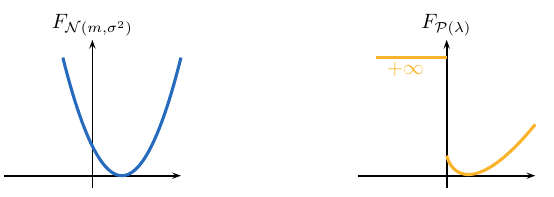}
\caption{The Legendre-Fenchel transforms of a Gaussian law and of a Poisson law.}
\end{figure}
\end{center}
\vspace{-5mm}
\end{example}
\bigskip

\subsection{Gaussian integrals}
Some computations involving the Gaussian density are used several times throughout the paper, so we decided to present them together here.

\begin{lemma}[Gaussian integrals]\label{lem:gaussintegral}~
\begin{enumerate}
\item \label{item:gaussmoment} moments: $$\frac{1}{\sqrt{2\pi}}\int_\R \E^{-\frac{x^2}{2}}\,x^{2m}\,dx=(2m-1)!!=(2m-1)(2m-3)\cdots 3\,1,$$ and the odd moments vanish.\vspace{2mm}
\item \label{item:gaussfourier} Fourier transform: with $g(x)=\frac{\E^{-\frac{x^2}{2}}}{\sqrt{2\pi}}$, one has
$$g^*(\zeta)=\int_\R g(x)\,\E^{\I x\zeta}\,dx=\E^{-\frac{\zeta^2}{2}}.$$
More generally, with the Hermite polynomials $H_{r}(x)=(-1)^{r}\,\E^{\frac{x^{2}}{2}}\,\,\frac{\partial^{r}}{\partial x^{r}}(\E^{-\frac{x^{2}}{2}})$, one has
$$(g\,H_{r})^{*}(\zeta)=(\I \zeta)^{r}\,\E^{-\frac{\zeta^{2}}{2}}.$$
\item \label{item:gausstail} tails: if $a \to +\infty$, then
\begin{align*}
&\int_{0}^{\infty} \E^{-\frac{(y+a)^{2}}{2}}\,dy=\frac{\E^{-\frac{a^{2}}{2}}}{a}\left(1-\frac{1}{a^{2}}+O\!\left(\frac{1}{a^{4}}\right)\right)\\
&\int_{0}^{\infty} y\,\E^{-\frac{(y+a)^{2}}{2}}\,dy=\frac{\E^{-\frac{a^{2}}{2}}}{a^{2}}\left(1+O\!\left(\frac{1}{a^{2}}\right)\right)\end{align*}
\begin{align*}
&\int_{0}^{\infty} y^{2}\,\E^{-\frac{(y+a)^{2}}{2}}\,dy=O\!\left(\frac{\E^{-\frac{a^{2}}{2}}}{a^{3}}\right)\\
&\int_{0}^{\infty} y^{3}\,\E^{-\frac{(y+a)^{2}}{2}}\,dy=O\!\left(\frac{\E^{-\frac{a^{2}}{2}}}{a^{2}}\right)
\end{align*}
In particular, the tail of the Gaussian distribution is $\frac{1}{\sqrt{2\pi}}\int_{a}^\infty\E^{-\frac{x^2}{2}}\,dx\simeq\frac{1}{a\sqrt{2\pi}}\,\E^{-\frac{a^2}{2}}$.
\vspace{1mm}
\item \label{item:gausscomplex} complex transform: for $\beta>0$,
$$
\int_{\R}\frac{\E^{-\frac{\beta^{2}}{2}}}{2\pi}\,\frac{\E^{-\frac{w^{2}}{2}}}{\beta+\I w}\,dw=\int_{\beta}^{\infty} \frac{\E^{-\frac{\alpha^{2}}{2}}}{\sqrt{2\pi}}\,d\alpha=\proba[\mathcal{N}_{\R}(0,1)\geq \beta].
$$
\vspace{1mm}
\end{enumerate}
\end{lemma}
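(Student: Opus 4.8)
The plan is to dispatch the four items in turn; each reduces to elementary calculus, the only one involving a genuine idea being the complex transform. For the moments, the odd ones vanish by oddness of $x \mapsto x^{2m+1}\,\E^{-x^2/2}$, and for the even ones I would integrate by parts, writing $x^{2m}\,\E^{-x^2/2} = -x^{2m-1}\,\tfrac{d}{dx}\E^{-x^2/2}$ to obtain the recursion $\int_\R x^{2m}\,\E^{-x^2/2}\,dx = (2m-1)\int_\R x^{2m-2}\,\E^{-x^2/2}\,dx$, which I iterate down to the base case $\int_\R\E^{-x^2/2}\,dx = \sqrt{2\pi}$.

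For the Fourier transform, differentiating under the integral sign gives $\tfrac{d}{d\zeta}g^*(\zeta) = \int_\R \I x\,g(x)\,\E^{\I x\zeta}\,dx = -\zeta\,g^*(\zeta)$, the last equality by one integration by parts using $x\,g(x) = -g'(x)$; since $g^*(0) = 1$, this linear ODE yields $g^*(\zeta) = \E^{-\zeta^2/2}$. For the Hermite refinement the key observation is that the Rodrigues-type formula defining $H_r$ gives $g(x)\,H_r(x) = (-1)^r\,g^{(r)}(x)$, so that $(g\,H_r)^*(\zeta) = (-1)^r\int_\R g^{(r)}(x)\,\E^{\I x\zeta}\,dx = (-1)^r(-\I\zeta)^r\,g^*(\zeta) = (\I\zeta)^r\,\E^{-\zeta^2/2}$, where the middle step is $r$ successive integrations by parts whose boundary terms vanish since $g$ and all its derivatives decay at infinity.

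For the tail estimates I would substitute $u = y + a$, turning each integral into $\int_a^\infty (u-a)^j\,\E^{-u^2/2}\,du$. For $j = 0$, repeated integration by parts against $u\,\E^{-u^2/2} = -\tfrac{d}{du}\E^{-u^2/2}$ produces the asymptotic expansion $\int_a^\infty\E^{-u^2/2}\,du = \tfrac{\E^{-a^2/2}}{a}\bigl(1 - a^{-2} + O(a^{-4})\bigr)$, the tail of the expansion being controlled by $\int_a^\infty u^{-4}\,\E^{-u^2/2}\,du \le a^{-5}\,\E^{-a^2/2}$. The case $j = 1$ then follows algebraically from $\int_a^\infty(u-a)\,\E^{-u^2/2}\,du = \E^{-a^2/2} - a\int_a^\infty\E^{-u^2/2}\,du$. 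For $j = 2$ and $j = 3$ only an upper bound is asserted, which I would get crudely from $\E^{-(y+a)^2/2} \le \E^{-a^2/2}\,\E^{-ay}$ together with $\int_0^\infty y^j\,\E^{-ay}\,dy = j!\,a^{-j-1}$. Dividing the $j = 0$ estimate by $\sqrt{2\pi}$ gives the final Gaussian tail asymptotic.

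The one step with real content is the complex transform. For $\beta > 0$ I would use the Laplace representation $\tfrac{1}{\beta + \I w} = \int_0^\infty\E^{-t(\beta + \I w)}\,dt$, substitute it into the left-hand side, and exchange the two integrations, which is legitimate because $\int_\R\int_0^\infty\E^{-w^2/2}\,\E^{-t\beta}\,dt\,dw = \sqrt{2\pi}/\beta < \infty$. The inner integral in $w$ is $\int_\R\E^{-w^2/2}\,\E^{-\I t w}\,dw = \sqrt{2\pi}\,\E^{-t^2/2}$ by the Fourier transform just computed, so the whole expression collapses to $\tfrac{\E^{-\beta^2/2}}{\sqrt{2\pi}}\int_0^\infty\E^{-t\beta - t^2/2}\,dt$; completing the square via $-t\beta - \tfrac{t^2}{2} = \tfrac{\beta^2}{2} - \tfrac{(t+\beta)^2}{2}$ and setting $\alpha = t + \beta$ turns this into exactly $\tfrac{1}{\sqrt{2\pi}}\int_\beta^\infty\E^{-\alpha^2/2}\,d\alpha = \proba[\mathcal N(0,1) \ge \beta]$. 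The main obstacle, such as it is, is simply recognizing that this Laplace-transform manoeuvre linearizes the denominator; everything after that is bookkeeping.
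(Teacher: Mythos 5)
Your proof is correct, but it takes a different route from the paper's on the two non-routine items. For the Fourier transforms, the paper proves all the identities $(g\,H_r)^*(\zeta)=(\I\zeta)^r\,\E^{-\zeta^2/2}$ at once by integrating the Hermite generating series $\sum_r H_r(x)\,t^r/r!=\E^{-t^2/2+tx}$ against $g(x)\,\E^{\I x\zeta}\,dx$, and then reads off the moments by differentiating $g^*$ at $0$; you instead get $g^*$ from the ODE $\frac{d}{d\zeta}g^*=-\zeta\,g^*$ and then handle general $r$ via the Rodrigues identity $g\,H_r=(-1)^r\,g^{(r)}$ and $r$ integrations by parts, while obtaining the moments by a direct reduction $\int x^{2m}\E^{-x^2/2}\,dx=(2m-1)\int x^{2m-2}\E^{-x^2/2}\,dx$. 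Both are complete; the generating-function computation is more compact when one wants the whole family, your version is more elementary and makes the decay hypotheses behind each integration by parts explicit. For the complex transform, the paper writes the integral as a contour integral $\frac{1}{2\I\pi}\int_{\beta+\I\R}\frac{\E^{((z-\beta)^2-\beta^2)/2}}{z}\,dz$, notes that it tends to $0$ as $\beta\to+\infty$, and integrates its $\beta$-derivative back — a step that requires the (footnoted) observation that the contribution of the moving contour to the derivative vanishes by holomorphy. Your Laplace-representation argument, $\frac{1}{\beta+\I w}=\int_0^\infty\E^{-t(\beta+\I w)}\,dt$ followed by Fubini (justified by the finite bound $\sqrt{2\pi}/\beta$), the Gaussian Fourier transform, and completion of the square, reaches the same conclusion while sidestepping that differentiation-under-the-integral subtlety entirely; it is arguably the cleaner of the two. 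The tail estimates are handled essentially as in the paper (integration by parts after the shift $u=y+a$), with your crude bound $\E^{-(y+a)^2/2}\le\E^{-a^2/2}\,\E^{-ay}$ for the $y^2$ and $y^3$ integrals giving bounds at least as strong as those stated.
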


\begin{proof}    
Recall that the generating series of Hermite polynomials (\cite[Chapter 5]{Sze75}) is 
$$\sum_{r =0}^{\infty} H_{r}(x)\,\frac{t^{r}}{r!}=\E^{\frac{x^2}{2}}\,\sum_{r=0}^\infty\frac{(-t)^r}{r!}\,\frac{\partial^r}{\partial x^r}\left(\E^{-\frac{x^2}{2}}\right)= \E^{\frac{x^2}{2}}\,\E^{-\frac{(x-t)^2}{2}}=\E^{-\frac{t^2}{2}+tx}.$$ 
Integrating against $g(x)\,\E^{\I x \zeta}\,dx$ yields
\begin{align*}\sum_{r=0}^\infty (g(x)\,H_r(x))^*(\zeta)\,\frac{t^r}{r!}&=\frac{1}{\sqrt{2\pi}}\int_\R \E^{-\frac{(x-t)^2}{2}+\I\zeta x}\,dx\\
&=\frac{\E^{\I\zeta t}}{\sqrt{2\pi}}\int_\R \E^{-\frac{y^2}{2}+\I\zeta y}\,dy = \E^{\I\zeta t-\frac{\zeta^2}{2}} = \sum_{r=0}^\infty (\I \zeta)^r\E^{-\frac{\zeta^2}{2}}\frac{t^r}{r!}
\end{align*}
whence the identity \eqref{item:gaussfourier} for Fourier transforms.\bigskip

With $r=0$, one gets the Fourier transform of the Gaussian $g^*(\zeta)=\E^{-\frac{\zeta^2}{2}}$, hence the moments \eqref{item:gaussmoment} by derivation at $\zeta=0$. The estimate of tails \eqref{item:gausstail} is obtained by an integration by parts; notice that similar techniques yield the tails of distributions $x^m\,\E^{-x^2/2}\,dx$ with $m\geq 1$. Finally, as for the complex transform \eqref{item:gausscomplex}, remark that 
$$F(\beta)=\int_{\R}\frac{\E^{-\frac{\beta^{2}}{2}}}{2\pi}\,\frac{\E^{-\frac{w^{2}}{2}}}{\beta+\I w}\,dw=\frac{1}{2\I\pi} \int_{\Gamma=\beta+ \I \R}\frac{\E^{\frac{(z-\beta)^{2}-\beta^{2}}{2}}}{z}\,dz,$$ the second integral being along the complex curve $\Gamma=\beta+\I \R$. 
By standard complex analysis arguments, this integral is the same along any line $\Gamma'=\beta'+i\R$ (for $\beta' >0$).
Namely
\[F(\beta)= \frac{1}{2\I\pi} \int_{\Gamma'=\beta'+ \I \R}\frac{\E^{\frac{(z-\beta)^{2}-\beta^{2}}{2}}}{z}\,dz.\]

Since $\lim_{\beta \to +\infty} F(\beta)=0$, 
$$
F(\beta)=-\int_\beta^\infty F'(\alpha)\,d\alpha
=\int_{\beta}^{\infty} \left(\frac{1}{2\I \pi}\int_{\Gamma'=\beta'+\I \R} \E^{\frac{(z-\alpha)^{2}-\alpha^{2}}{2}}\,dz\right)d\alpha.
$$
Again, the integration line $\Gamma'$ in the second integral can be replaced by 
$\Gamma=\alpha+\I \R$ and we get
$$F(\beta)
=\int_{\beta}^{\infty} \left(\frac{1}{2\I \pi}\int_{\Gamma=\alpha+\I \R} \E^{\frac{(z-\alpha)^{2}-\alpha^{2}}{2}}\,dz\right)d\alpha.
=\int_{\beta}^{\infty} \frac{\E^{-\frac{\alpha^{2}}{2}}}{\sqrt{2\pi}}\,d\alpha,$$
which is the tail $\proba[\mathcal{N}_{\R}(0,1)\geq \beta]$ of a standard Gaussian law.
\end{proof}
\noindent Also, there will be several instances of the Laplace method for asymptotics of integrals, but each time in a different setting; so we found it more convenient to reprove it each time.
\bigskip
\bigskip

\section{Fluctuations in the case of lattice distributions}\label{sec:lattice}

\subsection{Lattice and non-lattice distributions}
If $\phi$ is an infinitely divisible distribution, recall that its characteristic function writes uniquely as
\begin{equation}
\int_{\R} \E^{\I u x }\,\phi(dx)=\exp\left(\I m u - \frac{\sigma^2 u^2}{2}+\int_{\R \setminus \{0\}} \left(\E^{\I u x}-1-\frac{\I u x}{1+x^2}\right)\Pi(dx)\right),\label{eq:levykhintchine}
\end{equation}
where $\Pi$ is the L\'evy measure of $\phi$, and is required to integrate $1 \wedge x^2$ (see \cite[Chapter 13]{Kal97}). If $\sigma^2>0$, then $\phi$ has a normal component and its support 
$$\supp(\phi)=\big(\text{smallest closed subset $S$ of $\R$ with }\phi(S)=1\big)$$
is the whole real line, since $\phi$ can be seen as the convolution of some probability measure with a non-degenerate Gaussian law. Suppose now $\sigma^2=0$, and, set
$$\gamma=m-\int_{\R \setminus \{0\}} \frac{x}{1+x^2}\,\Pi(dx),$$
which is the shift parameter of $\phi$; note the integral above is not always convergent, so that $\gamma$ is not always defined.
\begin{lemma}\label{lem:support}
    \cite[Chapter 4, Theorem 8.4]{SVH04}
\begin{enumerate}
	\item If $\gamma$ is well-defined and finite, and if $\Pi([-\eps,\eps]\setminus \{0\})=0$ for some $\eps>0$, then 
	$$\supp(\phi)=\gamma + \overline{\N[\supp(\Pi)]},$$
	where $\N[S]$ is the semigroup generated by a part $S$ of $\R$ (the set of all sums of elements of $S$, including the empty sum $0$), and $\overline{\N[S]}$ is its closure.\vspace{2mm}
	\item Otherwise, the support of $\phi$ is either $\R$, or the half-line $[\gamma,+\infty)$, or the half-line $(-\infty,\gamma]$.
\end{enumerate}
\end{lemma}\bigskip

Recall that an additive subgroup of $\R$ is either discrete of type $\lambda \Z$ with $\lambda\geq 0$, or dense in $\R$. We call an infinitely divisible distribution {\em discrete}, or of type {\em lattice}, if $\sigma^2=0$, if $\gamma$ is well-defined and finite, and if the subgroup $\Z[\supp(\Pi)]$ is discrete. Otherwise, we say that $\phi$ is a non-lattice infinitely divisible distribution.

\begin{proposition}\label{prop:latticeandnonlattice}
An infinitely divisible distribution $\phi$ is of type lattice if and only if one of the following equivalent assertions is satisfied:\vspace{2mm}
\begin{enumerate}
    \item \label{item:supp} Its support is included in a set $\gamma+\lambda \Z$ for some parameters $\gamma$ and $\lambda> 0$. \vspace{2mm}
    \item \label{item:cf} For some parameter $\lambda> 0$, the characteristic function $\phi(\E^{\I u x})$ has modulus $|\phi(\E^{\I u x})|=1$ if and only if $u \in \frac{2\pi}{\lambda}\,\Z$.\vspace{2mm}
\end{enumerate}
If both hold and if $\lambda$ is chosen maximal in \eqref{item:supp}, then the parameters $\lambda$ in \eqref{item:supp} and \eqref{item:cf} coincide.
\medskip

\noindent Moreover, an infinitely divisible distribution $\phi$ is of type non-lattice if and only if $|\phi(\E^{\I u x})|<1$ for all $u \neq 0$.
\end{proposition}

\begin{proof}
In the following we exclude the case of a degenerate Dirac distribution $\phi=\delta_\gamma$, which is trivial. We can also assume that $\sigma^2=0$: otherwise, $\phi$ is of type non-lattice and with support $\R$, and the inequality $|\phi(\E^{\I u x})|<1$ for $u \neq 0$ is true for any non-degenerate Gaussian law, and therefore by convolution for every infinitely divisible law with parameter $\sigma^2\neq 0$.\bigskip

Suppose $\phi$ of type lattice. Then, since $\Z[\supp(\Pi)]=\lambda \Z$ for some $\lambda > 0$, the semigroup $\N[\supp(\Pi)] \subset \lambda \Z$ is discrete, and hence closed.  It thus follows from Lemma \ref{lem:support} that
$$\supp(\phi)=\gamma + \N[\supp(\Pi)]\subset \gamma + \lambda \Z.$$
Conversely, if $\supp(\phi)$ is included in a shifted lattice $\gamma+\lambda \Z$, then the second case of Lemma \ref{lem:support} is excluded, so $\gamma$ is well-defined and finite, and then 
$$\supp(\phi)=\gamma + \overline{\N[\supp(\Pi)]}.$$
But $\supp(\phi) \subset \gamma + \lambda \Z$, so this forces $\N[\supp(\Pi)]\subset \lambda \Z$, and therefore $\Z[\supp(\Pi)]\subset \lambda \Z$. Hence, $\phi$ is of type lattice. We have proved that the first assertion is indeed equivalent to the definition of a lattice infinitely divisible distribution.\bigskip

The equivalence of the two assertions \eqref{item:supp} and \eqref{item:cf} is then a general fact on probability measures $\phi$ on the real line. If $\phi$ is such a measure, let $G_\phi = \{u \in \R\,|\,|\phi(\E^{\I u x})| =1\}$. We claim that $G_\phi$ is an additive subgroup of $\R$. Indeed, if $u \neq 0$, then 
\begin{align*}
u \in G_{\phi} &\iff \left|\int_{\R}\E^{\I u x}\phi(dx)\right| = 1 \\
&\iff \text{the phase of $\E^{\I u x}$ is constant $\phi$-almost surely} \\
&\iff \phi \text{ is supported on a set }\gamma_u + \frac{2\pi}{u}\,\Z.
\end{align*}
Suppose that $u\neq 0$ and $v\neq 0$ belong to $G_\phi$. Then, 
$$\mathrm{supp}(\phi) \subset \left(\gamma_u + \frac{2\pi \Z}{u} \right)\cap \left(\gamma_v + \frac{2\pi \Z}{v} \right),$$
and the right-hand side of this formula is again a (shifted) discrete subgroup $\gamma_w + \frac{2\pi \Z}{w}$, with $\frac{k}{u}=\frac{l}{v}=\frac{1}{w}$ for some non-zero integers $k$ and $l$. In particular,
$$(k+l)w = kw+lw = u+v \quad;\quad \frac{1}{w} = \frac{k+l}{u+v}\quad;\quad \mathrm{supp}(\phi)\subset \gamma_w + \frac{2\pi\Z}{w} \subset \gamma_w + \frac{2\pi\Z}{u+v},$$
so $u+v \in G_\phi$ and $G_\phi$ is indeed a subgroup of $\R$.
\bigskip

If $G_\phi$ is discrete and writes as $p\Z$ with $p>0$, then $\phi$ is supported on a lattice $\gamma + \lambda \Z$ with $\lambda = \frac{2\pi}{p}$, and $|\phi(\E^{\I u})|=1$ if and only if $u \in \frac{2\pi\Z}{\lambda}$. Otherwise, $G_\phi$ cannot be a dense subgroup of $\R$, because then by continuity of $u \mapsto \phi(\E^{\I u})$, we would have $G_\phi=\R$, which implies that $\phi$ is a Dirac, and this case has been excluded. So, the only other possibility is $G_\phi=0$, which is the last statement of the proposition. \end{proof}
\bigskip

In the remaining of this section, we place ourselves in the setting of Definition \ref{def:modphi}, and we suppose that the $X_{n}$'s and the (non-constant) infinitely divisible distribution $\phi$ both take values in the lattice $\Z$, and furthermore, that $\phi$ has period $2\pi$ (in other words, the lattice $\Z$ is \emph{minimal} for $\phi$). In particular, for every $u \in (0,2\pi)$,
$|\exp(\eta(\I u))|<1,$
since by the previous discussion the period of the characteristic function of a $\Z$-valued infinitely divisible distribution is also the smallest $u > 0$ such that $|\phi(\E^{\I u x})|=1$. For more details on (discrete) infinitely-divisible distributions, we refer to the aforementioned textbook \cite{SVH04}, and also to \cite{Kat67} and \cite[Chapter XVII]{Fel71}.
\bigskip

\subsection{Deviations at the scale \texorpdfstring{$O(t_{n})$}{O(tn)}} 

\begin{lemma}\label{lem:fourier}
Let $X$ be a $\Z$-valued random variable whose generating function $\varphi_{X}(z)=\esper[\E^{zX}]$ converges absolutely in the strip $\mathcal{S}_{(c,d)}$, with $c < 0 <d$. For $k \in \Z$,
\begin{align*}
\forall h \in (c,d),\,\,\,\proba[X=k]&=\frac{1}{2\pi}\int_{-\pi}^{\pi} \E^{-k(h+\I u)}\,\varphi_{X}(h + \I u)\,du;\\
\forall h \in (0,d),\,\,\,\proba[X\geq k]&=\frac{1}{2\pi}\int_{-\pi}^{\pi} \frac{\E^{-k(h+\I u)}}{1-\E^{-(h+ \I u)}}\,\varphi_{X}(h + \I u)\,du.
\end{align*}
\end{lemma}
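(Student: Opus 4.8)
The plan is to derive both identities from the Fourier inversion formula for $\Z$-valued random variables, working with the ``tilted'' variable obtained by an exponential change of measure. First I would fix $h \in (-c,c)$ and consider the function $u \mapsto \E^{hX}$ multiplied against the test character; more precisely, since $\varphi_X$ converges absolutely on $\mathcal{S}_c$, the series $\sum_{k \in \Z} \proba[X=k]\,\E^{(h+\I u)k}$ converges absolutely and uniformly in $u \in [-\pi,\pi]$ to $\varphi_X(h+\I u)$. Hence $u \mapsto \varphi_X(h+\I u)$ is (up to the factor $\E^{hk}$) the Fourier series of the sequence $(\proba[X=k]\,\E^{hk})_{k \in \Z}$, and extracting the $k$-th Fourier coefficient via $\frac{1}{2\pi}\int_{-\pi}^{\pi} \E^{-\I u k}\,(\cdot)\,du$ and dividing by $\E^{hk}$ gives
$$
\proba[X=k] = \frac{1}{2\pi}\int_{-\pi}^{\pi} \E^{-k(h+\I u)}\,\varphi_X(h+\I u)\,du,
$$
which is the first identity. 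The interchange of sum and integral is justified by absolute convergence (dominated convergence / Fubini), so this step is routine.

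For the second identity I would sum the first one over all integers $j \geq k$. Formally,
$$
\proba[X \geq k] = \sum_{j \geq k} \proba[X=j] = \frac{1}{2\pi}\int_{-\pi}^{\pi} \left(\sum_{j \geq k} \E^{-j(h+\I u)}\right)\varphi_X(h+\I u)\,du,
$$
and for $h \in (0,c)$ one has $|\E^{-(h+\I u)}| = \E^{-h} < 1$, so the inner geometric series converges to $\dfrac{\E^{-k(h+\I u)}}{1-\E^{-(h+\I u)}}$, yielding
$$
\proba[X \geq k] = \frac{1}{2\pi}\int_{-\pi}^{\pi} \frac{\E^{-k(h+\I u)}}{1-\E^{-(h+\I u)}}\,\varphi_X(h+\I u)\,du.
$$
Again the only thing to check is that the double sum/integral can be rearranged; this follows from the estimate $\sum_{j \geq k}\sum_{m \in \Z} \E^{-(j-m)h}\,\proba[X=m] = \sum_m \proba[X=m]\,\E^{mh}\,\frac{\E^{-\max(k,m)h}}{1-\E^{-h}}$, which is finite precisely because $\varphi_X(h)<\infty$ for $h \in (0,c)$, so Fubini applies.

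The main (and only genuine) obstacle is bookkeeping the convergence/integrability conditions carefully: one must use $h>0$ in the second identity so that the geometric series $\sum_{j\geq k}\E^{-j(h+\I u)}$ converges and so that $|1-\E^{-(h+\I u)}|$ is bounded below (it never vanishes on $[-\pi,\pi]$ when $h>0$), whereas the first identity only needs $h$ in the open strip since it is a single Fourier coefficient. I would also remark that the right-hand sides are independent of $h$ (as long as $h$ stays in the allowed range), which is a consequence of Cauchy's theorem applied to the analytic integrand on the strip, but strictly speaking this is not needed for the statement as written. Everything else is a direct application of Fourier inversion on the circle together with Fubini's theorem.
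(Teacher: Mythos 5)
Your proof is correct and follows essentially the same route as the paper: extract the $k$-th Fourier coefficient of the $2\pi$-periodic function $u\mapsto\varphi_X(h+\I u)$ for the first identity, then sum over $j\geq k$ and exchange sum and integral (dominated convergence/Fubini, using $h>0$ and $\varphi_X(h)<\infty$) to get the geometric-series factor $\frac{1}{1-\E^{-(h+\I u)}}$. Only a cosmetic remark: in your Fubini estimate the inner sum $\sum_{j\geq k}\E^{-jh}$ equals $\frac{\E^{-kh}}{1-\E^{-h}}$ independently of $m$, so the bound is $\varphi_X(h)\,\frac{\E^{-kh}}{1-\E^{-h}}$ rather than the $\max(k,m)$ expression you wrote, but the conclusion is unaffected.
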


\begin{proof}
Since $$\varphi_{X}(h+\I u )=\sum_{k \in \Z} \proba[X=k]\,\E^{k(h+\I u )},$$ $\proba[X=k]\,\E^{kh}$ is the $k$-th Fourier coefficient of the $2\pi$-periodic and smooth function $u \mapsto \varphi_{X}(h+\I u)$; this leads to the first formula. Then, assuming also $h >0$,
$$ \proba[X \geq k]=\sum_{l=k}^{\infty} \proba[X=l] = \sum_{l=k}^{\infty} \frac{1}{2\pi}\int_{-\pi}^{\pi} \E^{-l(h+\I u)}\,\varphi_{X}(h + \I u)\,du, $$
and the sum of the moduli of the functions on the right-hand side is dominated by the integrable function $\frac{\E^{-kh}}{1-\E^{-h}}\,\varphi_{X}(h)$; so by Lebesgue's dominated convergence theorem, one can exchange the integral and the summation symbol, which yields the second equation.
\end{proof}
\bigskip

We now work under the assumptions of Definition \ref{def:modphi}, with a lattice infinitely divisible distribution $\phi$. Furthermore, we assume that the convergence is at speed $O((t_{n})^{-v})$, on a strip $\mathcal{S}_{(c,d)}$ containing $0$. Note that necessarily $\eta(0)=0$ and $\psi(0)=1$. A simple computation gives also the following approximation formulas:
\begin{align*}
    \esper(X_n)&=\varphi'_n(0)=t_n \eta'(0) + \psi'(0) \sim t_n \eta'(0) = t_n \eta'(0) +O(1); \\
    \Var(X_n)&=\varphi''_n(0)-\varphi'_n(0)^2 = t_n \eta''(0) +O(1).
\end{align*}

\begin{theorem}\label{thm:mainlattice}
Let $x$ be a real number in the interval $(\eta'(c),\eta'(d))$, and $h$ defined by the implicit equation $\eta'(h)=x$. We assume $t_{n}x \in \N$. \vspace{2mm}
\begin{enumerate}
\item The following expansion holds:
\begin{align*}
\proba[X_{n}=t_{n}x]&=\frac{\exp(-t_{n}F(x))}{\sqrt{2\pi t_{n}\eta''(h)}} \left(\psi(h)+\frac{a_{1}}{t_{n}}+\frac{a_{2}}{(t_{n})^{2}}+\cdots+\frac{a_{v-1}}{(t_{n})^{v-1}}+O\!\left(\frac{1}{(t_{n})^{v}}\right)\!\right)\\
&= \exp(-t_{n}F(x))\, \sqrt{\frac{F''(x)}{2\pi t_{n}}} \left(\psi(F'(x))+\frac{a_{1}}{t_{n}}+\cdots+\frac{a_{v-1}}{(t_{n})^{v-1}}+O\!\left(\frac{1}{(t_{n})^{v}}\right)\!\right),
\end{align*}
for some numbers $a_k$.\vspace{2mm}
\item Similarly, if $x$ is a real number in the range of $\eta'_{|(0,d)}$, then
$$\proba[X_{n}\geq t_{n}x]= \frac{\exp(-t_{n}F(x))}{\sqrt{2\pi t_{n}\eta''(h)}}\, \frac{1}{1-\E^{-h}}\left(\psi(h)+\frac{b_{1}}{t_{n}}+\cdots+\frac{b_{v-1}}{(t_{n})^{v-1}}+O\!\left(\frac{1}{(t_{n})^{v}}\right)\right),$$
for some numbers $b_k$.\vspace{2mm}
\end{enumerate}
Both $a_k$ and $b_k$ are rational fractions in the derivatives of $\eta$ and $\psi$ at $h$, that can be computed explicitly --- see Remark \ref{rem:computation_ak_bk}.
\end{theorem}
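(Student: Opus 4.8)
The plan is to apply the Fourier-analytic formulas of Lemma~\ref{lem:fourier} together with the exponential change of measure (Cram\'er tilting) adapted to the mod-$\phi$ setting. Concretely, fix $x \in (\eta'(-c),\eta'(c))$ and let $h$ solve $\eta'(h)=x$, so that $h \in (-c,c)$ (and $h\in(0,c)$ in part (2)). Starting from
\[
\proba[X_n = t_n x] = \frac{1}{2\pi}\int_{-\pi}^{\pi} \E^{-t_n x (h+\I u)}\,\varphi_n(h+\I u)\,du,
\]
I would substitute $\varphi_n(h+\I u) = \exp(t_n\,\eta(h+\I u))\,\psi_n(h+\I u)$, where $\psi_n \to \psi$ with all derivatives locally uniformly at speed $O((t_n)^{-v})$ by Cauchy's formula. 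Pulling out $\exp(t_n(\eta(h)-xh)) = \exp(-t_n F(x))$ using the identity $F(x)=xh-\eta(h)$, the integral becomes
\[
\proba[X_n = t_n x] = \frac{\E^{-t_n F(x)}}{2\pi}\int_{-\pi}^{\pi}\exp\!\big(t_n(\eta(h+\I u)-\eta(h)-\I u\,\eta'(h))\big)\,\E^{-\I t_n x u}\,\psi_n(h+\I u)\,du,
\]
wait — more cleanly, after the tilt the linear term in $u$ cancels because $\eta'(h)=x$, leaving a phase whose quadratic term is $-\tfrac{1}{2}t_n\,\eta''(h)\,u^2$.

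The core is then a Laplace-type (saddle point) analysis of this oscillatory integral. I would split $[-\pi,\pi]$ into a central region $|u|\le t_n^{-2/5}$ (say) and the complementary region. On the central region, I Taylor-expand $\eta(h+\I u)-\eta(h)-\I u\,\eta'(h)$ to high order in $\I u$, expand $\psi_n(h+\I u)$ around $u=0$, rescale $u = w/\sqrt{t_n\,\eta''(h)}$, and integrate term by term against the Gaussian weight $\E^{-w^2/2}$ using the Gaussian moment formulas of Lemma~\ref{lem:gaussintegral}\eqref{item:gaussmoment}. The leading term gives $\psi_n(h)/\sqrt{2\pi t_n\,\eta''(h)} \to \psi(h)/\sqrt{2\pi t_n\,\eta''(h)}$, and the successive corrections produce the coefficients $a_k/(t_n)^k$, each a polynomial/rational expression in the derivatives of $\eta$ and $\psi$ at $h$ (the odd-order contributions reorganize into these integer powers, since only even Gaussian moments survive). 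On the complementary region $|u| \ge t_n^{-2/5}$, I bound $|\exp(t_n(\eta(h+\I u)-\eta(h)-\I u x))| = \exp(t_n\,\mathrm{Re}(\eta(h+\I u)-\eta(h)))$; here one uses that $|\E^{\eta(h+\I u)-\eta(h)}| = |\varphi(h+\I u)|/\varphi(h) < 1$ for $u\in(0,2\pi)$ — this is exactly the non-lattice-within-the-lattice hypothesis (period $2\pi$, so $|\exp(\eta(\I u))|<1$ for $u\in(0,2\pi)$) together with the infinite divisibility / $\Z$-minimality discussed before Proposition~\ref{prop:latticeandnonlattice}; a uniform bound $\le 1-\delta$ away from $0$, combined with a quadratic bound near $0$, shows this contribution is $O(\E^{-c\,t_n^{1/5}})$, hence negligible against every power of $t_n^{-1}$, and in particular swallowed by the $O((t_n)^{-v})$ error from $\psi_n - \psi$ and its derivatives.

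For part (2), I repeat the argument starting from the second formula of Lemma~\ref{lem:fourier}, with the extra factor $\frac{1}{1-\E^{-(h+\I u)}}$; since $h>0$ this is smooth and non-vanishing on the contour, so it simply gets Taylor-expanded along with $\psi_n(h+\I u)$ near $u=0$, contributing the factor $\frac{1}{1-\E^{-h}}$ at leading order and modifying the correction coefficients into the $b_k$. The reformulations in terms of $F$ follow from the identities $F'(x)=h$ and $F''(x)=1/\eta''(h)$ recorded after the definition of the Legendre--Fenchel transform. I expect the main obstacle to be bookkeeping: organizing the high-order Taylor expansion of $\exp(t_n(\eta(h+\I u)-\eta(h)-\I u x))$ after the $w/\sqrt{t_n}$ rescaling so that the half-integer powers of $t_n$ genuinely cancel and one is left with a clean expansion in powers of $1/t_n$ down to order $v-1$ with a $O((t_n)^{-v})$ remainder that dominates the mod-$\phi$ speed error — this is routine in principle (it is the standard Edgeworth/saddle-point machinery) but requires care to state the $a_k$, $b_k$ as explicit rational fractions in the derivatives of $\eta$ and $\psi$ at $h$, as promised in the final sentence and deferred to Remark~\ref{rem:computation_ak_bk}.
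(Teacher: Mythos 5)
Your proof follows the paper's argument essentially verbatim: the same Fourier inversion formula from Lemma~\ref{lem:fourier}, the same tilt extracting $\E^{-t_n F(x)}$ via $xh=F(x)+\eta(h)$ and $x=\eta'(h)$, the same Laplace-method split into a central region (rescaled by $\sqrt{t_n\eta''(h)}$, Taylor-expanded, and integrated against the Gaussian, with only even moments surviving so that the expansion is in integer powers of $1/t_n$) and a complementary region controlled by the strict bound $|\exp(\eta(h+\I u)-\eta(h))|<1$, with the extra factor $(1-\E^{-h-\I u})^{-1}$ handling part~(2). The only cosmetic difference is that you split at a shrinking threshold $\delta=(t_n)^{-2/5}$ rather than the paper's small fixed $\delta$; both make the outer contribution superpolynomially small and the Gaussian tails negligible, so this is immaterial.
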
\medskip 

\begin{proof}
With the notations of Definition \ref{def:modphi}, the first equation of Lemma \ref{lem:fourier} becomes
\begin{align}\proba[X_{n}=t_{n}x]&=\frac{1}{2\pi}\int_{-\pi}^{\pi} \E^{-t_{n}x\, (h+\I u)}\,\varphi_{n}(h + \I u)\,du\nonumber\\
&=\frac{1}{2\pi}\int_{-\pi}^{\pi} \E^{-t_{n}xh}\,\E^{t_{n}(\eta(h+\I u) - \I u x)}\,\psi_{n}(h + \I u)\,du \nonumber\\
&=\frac{ \E^{-t_{n}F(x)}}{2\pi}\int_{-\pi}^{\pi}\,\E^{t_{n}(\eta(h+\I u) - \eta(h)- \I u \eta'(h))}\,\psi_{n}(h + \I u)\,du.\label{eq:tobesplit}
\end{align}
The last equality uses the facts that $x h=F(x)+\eta(h)$ and $x=\eta'(h)$. We perform the Laplace method on \eqref{eq:tobesplit}, and to this purpose we split the integral in two parts. Fix $\delta >0$, and denote $q_{\delta}=\max_{u \in (-\pi,\pi)\setminus (-\delta,\delta)} |\exp(\eta(h+\I u)-\eta(h))|$. This is strictly smaller than $1$, since
$$\exp(\eta(h+\I u)-\eta(h))=\frac{\esper[\E^{(h+\I u)X}]}{\esper[\E^{hX}]} = \esper_{\mathbb{Q}}[\E^{\I u X}]$$
is the characteristic function of $X$ under the new probability $d\mathbb{Q}(\omega)=\frac{\E^{hX(\omega)}}{\esper[\E^{hX}]}d\mathbb{P}(\omega)$
(and $X$ has minimum lattice $\Z$). Note that Lemma \ref{lem:technicqdelta_lattice} hereafter is a more precise version of this inequality.
\bigskip

As a consequence, if $I_{(-\delta,\delta)}$ and $I_{(-\delta,\delta)^{\mathrm{c}}}$ denote the two parts of \eqref{eq:tobesplit} corresponding to $\int_{-\delta}^{\delta}$ and $\int_{-\pi}^{-\delta}+\int_{\delta}^{\pi}$, then
$$
|I_{(-\delta,\delta)^{\mathrm{c}}}|\leq \frac{ \E^{-t_{n}F(x)}}{2\pi} \int_{(-\delta,\delta)^{\mathrm{c}}} (q_{\delta})^{t_{n}} \,|\psi_{n}(h + \I u)|\,du \leq 2\,(\E^{-F(x)}\,q_{\delta})^{t_{n}} \max_{u \in (-\pi,\pi)} |\psi(h+\I u)|
$$
for $n$ big enough, since $\psi_{n}$ converges uniformly towards $\psi$ on the compact set $K=h+\I[-\pi,\pi]$. Since $q_{\delta}<1$, for any $\delta>0$ fixed, $I_{(-\delta,\delta)^{\mathrm{c}}}\,\E^{t_{n}F(x)}$ goes  to $0$ faster than any negative power of $t_{n}$, so $I_{(-\delta,\delta)^{\mathrm{c}}}$ is negligible in the asymptotics
(recall that $F(x)$ is non-negative by definition, as $\eta(0)=0$).\bigskip

As for the other part, we can first replace $\psi_{n}$ by $\psi$ up to a $(1+O((t_{n})^{-v}))$, since the integral is taken on a compact subset of $\mathcal{S}_{(c,d)}$.
We then set $u=\frac{w}{\sqrt{t_{n}\eta''(h)}}$:
\begin{equation}
I_{(-\delta,\delta)}=\frac{\E^{-t_{n}F(x)}\left(1+O\!\left(\frac{1}{(t_{n})^{v}}\right)\right)}{2\pi \sqrt{t_{n}\eta''(h)}}\int_{-\delta \sqrt{t_{n}\eta''(h)}}^{\delta  \sqrt{t_{n}\eta''(h)}} \psi\!\left(h+\frac{\I w}{ \sqrt{t_{n}\eta''(h)}}\right) \,\E^{t_{n}\Delta_n(w)-\frac{w^{2}}{2}}\,dw,
\label{eq:principal}
\end{equation}
where $\Delta_n(w)$ is the Taylor expansion
\begin{align*}
\eta\left(h+\I u\right)&-\eta(h)-\eta'(h) \left(\I u \right)-\frac{\eta''(h)}{2} \left(\I u \right)^{2}\\
&=\sum_{k=3}^{2v+1} \frac{\eta^{(k)}(h)}{k!}\left(\frac{\I w}{ \sqrt{t_{n}\eta''(h)}}\right)^{\!k}+O\!\left(\frac{1}{(t_{n})^{v+1}}\right)\\
&=\frac{1}{t_{n}} \left(-\frac{w^{2}}{\eta''(h)}\sum_{k=1}^{2v-1} \frac{\eta^{(k+2)}(h)}{(k+2)!}\left(\frac{\I w}{ \sqrt{t_{n}\eta''(h)}}\right)^{\!k}+O\!\left(\frac{1}{(t_{n})^{v}}\right)\right).
\end{align*}
We also replace $\psi$ by its Taylor expansion
$$\psi\!\left(h+\frac{\I w}{ \sqrt{t_{n}\eta''(h)}}\right)=\sum_{k=0}^{2v-1} \frac{\psi^{(k)}(h)}{k!}\left(\frac{\I w}{ \sqrt{t_{n}\eta''(h)}}\right)^{\!k}+O\!\left(\frac{1}{(t_{n})^{v}}\right).$$
Thus, if one defines $\alpha_k$ by the equation
\begin{align*}
f_{n}(w)&:=\left(\sum_{k=0}^{2v-1} \frac{\psi^{(k)}(h)}{k!}\left(\frac{\I w}{ \sqrt{t_{n}\eta''(h)}}\right)^{\!k}\right)\,\exp\!\left(-\frac{w^{2}}{\eta''(h)}\sum_{k=1}^{2v-1} \frac{\eta^{(k+2)}(h)}{(k+2)!}\left(\frac{\I w}{ \sqrt{t_{n}\eta''(h)}} \right)^{\!k}\right)\\
&=\sum_{k=0}^{2v-1} \frac{\alpha_{k}(w)}{(t_{n})^{k/2}}+O\!\left(\frac{1}{(t_{n})^{v}}\right),
\end{align*}
then one can replace $\psi(h+\I u) \,\E^{t_{n}\Delta_n(w)}$ by $f_{n}(w)$ in Equation \eqref{eq:principal}. Moreover, observe that each coefficient $\alpha_{k}(w)$ writes as
$$\alpha_{k}(w)=\alpha_{k,0}(h)\left(\frac{w}{\sqrt{\eta''(h)}}\right)^{k}+\alpha_{k,1}(h)\left(\frac{w}{\sqrt{\eta''(h)}}\right)^{k+2}+\cdots+\alpha_{k,r}(h)\left(\frac{w}{\sqrt{\eta''(h)}}\right)^{k+2r}$$
with the $\alpha_{k,r}(h)$'s polynomials in the derivatives of $\psi$ and $\eta$ at point $h$. So,
$$I_{(-\delta,\delta)}=\left(1+O\!\left(\frac{1}{(t_{n})^{v}}\right)\right)\frac{\E^{-t_{n}F(x)}}{\sqrt{2\pi t_{n}\eta''(h)}} \left(\sum_{k=0}^{2v-1} \int_{-\delta \sqrt{t_{n}\eta''(h)}}^{\delta  \sqrt{t_{n}\eta''(h)}} \frac{\alpha_{k}(w)}{(t_{n})^{k/2}}\,\frac{\E^{-\frac{w^{2}}{2}}}{\sqrt{2\pi}}\,dw\right).$$
For any power $w^{m}$,  
$$\left|\int_{-\infty}^{\infty} w^{m} \,\frac{\E^{-\frac{w^{2}}{2}}}{\sqrt{2\pi}}\,dw-\int_{-\delta\sqrt{t_{n}\eta''(h)}}^{\delta\sqrt{t_{n}\eta''(h)}} w^{m}\, \frac{\E^{-\frac{w^{2}}{2}}}{\sqrt{2\pi}}\,dw\right|$$ is smaller than any negative power of $t_{n}$ as $n$ goes to infinity (see Lemma \ref{lem:gaussintegral}, \eqref{item:gausstail} for the case $m=0$): indeed, by integration by parts, one can expand the difference as 
$\E^{-\delta^{2}\,t_{n}\eta''(h)/2}\,R_{m}(\sqrt{t_{n}}),$
where $R_{m}$ is a rational fraction that depends on $m,h,\delta$ and on the order of the expansion needed. Therefore, one can take the full integrals in the previous formula. On the other hand, the odd moments of the Gaussian distribution vanish. One concludes that
$$
\proba[X_{n}=t_{n}x]=\frac{\E^{-t_{n}F(x)}}{\sqrt{2\pi t_{n}\eta''(h)}} \left(\sum_{k=0}^{v-1} \frac{1}{(t_{n})^{k}} \left(\int_{\R} \alpha_{2k}(w)\,\frac{\E^{-\frac{w^{2}}{2}}}{\sqrt{2\pi}}\,dw\right)+O\!\left(\frac{1}{(t_{n})^{v}}\right)\right),$$
and each integral $\int_{\R} \alpha_{2k}(w)\,\frac{\E^{-\frac{w^{2}}{2}}}{\sqrt{2\pi}}\,dw$ is equal to 
$$
\frac{\alpha_{2k,0}(h) \,(2k-1)!!}{(\eta''(h))^{k}}+\cdots+\frac{\alpha_{2k,r}(h) \,(2k+2r-1)!!}{(\eta''(h))^{k+r}}
$$
where $(2m-1)!!$ is the $2m$-th moment of the Gaussian distribution (\emph{cf.} Lemma \ref{lem:gaussintegral}, \eqref{item:gaussmoment}). This ends the proof of the first part of our Theorem, the second formula coming from the identities $h=F'(x)$ and $\eta''(h)=\frac{1}{F''(x)}$. The second part is exactly the same, up to the factor
$$\frac{1}{1-\E^{-h-\I u}}=\frac{1}{1-\E^{-h}}\,\left(\frac{1-\E^{-h}}{1-\E^{-h-\frac{\I w}{\sqrt{t_{n}\eta''(h)}}}}\right)$$
in the integrals.
\end{proof}\medskip

\begin{remark}\label{rem:deviation}
For $x > \eta'(0)$, the first term of the expansion
$$\frac{\exp(-t_{n}F(x))}{\sqrt{2\pi t_{n}\eta''(h)}}$$ 
is the leading term in the asymptotics of $\proba[Y_{t_{n}}=t_{n}x]$, where $(Y_{t})_{t \in \R_{+}}$ is the L\'evy process associated to the analytic function $\eta(z)$. Thus, the residue $\psi$ measures the difference between the distribution of $X_{n}$ and the distribution of $Y_{t_{n}}$ in the interval $(t_{n}\eta'(0),t_{n}\eta'(d))$. 
\end{remark}

\begin{remark}
If the convergence is faster than any negative power of $t_{n}$, then one can simplify the statement of the theorem as follows: as formal power series in $t_{n}$,
$$ \sqrt{2\pi t_{n}\eta''(h)}\,\exp(t_{n}F(x))\,\proba[X_{n}=t_{n}x]=\int_{\R} f_{n}(w)\,\E^{-\frac{w^{2}}{2}}\,dw,$$
\emph{i.e.}, the expansions of both sides up to any given power $O\!\left(\frac{1}{(t_{n})^{v}}\right)$ agree.
\end{remark}

\begin{remark}\label{rem:computation_ak_bk}
As mentioned in the statement of the theorem, the proof also gives an algorithm to obtain formulas for $a_k$ and $b_k$. More precisely, denote  
\begin{align*}\Delta_{n}(w)&=t_{n}\left(\eta\!\left(h+\frac{\I w}{\sqrt{t_{n}\eta''(h)}}\right)-\eta(h)-\eta'(h)\,\frac{\I w}{\sqrt{t_{n}\eta''(h)}}+\frac{w^{2}}{2t_{n}}\right)\\
f_{n}(w)&=\psi \!\left(h+\frac{\I w}{\sqrt{t_{n}\eta''(h)}}\right)\,\exp(t_n \Delta_{n}(w))=\sum_{k=0}^{\infty} \frac{\alpha_{k}(w)}{(t_{n})^{k/2}},
\end{align*}
the last expansion holding in a neighborhood of zero. The coefficient $\alpha_{2k}(w)$ is an even polynomial in $w$ with valuation $2k$ and coefficients which are polynomials in the derivatives of $\psi$ and $\eta$ at $h$, and in $\frac{1}{\eta''(h)}$. Then, 
$$a_{k}=\int_{\R}\alpha_{2k}(w) \,\frac{\E^{-\frac{w^{2}}{2}}}{\sqrt{2\pi}}\,dw,$$
and in particular,
\begin{align*}
a_{0}&= \psi(h);\\ 
a_{1}&= -\frac{1}{2}\,\frac{\psi''(h)}{\eta''(h)}+\frac{1}{24}\,\frac{\psi(h)\,\eta^{(4)}(h)+4\,\psi'(h)\,\eta^{(3)}(h)}{(\eta''(h))^{2}}-\frac{15}{72}\,\frac{\psi(h)\,(\eta^{(3)}(h))^{2}}{(\eta''(h))^{3}}.
\end{align*}
the $b_{k}$'s are obtained by the same recipe as the $a_{k}$'s, but starting from the power series
$$g_{n}(w)=\frac{1-\exp(-h)}{1-\exp\!\left(-h- \frac{\I w}{\sqrt{t_{n}\eta''(h)}}\right)}\,f_{n}(w).$$
\end{remark}
\medskip


\begin{example}\label{ex:randomcycle}
Suppose that $(X_{n})_{n \in \N}$ is mod-Poisson convergent, that is to say that $\eta(z)=\E^{z}-1$. The expansion of Theorem \ref{thm:mainlattice} reads then as follows:
$$\proba[X_{n}=t_{n}x]=\frac{\E^{t_{n} (x-1-x\log x)}}{\sqrt{2\pi x t_{n} }}\left(\psi(h)+\frac{\psi'(h)-3\psi''(h)-\psi(h)}{6xt_{n}}+O\!\left(\frac{1}{(t_{n})^{2}}\right)\right) $$
with $h = \log x$. For instance, if $X_{n}$ is the number of cycles of a random permutation in $\mathfrak{S}(n)$, then the discussion of Example \ref{ex:cycle} shows that for $x > 0$ such that $x\log n \in \N$, 
$$\proba[X_{n}= x(\log n)] = \frac{n^{-(x\log x-x+1)}}{\sqrt{2\pi x\,\log n}}\,\frac{1}{\Gamma(x)}\big(1+O(1/\log n)\big).$$
Similarly, for $x > 1$ such that $x\log n \in \N$, one has 
$$\proba[X_{n}\geq x(\log n)] = \frac{n^{-(x\log x-x+1)}}{\sqrt{2\pi x\,\log n}}\,\frac{x}{x-1}\,\frac{1}{\Gamma(x)}\big(1+O(1/\log n)\big).$$
As the speed of convergence is very good in this case, precise expansions in $1/\log n$ to any order could  also be given.
\end{example}
\bigskip

\subsection{Central limit theorem at the scales \texorpdfstring{$o(t_{n})$}{o(tn)} and \texorpdfstring{$o((t_{n})^{2/3})$}{o(tn23)}}
The previous paragraph has described in the lattice case the fluctuations of $(X_n)_{n \in \N}$ in the regime $O(t_n)$, with a result akin to large deviations. In this section, we establish in the same setting an extended central limit theorem, for fluctuations of order up to $o(t_n)$. In particular, for fluctuations of order $o((t_n)^{2/3})$, we obtain the usual central limit theorem. Hence, we describe the panorama of fluctuations drawn on Figure \ref{fig:panoramalattice}.

\begin{center}
\begin{figure}[ht]
\begin{tikzpicture}
\draw (-0.5,1.5) node {order of fluctuations};
\draw (2.3,0) node {large deviations ($\eta'(0)<x$):};
\draw (1.75,-1) node {extended central limit};
\draw (3,-3) node {central limit theorem ($y \ll (t_n)^{1/6}$):};
\draw (2.7,-1.4) node {theorem ($(t_n)^{1/6}\lesssim y \ll (t_n)^{1/2}$):};
\draw (9.2,-0.05) node {$\proba[\frac{X_n}{t_n} \geq x] \simeq \frac{\exp(-t_n\,F(x))}{\sqrt{2\pi t_n\eta'(x)}}\,\frac{1}{1-\E^{-F'(x)}}\,\psi(F'(x));$};
\draw (9.2,-1.5) node {$\proba[\frac{X_n-t_n\eta'(0)}{\sqrt{t_n\,\eta''(0)}} \geq y] \simeq \frac{\exp(-t_n\,F(x))}{F'(x)\,\sqrt{2\pi t_n \eta'(x)}};$};
\draw (9.23,-3.8) node {$\proba[\frac{X_n-t_n\eta'(0)}{\sqrt{t_n\,\eta''(0)}} \geq y] \simeq \proba[\mathcal{N}_{\R}(0,1)\geq y].$};
\draw[->,thick] (-2,-5) -- (-2,1) ;
\draw[thick] (-2.1,0) -- (-1.9,0);
\draw (-1.25,0) node {$O(t_n)$} ;
\draw[->,thick] (-2.1,-2) -- (-1.9,-2) -- (-1.9,-0.1);
\draw (-0.85,-2) node {$O((t_n)^{2/3})$} ;
\draw[->,thick] (-2.1,-4) -- (-1.9,-4) -- (-1.9,-2.1) ;
\draw (-0.85,-4) node {$O((t_n)^{1/2})$} ;
\end{tikzpicture}
\caption{Panorama of the fluctuations of a sequence of random variables $(X_n)_{n\in\N}$ that converges modulo a lattice distribution (with $x=\eta'(0) + \sqrt{\eta''(0)/t_n}\,y$).}\label{fig:panoramalattice}
\end{figure}
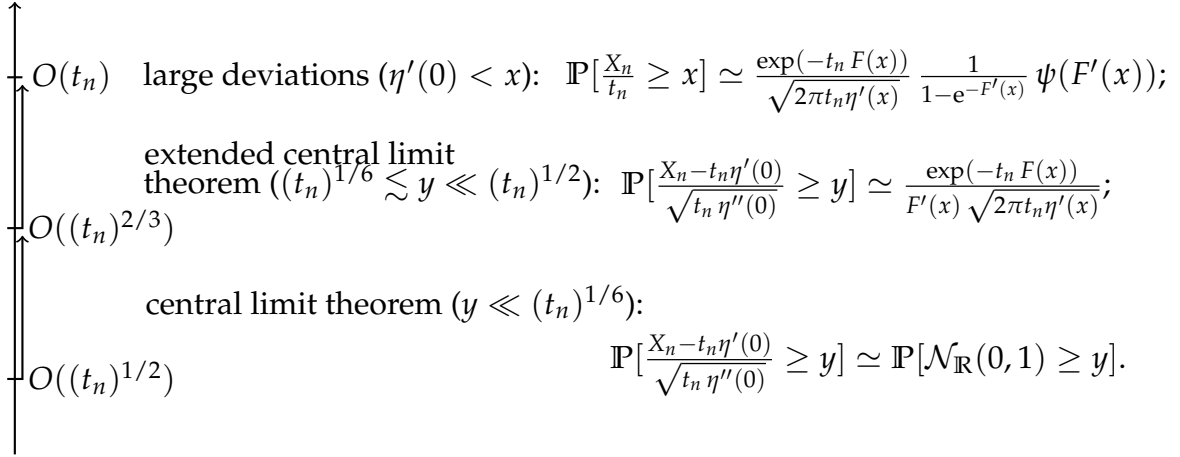
\end{center}

\begin{theorem}\label{thm:cltlattice}
Consider a sequence $(X_n)_{n \in \N}$ that converges mod-$\phi$, with a reference infinitely divisible law $\phi$ that is a lattice distribution. Assume $y=o((t_{n})^{1/6})$. Then,
$$\proba\!\left[X_{n}\geq t_{n}\eta'(0)+\sqrt{t_{n}\eta''(0)}\,y\right]=\proba[\mathcal{N}_{\R}(0,1)\geq y]\left(1+o(1)\right).$$
On the other hand, assuming $y \gg 1$ and $y=o((t_n)^{1/2})$, if $x=\eta'(0)+\sqrt{\eta''(0)/t_n}\,y$ and $h$ is the solution of $\eta'(h)=x$, then
\begin{align}
    \proba\!\left[X_{n}\geq t_{n}\eta'(0)+\sqrt{t_{n}\eta''(0)}\,y\right]
& = \frac{\E^{-t_n F(x)}}{h \sqrt{2\pi t_{n}\,\eta''(h)}}(1+o(1)) ; \nonumber\\
& = \frac{\E^{-t_n F(x)}}{y \sqrt{2\pi}} (1+o(1)).
\label{EqGaussianTailY}
\end{align}
\end{theorem}
\medskip

\begin{remark}
The case $y=O(1)$, which is the classical central limit theorem, follows immediately from the assumptions of Definition \ref{def:modphi}, since by a Taylor expansion around $0$ of $\eta$ the characteristic functions of the rescaled r.v. 
\[Y_{n}=\frac{X_{n}-t_{n}\eta'(0)}{\sqrt{t_{n}\eta''(0)}}\] 
converge pointwise to $\E^{-\frac{\zeta^{2}}{2}}$, the characteristic function of the standard Gaussian distribution.
In the first statement, the improvement here is the weaker assumption $y=o((t_{n})^{1/6})$.
\end{remark}\medskip

As we shall see, the ingredients of the proof are very similar to the ones in the previous paragraph. We start with a technical lemma of control of the module of the Fourier transform of the reference law $\phi$. 

\begin{lemma}\label{lem:technicqdelta_lattice}
Consider a non-constant infinitely divisible law $\phi$, of type lattice, and with convergent moment generating function $\int \E^{zx}\,\phi(dx) = \E^{\eta(z)}$ on a strip $\mathcal{S}_{(c,d)}$ with $c<0<d$. We assume without loss of generality that $\phi$ has minimal lattice $\Z$. Then, there exists a constant $D>0$ only depending on $\phi$, and an interval $(-\eps,\eps)\subset (c,d)$, such that for all $h \in (-\eps,\eps)$ and all $\delta$ small enough,
$$q_{\delta}= \max_{u \in [-\pi,\pi]\setminus (-\delta,\delta)} |\exp(\eta(h+\I u)-\eta(h))|$$ 
is smaller than $1-D\,\delta^{2}$.
\end{lemma}

\begin{proof}
Denote $X$ a random variable under the infinitely divisible distribution $\phi$. We claim that there exist two consecutive integers $n$ and $m=n-1$ with $\proba[X=n]\neq 0$ and $\proba[X=m]\neq 0$. Indeed, under our hypotheses, if $\Pi$ is the L\'evy measure of $\phi$, then 
$$\Z=\Z[\supp(\Pi)]=\N[\supp(\Pi)]-\N[\supp(\Pi)],$$ 
so there exist $a$ and $b$ in $\N[\supp(\Pi)]$ such that $b-a=1$. However, $\supp(\phi)=\gamma+\N[\supp(\Pi)]$ for some $\gamma \in \Z$, so $n=\gamma+b$ and $m=\gamma+a$ satisfy the claim.\bigskip

Now, we have seen that $\exp(\eta(h+\I u)-\eta(h))$ can be interpreted as the characteristic function of $X$ under the new probability measure $d\mathbb{Q}=\frac{\E^{hX}}{\esper[\E^{hX}]}\,d\proba$. So, for any $u$,
\begin{align*}
|\exp(\eta(h+\I u)-\eta(h))|^{2}&=\left|\esper_{\mathbb{Q}}[\E^{\I u X}]\right|^{2} = \sum_{n,m \in \Z} \mathbb{Q}[X=n]\,\mathbb{Q}[X=m]\,\E^{\I u (n-m)} \\
&= \sum_{k \in \Z}  \left(\sum_{n-m=k} \mathbb{Q}[X=n]\,\mathbb{Q}[X=m] \right)\cos ku.
\end{align*}
Fix two integers $n$ and $m=n-1$ such that $\proba[X=n]\neq 0$ and $\proba[X=m]\neq 0$. Then one also has $\mathbb{Q}[X=n]\neq 0$, $\mathbb{Q}[X=m]\neq 0$, and there exists
$D >0$ such that
$$\mathbb{Q}[X=n]\,\mathbb{Q}[X=m] \geq 15\,D>0$$
for $h$ small enough ($\mathbb{Q}$ tends to $\mathbb{P}$ for $h\to 0$).
As $\cos u \leq 1-\frac{u^{2}}{5}$ for all $u \in (-\pi,\pi)$, 
\begin{align*}
&|\exp(\eta(h+\I u)-\eta(h))|^{2} \leq 1+15\,D\,(\cos u-1) \leq 1-3\,D\,u^{2};\\ 
&q_{\delta} \leq \sqrt{1-3\,D\,\delta^{2}} \leq 1-D\,\delta^{2} \text{ for $\delta$ small enough}.
\end{align*}
This concludes the proof of the Lemma.\end{proof}
\medskip

\begin{proof}[Proof of Theorem \ref{thm:cltlattice}]
Notice that $\eta''(0)\neq 0$ since this is the variance of the law $\phi$, 
assumed to be non-trivial.
Set $x=\eta'(0)+s$, and assume $s =o(1)$. The analogue of Equation \eqref{eq:tobesplit} reads in our setting
\begin{equation}\proba[X_{n}\geq t_{n}x]=\frac{ \E^{-t_{n}F(x)}}{2\pi}\int_{-\pi}^{\pi}\,\frac{\E^{t_{n}(\eta(h+\I u) - \eta(h)- \I u \eta'(h))}}{1-\E^{-h-\I u}}\,\psi_{n}(h + \I u)\,du.\label{eq:tobesplit2}\end{equation}
Since $h'(x)=\frac{1}{\eta''(x)}$, one has $h=\frac{s}{\eta''(0)}+O(s^{2})$.
The same argument as in the proof of Theorem \ref{thm:mainlattice} shows that the integral over $(-\delta,\delta)^{\mathrm{c}}$ is bounded by $C\,\delta\,(q_{\delta})^{t_{n}},$ where $q_{\delta} <1$, and $C\,\delta$ (with $C$ a constant independent from $s$ and $\delta$) comes from the computation of 
$$\max_{u \in (-\delta,\delta)^{\mathrm{c}}}\left|\frac{\psi(h+\I u)}{1-\E^{-h-\I u}}\right|.$$
In the following we shall need to make $\delta$ go to zero sufficiently fast, but with $\delta\sqrt{t_{n}\eta''(0)}$ still going to infinity. Thus, set $\delta=(t_{n})^{-2/5}$, so that in particular $(t_{n})^{-1/2}\ll \delta \ll (t_{n})^{-1/3}$.
Notice that $I_{(-\delta,\delta)^\mathrm{c}}\,\,\E^{t_{n}F(x)}$ still goes to zero faster than any power of $t_{n}$; indeed, 
$$(q_{\delta})^{t_{n}}\leq \left(1-\frac{D}{(t_{n})^{4/5}}\right)^{t_{n}} \leq \E^{-D\,(t_{n})^{1/5}}$$
by Lemma \ref{lem:technicqdelta_lattice}. The other part of \eqref{eq:tobesplit2} is 
$$\frac{\E^{-t_{n}F(x)}}{2\pi \sqrt{t_{n}\eta''(h)}}\int_{-\delta \sqrt{t_{n}\eta''(h)}}^{\delta  \sqrt{t_{n}\eta''(h)}} \psi\!\left(h+\frac{\I w}{ \sqrt{t_{n}\eta''(h)}}\right) \,\E^{t_{n}\Delta_n(w)}\,\frac{\E^{-\frac{w^{2}}{2}}}{1-\E^{-h-\frac{\I w}{\sqrt{t_{n}\eta''(h)}}}}\,dw,$$
up to a factor $(1+o(1))$. Let us analyze each part of the integral:\vspace{2mm}
\begin{itemize}
\item The difference between $\psi\left(h+\frac{\I w}{ \sqrt{t_{n}\eta''(h)}}\right)$ and $\psi(0)$ is bounded by
$$\max_{z \in [-s,s]+\I[-\delta,\delta]}|\psi(z)-\psi(0)|=o(1)$$
by continuity of $\psi$, so one can replace the term with $\psi$ by the constant $\psi(0)=1$, up to factor $(1+o(1))$.\vspace{2mm}
\item The term $\Delta_n(w)$ has for Taylor expansion
$$\frac{\eta^{(3)}(h)}{6}\,\left(\frac{\I w}{\sqrt{t_{n}\eta''(h)}}\right)^{\!3}+O\left(\frac{1}{(t_{n})^{2}}\right),$$
so $t_{n}\,\Delta_n(w)$ is bounded by a $O(t_{n}\,\delta^{3})$, which is a $o(1)$ since $\delta\ll(t_{n})^{-1/3}$. So again one can replace $\E^{t_{n}\Delta_n(w)}$ by the constant $1$.\vspace{2mm}
\item The Taylor expansion of $\left(1-\E^{-h-\frac{\I w}{\sqrt{t_{n}\eta''(h)}}}\right)^{-1}$ is $\frac{1}{h+\frac{\I w}{\sqrt{t_{n}\eta''(h)}}}\,(1+o(1))$. Hence,
\begin{align*}\proba\left[X_{n}\geq t_{n}(\eta'(0)+s)\right]&= \frac{\E^{-t_{n}F(x)}}{2\pi} \left(\int_{\R} \frac{\E^{-\frac{w^{2}}{2}}}{\sqrt{t_{n}\eta''(h)} \,h+\I w} \,dw\right)\big(1+o(1)\big)\\
&=\E^{-t_{n}F(x)+\frac{h^{2}\,t_{n}\,\eta''(h)}{2}}\,\,\proba\!\left[\mathcal{N}_{\R}(0,1)\geq h\,\sqrt{t_{n}\eta''(h)}\right]\big(1+o(1)\big).
\end{align*}
Indeed, setting $\beta=h\sqrt{t_{n}\,\eta''(h)}$, this leads directly to the computation done in Lemma \ref{lem:gaussintegral}, \eqref{item:gausscomplex}.
\end{itemize}
Hence, we have shown so far that
\begin{equation}
\proba\left[X_{n}\geq t_{n}(\eta'(0)+s)\right]=\E^{-t_{n}F(\eta'(0)+s)}\,\E^{\frac{\beta^{2}}{2}}\,\proba[\mathcal{N}_{\R}(0,1)\geq \beta]\big(1+o(1)\big),\label{eq:forcomparisionwithradziwill}
\end{equation}
with $\beta=h\sqrt{t_{n}\,\eta''(h)}$.
\medskip

We now set $y=s\sqrt{t_{n}/\eta''(0)}=o(t_n^{1/2})$, and we consider the following regimes. If $y \gg 1$ (and \emph{a fortiori} if $y$ is of order bigger than $O(t_n)^{1/6}$), then  $s\gg (t_n)^{-1/2}$, so $h \gg (t_n)^{-1/2}$ and $\beta \gg 1$. We can then use Lemma \ref{lem:gaussintegral}, \eqref{item:gausstail} to replace in Equation \eqref{eq:forcomparisionwithradziwill} the function of $\beta$ by the tail-estimate of the Gaussian:
\begin{equation}
    \proba\left[X_n \geq t_n\eta'(0) + \sqrt{t_n\,\eta''(0)}y\right] 
= \frac{\E^{-t_n\,F(x)}}{h\,\sqrt{2\pi t_n\, \eta''(h)}}(1+o(1)).
\label{EqGaussianTail}
\end{equation}
Recall that $h=\frac{s}{\eta''(0)}\,(1+O(s))$, so that the denominator above can be approximated as follows:
\[h \sqrt{t_n\, \eta''(h)} =\frac{s}{\eta''(0)}\,(1+O(s))\, \sqrt{t_{n}\,(\eta''(0)+O(s))}= y\,(1+O(s))=y(1+o(1)).\]
This completes the proof of the second part of the theorem.
\medskip

Suppose on the opposite that $y=o((t_{n})^{1/6})$, or, equivalently, $s=o((t_{n})^{-1/3})$. Let us then see how everything is transformed.\vspace{2mm}
\begin{itemize}
\item By making a Taylor expansion around $\eta'(0)$ of the Legendre-Fenchel transform, we get (recall that $x=\eta'(0)$ implies $h=0$)
    \begin{equation}
        F(x)=F(\eta'(0))+F'(\eta'(0))\,s+\frac{F''(\eta'(0))}{2}\,s^{2}+O(s^{3})=\frac{y^{2}}{2t_{n}}+o((t_{n})^{-1}),
        \label{EqTaylorF}
    \end{equation}
so $\E^{-t_{n}F(\eta'(0)+s)}\simeq \E^{-\frac{y^{2}}{2}}$.  \vspace{2mm}
\item As above,
\[
\beta=h\sqrt{t_{n}\,\eta''(h)} = y\,(1+O(s))= y\,(1+o((t_{n})^{-1/3}))
\]
Consequently, $\beta^{2}=y^{2}(1+o((t_{n})^{-1/3}))=y^{2}+o(1)$, so $\E^{\frac{\beta^{2}}{2}}$ can be replaced safely by $\E^{\frac{y^{2}}{2}}$, which compensates the previous term.
\vspace{2mm}
\item Finally, fix $y$, and denote $F_{y}(\lambda)=\proba[\mathcal{N}_{\R}(0,1)\geq \lambda y]$. Then, for $|\lambda|$ say between $\frac{1}{2}$ and $2$,
$$|F_{y}'(\lambda)|=\left|\frac{y}{\sqrt{2\pi}}\,\E^{-\frac{\lambda^{2}y^{2}}{2}}\right|\leq \max_{y \in \R} \left|\frac{y}{\sqrt{2\pi}}\,\E^{-\frac{y^{2}}{8}}\right|=C<+\infty;$$
as a consequence,
\begin{align*}
\left|\proba[\mathcal{N}_{\R}(0,1)\geq \beta]-\proba[\mathcal{N}_{\R}(0,1)\geq y]\right|&=\left|F_{y}(1+o((t_{n})^{-1/3}))-F_{y}(1)\right|\\
&\leq \frac{C}{(t_{n})^{1/3}}=o(1).
\end{align*}
\end{itemize}
This ends the proof of Theorem \ref{thm:cltlattice}.
\end{proof}
\medskip

\begin{remark}
Equation \eqref{eq:forcomparisionwithradziwill} is the probabilistic counterpart of the number-theoretic results of \cite{Kub72,Rad09}, see in particular Theorems 2.1 and 2.2 in \cite{Rad09}. In Section \ref{subsec:arithmetic}, we shall explain how to recover the precise large deviation results of \cite{Rad09} for arithmetic functions whose Dirichlet series can be studied with the Selberg-Delange method.
\end{remark}\bigskip

The following corollary gives
a more explicit form of Theorem \ref{thm:cltlattice},
depending on the order of magnitude of $y$.
\begin{corollary}
    If $y=o((t_n)^{1/4})$, then one has
\begin{equation}
    \proba\!\big[X_n \ge t_n \eta'(0) + \sqrt{t_n \eta''(0)}\, y\big] 
    =\frac{ (1+o(1))}{y\sqrt{2\pi}} \,\E^{-\frac{y^2}{2}}\, \exp\left( \frac{\eta'''(0)}{6\,(\eta''(0))^{3/2}}\,\frac{y^3}{\sqrt{t_n}}
    \right) .
\label{EqCramerOrdre3}
\end{equation}
More generally, if $y=o((t_n)^{1/2-1/m})$, then one has
\begin{equation}
    \proba\!\big[X_n \ge t_n \eta'(0) + \sqrt{t_n \eta''(0)}\, y\big] 
    =\frac{(1+o(1))}{y\sqrt{2\pi}} \exp\left( - \sum_{i=2}^{m-1} \frac{F^{(i)}(\eta'(0))}{i!}\,\frac{(\eta''(0))^{i/2}\, y^i}{(t_n)^{(i-2)/2}}
    \right).
\label{EqCramerOrdreM}
\end{equation}
\end{corollary}
\begin{proof}
    As above in Equation \eqref{EqTaylorF}, we write $s=y\sqrt{\eta''(0)/t_{n}}$
    and $x=\eta'(0)+s$ and do a Taylor expansion of $F$ around $\eta'(0)$:
    \[F(x) = \sum_{i=0}^{m-1} \frac{F^{(i)}(\eta'(0))}{i!} \left( y\sqrt{\frac{\eta''(0)}{t_{n}}} \right)^i +O(s^{m}).\]
    Note that $F(\eta(0))=F'(\eta'(0))=0$.
    Because of the hypothesis $y=o((t_n)^{1/2-1/m})$, we have $t_n O(s^{m})=o(1)$.
    Therefore, plugging the equation above in Equation \eqref{EqGaussianTailY},
    we get \eqref{EqCramerOrdreM}.\medskip

    Observing that $F''(\eta'(0))=1/\eta''(0)$ and $F'''(\eta'(0))=\frac{-\eta'''(0)}{\eta''(0)^3}$,
    we get the first equation.
\end{proof}

To summarize, in the lattice case, mod-$\phi$ convergence implies a large deviation principle (Theorem \ref{thm:mainlattice}) and a precised central limit theorem (Theorem \ref{thm:cltlattice}), and these two results cover a whole interval of possible scalings for the fluctuations of the sequence $(X_n)_{n \in \N}$. As we shall see in the next Section \ref{sec:nonlattice}, the same holds for non-lattice reference distributions.
\bigskip
\bigskip

\section{Fluctuations in the non-lattice case}\label{sec:nonlattice}
In this section we prove the analogues of Theorems \ref{thm:mainlattice} and \ref{thm:cltlattice} when $\phi$ is not lattice-distributed; hence, by Proposition \ref{prop:latticeandnonlattice}, $|\E^{\eta(\I u)}|<1$ for any $u\neq 0$. In this setting, assuming $\phi$ absolutely continuous w.r.t.~the Lebesgue measure, there is a formula equivalent to the one given in Lemma \ref{lem:fourier}, namely,
\begin{equation}\proba[X\geq x]=\lim_{R \to \infty}\left(\frac{1}{2\pi}\int_{-R}^{R} \frac{\E^{-x(h+\I u)}}{h+\I u}\,\varphi_{X}(h+\I u)\,du\right)\label{eq:notuseful}
\end{equation}
if $\varphi_{X}(h)=\esper[\E^{hX}]<+\infty$ for $h>0$ (see \cite[Chapter XV, Section 3]{Fel71}). However, in order to manipulate this formula as in Section \ref{sec:lattice}, one would need strong additional assumptions of integrability on the characteristic functions of the random variables $X_{n}$. Thus, instead of Equation \eqref{eq:notuseful}, our main tool will be a Berry-Esseen estimate (see Proposition \ref{prop:berryesseen} hereafter), which we shall then combine with techniques of tilting of measures (Lemma \ref{lem:exponchange}) similar to those used in the classical theory of large deviations (see \cite[p. 32]{DZ98}).\bigskip

\subsection{Berry-Esseen estimates}
As explained above, we start by establishing some Berry-Esseen estimates in the setting of mod-$\phi$ convergence.

\begin{proposition}[Berry-Esseen expansion]\label{prop:berryesseen}
We place ourselves under the assumptions of Definition \ref{def:modphi}, with $\phi$ non-lattice infinitely divisible law, and the strip $\mathcal{S}_{(c,d)}$ that contains $0$. Denote 
$$g(y)=\frac{1}{\sqrt{2\pi}}\,\E^{-y^{2}/2}$$
the density of a standard Gaussian variable, and $F_{n}(x)=\proba[X_{n}\leq t_{n}\eta'(0)+\sqrt{t_{n}\eta''(0)}\,x]$. One has
$$F_{n}(x)=\int_{-\infty}^{x} \left(1+\frac{\psi'(0)}{\sqrt{t_n\eta''(0)}}\,y+\frac{\eta'''(0)}{6\sqrt{t_{n}(\eta''(0))^{3}}}\,(y^{3}-3y)\right)g(y)\,dy+o\!\left(\frac{1}{\sqrt{t_n}}\right)$$
with the $o(\cdot)$ uniform on $\R$.
\end{proposition}
\medskip

\begin{proof}
We use the same arguments as in the proof of \cite[Theorem XVI.4.1]{Fel71}, but adapted to the assumptions of Definition \ref{def:modphi}. Given an integrable function $f$, or more generally a distribution, its Fourier transform is $f^{*}(\zeta)=\int_{\R}\E^{\I \zeta x}\,f(x)\,dx.$
Consider a probability law $F(x)=\int_{-\infty}^{x}f(y)\,dy$ with vanishing expectation $(f^{*})'(0)=0$; and $G(x)=\int_{-\infty}^{x}g(y)\,dy$ a $m$-Lipschitz function with $g^{*}$ continuously differentiable and
$$(g^{*})'(0)=0 \qquad;\qquad \lim_{y \to -\infty}G(y)=0\qquad ;\qquad\lim_{y \to +\infty}G(y)=1.$$ 
Then Feller's Lemma \cite[Lemma XVI.3.2]{Fel71} states that, for any $x\in \R$ and any $T>0$,
$$|F(x)-G(x)|\leq \frac{1}{\pi} \int_{-T}^{T} \left|\frac{f^{*}(\zeta)-g^{*}(\zeta)}{\zeta}\right|\, d\zeta +\frac{24m}{\pi T}.$$
Notice that this is true even when $f$ is a distribution. Define the auxiliary variables
$$Y_{n}=\frac{X_{n}-t_{n}\eta'(0)}{\sqrt{t_{n}\eta''(0)}}$$
We shall apply Feller's Lemma to the functions
\begin{align*}
    F_{n}(x)&=\text{cumulative distribution function of } 
    Y_n;\\
G_{n}(x)&=\int_{-\infty}^{x}\left(1+\frac{\psi'(0)}{\sqrt{t_{n}\eta''(0)}}\,y+\frac{\eta'''(0)}{6\sqrt{t_{n}(\eta''(0))^{3}}}\,(y^{3}-3y)\right)g(y)\,dy.
\end{align*}
Note that each $G_n$ is clearly a Lipschitz function (with a uniform Lipschitz constant, \emph{i.e.} that does not depend on $n$). Besides, by Lemma \ref{lem:gaussintegral}, \eqref{item:gaussfourier}, the Fourier transform corresponding to the distribution function $G_n$ is, setting $z=\I\, \zeta$,
\begin{equation}\label{eq:gnstar}
    g_n^*(\zeta)= \E^{\frac{z^{2}}{2}}\,\left(1+\frac{\psi'(0)\,z}{\sqrt{t_{n}\eta''(0)}}+\frac{\eta'''(0)\,z^{3}}{6\sqrt{t_{n}(\eta''(0))^{3}}}\right).
\end{equation}
Consider now $f_{n}^{*}(\zeta)$:
if $z=\I\, \zeta$, then
\begin{align*}
    f_{n}^{*}(\zeta) &= \esper \left[\E^{z\left(\frac{X_{n}-t_{n}\eta'(0)}{\sqrt{t_{n}\eta''(0)}}\right)}\right]=\exp\left(-z\sqrt{\frac{t_{n}}{\eta''(0)}}\,\eta'(0)\right)\times \varphi_{n}\left(\frac{z}{\sqrt{t_{n}\eta''(0)}}\right)\\
&=\exp\left(t_{n}\left(\eta\left(\frac{z}{\sqrt{t_{n}\eta''(0)}}\right)-\eta'(0)\,\frac{z}{\sqrt{t_{n}\eta''(0)}}\right)\right)\times \psi_{n}\!\left(\frac{z}{\sqrt{t_{n}\eta''(0)}}\right)
\end{align*}
But
\[ \psi_{n}\!\left(\frac{z}{\sqrt{t_{n}\eta''(0)}}\right) = \left(1+\frac{\psi'_n(0)\,z}{\sqrt{t_{n}\eta''(0)}}+o\!\left(\frac{z}{\sqrt{t_{n}}}\right)\right) = \left(1+\frac{\psi'(0)\,z}{\sqrt{t_{n}\eta''(0)}}+o\!\left(\frac{z}{\sqrt{t_{n}}}\right)\right)\]
where the $o$ is uniform in $n$ because of the local uniform convergence of the analytic functions $\psi_n$ to $\psi$ (and hence, of $\psi'_n$ and $\psi''_n$ to $\psi'$ and $\psi$). Thus
\begin{align}
    f_{n}^{*}(\zeta)&=\exp\left(\frac{z^{2}}{2}+\frac{\eta'''(0)\,z^{3}}{6\sqrt{t_{n}(\eta''(0))^{3}}}+|z|^{2}\,o\!\left(\frac{z}{\sqrt{t_{n}}}\right)\right)\times \left(1+\frac{\psi'(0)\,z}{\sqrt{t_{n}\eta''(0)}}+o\!\left(\frac{z}{\sqrt{t_{n}}}\right)\right)\nonumber\\
&=\E^{\frac{z^{2}}{2}}\,\left(1+\frac{\psi'(0)\,z}{\sqrt{t_{n}\eta''(0)}}+\frac{\eta'''(0)\,z^{3}}{6\sqrt{t_{n}(\eta''(0))^{3}}}+(1+|z|^{2})\,o\!\left(\frac{z}{\sqrt{t_{n}}}\right)\right).\label{eq:asymptoticsfouriertransforms}
\end{align}
Beware that in the previous expansions, the $o(\cdot)$ is 
$$o\!\left(\frac{z}{\sqrt{t_{n}}}\right)=\frac{|z|}{\sqrt{t_{n}}}\,\eps\!\left(\frac{z}{\sqrt{t_{n}}}\right)\quad \text{with } \lim_{t \to 0}\eps(t)=0.$$ 
In particular, $z$ might still go to infinity in this situation. To make everything clear we will continue to use the notation $\eps(t)$ in the following. Fix $0<\delta<\Delta$ and take $T=\Delta\sqrt{t_{n}}$.  Comparing~\eqref{eq:gnstar} and \eqref{eq:asymptoticsfouriertransforms} and using Feller's lemma, we get:
\begin{align}
|F_{n}(x)-G_{n}(x)|&\leq\frac{1}{\pi}\int_{-\Delta\sqrt{t_{n}}}^{\Delta\sqrt{t_{n}}} \left|\frac{f_{n}^{*}(\zeta)-g_{n}^{*}(\zeta)}{\zeta}\right|d\zeta + \frac{24m}{\Delta\pi\sqrt{t_{n}}}\nonumber\\
&\leq \frac{1}{\pi\sqrt{t_{n}}}\int_{-\delta\sqrt{t_{n}}}^{\delta\sqrt{t_{n}}} \E^{-\frac{\zeta^{2}}{2}}\,(1+|\zeta|^{2})\,\, \eps\!\left(\frac{\zeta}{\sqrt{t_{n}}}\right)\,d\zeta+\frac{24m}{\Delta\pi\sqrt{t_{n}}}\nonumber\\
&\quad+\frac{1}{\pi\delta\sqrt{t_{n}}}\int_{[-\Delta\sqrt{t_{n}},\Delta\sqrt{t_{n}}]\setminus[-\delta\sqrt{t_{n}},\delta\sqrt{t_{n}}]}\left|f_{n}^{*}(\zeta)-g_{n}^{*}(\zeta)\right|d\zeta.
\label{ineq:FmoinsG}
\end{align}
In the right-hand side, the first part is of the form $\frac{\eps'(\delta)}{\sqrt{t_{n}}}$ when 
$\lim_{\delta \to 0} \eps'(\delta)=0$,
while the second part is smaller than $\frac{M}{\Delta\sqrt{t_{n}}}$ for some constant $M$.\bigskip

Let us show that the last integral goes to zero faster than any power of $t_{n}$. Indeed, for $|\zeta| \in [\delta\sqrt{t_{n}},\Delta\sqrt{t_{n}}]$,
$$|f_{n}^*(\zeta)|=\left|\varphi_{n}\left(\frac{\I \zeta}{\sqrt{t_{n}\eta''(0)}}\right)\right|\leq
 \left|\psi_n \!\left(\frac{\I \zeta}{\sqrt{t_{n}\eta''(0)}}\right) \right|
\times
 \left| \exp \!\left(t_n\, \eta\! \left(\frac{\I \zeta}{\sqrt{t_{n}\eta''(0)}}\right) \right) \right| 
 $$
The first part is bounded by a constant $K(\Delta)$ because of the uniform convergence of $\psi_n$ towards $\psi$ on the complex segment $[-\I\Delta/\sqrt{\eta''(0)},\I\Delta/\sqrt{\eta''(0)}]$. The second part can be bounded by
\[\left(\max_{\frac{\delta}{\sqrt{\eta''(0)}} \leq |u| \leq \frac{\Delta}{\sqrt{\eta''(0)}}} |\exp(\eta(\I u))|\right)^{\!t_{n}},\]
but the maximum is a constant $q_{\delta,\Delta}$ strictly smaller than 1, because $\eta$ is not lattice distributed. This implies that in the domain 
$[-\Delta\sqrt{t_{n}},\Delta\sqrt{t_{n}}]\setminus[-\delta\sqrt{t_{n}},\delta\sqrt{t_{n}}]$,
one has the bound
\[|f_{n}^*(\zeta)| \le K(\Delta) (q_{\delta,\Delta})^{t_n}.\]
The explicit expression~\eqref{eq:gnstar} shows that the same kind of bound holds for $|g_{n}^*(\zeta)|$. We shall use the notation $\widetilde{K}(\Delta)$ and $\widetilde{q}_{\delta,\Delta}$ for constants valid for both $|f_{n}^*(\zeta)|$ and $|g_{n}^*(\zeta)|$. Thus the third summand in the bound~\eqref{ineq:FmoinsG} is bounded by 
$$\frac{4 \Delta}{\pi \delta}\, \widetilde{K}(\Delta) \,(\widetilde{q}_{\delta,\frac{1}{\delta}})^{t_{n}}.$$
Fix $\eps>0$, then $\delta$ such that $\eps(\delta)<\eps$ and $M\delta<\eps$. Take $\Delta=\frac{1}{\delta}$; we get
$$|F_{n}(x)-G_{n}(x)| \leq \frac{2\eps}{\sqrt{t_{n}}}+\frac{4}{\pi\delta^2}\,\widetilde{K}(\delta^{-1})\,(\widetilde{q}_{\delta,\Delta})^{t_{n}} \leq \frac{3\eps}{\sqrt{t_{n}}}$$
for $t_{n}$ large enough. This completes the proof of the proposition.
\end{proof}
\medskip

\begin{remark}
Proposition \ref{prop:berryesseen} gives an approximation for the Kolmogorov distance between the law $\mu_n$ and the normal law. Indeed, assume to simplify that the reference law $\phi$ is the Gaussian law. Then, $\eta''(0)=1$ and $\eta'''(0)=0$, and one computes
\begin{align*}
d_\text{Kol}(\mu_n,\,\mathcal{N}_{\R}(0,1))&=\frac{1}{\sqrt{t_n}}\,\sup_{x \in \R} \left|\int_{-\infty}^x\psi'(0)\,y\,g(y)\,dy\right|+o\!\left(\frac{1}{\sqrt{t_n}}\right) \\
&= \frac{|\psi'(0)|}{\sqrt{2\pi t_n}}+o\!\left(\frac{1}{\sqrt{t_n}}\right).
\end{align*}
This makes explicit the bound given by Theorem 1 in \cite{Hwa98}.
If $\psi'(0) \ne 0$ ({\em e.g.}, as in Lemma \ref{lem:exponchange}), we get an equivalent of the Kolmogorov distance. However, if $\psi'(0)=0$, then the estimate $d_{\mathrm{Kol}}=o(1/\sqrt{t_n})$ may not be optimal. Indeed, in the case of a scaled sum of i.i.d.~random variables, $t_n=n^{1/3}$ and one obtains the bound 
$$d_{\mathrm{Kol}}\left(\frac{1}{\sqrt{n}}\sum_{i=1}^n Y_i,\,\mathcal{N}_{\R}(0,1)\right)=o\!\left(\frac{1}{n^{1/6}}\right),$$
which is not as good as the classical Berry-Esseen estimate $O(\frac{1}{n^{1/2}})$. There is a way to modify the arguments in order to get such optimal estimates, by controlling  the zone of mod-convergence. We refer to \cite{FMN14}, where such "optimal" computations of Kolmogorov distances is performed.
\end{remark}
\bigskip

\subsection{Deviations at scale \texorpdfstring{$O(t_{n})$}{O(tn)}} 
\begin{theorem}\label{thm:mainnonlattice}
Suppose $\phi$ non-lattice, and consider as before a sequence $(X_n)_{n \in \N}$ that converges mod-$\phi$ on a band $\mathcal{S}_{(c,d)}$ with $c < 0 <d$. If $x \in (\eta'(0),\eta'(d))$, then 
$$\proba[X_{n}\geq t_{n}x]=\frac{\exp(-t_{n}F(x))}{h\,\sqrt{2\pi t_{n}\eta''(h)}}\,\psi(h)\left(1+o(1)\right)$$
where as usual $h$ is defined by the implicit equation $\eta'(h)=x$.
\end{theorem}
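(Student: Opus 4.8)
The plan is to reduce the deviation probability to a central-limit-scale estimate via Cram\'er's change of measure, and then to feed in the Berry--Esseen expansion of Proposition~\ref{prop:berryesseen}, applied to the \emph{tilted} variables rather than to $X_n$ directly. Fix the unique $h\in(0,c)$ with $\eta'(h)=x$ (it is positive and unique because $\eta$ is strictly convex and $x>\eta'(0)$), and note that $\varphi_n(h)=\E^{t_n\eta(h)}\psi_n(h)$ is finite and eventually non-zero since $\psi_n(h)\to\psi(h)\neq 0$. Introduce the tilted probabilities $d\widetilde{\proba}_n=\frac{\E^{hX_n}}{\varphi_n(h)}\,d\proba$. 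Since $\esper[f(X_n)]=\varphi_n(h)\,\esper_{\widetilde{\proba}_n}[\E^{-hX_n}f(X_n)]$ and $hx-\eta(h)=F(x)$, one gets the exact identity
$$\proba[X_n\geq t_nx]=\varphi_n(h)\,\E^{-ht_nx}\,\esper_{\widetilde{\proba}_n}\!\bigl[\E^{-h(X_n-t_n\eta'(h))}\,\mathbf{1}_{X_n\geq t_n\eta'(h)}\bigr]=\E^{-t_nF(x)}\,\psi_n(h)\,\esper_{\widetilde{\proba}_n}\!\bigl[\E^{-\lambda_nW_n}\,\mathbf{1}_{W_n\geq 0}\bigr],$$
where $W_n:=\frac{X_n-t_n\eta'(h)}{\sqrt{t_n\eta''(h)}}$ and $\lambda_n:=h\sqrt{t_n\eta''(h)}\to+\infty$. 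As $\psi_n(h)\to\psi(h)$, the theorem amounts to showing that $\esper_{\widetilde{\proba}_n}[\E^{-\lambda_nW_n}\mathbf{1}_{W_n\geq 0}]$ is equivalent to $\bigl(h\sqrt{2\pi t_n\eta''(h)}\bigr)^{-1}$.

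Next I would observe that $X_n$ still converges mod-$\phi$ under $\widetilde{\proba}_n$: its moment generating function is $z\mapsto \varphi_n(h+z)/\varphi_n(h)=\exp(t_n\eta_h(z))\,\psi_{n,h}(z)$, with $\eta_h(z):=\eta(h+z)-\eta(h)$ and $\psi_{n,h}(z):=\psi_n(h+z)/\psi_n(h)$, and these are analytic and converge locally uniformly on the strip $\mathcal{S}_{c-h}$, with $\psi_{n,h}\to\psi_h(z):=\psi(h+z)/\psi(h)$. The law $\phi_h$ with exponent $\eta_h$ is the exponential tilt of $\phi$: it is again infinitely divisible, has the same support as $\phi$ (hence is still non-lattice), and $\eta_h'(0)=\eta'(h)=x$, $\eta_h''(0)=\eta''(h)>0$. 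Therefore Proposition~\ref{prop:berryesseen}, applied in the tilted setting, yields that the cumulative distribution function $\widetilde{F}_n$ of $W_n$ under $\widetilde{\proba}_n$ satisfies $\widetilde{F}_n(w)=\widetilde{G}_n(w)+o(t_n^{-1/2})$ uniformly in $w\in\R$, where $\widetilde{G}_n(w)=\int_{-\infty}^w\widetilde{g}_n$ and $\widetilde{g}_n(s)=\bigl(1+c_n^{(1)}s+c_n^{(2)}(s^3-3s)\bigr)g(s)$ with $g$ the standard Gaussian density and $c_n^{(1)}=\frac{\psi_h'(0)}{\sqrt{t_n\eta''(h)}}=O(t_n^{-1/2})$, $c_n^{(2)}=\frac{\eta'''(h)}{6\sqrt{t_n(\eta''(h))^3}}=O(t_n^{-1/2})$.

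It then remains to compute $\esper_{\widetilde{\proba}_n}[\E^{-\lambda_nW_n}\mathbf{1}_{W_n\geq 0}]=\int_{[0,\infty)}\E^{-\lambda_nw}\,d\widetilde{F}_n(w)$. Writing $\E^{-\lambda_nw}=\lambda_n\int_w^\infty\E^{-\lambda_ns}\,ds$ and using Tonelli, this equals $\lambda_n\int_0^\infty\E^{-\lambda_ns}\bigl(\widetilde{F}_n(s)-\widetilde{F}_n(0^-)\bigr)\,ds$; replacing $\widetilde{F}_n$ by $\widetilde{G}_n$ costs only $\lambda_n\cdot\lambda_n^{-1}\cdot o(t_n^{-1/2})=o(t_n^{-1/2})$, and since $\widetilde{G}_n$ is continuous and bounded, integrating back by parts leaves $\int_0^\infty\E^{-\lambda_ns}\,\widetilde{g}_n(s)\,ds$. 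Completing the square, $\int_0^\infty\E^{-\lambda_ns}g(s)\,ds=\frac{\E^{\lambda_n^2/2}}{\sqrt{2\pi}}\int_0^\infty\E^{-(s+\lambda_n)^2/2}\,ds=\frac{1}{\lambda_n\sqrt{2\pi}}\bigl(1+O(\lambda_n^{-2})\bigr)$ by Lemma~\ref{lem:gaussintegral}~\eqref{item:gausstail}; while the two correction pieces contribute $O(t_n^{-1/2})$ times integrals of the form $\int_0^\infty s^k\E^{-\lambda_ns}g(s)\,ds=O(\lambda_n^{-2})$ (again Lemma~\ref{lem:gaussintegral}~\eqref{item:gausstail}), hence $O(t_n^{-3/2})=o(t_n^{-1/2})$. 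Altogether $\esper_{\widetilde{\proba}_n}[\E^{-\lambda_nW_n}\mathbf{1}_{W_n\geq 0}]=\frac{1}{\lambda_n\sqrt{2\pi}}(1+o(1))=\frac{1}{h\sqrt{2\pi t_n\eta''(h)}}(1+o(1))$, and plugging this into the identity of the first paragraph yields the statement.

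The step requiring the most care — and the main obstacle — is the transfer to the tilted measure: one must verify that $\phi_h$ genuinely fits the hypotheses of Definition~\ref{def:modphi} (infinite divisibility, non-lattice character, moment generating function convergent on a strip, local uniform convergence of $\psi_{n,h}$ together with its first derivatives), so that Proposition~\ref{prop:berryesseen} is really available there. Equally essential is that the Berry--Esseen error be $o(t_n^{-1/2})$ and not merely $O(t_n^{-1/2})$: the quantity to estimate is itself of order $t_n^{-1/2}$, so a plain Kolmogorov bound would be worthless, and even the $O(t_n^{-1/2})$ correction terms of $\widetilde{G}_n$ survive only because they get integrated against the sharply decaying weight $\E^{-\lambda_ns}$ and thereby pick up an extra factor $O(\lambda_n^{-1})$.
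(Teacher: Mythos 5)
Your proposal is correct and takes essentially the same route as the paper: an exponential change of measure at $h$ (the paper's Lemma~\ref{lem:exponchange}), the Berry--Esseen expansion of Proposition~\ref{prop:berryesseen} applied to the tilted sequence with its uniform $o(t_n^{-1/2})$ error, and the Gaussian tail estimates of Lemma~\ref{lem:gaussintegral}~\eqref{item:gausstail} to show only the leading term $\frac{1}{h\sqrt{2\pi t_n\eta''(h)}}$ survives. Your Tonelli/Fubini manipulation of $\int_{[0,\infty)}\E^{-\lambda_n w}\,d\widetilde{F}_n(w)$ is just the paper's integration by parts in a different guise, and your check that the tilted law remains infinitely divisible and non-lattice is exactly the verification the paper invokes implicitly.
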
\medskip

\begin{remark}
By applying the result to $(-X_n)_{n \in \N}$, one gets similarly
$$\proba[X_n \leq t_nx] = \frac{\exp(-t_n F(x))}{|h|\sqrt{2\pi t_n \eta''(h)}}\, \psi(h)\left(1+o(1)\right)$$
for $x \in (\eta'(c),\eta'(0))$, with $h$ defined by the implicit equation $\eta'(h)=x$.
\label{rmq:neg_dev}
\end{remark}

\begin{remark}
Theorem \ref{thm:mainnonlattice} should be compared with \cite[Theorem 1]{Hwa96}, which studies another regime of deviations in the mod-$\phi$ setting, namely, when $h$ goes to zero (or equivalently, $x \to \eta'(0)$). We shall also look at this regime in our Theorem \ref{thm:cltnonlattice}.
\end{remark}

\begin{remark}
The main difference between Theorems \ref{thm:mainlattice} and \ref{thm:mainnonlattice} is the replacement of the factor $\psi(h)/(1-\E^{-h})$ by $\psi(h)/h$; the same happens with Bahadur-Rao's estimates when going from lattice distributions to non-lattice distributions. 
\end{remark}
\medskip 

\begin{lemma}\label{lem:exponchange}
Let $(X_n)_{n \in \N}$ be a sequence of random variables that converges mod-$\phi$ with parameters $(t_n)_{n \in \N}$ and limiting function $\psi$, on a strip $\mathcal{S}_{(c,d)}$ that does not necessarily contain $0$. For $h \in (c,d)$, we make the exponential change of measure
$$\mathbb{Q}[dy]=\frac{\E^{hy}}{\varphi_{X_{n}}(h)}\,\proba[X_{n} \in dy],$$
and denote $\widetilde{X}_{n}$ a random variable following this law. The sequence $(\widetilde{X}_n)_{n \in \N}$ converges mod-$\widetilde{\phi}$, where $\widetilde{\phi}$ is the infinitely divisible distribution with characteristic function $\E^{\eta(z+h)-\eta(h)}$. The parameters of this new mod-convergence are again $(t_n)_{n \in \N}$, and the limiting function is
$$\widetilde{\psi}(z)=\frac{\psi(z+h)}{\psi(h)}.$$
The new mod-$\widetilde{\phi}$ convergence occurs in the strip $\mathcal{S}_{(c-h,d-h)}$.
\end{lemma}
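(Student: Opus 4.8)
The plan is to compute the moment generating function of $\widetilde X_n$ explicitly and then read off from it the structure required by Definition~\ref{def:modphi}. Fix $h\in(0,c)$, set $\widetilde\eta(z):=\eta(z+h)-\eta(h)$, and work on the strip $\mathcal S_{c-h}$, whose half-width $c-h$ is positive: for $z$ there one has $\mathrm{Re}(z+h)\in(2h-c,c)\subset(-c,c)$, so $z+h\in\mathcal S_c$ and all integrals below converge absolutely. First I would note that, directly from the definition of $\mathbb Q$,
\[
\varphi_{\widetilde X_n}(z)=\int_\R\E^{zy}\,\mathbb Q[dy]=\frac{1}{\varphi_{X_n}(h)}\int_\R\E^{(z+h)y}\,\proba[X_n\in dy]=\frac{\varphi_{X_n}(z+h)}{\varphi_{X_n}(h)},
\]
and then substitute the mod-$\phi$ decomposition $\varphi_{X_n}(w)=\E^{t_n\eta(w)}\psi_n(w)$, obtaining
\[
\E^{-t_n\widetilde\eta(z)}\,\varphi_{\widetilde X_n}(z)=\frac{\psi_n(z+h)}{\psi_n(h)}=:\psi'_n(z).
\]
The quotient is legitimate since $\psi_n(h)=\E^{-t_n\eta(h)}\varphi_{X_n}(h)>0$, because $\varphi_{X_n}(h)=\esper[\E^{hX_n}]>0$ and $\eta(h)\in\R$ (as $\E^{\eta(h)}=\int\E^{hx}\,\phi(dx)>0$).

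It then remains to verify that the exponent $\widetilde\eta$, the functions $\psi'_n$, and their candidate limit fit the requirements of Definition~\ref{def:modphi}. For $\widetilde\eta$: the measure $\phi'(dx):=\E^{hx-\eta(h)}\phi(dx)$ is a probability measure with moment generating function $\int\E^{zx}\,\phi'(dx)=\E^{\widetilde\eta(z)}$, convergent on $\mathcal S_{c-h}$, and it is infinitely divisible because the exponential (Esscher) tilt of an infinitely divisible law is again infinitely divisible: writing $\phi=\lambda_n^{*n}$ and tilting each factor $\lambda_n$ by $\E^{hx}$ and renormalizing (legitimate, since $\int\E^{hx}\,\lambda_n(dx)=(\int\E^{hx}\,\phi(dx))^{1/n}<\infty$), one gets probability measures whose $n$-th convolution power has moment generating function $\E^{\widetilde\eta(z)}$, hence equals $\phi'$ by uniqueness. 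Since $\eta$ is non-constant ($\phi$ being a non-constant infinitely divisible law), $\widetilde\eta$ is non-constant, so $\phi'$ is non-constant. For the convergence: if $K\subset\mathcal S_{c-h}$ is compact, then $K+h\subset\mathcal S_c$ is compact, so $\psi_n(\cdot+h)\to\psi(\cdot+h)$ uniformly on $K$ while $\psi_n(h)\to\psi(h)\neq0$ (since $h$ lies on the real part of $\mathcal S_c$, where $\psi$ does not vanish); hence $\psi'_n\to\psi'$ locally uniformly on $\mathcal S_{c-h}$, where $\psi'(z)=\psi(z+h)/\psi(h)$ is analytic and, by the same argument, does not vanish on the real part of $\mathcal S_{c-h}$. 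With $t_n\to+\infty$ this is precisely mod-$\phi'$ convergence of $(\widetilde X_n)_{n\in\N}$, with parameters $(t_n)_{n\in\N}$ and limiting function $\psi'(z)=\psi(z+h)/\psi(h)$.

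The main obstacle — indeed the only point above that is not pure bookkeeping — is the stability of infinite divisibility under Esscher tilting; if one does not wish to reprove it, it is a classical fact (the tilt multiplies the Lévy measure by $\E^{hx}$, adjusts the drift, and leaves the Gaussian component untouched). Everything else, namely the identity $\varphi_{\widetilde X_n}(z)=\varphi_{X_n}(z+h)/\varphi_{X_n}(h)$, the substitution of the mod-$\phi$ decomposition, and the passage to the limit in $\psi_n(z+h)/\psi_n(h)$, is immediate from the material already at hand.
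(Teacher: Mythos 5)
Your proof is correct and follows the same route as the paper, whose entire argument is the identity $\varphi_{\widetilde{X}_n}(z)=\varphi_{X_n}(z+h)/\varphi_{X_n}(h)$, which is also the core of your computation. The additional material you supply — the restriction to the strip $\mathcal{S}_{c-h}$, the non-vanishing of $\psi_n(h)$ and $\psi(h)$, and the stability of infinite divisibility under the Esscher tilt — is exactly the bookkeeping the paper dismisses as obvious, and you verify it correctly.
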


\begin{proof}
Obvious since $\varphi_{\widetilde{X}_n}(z)=\varphi_{X_n}(z+h)/\varphi_{X_n}(h)$.
\end{proof}

\begin{proof}[Proof of Theorem \ref{thm:mainnonlattice}]
Fix $h \in (c,d)$, and consider the sequence $(\widetilde{X}_n)_{n\in \N}$ of Lemma \ref{lem:exponchange}. All the assumptions of Proposition \ref{prop:berryesseen} are satisfied, so, the distribution function $F_{n}(u)$ of 
$$\frac{\widetilde{X}_{n}-t_{n}\eta'(h)}{\sqrt{t_{n}\eta''(h)}}$$ is 
$$G_{n}(u)=\int_{-\infty}^{u} \left(1+\frac{\psi'(h)}{\psi(h)\,\sqrt{t_{n}\eta''(h)}}\,y+\frac{\eta'''(h)}{\sqrt{t_{n}(\eta''(h))^{3}}}\,(y^{3}-3y)\right)g(y)\,dy$$
up to a uniform $o(1/\sqrt{t_{n}})$. Then,
\begin{align*}\proba[X_{n} \geq t_{n}\eta'(h)]&=\int_{y=t_{n}\eta'(h)}^{\infty}\varphi_{X_{n}}(h)\,\E^{-hy}\,\mathbb{Q}(dy)\\
&=\varphi_{X_{n}}(h)\int_{u=0}^{\infty} \E^{-h\left(t_{n}\eta'(h)+\sqrt{t_{n}\eta''(h)}\,u\right)} dF_{n}(u)\\
&=\psi_{n}(h)\,\E^{-t_{n}F(x)}\int_{u=0}^{\infty} \E^{-h\sqrt{t_{n}\eta''(h)}\,u}\, dF_{n}(u), \,\, \text{(as $F(x)=h\eta'(h)-\eta(h)$).}
\end{align*}
To compute the integral $I$, we choose the primitive $F_n(u)-F_n(0)$ of $dF_{n}(u)$ that vanishes at $u=0$, and we make an integration by parts. Notice that we now need $h>0$ (hence, $x>\eta'(0)$) in order to manipulate some of the terms below:
\begin{align*}
I&=h\sqrt{t_{n}\eta''(h)}\int_{u=0}^{\infty}\E^{-h\sqrt{t_{n}\eta''(h)}\,u}\,(F_{n}(u)-F_n(0))\,du\\
&= h\sqrt{t_{n}\eta''(h)}\int_{u=0}^{\infty}\E^{-h\sqrt{t_{n}\eta''(h)}\,u}\left(G_{n}(u)-G_{n}(0)+o\!\left(\frac{1}{\sqrt{t_{n}}}\right)\!\right)\,du\end{align*}
\begin{align*}
&\simeq h\sqrt{t_{n}\eta''(h)} \iint_{0\leq y \leq u} \!\!\!\!\!\!\!\!\!\E^{-h\sqrt{t_{n}\eta''(h)}\,u}\!\left(1+\frac{\psi'(h)\,y}{\psi(h)\,\sqrt{t_{n}\eta''(h)}}+\frac{\eta'''(h)\,(y^{3}-3y)}{\sqrt{t_{n}(\eta''(h))^{3}}}\right)g(y)\,dy\,du \\
&\simeq \int_{y=0}^{\infty} \E^{-h\sqrt{t_{n}\eta''(h)}\,y}\left(1+\frac{\psi'(h)}{\psi(h)\,\sqrt{t_{n}\eta''(h)}}\,y+\frac{\eta'''(h)}{\sqrt{t_{n}(\eta''(h))^{3}}}\,(y^{3}-3y)\right)g(y)\,dy\\
&\simeq \frac{\E^{\frac{h^2\, t_n\eta''(h)}{2}}}{\sqrt{2\pi}}\int_{y=0}^{\infty} \E^{-\frac{(y+h\sqrt{t_{n}\eta''(h)})^2}{2}}\left(1+\frac{\psi'(h)}{\psi(h)\,\sqrt{t_{n}\eta''(h)}}\,y+\frac{\eta'''(h)}{\sqrt{t_{n}(\eta''(h))^{3}}}\,(y^{3}-3y)\right)dy,
\end{align*}
where on the three last lines the symbol $\simeq$ means that the remainder is a $o((t_{n})^{-1/2})$. By Lemma \ref{lem:gaussintegral}, \eqref{item:gausstail}, the only contribution in the integral that is not a $o((t_{n})^{-1/2})$ is $$\frac{\E^{\frac{h^2\, t_n\eta''(h)}{2}}}{\sqrt{2\pi}}\int_{y=0}^{\infty} \E^{-\frac{(y+h\sqrt{t_{n}\eta''(h)})^2}{2}}\,dy=\frac{1}{h\sqrt{2\pi t_{n}\eta''(h)}}+o\!\left(\frac{1}{\sqrt{t_{n}}}\right).$$ This ends the proof since $\psi_{n}(h)\to\psi(h)$ locally uniformly.
\end{proof}\bigskip

\subsection{Central limit theorem at the scales \texorpdfstring{$o(t_n)$}{o(tn)} and \texorpdfstring{$o((t_n)^{2/3})$}{o(tn23)}}
As in the lattice case, one can also prove from the hypotheses of mod-convergence an extended central limit theorem:
 
\begin{theorem}\label{thm:cltnonlattice}
Consider a sequence $(X_n)_{n \in \N}$ that converges mod-$\phi$ with limiting distribution $\psi$ and parameters $t_n$, where $\phi$ is a non-lattice infinitely divisible law that is absolutely continuous w.r.t. Lebesgue measure.
Let $y=o((t_{n})^{1/6})$. Then,
$$\proba\!\left[X_{n}\geq t_{n}\eta'(0)+\sqrt{t_{n}\eta''(0)}\,y\right]=\proba[\mathcal{N}_{\R}(0,1)\geq y]\left(1+o(1)\right).$$
On the other hand, assume $y \gg 1$ and $y=o((t_n)^{1/2})$. If $x=\eta'(0)+\sqrt{\eta''(0)/t_n}\,y$ and $h$ is the solution of $\eta'(h)=x$, then
$$\proba\!\left[X_{n}\geq t_{n}\eta'(0)+\sqrt{t_{n}\eta''(0)}\,y\right]
= \frac{\E^{-t_n\,F(x)}}{h \sqrt{2\pi t_{n}\,\eta''(h)}}\,\left(1+o(1)\right)
= \frac{\E^{-t_n\,F(x)}}{y\sqrt{2\pi}}\,\left(1+o(1)\right).$$
\end{theorem}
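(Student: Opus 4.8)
The plan is to mimic the proof of Theorem~\ref{thm:mainnonlattice}: perform an exponential change of measure so that the target scale becomes the ``centre'' of a tilted sequence, apply the Berry--Esseen expansion of Proposition~\ref{prop:berryesseen}, and then integrate against the exponential weight using the Gaussian tail estimates of Lemma~\ref{lem:gaussintegral}, item~\eqref{item:gausstail}. I would prove the second assertion first, and then deduce the first one from it.

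For the second assertion, fix $y = y_n$ with $y \gg 1$ and $y = o((t_n)^{1/2})$, set $x = \eta'(0) + \sqrt{\eta''(0)/t_n}\,y$ and let $h = h_n$ be the solution of $\eta'(h) = x$; since $h'(x) = 1/\eta''(x)$ one has $h_n \to 0$ and $h_n = y/\sqrt{t_n\,\eta''(0)}\,(1+o(1))$. Apply the tilting of Lemma~\ref{lem:exponchange} at $h_n$ to obtain $\widetilde{X}_n$, which converges mod-$\phi'$ with limiting function $\psi(\cdot + h_n)/\psi(h_n)$; since $\phi'$ is again non-lattice and absolutely continuous, and since the relevant constants (those of Lemma~\ref{prop:technicqdelta} and those controlling the local uniform convergence of $\psi_n$) are uniform for $h_n$ in a fixed neighbourhood of $0$, Proposition~\ref{prop:berryesseen} applies to $\widetilde{X}_n$ with a $o(1/\sqrt{t_n})$ remainder that is uniform both in the space variable and in $n$. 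Writing, exactly as in the proof of Theorem~\ref{thm:mainnonlattice},
\[
\proba[X_n \geq t_n\eta'(h)] = \psi_n(h)\,\E^{-t_nF(x)}\int_{0}^{\infty}\E^{-\beta' u}\,dF_n(u), \qquad \beta' = h\sqrt{t_n\,\eta''(h)} = y\,(1+o(1)) \to \infty,
\]
I would integrate by parts, substitute $F_n = G_n + o(1/\sqrt{t_n})$, and evaluate the resulting Gaussian integrals. The leading contribution is $\int_0^\infty \E^{-\beta' u}g(u)\,du = \E^{(\beta')^2/2}\,\proba[\mathcal{N}(0,1)\geq \beta'] = \tfrac{1}{\beta'\sqrt{2\pi}}(1 + O((\beta')^{-2}))$ by Lemma~\ref{lem:gaussintegral}, item~\eqref{item:gausstail}; the corrective terms coming from $\psi'(h)/\psi(h)$ and $\eta'''(h)$ carry a prefactor $O(1/\sqrt{t_n})$ and an integral of size $O((\beta')^{-2})$, hence are $o(1/\beta')$; and the Berry--Esseen remainder contributes $o(1/\sqrt{t_n})$, which is $o(1/\beta')$ precisely because $\beta'\asymp y = o(\sqrt{t_n})$ --- this is where the hypothesis $y=o((t_n)^{1/2})$ enters. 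Since $\psi_n(h)\to\psi(0)=1$ and $F(x)=h\eta'(h)-\eta(h)$, this yields $\proba[X_n \geq t_n\eta'(0) + \sqrt{t_n\eta''(0)}\,y] = \E^{-t_nF(x)}/(h\sqrt{2\pi t_n\eta''(h)})\,(1+o(1))$.

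For the first assertion, if $y$ stays bounded this is the classical central limit theorem and follows at once from Proposition~\ref{prop:berryesseen}, whose $o(1/\sqrt{t_n})$ error is $o(1)$ relative to $\proba[\mathcal{N}(0,1)\geq y]$, which is bounded away from $0$ on bounded sets. If $1 \ll y = o((t_n)^{1/6})$, I would invoke the second assertion and convert the answer via Taylor expansions around $x=\eta'(0)$: using $F(\eta'(0)) = F'(\eta'(0)) = 0$ and $F''(\eta'(0)) = 1/\eta''(0)$ one gets $t_nF(x) = y^2/2 + t_n\,O(s^3) = y^2/2 + o(1)$ with $s = x - \eta'(0)$ (here $y^3 = o(\sqrt{t_n})$ is exactly what forces $t_n s^3 = o(1)$), and similarly $h\sqrt{t_n\eta''(h)} = y(1+o(1))$; comparing with $\proba[\mathcal{N}(0,1)\geq y] \sim \E^{-y^2/2}/(y\sqrt{2\pi})$ (again Lemma~\ref{lem:gaussintegral}, item~\eqref{item:gausstail}) closes the argument, a general sequence $y_n = o((t_n)^{1/6})$ being treated by passing to subsequences along which $y_n$ converges in $[0,+\infty]$.

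The main obstacle, beyond the bookkeeping, is conceptual: a direct application of Berry--Esseen to $X_n$ itself is \emph{not} enough, since dividing its $o(1/\sqrt{t_n})$ error by $\proba[\mathcal{N}(0,1)\geq y]\asymp \E^{-y^2/2}/y$ only controls the ratio for $y = O(\sqrt{\log t_n})$; the tilting is genuinely needed to reach scales up to $(t_n)^{1/2}$. The delicate technical point is then to verify that the Berry--Esseen expansion of Proposition~\ref{prop:berryesseen} survives the tilting by a parameter $h_n$ that drifts to $0$, with a remainder that stays uniformly $o(1/\sqrt{t_n})$; this reduces to the uniformity over $h_n$ in a neighbourhood of $0$ of the non-lattice bound of Lemma~\ref{prop:technicqdelta} and of the convergence $\psi_n\to\psi$ together with its first derivatives.
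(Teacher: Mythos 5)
Your proposal is correct, but it takes a genuinely different route from the paper. The paper disposes of Theorem \ref{thm:cltnonlattice} by re-running the lattice proof of Theorem \ref{thm:cltlattice}: a direct Fourier-inversion/Laplace-method computation (with the kernel $1/(1-\E^{-h-\I u})$ replaced by $1/(h+\I u)$, which was anyway approximated by $1/(h+\I u)$ in the lattice case), the tail of the integral being killed by Lemma \ref{prop:technicqdelta} and the leading term computed via the complex Gaussian transform of Lemma \ref{lem:gaussintegral}; this yields the single formula \eqref{eq:forcomparisionwithradziwill}, $\proba[X_n\geq t_n(\eta'(0)+s)]=\E^{-t_nF}\,\E^{\beta^2/2}\,\proba[\mathcal{N}(0,1)\geq\beta](1+o(1))$, from which both regimes are read off. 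You instead tilt by the drifting parameter $h_n\to 0$ (Lemma \ref{lem:exponchange}), reuse the Berry--Esseen expansion of Proposition \ref{prop:berryesseen} for the tilted sequence, and repeat the integration-by-parts computation of Theorem \ref{thm:mainnonlattice}, proving the $y\gg 1$ regime first and deducing the $y=o((t_n)^{1/6})$ regime by Taylor comparison (plus a direct Berry--Esseen argument for bounded $y$). Your route has the advantage of recycling already-established machinery and of working only on the compact Fourier window $|\zeta|\leq \Delta\sqrt{t_n}$ supplied by Feller's smoothing lemma, so it sidesteps the integrability-at-infinity issue that a literal non-lattice analogue of the contour formula \eqref{eq:notuseful} would raise; it also isolates cleanly where each hypothesis enters ($y=o((t_n)^{1/2})$ to make the $o(1/\sqrt{t_n})$ error negligible against $1/\beta'$, $y=o((t_n)^{1/6})$ to make $t_n s^3=o(1)$, and absolute continuity of $\phi$ to get the bound of Lemma \ref{prop:technicqdelta} uniformly in $h$). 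The price is exactly the point you flag: one must check that the expansion of Proposition \ref{prop:berryesseen} survives a tilt $h_n\to 0$ with a remainder uniformly $o(1/\sqrt{t_n})$, which indeed reduces to the uniformity in $h\in(-\eps,\eps)$ of Lemma \ref{prop:technicqdelta} and to the locally uniform convergence of $\psi_n$ and its derivatives (so that $\psi_n'(h_n)/\psi_n(h_n)\to\psi'(0)$, $\eta''(h_n)\to\eta''(0)$, \emph{etc.}); this verification is routine but should be written out. By contrast, the paper's route produces the unified intermediate formula \eqref{eq:forcomparisionwithradziwill}, which it exploits elsewhere (comparison with Radziwill's estimates), whereas your argument delivers the two regimes separately.
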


As in the proof of Theorem \ref{thm:cltlattice}, we need to control the modulus of the Fourier transform of the reference law $\phi$. Thus, let us state the non-lattice analogue of Lemma \ref{lem:technicqdelta_lattice}:
\begin{lemma}\label{lem:technicqdelta_nonlattice}
Consider a non-constant infinitely divisible law $\phi$, of type non-lattice, with a convergent moment generating function in a strip $\mathcal{S}_{(c,d)}$ with $c<0<d$. We also assume that $\phi$ is absolutely continuous w.r.t.~the Lebesgue measure. Then, there exists a constant $D>0$ only depending on $\phi$, and an interval $(-\eps,\eps) \subset (c,d)$, such that for all $h \in (-\eps,\eps)$, and all $\delta$ small enough,
$$ q_\delta = \max_{u \in \R \setminus (-\delta,\delta)} |\exp(\eta(h+\I u)-\eta(h))| \leq 1-D\,\delta^2.$$
\end{lemma}

\begin{remark}
One can give a sufficient condition on the L\'evy-Khintchine representation of $\phi$ to ensure the absolute continuity with respect to the Lebesgue measure; \emph{cf.} \cite[Chapter 4, Theorem 4.23]{SVH04}. Hence, it is the case if $\sigma^2>0$, or if $\sigma^2=0$ and if the absolutely continuous part of the L\'evy measure $\Pi$ has infinite mass.
\end{remark}

\begin{remark} Let us explain why we need to add the assumption of absolute continuity with respect to Lebesgue measure, which is a strictly stronger hypothesis than being non-lattice. The hypotheses on the infinitely divisible law $\phi$ imply that it has finite variance, and therefore, that the L\'evy-Khintchine representation of the Fourier transform given by Equation \eqref{eq:levykhintchine} can be replaced by a Kolmogorov representation. This representation actually holds for the complex moment generating function (see \cite[Chapter 4, Theorem 7.7]{SVH04}):
$$\eta(z)=m z + \sigma^2\,\int_{\R}\frac{\E^{zx}-1-zx}{x^2}\,K(dx)$$
where $K$ is a probability measure on $\R$, and where the fraction in the integral is extended by continuity at $x=0$ by the value $-\frac{z^2}{2}$. As a consequence,
$$|\exp(\eta(h+\I u)-\eta(h))| = \exp\left(\sigma^2\int_{\R} \frac{\E^{hx}\,(\cos ux-1)}{x^2}\,K(dx)\right)\leq 1.$$
This expression can be expanded in series of $u$ as
$$1-\frac{\sigma^2u^2}{2} \int_{\R} \E^{hx} \,K(dx) + O_h(u^3).$$
Therefore, Lemma \ref{lem:technicqdelta_nonlattice} holds as soon as one can show that 
$$\sup_{h \in (-\eps,\eps)} \limsup_{|u| \to \infty} |\exp(\eta(h+\I u)-\eta(h))| <1,$$
because one has a bound of type $1-D\,u^2$ in a neighborhood of zero. Unfortunately, for general probability measures, the Riemann-Lebesgue lemma does not apply, and even for $h=0$, it is unclear whether for a general exponent $\eta$ the Cram\'er condition (C)
$$\limsup_{|u| \to \infty} |\exp(\eta(\I u))| <1 $$is satisfied (see \cite{Petrov95} for more discussion and references on  condition (C)).
We refer to \cite[Theorem 2]{Wol83}, where it is shown that \emph{decomposable} probability measures enjoy this property. This difficulty explains why one has to restrict oneself to absolutely continuous measures in the non-lattice case, in order to use the Riemann-Lebesgue lemma. In the following we provide an \emph{ad hoc} proof of Lemma \ref{lem:technicqdelta_nonlattice} in the absolutely continuous cases, that does not rely on the Kolmogorov representation.\footnote{V: Je ne comprends pas bien cette remarque. C'est une preuve alternative avec des hypothèses plus faibles ?
Si oui, pourquoi ne pas présenter celle-là, plutôt que l'autre\dots

PL: On pourrait effectivement donner des conditions plus faibles qu'absolument continu par rapport à la mesure de Lebesgue, mais ca deviendrait excessivement technique, et nettement plus dur. Dans l'article, j'ai l'impression qu'on essaie de se passer des résultats avancés sur les lois infiniment divisibles (on a juste besoin de la distinction lattice/non-lattice), c'est pour cela que je ne voulais pas plus entrer dans des details techniques.}
\end{remark}

\begin{proof}[Proof of Lemma \ref{lem:technicqdelta_nonlattice}]
We shall adapt the arguments of Lemma \ref{lem:technicqdelta_lattice} from the discrete to the continuous case. Though the density $f$ cannot be supported on a compact segment (\emph{cf.} Lemma \ref{lem:support} and the classification of the possible supports of an infinitely divisible law), one can work as if it were the case, thanks to the following calculation:
\begin{align*}
|\exp(\eta(h+\I u)-\eta(h))|&=\left|\frac{\phi(\E^{(h+\I u)x})}{\phi(\E^{hx})}\right|\\
&=\frac{|\phi_{<a}(\E^{(h+\I u)x})+\int_a^b \E^{(h+\I u) x}\,f(x)\,dx+\phi_{>b}(\E^{(h+\I u)x})|}{\phi_{<a}(\E^{hx})+\int_a^b \E^{hx}\,f(x)\,dx+\phi_{>b}(\E^{hx})}\\
&\leq \frac{\phi_{<a}(\E^{hx})+\phi_{>b}(\E^{hx})+|\int_a^b \E^{(h+\I u) x}\,f(x)\,dx|}{\phi_{<a}(\E^{hx})+\phi_{>b}(\E^{hx})+\int_a^b \E^{hx}\,f(x)\,dx}
\end{align*}
where $\phi_{<a}$ (respectively, $\phi_{>b}$) is the measure $\mathbbm{1}_{x<a}\,\phi(dx)$ (resp., $\mathbbm{1}_{x>b}\,\phi(dx)$). Therefore, it suffices to show:
$$\max_{u \in (-\delta,\delta)^\mathrm{c}} \frac{|\int_a^b \E^{(h+\I u)x}\,f(x)\,dx|}{\int_a^b \E^{hx}\,f(x)\,dx} \leq 1-D\,\delta^2$$
for $\delta$ and $h$ small enough. This reduction to a compact support will be convenient later in the computations.\bigskip

Set $g_h(x)=\frac{\E^{hx}\,f(x)}{\int_a^b \E^{hx}\,f(x)\,dx}$ and 
\begin{align*}
F(h,u)&=\left|\int_a^b g_h(x)\,\E^{\I u x}\,dx\right|^2=\iint_{[a,b]^2} g_h(x)g_h(y)\, \E^{\I u (x-y)}\,dx \,dy \\
&=\iint_{[a,b]^2} g_h(x)g_h(y)\, \cos (u (x-y))\,dx \,dy \\
&=\int_{t=-(b-a)}^{b-a}  \left(\int_{x=\max(a,t+a)}^{\min(b,t+b)}g_h(x)g_h(x-t)\,dx\right)\cos ut\,dt.
\end{align*}
The problem is to show that
$$\sup_{h \in (-\eps,\eps)} \sup_{u \in (-\delta,\delta)^\mathrm{c}} F(h,u) \leq 1-D\,\delta^2$$
for some constant $D$. With $h$ fixed, by the Riemann-Lebesgue lemma applied to the integrable function
$$m(t)=\int_{x=\max(a,t+a)}^{\min(b,t+b)}g_h(x)g_h(x-t)\,dx,$$ 
the limit as $|u|$ goes to infinity of $F(h,u)$ is $0$. On the other hand, if $u \neq 0$, then $F(h,u)<F(h,0)=1$. Indeed, suppose the opposite: then $\cos ut=1$ almost surely w.r.t. the measure $m(t)\,dt$. This means that this measure $m(t)\,dt$ is concentrated on the lattice $\frac{2\pi}{|u|}\,\Z$, which is impossible for a measure continuous with respect to  the Lebesgue measure. Combining these two observations, one sees that for any $\delta>0$,
$$\sup_{u \in (-\delta,\delta)^{\mathrm{c}}} F(h,u) \leq C_{(h,\delta)}<1$$
for some constant $C_{(h,\delta)}$. Since all the terms considered depend smoothly on $h$, for $h $ small enough, one can even take a uniform constant $C_\delta$:
\begin{equation}\forall \delta>0,\,\,\exists C_\delta<1 \text{ such that }\sup_{h \in (-\eps,\eps)} \sup_{u \in (-\delta,\delta)^{\mathrm{c}}} F(h,u) \leq C_\delta.\label{eq:boundatinfinity}
\end{equation}
On the other hand, notice that
$$
\frac{\partial F(h,u)}{\partial u} = -\iint_{[a,b]^2} g_h(x)g_h(y) \,(x-y)\,\sin(u(x-y))\,dx\,dy.$$
However, if $u(b-a)\leq \frac{\pi}{2}$, then $(x-y)\,\sin(u(x-y))\geq \frac{2 u}{\pi}(x-y)^2$ over the whole domain of integration, so,
$$\frac{\partial F(h,u)}{\partial u} \leq -\frac{2 B_h}{\pi}\, u$$
where $B_h=\iint_{[a,b]^2}g_h(x)g_h(y)\,(x-y)^2\,dx\,dy$. By integration, 
$$F(h,u) \leq 1-\frac{B_h}{\pi}\,u^2 \quad \text{for all } u \leq  \frac{\pi}{2(b-a)}.$$
Again, by continuity of the constant $B_h$ w.r.t. $h$, one can take a uniform constant :
\begin{equation}\exists B>0 \text{ such that for all } u \leq  \frac{\pi}{2(b-a)},\,\,\,\sup_{h \in (-\eps,\eps)} F(h,u) \leq 1-B\,u^2.\label{eq:boundaroundzero}\end{equation}
The two assertions \eqref{eq:boundatinfinity} (with $\delta=\frac{\pi}{2(b-a)}$) and \eqref{eq:boundaroundzero} enable one to conclude, with
\[D=\inf\left(B,\frac{1-C_\delta}{\delta^2}\right), \quad\text{where }\delta=\frac{\pi}{2(b-a)}. \qedhere\]
\end{proof}
\noindent We also refer to \cite[Theorem 6]{Ess45} for a general result on the Lebesgue measure of the set of points such that the characteristic function of a distribution is larger in absolute value than $1-\delta^{2}$. 
\bigskip

\begin{proof}[Proof of Theorem \ref{thm:cltnonlattice}]
The proof is now exactly the same as in the lattice case (Theorem \ref{thm:cltlattice}). Indeed, the conclusions of the technical Lemma \ref{lem:technicqdelta_nonlattice} hold, and on the other hand, the equivalents for $\proba[X_{n}\geq t_{n}x ]$ in the lattice and non-lattice cases (Theorems~\ref{thm:mainlattice} and \ref{thm:mainnonlattice}) differ only by the fact that $1-e^{-h}$ is replaced by $h$. But in the proof of Theorem \ref{thm:cltlattice}, the quantity $1-e^{-h}$ is approximated by $h$, so everything works the same way as in the non-lattice case.
\end{proof}
\bigskip

As in the non-lattice case, we have the following corollary (with the exact same statement and proof):
\begin{corollary}
    If $y=o((t_n)^{1/4})$, then one has
\begin{equation}
    \proba\big[X_n \ge t_n \eta'(0) + \sqrt{t_n \eta''(0)}\, y\big] 
    =\frac{(1+o(1))}{y\sqrt{2\pi}}\, \E^{-\frac{y^2}{2}}\, \exp\left( \frac{\eta'''(0)}{6\,(\eta''(0))^{3/2}}\,\frac{y^3}{\sqrt{t_n}}
    \right).
\label{EqCramerOrdre3NL}
\end{equation}
More generally, if $y=o((t_n)^{1/2-1/m})$, then one has
\begin{equation}
    \proba\big[X_n \ge t_n \eta'(0) + \sqrt{t_n \eta''(0)}\, y\big] 
    =\frac{(1+o(1))}{y\sqrt{2\pi}} \exp\left( -\sum_{i=2}^{m-1} \frac{F^{(i)}(\eta'(0))}{i!}\,\frac{(\eta''(0))^{i/2} \,y^i}{(t_n)^{(i-2)/2}}
    \right) .
\label{EqCramerOrdreMNL}
\end{equation}
\end{corollary}

Hence, one can again describe all the fluctuations of $X_n$ from order $O(\sqrt{t_n})$ up to order $O(t_n)$, see Figure \ref{fig:panoramanonlattice}.
\begin{center}
\begin{figure}[ht]
\begin{tikzpicture}
\draw (-1,1.5) node {order of fluctuations};
\draw (2.5,0) node {large deviations ($\eta'(0)<x$):};
\draw (1.9,-1) node {extended central limit};
\draw (3.25,-3) node {central limit theorem ($y \ll (t_n)^{1/6}$):};
\draw (2.9,-1.5) node {theorem ($(t_n)^{1/6}\lesssim y \ll (t_n)^{1/2}$):};
\draw (9.2,-0.05) node {$\proba[\frac{X_n}{t_n} \geq x] \simeq \frac{\exp(-t_n\,F(x))}{F'(x)\sqrt{2\pi t_n\eta'(x)}}\,\psi(F'(x));$};
\draw (9.4,-1.6) node {$\proba[\frac{X_n-t_n\eta'(0)}{\sqrt{t_n\,\eta''(0)}} \geq y] \simeq \frac{\exp(-t_n\,F(x))}{F'(x)\,\sqrt{2\pi t_n \eta'(x)}};$};
\draw (9.23,-3.8) node {$\proba[\frac{X_n-t_n\eta'(0)}{\sqrt{t_n\,\eta''(0)}} \geq y] \simeq \proba[\mathcal{N}_{\R}(0,1)\geq y].$};
\draw[->,thick] (-2,-5) -- (-2,1) ;
\draw[thick] (-2.1,0) -- (-1.9,0);
\draw (-1.25,0) node {$O(t_n)$} ;
\draw[->,thick] (-2.1,-2) -- (-1.9,-2) -- (-1.9,-0.1);
\draw (-0.85,-2) node {$O((t_n)^{2/3})$} ;
\draw[->,thick] (-2.1,-4) -- (-1.9,-4) -- (-1.9,-2.1) ;
\draw (-0.85,-4) node {$O((t_n)^{1/2})$} ;
\end{tikzpicture}
\caption{Panorama of the fluctuations of a sequence of random variables $(X_n)_{n\in\N}$ that converges modulo an absolutely continuous distribution (with $x=\eta'(0) + \sqrt{\eta''(0)/t_n}\,y$).}\label{fig:panoramanonlattice}
\end{figure}
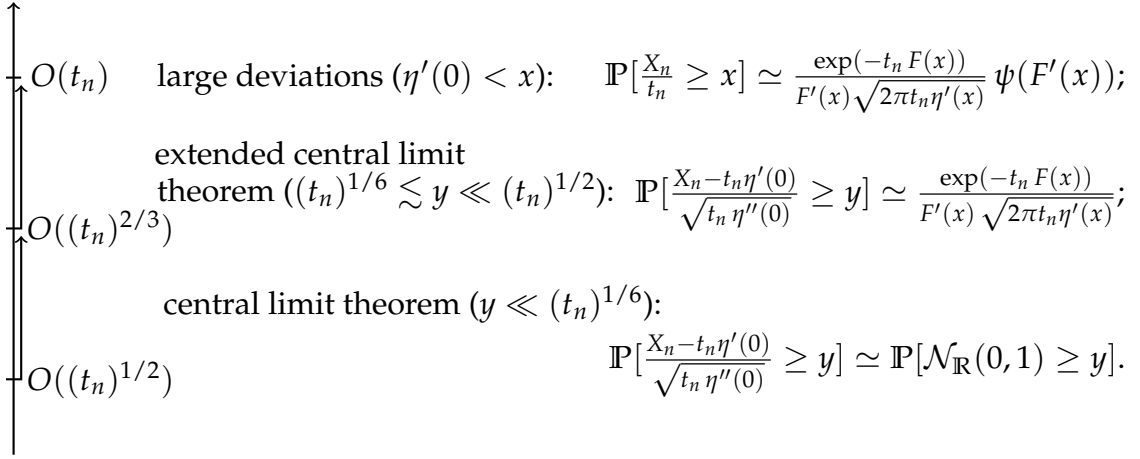
\end{center}
\vspace{-5mm}

To conclude this paragraph, let us mention an application that looks similar to the law of the iterated logarithm, and that also works in the lattice case. Consider a sequence $(X_n)_{n \in \N}$ converging mod-$\phi$ with parameters $t_n$ such that $t_n \gg (\log n)^3$. We also assume that the random variables $X_{n}$ are defined on the same probability space, and we look for sequences $\gamma_{n}$ such that almost surely,
$$\limsup_{n \to \infty} \left(\frac{X_{n}-t_n \eta'(0)}{\gamma_{n}} \right)\leq 1.$$
Unlike in the usual law of the iterated logarithm, we do not make any assumption of independence. Such assumptions are common in this setting, or at least some conditional independence (for instance, a law of the iterated logarithm can be stated for martingales); see the survey \cite{Bin86} or \cite[Chapter X]{Petrov75}. \bigskip

On the one hand we have a less precise result: we only obtain an upper bound, which is not tight in the case of sums of i.i.d. variable since we have a $\sqrt{\log(n)}$ factor instead the usual $\sqrt{\log(\log(n))}$ factor. On the other hand, our result does not depend at all on the way one realizes the random variables $X_{n}$. In other words, for every possible coupling of the variables $X_{n}$, the following holds:

\begin{proposition}
Let $(X_n)_{n \in \N}$ be a sequence that converges mod-$\phi$ with parameters $t_n$, where $\phi$ is either a non-constant lattice distribution, or a non-lattice distribution that is absolutely continuous with respect to the Lebesgue measure. We assume
$$\lim_{n \to \infty} \frac{t_n}{(\log n)^3} = +\infty.$$
Then,
$$\limsup_{n \to \infty} \frac{X_{n}-t_n \eta'(0)}{\sqrt{2\eta''(0)\,t_n\log n}} \leq 1 \quad\text{almost surely}.$$
\end{proposition}

\begin{proof}
Notice the term $\log n$ instead of $\log \log n$ for the usual law of iterated logarithm. One computes
$$\proba\big[X_{n}-t_n \eta'(0) \geq \sqrt{2 (1+\eps) \,\eta''(0)\, t_n \log n }\big].$$
Set $y=\sqrt{2 (1+\eps) \log n}$. Due to the hypotheses on $t_n$, one has $y=o\big( (t_n)^{1/6} \big)$ and one can apply Theorem \ref{thm:cltnonlattice}: using the classical equivalent 
\[\proba[\mathcal{N}_{\R}(0,1)\geq y] \sim \frac{\E^{-y^2/2}}{y\sqrt{2 \pi}},\]
we get
$$\proba\!\left[X_{n}-t_n \eta'(0)
\geq \sqrt{2 (1+\eps) \eta''(0) t_n \log n } \right]
\simeq \frac{\E^{-(1+\eps)\log n}}{\sqrt{4\pi (1+\eps) \log n}} \leq \frac{1}{n^{1+\eps}}.$$
for $n$ large enough. For any $\eps>0$, this is summable, so by the Borel-Cantelli lemma, one has almost surely 
$$X_{n}-t_n \eta'(0)<\sqrt{2(1+\eps)\,\eta''(0)\,t_n\log n}\quad\text{for $n$ large enough}.$$
Since this is true for every $\eps$, one has (almost surely): 
$$\limsup_{n \to \infty} \frac{X_{n}-t_n \eta'(0)}{\sqrt{2\eta''(0)\,t_n\log n}} \leq 1. $$
\end{proof}
\bigskip

\subsection{Normality zones for mod-\texorpdfstring{$\phi$}{phi} and mod-Gaussian sequences}
\label{subsec:normalityzone}
Let $(X_n)_{n\in \N}$ be a sequence of random variables that converges mod-$\phi$ (we do not assume $\phi$ non-lattice for the moment). Then we have seen that
$$Y_{n}=\frac{X_{n}-t_{n}\eta'(0)}{\sqrt{t_{n}\eta''(0)}}$$ 
satisfies a central limit theorem: for all fixed $y$,
\begin{equation}
    \label{eq:GaussianTail}
    \lim_{n \to \infty} \frac{\proba[Y_n \geq y]}{(2\pi)^{-1/2} \int_y^\infty \E^{-\frac{s^2}{2}} \,ds} =1.
\end{equation}
The question that we address here is the question of the {\em normality zone}: we want to identify the maximal scale $u_n$ such that Equation \eqref{eq:GaussianTail} holds for $y=o(u_n)$. The results of the previous Sections allows to identify this scale and to describe what happens for $y=O(u_n)$.
\bigskip

Suppose that $\phi$ is a lattice distribution, or a non-lattice distribution that is absolutely continuous with respect to the Lebesgue measure. From Theorem \ref{thm:cltlattice} or Theorem \ref{thm:cltnonlattice}, we get that if $y=o((t_n)^{1/6})$, then $\proba[Y_n \geq y]$ is given by the Gaussian distribution. Assume $\eta'''(0) \neq 0$. Then, the previous result is optimal, because
of Equations \eqref{EqCramerOrdre3} and \eqref{EqCramerOrdre3NL}: for $y=c (t_n)^{1/6}$ the second factor is different from $1$ and the approximation by the  Gaussian tail is no longer valid. Thus, if $\eta'''(0)\neq 0$, then the normality zone for the sequence $(Y_n)_{n \in \N}$ is $o((t_n)^{1/6})$. 
In particular, one has the same asymptotics and normality zone than in the case of the sum of $t_n$ i.i.d. variables of law $\phi$; see \cite{Cramer38} and \cite{Petrov54}.
\bigskip

The only case where this comparison does not give us the normality zone is 
the case of mod-Gaussian convergence, that we shall discuss now. 
\medskip

\begin{proposition}
    \label{prop:normalityzone}
    Assume that $(X_n)_{n \in \N}$ converges in the mod-Gaussian sense, with a non-trivial limiting function ({\em i.e.}, $\psi \not\equiv 1$). Then the normality zone for $Y_n=X_n/\sqrt{t_n}$ is $o(\sqrt{t_n})$.
\end{proposition}
\begin{proof}
\begin{itemize}
    \item Let $y=o((t_n)^{1/2})$. Set $x=h=y/\sqrt{t_n}$ as in Theorem \ref{thm:cltnonlattice} in the mod-Gaussian case. Then, the second part of Theorem \ref{thm:cltnonlattice} states that
        $$\proba[Y_n \geq y] =\proba[X_n \geq t_n x] = \frac{\E^{-\frac{ t_n x^2}{2}}}{h \sqrt{2\pi t_n}}\,(1+o(1)) = \frac{\E^{-\frac{y^2}{2}}}{y \sqrt{2\pi}}\,(1+o(1)).$$
        Thus, the normality zone is at least $o(\sqrt{t_n})$ in this case.\vspace{2mm}
    \item Set now $y=x \sqrt{t_n}$ for a fixed $x > 0$. Then, Theorem \ref{thm:mainnonlattice} states that
        \begin{equation}
            \proba[Y_n \geq y] = \proba[X_n \geq t_n\, x]= \frac{\E^{-\frac{y^2}{2} }}{y \sqrt{2\pi}}\, \psi(x)\, (1+o(1)).
            \label{eq:PosDev}
        \end{equation}
        Similarly, if $y=-x \sqrt{t_n}$ for a fixed $x > 0$, we get (see Remark \ref{rmq:neg_dev})
        \begin{equation}
            \proba[Y_n \leq y] = \proba[X_n \leq - t_n\, x] =\frac{\E^{-\frac{y^2}{2} }}{|y| \sqrt{2\pi}}\, \psi(-x)\, (1+o(1)).
            \label{eq:NegDev}
        \end{equation}
        In particular, if $\psi(x)$ is not identically equal to $1$, then the approximation \eqref{eq:GaussianTail} of $\proba[Y_n \geq y]$ by the Gaussian tail breaks and the normality zone is exactly $o(\sqrt{t_n})$.\qedhere
\end{itemize}
\end{proof}
As seen in the proof, from a simple application of Theorem \ref{thm:mainnonlattice},
the residue $\psi$ describes how to correct the Gaussian tail to find an equivalent for $\proba[Y_n \geq y]$. A standard and interesting case is the case where the limiting function is $\psi=\exp(L z^v)$, where $L \neq 0$ is a real number and $v$ a positive integer ($v \geq 3$).
This might seem restrictive, but we will see in the examples that $\psi$ is very often of this type
--- Examples \ref{ex:zeros_randomfunction}, \ref{ex:ising}, \ref{ex:sumiid} and Theorems \ref{thm:modgaussianrcv}, \ref{thm:moddevgraphs}, \ref{thm:modgaussianfromzeros} and \ref{thm:modgaussiangeneralsparsegraph}.
If $v$ is odd ($v=3$ is a common case), comparing Equations \eqref{eq:PosDev} and \eqref{eq:NegDev} shows the following phenomenon:
the negative and positive deviations of $Y_n$ at order $O(\sqrt{t_n})$ have different asymptotic behaviour,
one being larger than the other one depending on the sign of $L$. 
In other words, our results reveal a {\em breaking of symmetry} at the edge of the normality zone.

\begin{remark}
This breaking of symmetry also occurs in multi-dimensional setting. In particular, in two dimensions, the residue allows to compute the distribution of the angle
of a sum of i.i.d. random variables at the edge of the normality zone, see \cite{Multidim}.
\end{remark}
\bigskip
\bigskip

\subsection{Discussion and refinements}

\subsubsection{Bahadur-Rao theorem and Cramér-Petrov expansion}
We consider here the case of a sum $S_n=Y_1+\cdots+Y_n$ of i.i.d.~random variables
such that $Y=Y_1$ has an infinitely divisible distribution of Levy exponent $\eta$.
Then $S_n$ {\em converges mod-$Y$} with parameters $n$ and limiting function $\psi=1$;
see Example \ref{ex:sumiid}. In this case, Theorems \ref{thm:mainnonlattice} and \ref{thm:mainlattice}
correspond to Bahadur-Rao estimates   
$$\proba[S_n \geq n\,x] \simeq \begin{cases}\frac{\exp(-n\,F(x))}{(1-\E^{-h})\sqrt{2\pi n \,\eta''(h)}}
    &\!\!\!\text{in the lattice case (assume $\Z$ is the minimal lattice);} \\
\frac{\exp(-n\,F(x))}{h\sqrt{2\pi n \,\eta''(h)}}&\!\!\!\text{in the non-lattice case,}
 \end{cases}
$$
where $\eta(h)=\log \esper[\E^{hY}]$ and $F$ is the Legendre-Fenchel transform of $\eta$;
see Theorem 3.7.4 in \cite{DZ98}, and also the papers \cite{BR60,Ney83,Ilt95}.
\medskip

In the same setting,
 Theorems \ref{thm:cltnonlattice} and \ref{thm:cltlattice} correspond to 
 Cramér-Petrov expansion \cite{Petrov95} (in the non-lattice case,
 we assume in addition that the law of $Y$ is absolutely continuous with respect to Lebesgue measure).
 To the best of our knowledge, the link with the Legendre-Fenchel transform is new.\smallskip
\medskip

\subsubsection{On the infinite divisibility of $\phi$}
\label{subsect:OnInfDiv}
As above, consider the case of a sum $S_n=Y_1+\cdots+Y_n$ of i.i.d.~random variables,
but with the law of $Y_1$ not necessarily infinitely divisible.
In this case, $\esper[\E^{zS_n}]=\big(\esper[\E^{zY_1}] \big)^n$, 
but, if $\esper[\E^{zY_1}]$ vanishes for some complex value of $z$,
one cannot write this as $\exp(n \eta(z))$ as in Definition \ref{def:modphi}.

The proofs of our large deviation results --- Theorems \ref{thm:mainnonlattice} and \ref{thm:mainlattice} ---
can nevertheless be adapted to this setting.
For the extended central limit theorem --- Theorems \ref{thm:cltlattice} and \ref{thm:cltnonlattice} ---
we would need to assume the estimate given by
Lemma \ref{lem:technicqdelta_lattice} or Lemma \ref{lem:technicqdelta_nonlattice}.
This is satisfied in particular if:
\begin{itemize}
\item either $Y$ takes its values in $\Z$, and there are two consecutive integers $n,m=n-1$ such that $\proba[Y=n]\neq 0$ and $\proba[Y=m]\neq 0$;\vspace{2mm}
\item or, $Y$ has a component absolutely continuous w.r.t. Lebesgue measure.\vspace{2mm}
\end{itemize}
Since these are classical results and since our method are close to the usual ones,
we do not give details on how to adapt our proof to the non infinitely divisible setting.

\begin{remark}
For Bahadur-Rao theorem,
it should be noticed that the assumption that $\Z$ is the minimal lattice is necessary.
For instance, if one considers a sum $S_n$ of $n$ independent Bernoulli random variables with $\proba[B=1]=\proba[B=-1]=1/2$, then the estimate above is not true, because $S_n$ has always the same parity as $n$. This is related to the fact that $\esper[\E^{zB}]$ has modulus $1$ at $z=\I\pi$.
\end{remark}
\medskip

\subsubsection{Quasi powers}
\label{Subsect:QP}

Mod-$\phi$ convergent is reminiscent of the quasi-power theory
developed by Hwang \cite{Hwa96,Hwa98} --- see also \cite[Chapter IX]{FSe09}.
\begin{definition}\cite{Hwa96}
    A sequence $(X_n)$ of random variables satisfy the quasi-power hypothesis if
    \begin{equation}
        \esper[\E^{zX_n}] = \E^{\phi(n)u(z)+v(z)} \big( 1 +O(\ka_n^{-1}) \big),
        \label{EqHwangQP}
    \end{equation}
    where $\phi(n),\ka_n \to \infty$, $u(z)$ and $v(z)$ are analytic functions for $|s| < \rho$
    (with $u''(0) \ne 0$)
    and the $O$ symbol is uniform in the disk $D(0,\rho)$.    
\end{definition}
Clearly, any sequence converging mod-$\phi$ satisfies this hypothesis,
taking $\phi(n)=t_n$, $v(z)=\ln(\psi(z))$, $u(z)=\eta(z)$
(since $\psi(0)=1$, a determination of the $\ln$ always exists on a sufficiently small neighbourhood of the origin).

A major difference between mod-convergence and the quasi-power framework
is that we assume that $\eta(z)$ is the L\'evy exponent of an infinitely divisible distribution,
while Hwang does not have any hypothesis on $u(z)$ (except $u''(0) \ne 0$).
The fact that $\exp(\eta(z)) = \int_{\R} \E^{zx}\, \phi(dx)$ is important
to study deviations at scale $O(t_n)$,
since we used the inequality $|\exp(\eta(z))| \le 1$ for $z=i\xi$
in the proof of Theorem \ref{thm:mainlattice} and Proposition \ref{prop:berryesseen}.
\bigskip

At the scale $o(t_n)$, our results --- Theorems \ref{thm:cltnonlattice} and \ref{thm:cltlattice} ---
coincide with the ones of Hwang.
Note, however, that our hypotheses are slightly different.
We need $\eta(z)$ to be the Levy exponent of an infinitely divisible distribution,
while Hwang uses an hypothesis
on the speed of convergence in Equation \eqref{EqHwangQP}.
In most examples, $\eta$ is a Poisson or Gaussian Lévy exponent,
so that our hypothesis is automatically verified.
It can thus be considered as a slight improvement that 
we do not require any hypothesis on the speed of convergence
(but such an hypothesis allows us to refine our results at scale $O(t_n)$
in the lattice case, see Theorem \ref{thm:mainlattice}).

\begin{remark}[The disk or the strip?]
    In the quasi-power framework, we assume convergence of the renormalized 
    Laplace transform on a disk, while mod-$\phi$ convergence is defined
    as such convergence on the strip.
    It is thus natural to wonder which hypothesis is more natural. To this purpose, let us mention an old result of Lukacs and Szász \cite[Theorem 2]{LS52}: if $X$ is a random variable with an analytic moment generating function $\esper[\E^{zX}]$ defined on the open disk $\mathcal{D}_{(0,c)}$, then this function is automatically defined and analytic on the strip $\mathcal{S}_{(-c,c)}$. This implies that the left-hand side of Eq.~\eqref{eq:modphi} is automatically defined on a strip, as soon as it is defined on a disk. Of course it could converge on a disk and not on a strip, but we shall see throughout this paper that, in many examples, the convergence on the strip indeed occurs. Actually, in most of our examples, $c=-\infty$ and $d=+\infty$, and the distinction between disk and strip disappears as $\mathcal{D}_{(0,+\infty)}=\mathcal{S}_{(-\infty,+\infty)}=\C$.
\label{rem:diskorstrip}
\end{remark}\medskip

\section{An extended deviation result from bounds on cumulants}\label{sec:cumulantechnic}
In this section, we discuss a particular case of mod-Gaussian variables, that arises from bounds on cumulants. We will see that in this case the deviation result given in Theorem \ref{thm:mainnonlattice} is still valid at a scale larger than $t_n$; see Proposition \ref{prop:largedeviationscumulants}.

\subsection{Bounds on cumulants and mod-Gaussian convergence}
\label{subsec:modgaussfromcumulants}
Let $(S_{n})_{n \in \N}$ be a sequence of real-valued centered random variables that admit moments of all order, and such that  
\begin{equation}
    |\kappa^{(r)}(S_{n})| \leq (Cr)^{r}\,\alphan (\betan)^r
    \label{eq:superbound}
\end{equation}
for all $r \geq 2$ and for some sequences $(\alphan)_{n \to \infty} \to +\infty$ and $(\beta_n)_{n\in \N}$ arbitrary. Assume moreover that there exists an integer $v \geq 3$ such that \vspace{2mm}
\begin{enumerate}
\item $\kappa^{(r)}(S_{n})=0$ for all $3 \leq r < v$ and all $n \in \N$;\vspace{2mm}
\item we have the following approximations for second and third cumulants:
    \begin{align}
        \kappa^{(2)}(S_{n}) &= \sigma^{2}\,\alphan (\betan)^{2}\,\left(1+o\!\left((\alphan)^{-\frac{v-2}{v}}\right)\right); \nonumber \\
        \kappa^{(v)}(S_{n}) &=L\,\alphan (\betan)^{v}\,\big(1+o(1)\big).
        \label{eq:cv_sndthrd_cumulants}
    \end{align}
\end{enumerate}
Set $X_{n}=\frac{S_{n}}{(\alphan)^{\frac{1}{v}}\betan}$. The cumulant generating series of $X_{n}$ is
\begin{align*}
\log \varphi_{n}(z)&=\frac{\kappa^{(2)}(S_{n})}{2\,(\alphan)^{\frac{2}{v}}\,(\betan)^2}\,z^{2}+\frac{\kappa^{(v)}(S_{n})}{v!\,\alphan (\betan)^{v}}\,z^{v}+\sum_{r=v+1}^{\infty} \frac{\kappa^{(r)}(S_{n})}{r!\,(\alphan)^{\frac{r}{v}}\,(\betan)^r}\,z^{r}\\
&=\frac{\sigma^{2}}{2}\,(\alphan)^{\frac{v-2}{v}}z^{2}+\frac{L}{v!}\,z^{v}+ \sum_{r=v+1}^{\infty} \frac{\kappa^{(r)}(S_{n})}{r!\,(\alphan)^{\frac{r}{v}}\,(\betan)^r}\,z^{r}+o(1),
\end{align*}
where the $o(1)$ is locally uniform. The remaining series is locally uniformly bounded in absolute value by
$$\sum_{r=v+1}^{\infty} C^{r}\,\frac{r^{r}}{r!}\, \frac{1}{(\alphan)^{\frac{r-v}{v}}}\,R^{r} \leq \alphan \sum_{r=v+1}^{\infty} \left(\frac{\E\, CR}{(\alphan)^{\frac{1}{v}}}\right)^{r}=(\alphan)^{-\frac{1}{v}}\,\frac{(\E\, CR)^{v+1}}{1-\E\, CR\,(\alphan)^{-\frac{1}{v}}} \to 0.$$
Hence, 
$$\psi_{n}(z)=\exp\left(-(\alphan)^{\frac{v-2}{v}}\,\frac{\sigma^{2}z^{2}}{2}\right)\,\varphi_{n}(z) \to \exp\left(\frac{L}{v!}\,z^{v}\right)$$
locally uniformly on $\C$, so one has again mod-Gaussian convergence, with parameters $t_{n}=\sigma^{2}\,(\alphan)^{\frac{v-2}{v}}$ and limiting function $\psi(z)=\E^{\frac{L}{v!}\,z^{v}}$.

\begin{remark}
The case of i.i.d.~variables --- Example \ref{ex:sumiid} --- fits in this framework, with $\alphan=n$ and $\betan=1$.However, it includes many more examples than sums of i.i.d.~variables: in particular, in Section \ref{sec:depgraph}, we show that such bounds on cumulants typically occur in the framework of dependency graphs. Concrete examples are discussed in Sections \ref{sec:erdosrenyi} and \ref{sec:central}.
\end{remark}
\bigskip

\subsection{Precise deviations for random variables with control on cumulants}\label{subsec:deviationscumulant}
We use the same hypotheses as in the previous subsection, and without loss of generality, we suppose that $v=3$. Then, the sequence of random variables
$$X_n=\frac{S_{n}}{\betan\,(\alphan)^{1/3}}$$ 
conver\-ges mod-Gaussian with parameters $(\alphan)^{1/3}\,\sigma^{2}$ and limiting function $\psi(z)=\exp(Lz^{3}/6)$ (here, we may have $L=0$). So, one can apply the previous theorems to estimate the tail of the distribution of $S_n$.
In particular, $X_n/\sqrt{t_n} = S_n/(\betan \sigma \sqrt{\alphan})$ satisfies a central limit theorem
with a normality zone of size $o(\sqrt{t_n}) = o \big( \alphan^{1/6} \big)$ (as for the sum of $\alpha_n$ i.i.d. variables)
and one can describe the deviation probabilties at the edge of the normality zone 
--- see Proposition \ref{prop:normalityzone}.

We shall see now that, with stronger assumptions on the speed of convergence than Equation \eqref{eq:cv_sndthrd_cumulants},
we can extend these results to a larger scale.
More precisely, we will assume:
\begin{align}\kappa^{(2)}(S_{n})&= \sigma^{2}\,\alphan\,(\betan)^{2}\,(1+O((\alphan)^{-1/2})); \nonumber\\ 
\kappa^{(3)}(S_{n}) &= L \,\alphan\,(\betan)^{3}\,(1+O((\alphan)^{-1/4})).\label{eq:limitcumulant}
\end{align}
We then have the following result:
\begin{proposition}\label{prop:largedeviationscumulants}
Let $(S_{n})_{n \in \N}$ be a sequence of centered real-valued random variables.
Assume that the bound on cumulants \eqref{eq:superbound} and the asymptotics of second and third cumulants given by Equation \eqref{eq:limitcumulant} hold.
If $x_n$ is a positive sequence, bounded away from $0$ with $x_n=o\big(\alphan^{1/12} \big)$, then
\begin{align*}
    \proba\!\left[S_{n} \geq \betan \sigma^2 \alphan^{2/3} x_n \right]=
\proba[X_n \ge t_n x_n] =
\frac{\E^{-\frac{(x_n)^2(\alphan)^{1/3}\sigma^2}{2}}}{x_n(\alphan)^{1/6}\sigma \sqrt{2\pi}}\,\E^{\frac{L(x_n)^3}{6}}\,(1+o(1)).\\
    \proba\!\left[S_{n} \geq \betan \sigma^2 \alphan^{2/3} x_n \right]=
\proba[X_n \le - t_n x_n] =
\frac{\E^{-\frac{(x_n)^2(\alphan)^{1/3}\sigma^2}{2}}}{x_n(\alphan)^{1/6}\sigma \sqrt{2\pi}}\,\E^{\frac{-L(x_n)^3}{6}}\,(1+o(1)).
\end{align*}
\end{proposition}

\begin{remark}
The case where $x_n$ is a constant sequence equal to $x$ corresponds to Equations \eqref{eq:PosDev} and \eqref{eq:NegDev}, 
which gives an equivalent for the deviation probability at the edge of the normality zone.
Hence, the proposition asserts that, with appropriate assumptions on cumulants,
this result is valid at a larger scale.
Namely, we give an equivalent for the deviation probability of $X_n$ of order up to $o\big( t_n^{5/4}\big)$,
instead of the usual $O(t_n)$.
\end{remark}

\begin{proof}
Set $X_{n}=(\alphan)^{-1/3}\,S_{n}$; up to a renormalization of the random variables, one can suppose $\beta_n=1$, and also $\sigma^{2}=1$. Let $z_n$ be a sequence of complex numbers with $|z_{n}|=O((\alphan)^{1/12})$; we set $\eta_n = |z_n| \,(\alphan)^{-1/12}$. Then, following the computation of Section \ref{subsec:modgaussfromcumulants} with $v=3$, we get:
\begin{align*}
\log \varphi_{X_n}(z_n)&=\frac{\kappa^{(2)}(S_{n})}{2\,(\alphan)^{\frac{2}{3}}}\,(z_n)^{2}+\frac{\kappa^{(3)}(S_{n})}{6\,\alphan }\,(z_n)^{3}+\sum_{r=4}^{\infty} \frac{\kappa^{(r)}(S_{n})}{r!\,(\alphan)^{\frac{r}{3}}}\,(z_n)^{r}\\
&=\frac{1}{2}\,(\alphan)^{\frac{1}{3}}\,(z_n)^{2}+\frac{L}{6}\,(z_n)^{3}+ O((\eta_n)^2 + (\eta_n)^3+(\eta_n)^4).
\end{align*}
If $|z_{n}|=o((\alphan)^{1/12})$, then $\eta_n \to 0$, so the remainder above is $o(1)$ and we have:
\begin{equation}
\varphi_{X_{n}}(z_{n})=\exp\left((\alphan)^{\frac{1}{3}}\frac{(z_{n})^{2}}{2}+\frac{L(z_{n})^{3}}{6}\right)\big(1+o(1)\big).
\label{EqPhiXn}
\end{equation}
We make the change of probability measure 
$$\proba[Y_{n} \in dy]=\frac{\E^{x_ny}}{\varphi_{X_{n}}(x_n)}\,\proba[X_{n} \in dy]$$ 
with $x_n=o((\alphan)^{1/12})$; the generating function of $Y_{n}$ is $\varphi_{Y_{n}}(z)= \frac{\varphi_{X_{n}}(x_n+z)}{\varphi_{X_{n}}(x_n)}$. So, using the inequality
$$|(z+x_n)^r - (x_n)^r| \leq 2^r\,|z|\,\max(|z|,|x_n|)^{r-1},$$
we get, setting $\eta_n^z = |z|\,(\alphan)^{-1/12}$ and $\eta_n^{x,z} = \max(|x_n|,|z|)\,(\alphan)^{-1/12}$,
\begin{align}
\log \varphi_{Y_{n}}(z)&=(\alphan)^{1/3}\frac{(z+x_{n})^{2}-(x_n)^2}{2}+\frac{L((z+x_{n})^{3}-(x_n)^3)}{6}\nonumber\\
&\quad+O(\eta_{n}^z(\eta_n^{x,z} + (\eta_n^{x,z})^2+(\eta_n^{x,z})^3))\nonumber\\
&=\left((\alphan)^{1/3}\,x_{n} +\frac{L\,(x_{n})^{2}}{2}\right)z+\left(\frac{(\alphan)^{1/3}+L\,x_{n}}{2}\right)z^{2}+\frac{L}{6}\,z^{3}\nonumber\\
&\quad+O(\eta_{n}^z(\eta_n^{x,z} + (\eta_n^{x,z})^2+(\eta_n^{x,z})^3))\label{eq:push}
\end{align}
Thus, if 
$$Z_{n}=Y_{n}-(\alphan)^{1/3}\,x_{n} -\frac{L\,(x_{n})^{2}}{2},$$
then the sequence $(Z_{n})_{n \in \N}$ converges in the mod-Gaussian sense, with parameters $t_n=(\alphan)^{1/3}+L\,x_{n}$ and limiting function $\exp(\frac{L\,z^{3}}{6})$. Moreover, in Equation \eqref{eq:push}, the approximation is valid for any $z$ such that $|z|\leq \Delta(\alphan)^{1/12}$, for some constant $\Delta$ depending only on the constant $C$ in the bound \eqref{eq:superbound}.\bigskip

Besides, for $x_n=o((\alphan)^{1/12})$, one has
\begin{align*}
\proba&\!\left[S_{n} \geq x_{n}\,(\alphan)^{\frac{2}{3}}\right]=\proba\!\left[X_{n} \geq x_n\,(\alphan)^{\frac{1}{3}}\right]=\varphi_{X_{n}}(x_n)\int_{y=x_n\,(\alphan)^{1/3}}^{\infty}\E^{-x_ny}\,\,\proba[Y_{n} \in dy]\\
&=\varphi_{X_{n}}(x_n)\,\E^{-\left((\alphan)^{1/3}\,(x_{n})^{2}+\frac{L\,(x_{n})^{3}}{2}\right)}\!\!\int_{z=-\frac{L\,(x_{n})^{2}}{2}}^{\infty} \E^{-x_n z}\, \,\proba[Z_{n} \in dz]\\
&=\exp\left(-\frac{(\alphan)^{1/3}\,(x_n)^{2}}{2}-\frac{L(x_n)^3}{3}\right)\,R_{n}\,(1+o(1)),
\end{align*}
by replacing $\varphi_{X_n}(x_n)$ by its estimate \eqref{EqPhiXn}, which holds since $x_n=o((\alphan)^{1/12})$; $R_n$ is the integral of the second line.\bigskip

To estimate the integral $R_n$, we shall adapt the proof of Proposition \ref{prop:berryesseen} to the special case of a sequence $(Z_n)_{n \in \N}$ that converges in the mod-Gaussian sense, with parameters $t_n$, limit function $\exp(Kz^3)$, and with the approximation
\begin{equation}
\log \varphi_{Z_n}(z) = \frac{t_n\,z^2}{2} + K\,z^3 + O\!\left(\frac{z}{(t_n)^{1/4}}\right)
\label{EqLogPhi}
\end{equation}
that is valid for every $|z| \leq \Delta (t_n)^{1/4}$ with $\Delta>0$. Notice that the sequence $(Z_n)_{n \in \N}$ previously constructed satisfies these hypotheses with $t_n = (\alphan)^{1/3} + Lx_n \simeq (\alphan)^{1/3}$. If one applies Proposition \ref{prop:berryesseen} to the case of mod-Gaussian convergence with a limit $\exp(Kz^3)$, then $\eta'''(0)=\psi'(0)=0$, so the approximation of the law $dF_n(w)$ of $Z_n/\sqrt{t_n}$ is simply the Gaussian law $dG(w) = (2\pi)^{-1/2}\,\E^{-w^2/2}\,dw$, and the Kolmogorov distance between $F_n$ and $G$ is a $o((t_n)^{-1/2})$. However, by using the validity of the approximation \eqref{EqLogPhi} on a larger scale than $z=O(1)$, it is possible to obtain a better Berry-Esseen bound, namely, $O((t_n)^{-3/4})$.\bigskip

Recall that for any $T>0$ and any $w \in \R$, the distance between cumulative distribution functions is smaller than
$$|F_n(w)-G(w)|\leq \frac{1}{\pi} \int_{-T}^{T} \left|\frac{f_n^{*}(\zeta)-g^{*}(\zeta)}{\zeta}\right|\, d\zeta +\frac{24m}{\pi T}.$$
However, for any $\zeta=O((t_n)^{3/4})$, one has
\begin{align*}
\left|\frac{f_n^{*}(\zeta)-g^{*}(\zeta)}{\zeta}\right| &= \E^{-\frac{\zeta^2}{2}} \,\left|\frac{\exp\left(\frac{K\,\zeta^3}{(t_n)^{3/2}} + O\!\left(\frac{\zeta}{(t_n)^{3/4}}\right)\right)-1}{\zeta}\right| \\
\E^{\frac{\zeta^2}{2}}\left|\frac{f_n^{*}(\zeta)-g^{*}(\zeta)}{\zeta}\right| &\leq \left|\E^{\frac{K\,\zeta^3}{(t_n)^{3/2}}}\right|\,\left|\frac{\exp\left(O\!\left(\frac{\zeta}{(t_n)^{3/4}}\right)\right)-1}{\zeta}\right| + \left|\frac{\exp\left(\frac{K\,\zeta^3}{(t_n)^{3/2}}\right)-1}{\zeta}\right|  \\
&\leq \E^{\frac{K\,|\zeta|^3}{(t_n)^{3/2}}} \,O\!\left(\frac{1}{(t_n)^{3/4}} + \frac{\zeta^2}{(t_n)^{3/2}}\right).
\end{align*}
In these inequalities, the constant hidden in the big $O$ can be chosen uniform if $\frac{\zeta}{(t_{n})^{3/4}}$ stays in a bounded, sufficiently small interval $[-\Delta,\Delta]$. As a consequence, setting $T=\Delta\,(t_n)^{3/4}$, one obtains from Feller's lemma:
$$|F_n(w)-G(w)|\leq O\left(\,\int_{-\Delta\,(t_n)^{3/4}}^{\Delta\,(t_n)^{3/4}} \left(\frac{1}{(t_n)^{3/4}} + \frac{\zeta^2}{(t_n)^{3/2}}\right)\, \E^{-\frac{\zeta^2}{2} + \frac{K\,|\zeta|^3}{(t_n)^{3/2}}}\,d\zeta +\frac{1}{\Delta\,(t_n)^{3/4}}\right)$$
uniformly in $w$.
Since $|\zeta|$ stays smaller than $\Delta\,(t_n)^{3/4}$, in this integral,
$$-\frac{\zeta^2}{2} + \frac{K\,|\zeta|^3}{(t_n)^{3/2}} \leq -\frac{\zeta^2}{2}\left(1-\frac{K\,\Delta}{(t_n)^{3/4}}\right) \leq -\frac{\zeta^2}{4}$$
for $t_n$ large enough. As claimed before, it follows that
$$\sup_{w \in \R} |F_n(w)-G(w)| = O\left(\frac{1}{(t_n)^{3/4}}\right).$$
\bigskip

We can now compute the asymptotics of the integral $R_n$. Set
$$\eps_n = -\frac{L(x_n)^2}{2\sqrt{(\alphan)^{1/3}+Lx_n}} = -\frac{L(x_n)^2}{2\sqrt{t_n}};$$
since $x_n = o((\alphan)^{1/12})$, $\eps_n \to 0$. Now,
\begin{align*}
R_n&=\int_{w=\eps_n}^\infty \exp\left(-w\,x_n\sqrt{t_n} \right)\,dF_n(w)\\
&=x_n\sqrt{t_n} \int_{\eps_n}^{\infty}\exp\left(-w\,x_n\sqrt{t_n} \right)\,(F_n(w)-F_n(\eps_n))\,dw \\
&=x_n\sqrt{t_n} \int_{\eps_n}^{\infty}\exp\left(-w\,x_n\sqrt{t_n} \right)\left(G_n(w)-G_n(\eps_n) + O\!\left(\frac{1}{(t_n)^{3/4}}\right)\right)\,dw \\
&=\frac{1}{\sqrt{2\pi}}\,\int_{\eps_n}^{\infty} \exp\left(-w\,x_n\sqrt{t_n}-\frac{w^2}{2} \right)\,dw + O\!\left(\frac{\E^{\frac{L\,(x_n)^3}{2}}}{(t_n)^{3/4}}\right).
\end{align*}
The last Gaussian integral is given by Lemma \ref{lem:gaussintegral}, \eqref{item:gausstail}:

\begin{align*}
\int_{\eps_n}^{\infty} \exp\left(-w\,x_n\sqrt{t_n}-\frac{w^2}{2} \right)\,dw &= \E^{\frac{t_n(x_n)^2}{2}}\int_{0}^{\infty} \exp\left(-\frac{(y+\eps_n+x_n\sqrt{t_n})^2}{2} \right)\,dy\\
&= \frac{\E^{\frac{L(x_n)^3}{2}}}{\eps_n+x_n\sqrt{t_n}}\, (1+o(1)) = \frac{\E^{\frac{L(x_n)^3}{2}}}{x_n\sqrt{t_n}} \,(1+o(1))
\end{align*}
since $\eps_n \to 0$. Since $x_n=o((\alphan)^{1/12}) = o((t_n)^{1/4})$, $\frac{1}{x_n\sqrt{t_n}}$ becomes much larger than $O(\frac{1}{(t_n)^{3/4}})$ as $t_n$ goes to infinity, so finally:
$$R_n = \frac{1}{\sqrt{2\pi}}\,\frac{\E^{\frac{L(x_n)^3}{2}}}{x_n(\alphan)^{1/6}} \,(1+o(1))$$
as $t_n\simeq (\alphan)^{1/3}$. Gathering everything, we get
$$\proba\!\left[S_{n} \geq x_{n}\,(\alphan)^{\frac{2}{3}}\right] = \frac{\E^{-\frac{(x_n)^2(\alphan)^{1/3}}{2}}}{x_n(\alphan)^{1/6}\sqrt{2\pi}}\,\E^{\frac{L(x_n)^3}{6}}\,(1+o(1)),$$
and this ends the proof if $\beta_n=\sigma^2=1$ (set $T=x_n\,(\alphan)^{2/3}$ in the statement of the Proposition). In the general case, it suffices to replace $S_n$ by $\frac{S_{n}}{\sigma\beta_n}$, which changes $L$ into $\frac{L}{\sigma^3}$ in the previous computations.
\end{proof}
\medskip

\begin{remark}
The argument which allows one to get a better Berry-Esseen estimate than in Proposition \ref{prop:berryesseen} can be used in a very general setting of mod-stable convergence, in order to get optimal bounds on the Kolmogorov distance. This will be the main topic of the forthcoming paper \cite{FMN14}.
\end{remark}
\bigskip

\subsection{Link with the Cram\'er-Petrov expansion}
Proposition \ref{prop:largedeviationscumulants} hints at a possible expansion of the fluctuations up to any order $T=o((\alphan)^{1-\eps})$, and indeed, it is a particular case of the results given by Rudzkis, Saulis and Statulevi\v{c}ius in \cite{RSS78,SS91}, see in particular \cite[Lemma 2.3]{SS91}. Suppose that
$$|\kappa^{(r)}(S_{n})|\leq (Cr)^{r}\,\alphan\,(\betan)^{r}\quad;\quad \kappa^{(r)}(S_{n})=K(r)\,\alphan\,(\betan)^{r}\,(1+O((\alphan)^{-1}))$$
the second estimate holding for any $r \leq v$; we denote $\sigma^{2}=K(2)$. In this setting, one can push the expansion up to order $o((\alphan)^{1-1/v})$. Indeed, define recursively for a sequence of cumulants $(\kappa^{(r)})_{r \geq 2}$ the coefficients of the Cram\'er-Petrov series $\lambda^{(r)}=-b_{r-1}/r$, with
$$\sum_{r=1}^{j} \frac{\kappa^{(r+1)}}{r!}\left(\sum_{\substack{j_{1}+\cdots+j_{r}=j\\ j_{i} \geq 1}} b_{j_{1}}b_{j_{2}}\cdots b_{j_{r}}\right)=\mathbbm{1}_{j=1}.$$
For instance, $\lambda^{(2)}=-\frac{1}{2}$, $\lambda^{(3)}=\frac{\kappa^{(3)}}{6}$, $\lambda^{(4)}=\frac{\kappa^{(4)}-3(\kappa^{(3)})^{2}}{24}$, \emph{etc.} The appearance of these coefficients can be guessed by trying to push the previous technique to higher order; in particular, the simple form of $\lambda^{(3)}$ is related to the fact that the only term in $z^{3}$ in the expansion \eqref{eq:push} is $\frac{\kappa^{(3)}}{6}$. If for the cumulants $\kappa^{(r)}$'s one has estimates of order $(\alphan)^{1-r/2}(1+O((\alphan)^{-1}))$, then one has the same estimates for the $\lambda^{(r)}$'s, so there exists coefficients $L(r)$ such that
$$\lambda^{(r)}\left(\frac{S_{n}}{\sigma\,\betan\,(\alphan)^{\frac{1}{2}}}\right)=L(r)\,(\alphan)^{1-r/2}\,(1+O((\alphan)^{-1})).$$ 
Take then $T=x_n\,(\alphan)^{\frac{v-1}{v}}$ with $x_n=O(1)$; Lemma 2.3 of \cite{SS91} ensures that 
\begin{align*}\proba\left[\frac{S_{n}}{\sigma\betan}\geq T\right]&=\frac{\E^{-\frac{T^{2}}{2\alphan}}}{\sqrt{2\pi\frac{T^{2}}{\alphan}}}\,\exp\left(\sum_{r=3}^{v} \lambda^{(r)}\,\left(\frac{T}{\sigma (\alphan)^{1/2}}\right)^{r}\right) \big(1+o(1)\big)\\
&=\frac{\E^{-\frac{T^{2}}{2\alphan}}}{\sqrt{2\pi\frac{T^{2}}{\alphan}}}\,\exp\left(\sum_{r=3}^{v} \,\frac{L(r)\,T^{r}}{\sigma^{r}\,(\alphan)^{r-1}}\right) \big(1+o(1)\big).
\end{align*}
Thus, the method of cumulants of Rudzkis, Saulis and Statulevi\v{c}ius can be thought of as a particular case (and refinement in this setting) of the notion of mod-$\phi$ convergence. However, their works do not yield a bound
$$|\kappa^{(r)}(S_{n})|\leq (Cr)^{r}\,\alphan\,(\betan)^{r}$$
but for simple cases, such as sums of i.i.d.~random variables. 
In Section \ref{sec:depgraph}, we show that dependency graphs
are an adequate framework to provide such bounds.\bigskip
\bigskip

\section{A precise version of the Ellis-G\"artner theorem}\label{sec:precisellis}

In the classical theory of large deviations, asymptotic results are formulated not only for the probabilities of tails $\proba[X_{n}\geq t_{n}x]$, but more generally for probabilities
$$\proba[X_{n} \in t_{n}B]\quad \text{with $B$ arbitrary Borelian subset of $\R$}.$$
In particular, under some technical assumptions on the generating series (that look like, but are somehow weaker than mod-convergence),  Ellis-G\"artner theorem provides some asymptotic upper and lower bounds for $\log(\proba[X_{n} \in t_{n}B])$, these bounds relying on a limiting condition on $(t_n)^{-1} \log\varphi_n(\cdot)$. When the topology of $B$ is {\em nice} enough, these bounds coincide  (see \emph{e.g.} \cite[Theorem 2.3.6]{DZ98}). This generalizes Cram\'er's large deviations for sums of i.i.d.~random variables. \bigskip

Our Theorems~\ref{thm:mainlattice} and \ref{thm:mainnonlattice} give estimates for the probabilities $\proba[X_{n} \geq t_{n}x]$ themselves, instead of their logarithm). Therefore, it is  natural to establish in the framework of mod-convergence a precise version of Ellis-G\"artner theorem. In this section, we shall give some asymptotic upper and lower bounds for the probabilities $\proba[X_{n} \in t_{n}B]$ itself instead of their logarithms. Once again, the upper and lower bounds coincide for {\em nice} borelian sets $B$.\medskip

\begin{remark}
In \cite{Multidim}, we shall prove similar estimates of $\proba[\mathbf{X}_n \in t_n B]$ in the setting of sequences of random \emph{vectors} that converge in the multi-dimensional mod-Gaussian sense.
 \end{remark} \bigskip

\subsection{Technical preliminaries}
In this section, we make the following assumptions:\vspace{2mm}
\begin{enumerate}
\item The random variables $X_{n}$ satisfy the hypotheses of Definition \ref{def:modphi} with $c=+\infty$ (in particular, $\psi$ is entire on $\C$). \vspace{2mm}
\item The Legendre-Fenchel transform $F$ is essentially smooth, that is to say that it takes finite values on a non-empty closed interval $I_{F}$ and that $\lim F'(x)=\lim h = \pm \infty$ when $x$ goes to a bound of the interval $I_{F}$ (\emph{cf.} \cite[Definition 2.3.5]{DZ98}). \vspace{2mm}
\end{enumerate}
The latter point is verified if $\phi$ is a Gaussian or Poisson law, which are the most important examples.
\begin{lemma}
Let $C$ be a closed subset of $\R$. Either $\inf_{u \in C}F(u)=+\infty$, or $\inf_{u \in C}F(u)=m$ is attained and $\{x \in C \,\,|\,\,F(x)=\min_{u \in C}F(u)\}$ consists of one or two real numbers $a \leq b$, with $a < \eta'(0) < b$ if $a \neq b$.
\end{lemma}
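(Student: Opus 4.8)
The plan is to exploit the strict convexity of $F$ on the interior of $I_F$, together with the essential smoothness hypothesis and the classical properties of the Legendre--Fenchel transform recalled earlier in the paper. Recall that $\eta$ is convex (H\"older), real-analytic on the real axis (it is the logarithm of a non-vanishing analytic function), and $\eta''(0) = \Var(\phi) > 0$ since $\phi$ is non-constant. Hence $\eta$ is \emph{strictly} convex on $\R$, and consequently its Legendre--Fenchel transform $F$ is strictly convex on the interior of the interval $I_F$ on which it is finite. Moreover $F \geq 0$ with $F(\eta'(0)) = 0$, and from $F'(x) = h$ (where $\eta'(h) = x$) we see that $F$ is strictly decreasing on $(\inf I_F, \eta'(0))$ and strictly increasing on $(\eta'(0), \sup I_F)$.

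First I would dispose of the trivial alternative: if $C \cap I_F = \emptyset$ then $F \equiv +\infty$ on $C$ and $\inf_{u\in C} F(u) = +\infty$, so we may assume $C \cap I_F \neq \emptyset$, and in particular $\inf_{u \in C} F(u) < +\infty$. Next I would show the infimum is attained. Pick any $u_0 \in C$ with $F(u_0) < +\infty$ and set $m_0 = F(u_0)$; then $\inf_{u\in C}F(u) = \inf_{u \in C \cap \{F \le m_0\}}F(u)$. The sublevel set $\{F \le m_0\}$ is a bounded interval: it is an interval by convexity of $F$, and it is bounded because essential smoothness forces $F'(x) \to \pm\infty$ at the endpoints of $I_F$, hence $F(x) \to +\infty$ there (and $I_F$ itself, being the finiteness interval with $F$ blowing up at its boundary, must then be such that $\{F \le m_0\}$ stays strictly inside it and is compact after taking closure). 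Thus $C \cap \{F \le m_0\}$ is a non-empty compact set on which the continuous function $F$ attains its minimum $m := \min_{u \in C} F(u)$.

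It remains to analyze the minimizing set $M = \{x \in C : F(x) = m\}$. I would split into two cases according to whether $\eta'(0) \in C$. If $\eta'(0) \in C$, then $m = 0$ (as $F \ge 0$ with equality only at $\eta'(0)$, by strict convexity), and $M = \{\eta'(0)\}$ is a single point. If $\eta'(0) \notin C$, write $C_- = C \cap (-\infty, \eta'(0))$ and $C_+ = C \cap (\eta'(0), +\infty)$. On $C_-$, $F$ is the restriction of a strictly decreasing function, so it attains its infimum over $C_-$ (if $C_- \neq \emptyset$) at a unique point $a$, namely $a = \sup C_-$ if that supremum lies in $C$ — and it does, because $C$ is closed, provided $\inf_{C_-} F < +\infty$; the strict monotonicity gives uniqueness of this minimizer. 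Symmetrically, $F$ restricted to $C_+$ is strictly increasing, so it attains its infimum over $C_+$ at the unique point $b = \inf C_+ \in C$. Then $M \subseteq \{a, b\}$, with $a < \eta'(0) < b$ when both are present; $M$ has one element if the two candidate values $F(a)$ and $F(b)$ differ (or if only one side is non-empty), and two elements if $F(a) = F(b) = m$.

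The main obstacle — really the only subtle point — is the argument that the relevant sublevel set of $F$ is compact, i.e. that $\inf_{u \in C} F$ is genuinely attained rather than only approached as $u$ runs off towards the boundary of $I_F$; this is exactly where the essential smoothness hypothesis (item~(2) of the standing assumptions) is used, via the implication $F'(x) \to \pm\infty \Rightarrow F(x) \to +\infty$ at the endpoints of $I_F$. One must also be a little careful when $I_F$ is a half-line or all of $\R$, but in every case the estimate $F(x) \ge F'(x_0)(x - x_0)$ for $x \ge x_0 > \eta'(0)$ (convexity) combined with $F'(x_0) > 0$ shows $F$ is coercive to the right of $\eta'(0)$, and symmetrically to the left, which is all that is needed.
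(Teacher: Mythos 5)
Your proposal is correct and follows essentially the same route as the paper: dispose of the trivial case $C\cap I_F=\emptyset$, obtain attainment of the infimum by intersecting $C$ with a compact sublevel set of $F$ (compactness coming from essential smoothness, together with your coercivity remark when $I_F$ is unbounded), and then use strict convexity of $F$, i.e.\ strict monotonicity on each side of $\eta'(0)$, to conclude that the minimizing set has at most two points which straddle $\eta'(0)$ when distinct. The only nitpick is that strict convexity of $\eta$ (hence of $F$) should be justified by $\eta''(h)>0$ for \emph{every} $h$ in the real part of $\mathcal{S}_c$ --- this is the variance of the exponentially tilted law, which is exactly the paper's argument via $F''(x)=1/\eta''(h)$ --- and not by $\eta''(0)>0$ alone, which does not by itself give global strict convexity.
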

\begin{center}
\begin{figure}[ht]
\includegraphics{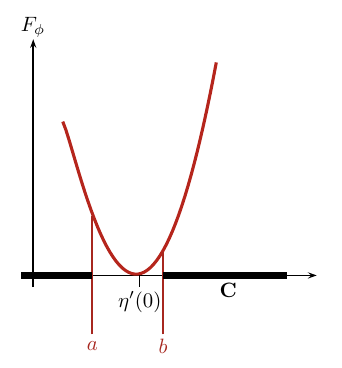}
\vspace{-5mm}
\caption{The infimum of $F$ on an admissible closed set $C$ is attained either at $a=\sup (C \cap (-\infty,\eta'(0)])$, or at $b=\inf( C \cap [\eta'(0),+\infty))$, or at both if $F(a)=F(b)$.}
\end{figure}
\end{center}
\begin{proof}
Recall that $F$ is strictly convex, since its second derivative is $1/\eta''(h)$, which is the inverse of the variance of a non-constant random variable. Also, $\eta'(0)$ is the point where $F$ attains its global minimum, and it is the expectation of the law $\phi$. If $C \cap I_{F}=\emptyset$, then $F_{|C}=+\infty$ and we are in the first situation. Otherwise, $F_{|C}$ is finite at some points, so there exists $M \in \R_{+}$ such that $C \cap \{x \in \R \,\,|\,\,F(x) \leq M\} \neq \emptyset$. However, the set $ \{x \in \R \,\,|\,\,F(x) \leq M\}$ is compact by the hypothesis of essential smoothness: it is closed as the reciprocal image of an interval $]-\infty,M]$ by a lower semi-continuous function, and bounded since $\lim_{x \to (I_{F})^{\mathrm{c}}} |F'(x)|=+\infty$. So, $C \cap \{x \in \R \,\,|\,\,F(x) \leq M\}$ is a non-empty compact set, and the lower semi-continuous $F$ attains its minimum on it, which is also $\min_{u\in C}F(u)$. Then, if $a \leq b$ are two points in $C$ such that $F(a)=F(b)=\min_{u \in C}F(u)$, then by strict convexity of $F$, $F(x)<F(a)$ for all $x \in (a,b)$, hence, $(a,b) \subset C^{\mathrm{c}}$. Also, $F(x)>F(a)$ if $a \neq b$ and $x \notin [a,b]$, so either $a=b$, or $\eta'(0) \in (a,b)$.
\end{proof}\bigskip

We take the usual notations $B^{\mathrm{o}}$ and $\overline{B}$ for the interior and the closure of a subset $B \subset \R$. Call \emph{admissible} a (Borelian) subset $B \subset \R$ such that there exists $b \in B$ with
$F(b) < +\infty$, and denote then 
$$F(B)=\inf_{u \in B}F(u)=\min_{u \in \overline{B}}F(u),$$
and $B_{\min}= \{a \in \overline{B}\,\,|\,\,F(a)=F(B)\}$; according to the previous discussion, $B_{\min}$ consists of one or two elements.

\subsection{A precise upper bound}
\begin{theorem}\label{thm:superellis}
Let $B$ be a Borelian subset of $\R$.\vspace{2mm}
\begin{enumerate}
\item\label{item:admissible} If $B$ is admissible, then
$$\limsup_{n \to \infty} \big(\sqrt{2\pi t_{n}}\,\exp(t_{n}F(B))\,\proba[X_{n} \in t_{n} B] \big) \leq \begin{cases}
&\sum_{a \in B_{\min}} \frac{\psi(h(a))}{(1-\E^{-|h(a)|})\,\sqrt{\eta''(h(a))}}\\
&\sum_{a \in B_{\min}} \frac{\psi(h(a))}{|h(a)|\,\sqrt{\eta''(h(a))}}
\end{cases}$$
the distinction of cases corresponding to $\phi$ lattice or non-lattice distributed. The sum on the right-hand side consists in one or two terms --- it is considered infinite if $a=\eta'(0) \in B_{\min}$. \vspace{2mm}
\item\label{item:nonadmissible} If $B$ is not admissible, then for any positive real number $M$,
$$\lim_{n \to \infty} \big( \exp(t_{n}M)\,\proba[X_{n} \in t_{n} B] \big)=0.$$
\end{enumerate}
\end{theorem}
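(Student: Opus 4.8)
The plan is to reduce this statement to the one-dimensional precise large deviation estimates already established (Theorems~\ref{thm:mainlattice} and \ref{thm:mainnonlattice}) together with the monotonicity properties of the Legendre--Fenchel transform $F$. The first observation is that, by the previous lemma, if $B$ is admissible then $\overline{B}$ contains one or two points $a \leq b$ (the elements of $B_{\min}$) where $F$ attains its minimum over $\overline{B}$, and by strict convexity of $F$ these are the closest points of $\overline{B}$ to $\eta'(0)$ on the left and on the right respectively (with $a < \eta'(0) < b$ if there are two). First I would split $B = (B \cap (-\infty,\eta'(0)]) \cup (B \cap [\eta'(0),+\infty))$, so that it suffices to treat the case where $B$ lies entirely on one side of $\eta'(0)$, say $B \subset [\eta'(0),+\infty)$ with $F(B) = F(b)$, $b = \inf \overline{B} \geq \eta'(0)$; the case $b = \eta'(0)$ (when $a = \eta'(0) \in B_{\min}$) gives the trivial infinite bound, so one may assume $b > \eta'(0)$.

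For such a $B$, the monotonicity of $F$ on $[\eta'(0),+\infty)$ gives the inclusion $t_n B \subseteq t_n[b,+\infty)$ up to the set where $F \geq F(b)$; more precisely $\proba[X_n \in t_n B] \leq \proba[X_n \geq t_n b]$ since $B \subseteq [b,+\infty)$ (as $b = \inf \overline B$ and $B \subseteq [\eta'(0),\infty)$, every point of $B$ is $\geq b$). Then I would apply Theorem~\ref{thm:mainnonlattice} (resp. Theorem~\ref{thm:mainlattice} in the lattice case, being careful about the constraint $t_n b \in \N$ — one uses the smallest lattice point $\geq t_n b$, which changes $F(b)$ only by $O(1/t_n)$ and is absorbed) to get
$$\proba[X_n \geq t_n b] = \frac{\exp(-t_n F(b))}{|h(b)|\,\sqrt{2\pi t_n\,\eta''(h(b))}}\,\psi(h(b))\,(1+o(1)),$$
with the factor $1/(1-\E^{-|h(b)|})$ replacing $1/|h(b)|$ in the lattice case. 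Multiplying by $\sqrt{2\pi t_n}\,\exp(t_n F(B))$ and using $F(B) = F(b)$ yields exactly the claimed bound, one term per element of $B_{\min}$ after recombining the two sides; the $\limsup$ appears because we only have an upper bound $\proba[X_n \in t_n B] \leq \proba[X_n \geq t_n b]$ and not an equality (the set $B$ may be much smaller than $[b,\infty)$).

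For part~\eqref{item:nonadmissible}, if $B$ is not admissible then $F(b) = +\infty$ for all $b \in B$, which by essential smoothness means $B$ is disjoint from the interval $I_F$ where $F$ is finite; since $F$ is nondecreasing to the right of $\eta'(0)$ and nonincreasing to the left, $B$ lies beyond the endpoints of $I_F$, hence there is $b^\ast$ an endpoint of $I_F$ with $B \subseteq [b^\ast,\infty)$ or $B \subseteq (-\infty,-b^\ast]$; then $\proba[X_n \in t_n B] \leq \proba[X_n \geq t_n x]$ for every $x < b^\ast$, and letting $x \uparrow b^\ast$ one gets decay faster than $\E^{-t_n M}$ for every $M$ since $F(x) \to +\infty$. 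I expect the main obstacle to be purely bookkeeping: handling cleanly the lattice constraint $t_n x \in \N$ (passing to the nearest admissible lattice point and checking the error is $o(1)$ after multiplication by $\exp(t_n F(B))$), and correctly matching the case distinction ($b = \eta'(0)$, $a = b$, $a < \eta'(0) < b$) with the one-or-two-term sum over $B_{\min}$, including the degenerate convention that the bound is infinite when $\eta'(0) \in B_{\min}$ (where $h(\eta'(0)) = 0$ and the denominators vanish).
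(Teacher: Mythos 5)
Your treatment of part \eqref{item:admissible} is essentially the paper's own argument: reduce to the two one-sided tails at the extreme points $a$ and $b$ of $\overline{B}$ on either side of $\eta'(0)$, invoke Theorems~\ref{thm:mainlattice} and \ref{thm:mainnonlattice} (applying them to $-X_n$ for the left tail), dispose of the case $\eta'(0)\in B_{\min}$ by the convention that the bound is infinite, and pass from closed sets to general Borel sets via $F(B)=F(\overline{B})$ and $B_{\min}=(\overline{B})_{\min}$. Your extra remark on rounding $t_n b$ to the next lattice point is sound (rounding upwards can only decrease the bound, since $F$ is nondecreasing to the right of $\eta'(0)$), so part \eqref{item:admissible} matches the paper's proof.

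Part \eqref{item:nonadmissible} is where there is a genuine gap. Your argument rests on the claim that $F(x)\to+\infty$ as $x$ tends to a finite endpoint $b^{*}$ of $I_F$ from the inside. Essential smoothness does not give this: it is a steepness condition on the \emph{derivative}, $F'(x)=h(x)\to\pm\infty$ at the bounds of $I_F$, while $I_F$ is assumed to be a \emph{closed} interval on which $F$ is finite, so $F(b^{*})<+\infty$ and, by monotonicity of $F$ on $[\eta'(0),b^{*}]$, one even has $F(x)\leq F(b^{*})$ for every $x$ in the range where Theorem~\ref{thm:mainnonlattice} applies. The mod-Poisson case, the paper's flagship lattice example, shows this concretely: $F_{\mathcal{P}(\lambda)}(x)=x\log\frac{x}{\lambda}-(x-\lambda)\to\lambda$ as $x\downarrow 0$, whereas $F'(x)=\log\frac{x}{\lambda}\to-\infty$. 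Consequently the chain of bounds $\proba[X_n\in t_nB]\leq\proba[X_n\geq t_n x]$ with $x$ inside the admissible range can never produce decay faster than $\exp(-t_n(F(b^{*})-o(1)))$: it proves the assertion only for $M<F(b^{*})$, not for every $M>0$.

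The missing idea is that super-exponential decay on a set where $F\equiv+\infty$ is obtained by letting the \emph{tilting parameter} $h$ go to infinity, not by letting the deviation level $x$ approach the boundary of $I_F$; Theorem~\ref{thm:mainnonlattice} always works with the finite $h$ solving $\eta'(h)=x$ and therefore cannot reach arbitrary rates $M$. This is what the paper does: since $c=+\infty$ and $\psi$ does not vanish on $\R$, mod-$\phi$ convergence gives $\frac{1}{t_n}\log\varphi_n(h)\to\eta(h)$ for every real $h$, so the Ellis--G\"artner upper bound applies and yields $\limsup_{n}\frac{1}{t_n}\log\proba[X_n\in t_nB]\leq -F(B)=-\infty$ for non-admissible $B$. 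Equivalently, one can argue by a Chernoff bound, $\proba[X_n\geq t_n y]\leq \E^{-t_n(hy-\eta(h))}\,\psi_n(h)$ for each fixed $h>0$, and use that $F(y)=\sup_h (hy-\eta(h))=+\infty$ at points $y$ of $B$ to choose, for any prescribed $M$, a fixed $h$ with $hy-\eta(h)\geq M$. Replacing your limit in $x$ by this limit in $h$ is what makes part \eqref{item:nonadmissible} work.
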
\medskip

\begin{proof}
For the second part, one knows that $\varphi_{n}(x)\,\exp(-t_{n}\eta(x))$ converges to $\psi(x)$ which does not vanish on the real line, so by taking the logarithms, 
$$\lim_{n \to \infty} \frac{\log \varphi_{n}(x)}{t_{n}}=\eta(x).$$
Then, Ellis-G\"artner theorem holds since $F$ is supposed essentially smooth. So, $$\limsup_{n \to \infty} \,\frac{\log \proba[X_{n} \in t_{n}B]}{t_{n}} \leq -F(B),$$ and if $B$ is not admissible, then the right-hand side is $-\infty$ and \eqref{item:nonadmissible} follows immediately.\bigskip

For the first part, suppose for instance $\phi$ non-lattice distributed. Take $C$ a closed admissible subset, and assume $\eta'(0) \notin C$ --- otherwise the upper bound in \eqref{item:admissible} is $+\infty$ and the inequality is trivially satisfied. Since $C^{\mathrm{c}}$ is an open set, there is an open interval $(a,b) \subset C^{\mathrm{c}}$ containing $\eta'(0)$, and which we can suppose maximal. Then $a$ and $b$ are in $C$ as soon as they are finite, and $C \subset (-\infty,a] \sqcup [b,+\infty)$. Moreover, by strict convexity of $F$, the minimal value $F(C)$ is necessarily attained at $a$ or $b$. Suppose for instance $F(a)=F(b)=F(C)$ --- the other situations are entirely similar. Then,
\begin{align*}\proba[X_{n} \in t_{n}C] &\leq \proba[X_{n} \leq t_{n}a]+\proba[X_{n}\geq t_{n}b] \\
&\lesssim \exp(-t_{n}F(C)) \left(\frac{\psi(h(a))}{-h(a)\sqrt{2\pi t_{n}\eta''(h(a))}}+\frac{\psi(h(b))}{h(b)\sqrt{2\pi t_{n}\eta''(h(b))}}\right) 
\end{align*}
by using Theorem \ref{thm:mainnonlattice} for $\proba[X_{n} \geq t_{n}b]$, and also for $\proba[X_{n}\leq t_{n}a]=\proba[-X_{n} \geq -t_{n}a]$ --- the random variables $-X_{n}$ satisfy the same hypotheses as the $X_{n}$'s with $\eta(x)$ replaced by $\eta(-x)$, $\psi(x)$ replaced by $\psi(-x)$, \emph{etc}. This proves the upper bound when $B$ is closed, and since $F(B)=F(\overline{B})$ by lower semi-continuity of $F$ and $B_{\min}=(\overline{B})_{\min}$, the result extends immediately to arbitrary admissible Borelian subsets.
\end{proof}\bigskip

\subsection{A precise lower bound}
One can then ask for an asymptotic lower bound on $\proba[X_{n} \in t_{n} B]$, and in view of the classical theory of large deviations, this lower bound should be related to open sets and to the exponent $F(B^{\mathrm{o}})$. Unfortunately, the result takes a less interesting form than Theorem \ref{thm:superellis}. If $B$ is a Borelian subset of $\R$, denote $B^{\delta}$ the union of the open intervals $(x,x+\kappa)$ of width $\kappa \geq \delta$ that are included into $B$. The interior $O=B^{\mathrm{o}}$ is a disjoint union of a countable collection of open intervals, and also the increasing union $\bigcup_{\delta >0}B^{\delta}$. 

\begin{center}
\begin{figure}[ht]\vspace{-1mm}
\includegraphics{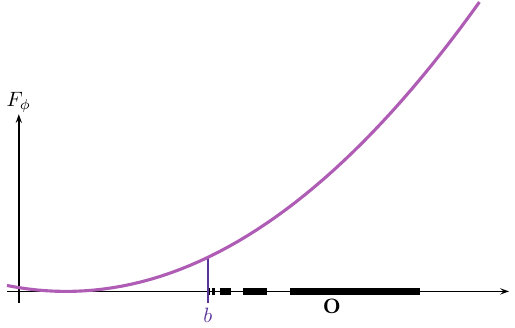}
\vspace{-2mm}
\caption{In some problematic situations, one is only able to prove a non-precise lower bound for large deviations.\label{fig:problem}}
\end{figure}
\end{center}

\noindent However, the topology of $B^{\mathrm{o}}$ may be quite intricate in comparison to the one of the $B^{\delta}$'s, as some points can be points of accumulation of open intervals included in $B$ and of width going to zero (see Figure \ref{fig:problem}). This phenomenon prevents us to state a precise lower bound when one of this point of accumulation is $a=\sup(B^{\mathrm{o}}\cap (-\infty,\eta'(0)])$ or $b=\inf (B^{\mathrm{o}} \cap [\eta'(0),+\infty))$. Nonetheless, the following is true:\medskip 

\begin{theorem}\label{thm:hyperellis}
For an admissible Borelian set $B$, 
$$\liminf_{\delta\to 0} \liminf_{n \to \infty} \big(\sqrt{2\pi t_{n}}\,\exp(t_{n}F(B^{\delta}))\,\proba[X_{n} \in t_{n} B] \big) \geq \begin{cases}
&\sum_{a \in (B^{\mathrm{o}})_{\min}} \frac{\psi(h(a))}{(1-\E^{-|h(a)|})\,\sqrt{\eta''(h(a))}}\\
&\sum_{a \in (B^{\mathrm{o}})_{\min}} \frac{\psi(h(a))}{|h(a)|\,\sqrt{\eta''(h(a))}},
\end{cases}$$
with the usual distinction of lattice/non-lattice cases. In particular, the right-hand side in Theorem \ref{thm:superellis} is the limit of $\sqrt{2\pi t_{n}}\,\exp(t_{n}F(B))\,\proba[X_{n} \in t_{n} B]$ as soon as $F(B^{\delta})=F(B)$ for some $\delta>0$.
\end{theorem}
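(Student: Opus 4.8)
\textbf{Proof strategy for Theorem \ref{thm:hyperellis}.}

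The plan is to obtain a lower bound for $\proba[X_n \in t_n B]$ by restricting attention to one of the ``well-separated'' sub-intervals of $B$ on which $F$ attains (nearly) its minimum, and then to apply the precise tail estimates of Theorem \ref{thm:mainlattice} (lattice case) or Theorem \ref{thm:mainnonlattice} (non-lattice case). Concretely, fix $\delta>0$. By definition of $B^{\delta}$ and of $F(B^{\delta})$, there is a point $a$ lying in the closure of $B^{\delta}$ with $F(a)=F(B^{\delta})$; arguing as in the lemma preceding Theorem \ref{thm:superellis}, the set $(B^{\delta})_{\min}$ consists of at most two points, one on each side of $\eta'(0)$, and as $\delta\to 0$ these points converge to the points of $(B^{\mathrm{o}})_{\min}$. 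I would treat the case where $(B^{\delta})_{\min}=\{a\}$ is a single point with $a>\eta'(0)$ (the symmetric case $a<\eta'(0)$ follows by replacing $X_n$ by $-X_n$, and the two-point case by adding the two contributions, exactly as in the proof of Theorem \ref{thm:superellis}). Since $a\in\overline{B^{\delta}}$, for every $\eps>0$ there is an open interval $(x,x+\kappa)\subset B$ with $\kappa\geq\delta$ such that $a\in[x,x+\kappa]$ and $x<a+\eps$; shrinking if necessary we may take a genuine subinterval $(a,a+\kappa')\subset B$ with $\kappa'$ bounded below by a positive quantity depending only on $\delta$ (this uses $a>\eta'(0)$ so that one can move strictly to the right of $a$ while staying in $B$, possibly after first moving slightly to the left to a point $a'<a$ still in the same interval of $B^{\delta}$).

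Next I would write, for the non-lattice case,
\[
\proba[X_n\in t_n B]\ \geq\ \proba[X_n\geq t_n a']-\proba[X_n\geq t_n(a'+\kappa')],
\]
where $a'$ is a point of the relevant interval with $a'\leq a$ and $a'\to a$ as the auxiliary parameters go to zero. Theorem \ref{thm:mainnonlattice} gives the sharp asymptotics of both terms on the right: the first is
\[
\frac{\exp(-t_nF(a'))}{h(a')\sqrt{2\pi t_n\eta''(h(a'))}}\,\psi(h(a'))\,(1+o(1)),
\]
and the second is of the same shape with $a'$ replaced by $a'+\kappa'$; since $F$ is strictly convex and $a'>\eta'(0)$, one has $F(a'+\kappa')\geq F(a')+c\kappa'$ for a positive constant $c$, so the second term is exponentially negligible compared to the first. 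Hence
\[
\sqrt{2\pi t_n}\,\E^{t_nF(a')}\,\proba[X_n\in t_n B]\ \geq\ \frac{\psi(h(a'))}{h(a')\sqrt{\eta''(h(a'))}}\,(1+o(1)).
\]
Now let $n\to\infty$ and then let the auxiliary parameters (hence $a'$) tend to $a$, and finally let $\delta\to0$: by continuity of $\psi$, $h=F'$ and $\eta''$ on the real axis (all guaranteed by the analyticity hypotheses of Definition \ref{def:modphi}, with $c=+\infty$), $a'\to a\to$ the point of $(B^{\mathrm{o}})_{\min}$, and $F(a')\to F(B^{\delta})$, one recovers exactly the right-hand side claimed. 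The lattice case is identical, using the second formula of Theorem \ref{thm:mainlattice}, with $h(a)$ replaced by $1-\E^{-h(a)}$ in the denominator, and with $a'$ chosen along the lattice $\frac{1}{t_n}\Z$; the constraint $t_n a'\in\N$ forces only an $O(1/t_n)$ adjustment of $a'$, which is absorbed in the $(1+o(1))$ by the same continuity argument. Summing over the (at most two) points of $(B^{\delta})_{\min}$ gives the general statement, and the final sentence of the theorem follows because if $F(B^{\delta})=F(B)$ for some $\delta>0$ then $(B^{\mathrm{o}})_{\min}=B_{\min}$ and the $\liminf$-$\liminf$ over $\delta$ and $n$ collapses to an honest limit matching the upper bound of Theorem \ref{thm:superellis}.

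\textbf{Main obstacle.} The delicate point is \emph{not} the analytic estimate but the bookkeeping around the point $a$: one must produce, for each fixed $\delta$, a genuine open subinterval of $B$ with length bounded below in terms of $\delta$ and with an endpoint arbitrarily close to the minimizer $a$ of $F$ on $\overline{B^{\delta}}$, while keeping that endpoint strictly on the correct side of $\eta'(0)$ so that the tail estimate of Theorem \ref{thm:mainnonlattice} (which requires $x>\eta'(0)$) applies and so that the ``remainder'' term $\proba[X_n\geq t_n(a'+\kappa')]$ is genuinely of smaller exponential order. The pathological configurations depicted in Figure \ref{fig:problem} --- where $a$ is an accumulation point of intervals of $B$ of vanishing width --- are precisely the reason one cannot replace $F(B^{\delta})$ by $F(B^{\mathrm{o}})$ in the statement, and handling them correctly amounts to being careful that the interval one extracts has width $\geq\delta$ (not merely positive); everything else is a routine limiting argument combining the two previous precise-deviation theorems with the continuity of the Legendre–Fenchel data.
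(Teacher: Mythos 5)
Your proposal is correct and follows essentially the same route as the paper: restrict to an interval of width $\geq\delta$ contained in $B$ near the minimizer, bound $\proba[X_n\in t_nB]$ from below by the difference of two tail probabilities estimated via Theorems \ref{thm:mainlattice}/\ref{thm:mainnonlattice}, discard the second term as exponentially negligible by strict convexity of $F$, and then let $n\to\infty$ and $\delta\to0$ using continuity of $F$, $h=F'$, $\eta''$ and $\psi$. The paper's bookkeeping is just slightly more direct --- it works with $b^{\delta}=\inf(B^{\delta}\cap[\eta'(0),+\infty))$ and uses that $(b^{\delta},b^{\delta}+\delta)\subset B$ by the very definition of $B^{\delta}$, avoiding your auxiliary point $a'$ --- but the argument is the same.
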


\begin{proof}
Again we deal with the non-lattice case, and we suppose for instance that the set $(B^{\mathrm{o}})_{\min}$ consists of one point $b = \inf (B^{\mathrm{o}} \cap [\eta'(0),+\infty))$, the other situations being entirely similar. As $\delta$ goes to $0$, $B^{\delta}$ increases towards $B^{\mathrm{o}}=\bigcup_{\delta >0}B^{\delta}$, so the infimum $F(B^{\delta})$ decreases and the quantity $$L(\delta)=\liminf_{n \to \infty} \big(\sqrt{2\pi t_{n}}\,\exp(t_{n}F(B^{\delta}))\,\proba[X_{n} \in t_{n} B]\big)$$ is decreasing in $\delta$. 
Actually, if $b^{\delta} = \inf (B^{\delta} \cap [\eta'(0),+\infty))$, then for $\delta$ small enough $F(B^{\delta})=F(b^{\delta})$, so $\lim_{\delta \to 0}F(B^{\delta})=F(B^{\mathrm{o}})$ by continuity of $F$. On the other hand,  
$$R(\delta) = \frac{\psi(h(b^{\delta}))}{h(b^{\delta})\sqrt{\eta''(h(b^{\delta}))}}$$
tends to the same quantity with $b$ instead of $b^{\delta}$. Hence, it suffices to show that for $\delta$ small enough, $L(\delta)\geq R(\delta)$. However, by definition of $B^{\delta}$, the open interval $(b^{\delta},b^{\delta}+\delta)$ is included into $B$, so 
\begin{align*}\proba[X_{n} \in t_{n}B]&\geq \proba[X_{n} \in t_{n}B^{\delta}] \geq \proba[X_{n} > t_{n}b^{\delta}] - \proba[X_{n} \geq t_{n}(b^{\delta}+\delta)]\\
&\geq \left(\frac{\psi(h(b^{\delta}))\,\E^{-t_{n}F(b^{\delta})}}{h(b^{\delta})\sqrt{2\pi t_{n}\eta''(h(b^{\delta}))}}- \frac{\psi(h(b^{\delta}+\delta))\,\E^{-t_{n}F(b^{\delta}+\delta)}}{h(b^{\delta}+\delta)\sqrt{2\pi t_{n}\eta''(h(b^{\delta}+\delta))}}\right)\big(1+o(1)\big)\\
&\geq \frac{\psi(h(b^{\delta}))\,\E^{-t_{n}F(b^{\delta})}}{h(b^{\delta})\,\sqrt{2\pi t_{n}\eta''(h(b^{\delta}))}}\big(1+o(1)\big)
\end{align*}
since the second term on the second line is negligible in comparison to the first term --- $F(b^{\delta}+\delta)>F(b^{\delta})$. This ends the proof.
\end{proof}
\bigskip

\section{Examples with an explicit generating function}\label{sec:firstexamples}

The general results of Sections \ref{sec:lattice} and \ref{sec:precisellis} can be applied in many contexts, and the main difficulty is then to prove for each case that one has indeed the estimate on the Laplace transform given by Definition \ref{def:modphi}. Therefore, the development of techniques to obtain mod-$\phi$ estimates is an important part of the work. Such an estimate can sometimes be established from an explicit expression of the Laplace transform (hence of the characteristic function); we give several examples of this kind in Section \ref{subsec:explicit}. But there also exist numerous techniques to study sequences of random variables without explicit expression for the characteristic function: complex analysis methods in number theory (Section \ref{subsec:arithmetic}) and in combinatorics (Section \ref{subsec:cycles}), localization of zeros (Section \ref{sec:zeros}) and dependency graphs (Sections \ref{sec:central}, \ref{sec:erdosrenyi} and \ref{sec:depgraph}) to name a few. These methods are known to yield central limit theorems and we show how they can be adapted to prove mod-convergence. We illustrate each case with one or several example(s).
\footnote{V: Le paragraphe ci-dessus serait bien dans l'introduction peut-être\ldots}
\bigskip

In this section, we detail examples for which the mod-$\phi$ convergence has already been proved before (\emph{cf.} \cite{JKN11,DKN11}) or follows easily from formulas in the literature.\bigskip

\subsection{Mod-convergence from an explicit formula for the Laplace transform}
\label{subsec:explicit}
Examples where mod-convergence is proved using an explicit formula for the Laplace transform were already given in Section \ref{subsec:basicexamples}. We provide here two additional examples of mod-Gaussian convergence that can be obtained by this method.

\begin{example}
    \label{ex:zeros_randomfunction}
Let $f(z)=\sum_{n=0}^\infty a_n\,z^n$ be a random analytic function, where the coefficients $a_n$ are independent standard complex Gaussian variables. The random function $f$ has almost surely its radius of convergence equal to $1$, and its set of zeroes $\mathcal{Z}(f)=\{z \in \mathcal{D}_{(0,1)}\,\,|\,\,f(z)=0 \}$ is a determinantal point process on the unit disk, with kernel 
$$\mathcal{K}(w,z)=\frac{1}{\pi(1-w\overline{z})^2}=\frac{1}{\pi}\sum_{k=1}^\infty k(w\overline{z})^{k-1}.$$
We refer to \cite{Zero} for precisions on these results. It follows then from the general theory of determinantal point processes, and the radial invariance of the kernel, that the number $N_r$ of points of $\mathcal{Z}(f) \cap B_{(0,r)}$ can be represented in law as a sum of independent Bernoulli variables of parameters $\{r^{2k}\}_{k \geq 1}$:
$$N_r=\mathrm{card}\{z \in \mathcal{Z}(f)\,\,|\,\,|z|\leq r\} =_{\text{law}} \sum_{k=1}^\infty\mathcal{B}(r^{2k}).$$
This representation as a sum of independent variables allows one to estimate the moment generating function of $N_r$ under various renormalizations. Let us introduce the hyperbolic area $$h=\frac{4\pi\,r^2}{1-r^2}$$ of $\mathcal{D}_{(0,r)}$, and denote $N_r=N^h$; we are interested in the asymptotic behavior of $N^h$ as $h$ goes to infinity, or equivalently as $r$ goes to $1$. Since $\esper[\E^{zN_r}]=\prod_{k=1}^\infty (1+r^{2k}(\E^z-1))$, one has
$$
\log \left(\esper\!\left[\E^{\frac{zN^h}{h^{1/3}}}\right]\right)=\sum_{k=1}^\infty \,\log\left(1+r^{2k}\left(\E^\frac{z}{h^{1/3}}-1\right)\right) =\frac{h^{2/3}z}{4\pi}+ \frac{h^{1/3}z^2}{16\pi}+\frac{z^3}{144\pi}+o(1)
$$
with a remainder that is uniform when $z$ stays in a compact domain of $\C$. Therefore, $$\frac{N^h-\frac{h}{4\pi} }{h^{1/3}}\,\text{ converges mod-Gaussian with }\begin{cases} &\!\!\text{parameters } t_h=\frac{h^{1/3}}{8\pi},\\
 &\!\!\text{limiting function }\psi(z)=\exp\left(\frac{z^3}{144\pi}\right). 
 \end{cases}$$
Again, the limiting function is the exponential of a simple monomial. By Proposition \ref{prop:normalityzone}, 
$$X^h = \frac{N^h - \frac{h}{4\pi}}{\sqrt{\frac{h}{8\pi}}}$$
converges as $h \to \infty$ to a Gaussian law, with normality zone $o(h^{1/6})$. Moreover, by Theorem \ref{thm:mainnonlattice}, at the edge of this normality zone,
$$\proba\!\left[N^h - \frac{h}{4\pi} \geq \frac{h^{2/3}}{4\pi}\,x\right] = \frac{\E^{-\frac{h^{1/3}\,x^2}{4\pi}}}{h^{1/6}\,x}\,\exp\left(\frac{x^3}{18\pi}\right)\,(1+o(1))$$
for any $x>0$.
\end{example}
\medskip

\begin{example}\label{ex:ising}
Consider the Ising model on the discrete torus $\Z/n\Z$. Thus, we give to each spin configuration $\sigma : \Z/n\Z \to\{\pm1\}$ a probability proportional to the factor $\exp(-\beta \sum_{i \sim j} \mathbbm{1}_{\sigma(i)\neq \sigma(j)})$, the sum running over neighbors in the circular graph $\Z/n\Z$. The technique of the transfer matrix (see \cite[Chapter 2]{Baxter82}) ensures that if $M_n=\sum_{i=1}^n \sigma(i)$ is the total magnetization of the model, then
$$\esper[\E^{zM_n}]=\frac{\mathrm{tr}\, (T(z))^n}{\mathrm{tr}\, (T(0))^n},\qquad\text{where }T(z)=\begin{pmatrix}\E^{-z} & \E^{z-\beta}\\
\E^{-z-\beta}&\E^z\end{pmatrix}.$$
The two eigenvalues of $T(z)$ are $\cosh z \pm \sqrt{(\sinh z)^2 +\E^{-2\beta}}$, and their Taylor expansion shows that
$$\log \left(\esper\!\left[\E^{\frac{zM_n}{n^{1/4}}}\right]\right)=\frac{\E^\beta\,n^{1/2}\,z^2}{2}-\frac{(3\E^{3\beta}-\E^\beta)\,z^4}{24}+o(1).$$
So, one has mod-Gaussian convergence for $n^{-1/4}\,M_n$, and the estimates
$$\proba[M_n \geq n^{3/4}x]=\frac{\E^{-\frac{n^{1/2}\,\E^\beta\,x^2}{2}}}{x\,\sqrt{2\pi n^{1/2}}}\,\exp\left(-\frac{(3\E^{3\beta}-\E^\beta)\,x^4}{24}\right)\,(1+o(1)).$$
In particular, $\frac{M_n}{n^{1/2}\,\E^{\beta/2}}$ satisfies a central limit theorem with normality zone $o(n^{1/4})$.
\end{example}
\medskip

\begin{remark}
If instead of $\Z/n\Z$ we consider the graph $1 \leftrightarrow 2 \leftrightarrow \cdots \leftrightarrow n$ (\emph{i.e.}, one removes the link $n \leftrightarrow 1$), then one can realize the spins $\sigma(i)$ of the Ising model as the $n$ first states of a Markov chain with space of states $\{\pm 1\}$. The magnetization $M_n$ appears then as a linear functional of the empirical measure of this Markov chain. More generally, if $S_n = \sum_{i=1}^n f(X_i)$ is a linear functional of a Markov chain on a finite space, then under mild hypotheses this sum satisfies the Markov chain central limit theorem (\emph{cf.} for instance \cite{Cog72}). In \cite{FMN14}, we shall use arguments of the perturbation theory of operators in order to prove that one also has mod-Gaussian convergence for such linear functionals of Markov chains.
\end{remark}
\bigskip

\subsection{Additive arithmetic functions of random integers} 
\label{subsec:arithmetic} In this paragraph, we sometimes write $\log(\log n) = \log_2 n$, which is negligible in comparison to $\log n$ as $n$ goes to infinity. 

\subsubsection{Number of prime divisors, counted without multiplicities}
Denote $\mathbb{P}$ the set of prime numbers, $\omega(k)$ the number of distinct prime divisors of an integer $k$, and $\omega_{n}$ the random variable $\omega(k)$ with $k$ random integer uniformly chosen in $[n]:=\{1,2,\ldots,n\}$. The random variable $\omega_{n}$ satisfies the Erd\"os-Kac central limit theorem (\emph{cf.} \cite{EK40}):
$$\frac{\omega_{n}-\log\log n}{\sqrt{\log\log n}} \to \mathcal{N}_{\R}(0,1).$$ 
In this section, we show that $\omega_{n}$ converges mod-Poisson and present precise deviation results for it. Such a result was established in \cite[Section 4]{KN10}. But in the latter article, mod-$\phi$ convergence is defined via convergence of the renormalized Fourier transform, while here we work with Laplace transform. Therefore, we need to justify that the convergence also holds for Laplace transforms.
\bigskip

We start from the {\em Dirichlet series} of $y^{\omega(k)}$, which is:
\[\sum_{k \ge 1} \frac{y^{\omega(k)}}{k^s} = \prod_{p \in \mathbb{P}} \left( 1 + \frac{y}{p^s-1} \right),\]
well-defined and absolutely convergent if $\Re(s)>1$. The Selberg-Delange method allows to extract from this formula precise estimates for the generating function of $\omega_{n}$; see \cite{Ten95} and references therein.

\begin{proposition}\cite[Section II.6, Theorem 1]{Ten95}
For any $A >0$, we have, for any $y$ in $\C$ with $|y| \le A$
\[ \sum_{k \le n} y^{\omega(k)} = n\, (\log n)^{y-1} \, (\lambda_0(y) +O(1/\log n)), \]
where
\[\lambda_0(y)= \frac{1}{\Gamma(y)} \, \prod_{p \in \mathbb{P}} \left( 1 + \frac{y}{p-1} \right)\left( 1- \frac{1}{p} \right)^y \]
and the constant hidden in the $O$ symbol depends only on $A$.
\label{prop:selbergdelange_omega}
\end{proposition}

\begin{remark}
In fact, \cite[Section II.6, Theorem 1]{Ten95} gives a complete asymptotic expansion of $\sum_{k \le n} y^{\omega(k)}$ in terms of powers of $1/\log(n)$. For our purpose, the first term is enough: we will see that it implies mod-Poisson convergence with a speed of convergence as precise as wanted.
\end{remark}
\bigskip

Setting $y=\E^z$, this can be rewritten as an asymptotic formula for the Laplace transform of $\omega_{n}$.
\begin{align*}
\esper[\E^{z\,\omega_{n}}] &= \frac{1}{n} \sum_{k=1}^n \E^{z\,\omega(k)} 
= (\log n)^{\E^z-1} \lambda_0(\E^z) \,(1+ O(1/\log n))\\
&= \E^{(\E^z-1) \log \log n}\, \lambda_0(\E^z) \,(1+ O(1/\log n)).
\end{align*}
Recall that the constant in the $O$ symbol is uniform on sets $\{w, |w| \le A\}$, that is on bands $\{z,\, \Re(z) \le \log(A)\}$. In particular the convergence is uniform on compact sets. Therefore, one has the following result.

\begin{proposition}
    The sequence of random variables $(\omega_n)_{ n \ge 1}$ converges mod-Poisson with parameter $t_n=\log_2 n$ and limiting function $\psi(z)=\lambda_0(\E^z)$ on the whole complex plane. This takes place with speed of convergence $O(1/\log n)$, that is $O((t_n)^{-\nu})$ for all $\nu >0$.
\end{proposition}

Using Theorem \ref{thm:mainlattice}, we get immediately the following deviation result.
\begin{theorem}
    Let $x>0$. Assume $x \log_2 n \in \N$. Then
    \begin{equation}
        \proba[\omega_n=x \log_2 n] =
        \frac{ \lambda_0(x)\,\,(1 +O(1/\log_2 n))}{(\log n)^{x \log(x)-x+1} \sqrt{2 \pi x\, \log_2 n}}.
    \label{eq:LLL_omega}
\end{equation}
    Furthermore, if $x>1$, then
    \begin{equation}
        \proba[\omega_n \ge x \log_2 n] =
        \frac{ \lambda_0(x)\,\,(1 +O(1/\log_2 n ))}{(\log n)^{x \log(x)-x+1} \sqrt{2 \pi x\, \log_2 n }} \,\frac{1}{1-\frac{1}{x}}.
    \label{eq:deviations_omega}
\end{equation}
\end{theorem}

\begin{remark}
The first equation \eqref{eq:LLL_omega} is not new: it is due to Selberg \cite{Selberg54} and presented in a slightly different form than here in \cite[Section II.6, Theorem 4]{Ten95}. Note also that, as the speed of mod-Poisson convergence of $\omega_n$ is $O\big(\big(t_n\big)^{-\nu}\big)$ for all $\nu >0$, Theorem \ref{thm:mainlattice} gives asymptotic expansions of the above probabilities, up to an arbitrarily large power of $1/\log\log n$. Theorem 4 in \cite[Section II.6]{Ten95} also gives such estimates. The second statement \eqref{eq:deviations_omega} follows from \cite[Theorem 2.8]{Rad09}; it is a nice feature of the theory of mod-$\phi$ convergence to allow to recover quickly such deep arithmetic results (though we still need Selberg-Delange asymptotics).
\end{remark}
\bigskip

\subsubsection{Number of prime divisors, counted with multiplicities}
In this section, we give similar results for the number of prime divisors $\Omega_n$ of a random integer in $\{1,2,\ldots,n\}$, {\em counted with multiplicities}. An important difference is that, here, the mod-Poisson convergence occurs only on a band and not on the whole complex plane $\C$. In this case the Dirichlet series is given by:
\[\sum_{k \ge 1} \frac{y^{\Omega(k)}}{k^s} = \prod_{p \in \mathbb{P}} \left( 1 - \frac{y}{p^s} \right)^{\!-1},\]
which again is well-defined and absolutely convergent for $\Re (s)>1$. Note that, unlike in the case of $\omega(k)$, the right-hand side has some pole, the smallest in modulus being for $y=2^s$. Again, a precise estimate for the generating function follows from the work of Selberg and Delange; see \cite{Ten95} and references therein.

\begin{proposition}\cite[Section II.6, Theorem 2]{Ten95}
For any $\delta$ with $0<\delta<1$, we have, for any $y$ in $\C$ with $|y| \le 2-\delta$
\[ \sum_{k \le n} y^{\Omega(k)} = n\, (\log n)^{y-1} \, (\nu_0(y) +O(1/\log n)), \]
where
\[\nu_0(y)= \frac{1}{\Gamma(y)} \, \prod_{p \in \mathbb{P}} \left(1 - \frac{y}{p} \right)^{-1}\left( 1- \frac{1}{p} \right)^y \]
and the constant hidden in the $O$ symbol depends only on $\delta$.
\label{prop:selbergdelange_Omega}
\end{proposition}

\noindent Note the difference with Proposition \ref{prop:selbergdelange_omega}: the function $\nu_0$ has a simple pole for $y=2$ (while $\lambda_0$ is an entire function) and the estimate in Proposition \ref{prop:selbergdelange_Omega} holds (uniformly on compacts) {\em only for $|y| < 2$}.\bigskip

Setting again $y=\E^z$, this can be rewritten as an asymptotic formula for the Laplace transform of $\Omega_{n}$
 on the band $\mathcal{S}_{(-\infty,\log 2)}=\{z, \, \Re(z) < \log 2\}$.
\[
    \esper[\E^{z\,\Omega_{n}}] = \frac{1}{n} \sum_{k=1}^n \E^{z\,\Omega(k)} 
= \E^{(\E^z-1) \log \log n}\, \nu_0(\E^z) \,(1+ O(1/\log n)).
\]
Recall that the constant in the $O$ symbol is uniform on sets $\{y, |y| \le 2-\delta\}$, that is on bands $\{z,\, \Re(z) < \log(2-\delta)\}$. In particular the convergence is uniform on compact sets. Therefore, one has the following result.
\begin{proposition}
The sequence of random variables $(\Omega_n)_{ n \ge 1}$ converges mod-Poisson with parameter $t_n=\log_2 n$ and limiting function $\psi(z)=\nu_0(\E^z)$, on the band $\mathcal{S}_{(-\infty,\log 2)}$. This takes place with speed of convergence $O(1/\log(n))$, that is $O((t_n)^{-\nu})$, for all $\nu >0$.
\end{proposition}
\bigskip

As for $\omega_n$, this implies precise deviation results. However, as the convergence only takes place on a band,
the range of these results is limited (note the condition $x<2$ in the theorem below).
\begin{theorem}
    Fix $x$ with $0<x<2$. Assume $x \log_2 n \in \N$. Then
    \begin{equation}
        \proba[\Omega_n=x \log_2 n] =
        \frac{ \nu_0(x)\,\,(1 +O(1/\log_2 n))}{(\log n)^{x \log(x)-x+1} \sqrt{2 \pi x \,\log_2 n}}.
    \label{eq:LLL_Omega}
\end{equation}
    Furthermore, if $1<x<2$,
    \begin{equation}
        \proba[\Omega_n \ge x \log_2 n ] =
        \frac{ \nu_0(x)\,\,(1 +O(1/\log_2 n))}{(\log n )^{x \log(x)-x+1} \sqrt{2 \pi x \,\log_2 n}}\, \frac{1}{1-\frac{1}{x}}.
    \label{eq:deviations_Omega}
\end{equation}
\end{theorem}
\noindent Again, the fist equation was discovered by Selberg \cite{Selberg54} and can be found in a slightly different form in \cite[Section II.6, Theorem 5]{Ten95}.

\begin{remark}
An extension of Equation \eqref{eq:LLL_Omega} for $x>2$ is given in \cite[Section II.6, Theorem 6]{Ten95}. This involves the type of singularity of the limiting function $\nu_0(\E^z)$ at the edge of the convergence domain. Here, $\nu_0(y)$ has only a simple pole in $y=2$, and the residue appears in the deviation results. It would be interesting to see if this kind of idea can be used in the general framework of mod-$\phi$ convergence, but this is outside the scope of this already quite long paper.
\end{remark}
\bigskip

\subsubsection{Other arithmetic functions}
In this paragraph, we generalize the two examples $\omega_n$ and $\Omega_n$ above. The goal is to understand the phase transition between the convergence on a band and the convergence on the complex plane.
\medskip

Let $f: \N \to \Z$ be a function with the following properties:\vspace{2mm}
\begin{enumerate}[(i)]
\item\label{hyp:additivefunction1} $f$ is additive, that is $f(mn)=f(m)+f(n)$ if $m \wedge n =1$;\vspace{2mm}
\item\label{hyp:additivefunction2} for every prime $p$, one has $f(p)=1$.\vspace{2mm}
\end{enumerate}
We necessarily have $f(1)=0$. For $C>0$, we say that such a function {\em has a $C$-linear growth} if there exists $B$ such that
$$|f(p^k)| \le B+C\, k \quad\text{for any }p\in \mathbb{P} \text{ and any }k>0.$$
If $f$ has a $C$-linear growth for any $C>0$, we say that $f$ has a {\em sublinear growth}. In particular, since $\Omega(p^k)=k$ (for all $p$ and $k$), the function $\Omega$ has a $1$-linear growth, while $\omega$ has a sublinear growth (for all $p$ and $k$, $\omega(p^k)=1$). We are interested in the random variable $f_n$, which is equal to the value $f(k)$ of $f$ on a random integer $k$ chosen uniformly between $1$ and $n$. Note that the additivity condition ensures that the following Dirichlet series factorizes:
\[F(y,s):= \sum_{k=1}^\infty \frac{y^{f(k)}}{k^s} = \prod_{p\in \mathbb{P}} \left( 1+\frac{y^{f(p)}}{p^s}+\frac{y^{f(p^2)}}{p^{2s}}+\cdots \right). \]
Again, one can use the Selberg-Delange method to get precise estimates for the Laplace transform.
\begin{proposition}
\label{prop:selbergdelange_general}
Suppose that $f$ fulfills \eqref{hyp:additivefunction1} and \eqref{hyp:additivefunction2} and has a $C$-linear groth for some $C>0$. For any $\delta$ with $0<\delta<\frac{1}{2}$, we have, for any $y$ in $\C$ with $(2-2\delta)^{-1/C} \le |y| \le (2-2\delta)^{1/C}$,
\begin{equation}
 \sum_{k \le n} y^{f(k)} = n\, (\log n)^{y-1} \, (\tau_0(y) +O(1/\log n)),
\label{eq:asymptotics_generaladditivefunction}
\end{equation}
where the constant hidden in the $O$ symbol depends on $\delta$, $B$ and $C$ (but not on $y$), and
\[\tau_0(y)= \frac{1}{\Gamma(y)} \, \prod_{p \in \mathbb{P}}
\left( 1 + \frac{y^{f(p)}}{p}+\frac{y^{f(p^2)}}{p^2} + \cdots \right)\left( 1- \frac{1}{p} \right)^y. \]
\end{proposition}

\begin{proof}
The proof is an easy adaptation of the proof of \cite[Section II.6, Theorems 1 and 2]{Ten95}. We will only indicate the necessary modifications, assuming that the reader can consult Tenenbaum's book.\medskip

Fix $\delta\in(0,\frac{1}{2})$, and $y$ in $\C$ with $(2-2\delta)^{-1/C} \le |y| \le (2-2\delta)^{1/C}$. We set $Y = \max(|y|,|y|^{-1})$; by assumption, $1\leq Y\leq (2-2\delta)^{1/C}$, and on the other hand,
$$|y^{f(p^k)}| \leq Y^{B+Ck}
$$
as $f$ has a $C$-linear growth. We have, for any prime $p$ and $|\xi|<Y^{-C}$, the bound
\[\sum_{k \ge 0} |y^{f(p^k)}\, \xi^k| < 1+\sum_{k \ge 1} Y^{B+Ck} |\xi|^k \leq 1+ Y^B \frac{Y^C |\xi|}{1-Y^C |\xi|}.
\]
Therefore, for each prime $p$, the following formula defines a holomorphic function for $|\xi|<Y^{-C}\leq 1$:
\[h_{y,p}(\xi)= \left( 1+ y^{f(p)} \xi+ y^{f(p^2)} \xi^2 +\dots \right) \left( 1- \xi \right)^y. \]
Moreover, we have
\[|h_{y,p}(\xi)| < \left(1+ Y^B \frac{Y^C |\xi|}{1-Y^C |\xi|} \right)\,\E^{Y\,|\log(1-\xi)|}.\]
Consider now the coefficients of the power series expansion of $h_{y,p}$ around the origin,
{\em i.e.} the numbers $b_y(p^k)$ such that
\[h_{y,p}(\xi)= 1 + \sum_{k \ge 1} b_y(p^k) \xi^k.\]
Note that $b_y(p)=0$ for any $p \in \mathbb{P}$. With the previous hypotheses, 
$$\frac{1}{2-\delta}< Y^{-C}\leq |y|^{-C}.$$
By Cauchy's formula applied to the circle of radius $1/(2-\delta)$, one has, for any prime $p$, any complex number $y$ such that $(2-2\delta)^{1/C}\le |y| \le (2-2\delta)^{1/C}$, and any $k \ge 1$:
\[ |b_w(p^k)| \le M(\delta,B,C)\, (2-\delta)^k, \]
where 
\[M(\delta,B,C) = \sup_{\substack{|\xi| = 1/(2-\delta) \\  1 \leq Y \leq  (2-2\delta)^{1/C} }}
\left(1+ Y^B \frac{Y^C |\xi|}{1-Y^C |\xi|} \right)\,\E^{Y\,|\log(1-\xi)|} < +\infty. \]

Denote $\zeta$ is the Riemann zeta function. The same arguments as in the proof of \cite[Section II.6, Theorem 1]{Ten95} shows that
\[G(y,s):=F(y,s)\, \zeta(s)^{-y} = \prod_{p \in \mathbb{P}} \left (1 + \sum_{k \ge 2} \frac{b_y(p^k)}{p^{ks}} \right)\]
is convergent and uniformly bounded for $\Re(s)>3/4$ and $y $ such that $(2-2\delta)^{-1/C}\le |y| \le (2-2\delta)^{1/C}$.
Therefore, we can apply \cite[Section II.5, Theorem 3]{Ten95}, which gives the asymptotic formula \eqref{eq:asymptotics_generaladditivefunction}.
\end{proof}

Setting $y=\E^z$, Proposition \ref{prop:selbergdelange_general} can be rewritten as an asymptotic formula for the Laplace transform of $f_{n}$:
\[
    \esper[\E^{z\,f_{n}}] = \frac{1}{n} \sum_{k=1}^n \E^{z\,f(k)} 
= \E^{(\E^z-1) \log \log n)} \,(\tau_0(\E^z) + O(1/\log n)),
\]
where the constant hidden in $O$ depends on $\delta$, $B$ and $C$, but is uniform on $|\Re(z)| \le (\log(2-2\delta))/C$. This yields the following mod-convergence result:

\begin{proposition}
Assume $f$ is an arithmetic function that fulfills \eqref{hyp:additivefunction1} and \eqref{hyp:additivefunction2} and has a $C$-linear growth for some constant $C>0$. Then, the sequence of random variables $(f_n)_{ n \ge 1}$
converges mod-Poisson with parameter $t_n=\log_2 n$ and limiting function $\psi(z)=\tau_0(\E^z)$ on the band $$\mathcal{S}_{(-\frac{\log 2}{C},\frac{\log 2}{C})}.$$ 
This takes place with speed of convergence $O(1/\log n)$, that is $O((t_n)^{-\nu})$ for all $\nu >0$.
\medskip

\noindent As a consequence, if $f$ has a sublinear growth, then the convergence takes place on the whole complex plane.
\end{proposition}

\noindent It is then straightforward to write deviation results, analog to Equations \eqref{eq:deviations_omega} and \eqref{eq:deviations_Omega}, for a generic function $f$ as above.
\medskip

\begin{remark}
If the additive function $f$ is non-negative, as is the case for $\omega(n)$ and $\Omega(n)$, then it is easily seen that one can perform the whole proof of Proposition \ref{prop:selbergdelange_general} assuming only $|y| \leq (2-2\delta)^{1/C}$ (but not $|y| \geq (2-2\delta)^{-1/C}$). Therefore, one has in this case a mod-Poisson convergence on the band $$\mathcal{S}_{(-\infty,\frac{\log 2}{C})}\quad \text{instead of}\quad \mathcal{S}_{(-\frac{\log 2}{C},\frac{\log 2}{C})}.$$
\end{remark}
\bigskip

\subsection{Number of cycles in weighted probability measure}
\label{subsec:cycles}
We consider here the number of cycles of random permutations under the so-called weighted probability measure.
This example was already considered in \cite{NZ13} and we follow the presentation of this article.
\bigskip

Denote $X_{n}(\sigma)$ the number of disjoint cycles (including fixed points) of a permutation $\sigma$  in the symmetric group $\mathfrak{S}(n)$. We write $C_j(\sigma)$ for the number of cycles of length $j$ in the decomposition of $\sigma$ as a product of disjoint cycles; thus, $X_{n}(\sigma) =\sum_{j=1}^n C_j(\sigma)$ and $n = \sum_{j=1}^n j\,C_j(\sigma)$. Let $\Theta=(\theta_m)_{m\geq1}$ be given with $\theta_m\geq 0$. The generalized weighted measure is defined as the probability measure $\mathbb{P}_{\Theta} $ on the symmetric group $\mathfrak{S}(n)$:
$$\mathbb P_{\Theta}[\sigma]=\dfrac{1}{h_n\, n!}\prod_{m=1}^n (\theta_m)^{C_m(\sigma)}$$ with $h_n$ a normalization constant (or a partition function) and $h_0=1$. This model  is coming from statistical mechanics and the study of Bose quantum gases (see \cite{NZ13} for more references and details). It generalizes the classical cases of the uniform measure (corresponding to $\theta_m\equiv 1$) and the Ewens measure (corresponding to the case $\theta_m\equiv\theta>0$). It has been an open question to prove a central limit theorem for the total number of cycles $X_n$ under such measures (or more precisely under some specific regimes related to the asymptotic behavior of the $\theta_m$'s; such a central limit theorem was already known for Ewens measure). We refer to \cite{EU14} for a nice survey of this question. The difficulty comes from the fact the there is nothing such as the Feller coupling anymore, and we cannot apply the same method as in Example \ref{ex:cycle}. We now show how singularity analysis allows us to prove mod-Poisson convergence, and hence the central limit theorem, but also distributional approximations and precise large deviations.
\medskip

We consider the generating series 
$$g_\Theta(t)=\sum_{n=1}^\infty \dfrac{\theta_n}{n} \,t^n.$$ 
It is well known that 
\begin{equation}\label{eq:gf_hn}
\sum_{n=0}^\infty h_n \,t^n=\exp(g_\Theta(t)).
\end{equation}
Set $F(t,w)=\exp(w g_\Theta(t))$. Using elementary combinatorial arguments (which are detailed in \cite{NZ13}),
one can prove for each $z\in\mathbb{C}$ the following identity as formal power series:
\begin{equation}\label{eq:combtheta}
\sum_{n=0}^\infty h_n \,\mathbb E_{\Theta} [\exp(z X_n)]\,t^n=\exp(\E^z g_\Theta(t))=F(t,\E^z).
\end{equation}
Our goal is to obtain an asymptotic for $h_n$ and for the moment generating function of $X_n$. Note that in general (and unlike the case of Ewens measure), neither $h_n$ nor $\mathbb E_{\Theta} [\exp(z X_n)]$ have a closed expression.
Nevertheless, they correspond to the coefficient of $t^n$ in $F(t,1)$ (respectively $F(t,\E^z)$) and, thus, using Cauchy formula, they can be expressed as a contour integral. The idea of singularity analysis is to choose the contour in a clever way such that, asymptotically, the main part of this contour integral comes from the integral near the singularity. In this way, the integral depends asymptotically only of the type of singularity of $F(t,1)$ (resp. $F(t,\E^z)$).\bigskip

We note $r$ the radius of convergence of $g_\Theta(t)$.  We need suitable assumptions on the analyticity properties of $g$ together with assumptions on the nature of its singularity at the point $r$ on the circle of convergence. This motivates the next definition:

\begin{definition}
Let $0<r<R$ and $0<\phi<\pi/2$ be given. We then define
$$\Delta_0=\Delta_0(r,R,\phi)=\{z\in\mathbb C;\; |z|<R,z\neq r, |\arg (z-r)|>\phi\},$$
see Figure \ref{fig:domaindelta}. Assume we are further given $g(t)$, $\theta\geq0$ and $r>0$. We then say that $g(t)$ is in the class $\mathcal F(r,\theta)$ if\vspace{2mm}
\begin{enumerate}[(i)]
\item there exists $R>r$ and $0<\phi<\pi/2$ such that $g(t)$ is holomorphic in $\Delta_0(r,R,\phi)$;\vspace{2mm}
\item there exists a constant $K$ such that 
$$g(t)=\theta \log \left(\dfrac{1}{1-t/r}\right)+K+O(t-r)\;\; \text{ as } t\to r.$$
\end{enumerate}
\end{definition}

\begin{center}
\begin{figure}[ht]
\includegraphics{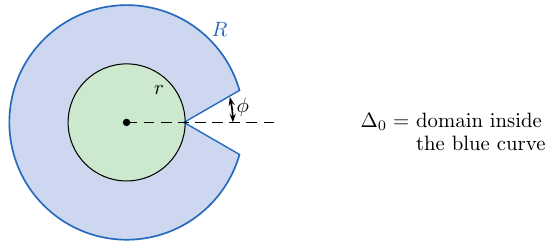}\caption{Domain $\Delta_0(r,R,\phi)$.\label{fig:domaindelta}}
\end{figure}
\end{center}

One readily notes that the generating series corresponding to the Ewens measure (\emph{i.e.} $\theta_m\equiv\theta$) is of class $\mathcal F(1,\theta)$ since in this case
$$g_\Theta(t)\equiv \theta\log\left(\dfrac{1}{1-t}\right).$$ Consequently our results will provide alternative proofs to this case as well.
The next theorem due to Hwang plays a key role in our example (we use the following notation: if $G(t)=\sum_{n=0}^\infty g_n t^n$,  we denote $[t^n][G]\equiv g_n$ the coefficient of $t^n$ in $G(t)$).
\begin{theorem}[Hwang, \cite{Hwa94}] \label{thm:hwang94}
Let $F(t,w)=\exp(w g(t))\,S(t,w)$ be given. Suppose\vspace{2mm}
\begin{enumerate}[(i)]
\item $g(t)$ is of class $\mathcal F(r,\theta)$,\vspace{2mm}
\item $S(t,w)$ is holomorphic in a domain containing $\{(t,w)\in\mathbb C^2;\;\; |t|\leq r, |w|\leq \hat r\}$, where $\hat r>0$ is some positive number.\vspace{2mm}
\end{enumerate}
Then 
$$[t^n][F(t,w)]=\dfrac{\E^{Kw}n^{w\theta-1}}{r^n}\left(\dfrac{S(r,w)}{\Gamma(\theta w)}+O\!\left(\dfrac{1}{n}\right)\right)$$
uniformly for $|w|\leq\hat r$ and with the same $K$ as in the definition above.
\end{theorem}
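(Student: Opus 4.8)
The plan is to extract the coefficient by Cauchy's integral formula and then to deform the contour to a Hankel-type contour hugging the dominant singularity $t=r$, in the spirit of Flajolet--Odlyzko singularity analysis, carrying the parameter $w$ along so that every estimate is uniform for $|w|\le\hat r$.

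First I would isolate the local behaviour of $F$ at $t=r$. Since $g\in\mathcal F(r,\theta)$, exponentiating the expansion $g(t)=\theta\log(1/(1-t/r))+K+O(t-r)$ gives, uniformly for $|w|\le\hat r$,
$$\exp(wg(t))=\E^{Kw}\,(1-t/r)^{-\theta w}\,\big(1+O(t-r)\big)\qquad(t\to r,\ t\in\Delta_0),$$
and since $S$ is holomorphic on a neighbourhood of $\{|t|\le r\}$ we may write $S(t,w)=S(r,w)+O(t-r)$ uniformly in $w$. Hence
$$F(t,w)=\E^{Kw}\,S(r,w)\,(1-t/r)^{-\theta w}+\E^{Kw}\,(1-t/r)^{-\theta w}\,H(t,w),$$
with $H$ holomorphic on the same $\Delta$-domain and $H(t,w)=O(t-r)$ uniformly for $|w|\le\hat r$.

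Next, applying $[t^n][F]=\frac{1}{2\I\pi}\oint F(t,w)\,t^{-n-1}\,dt$ and deforming the circle into the boundary of a truncated domain $\{|t|\le R',\ |t-r|\ge 1/n,\ |\arg(t-r)|\ge\phi'\}$ (with $r<R'<R$ and $\phi<\phi'<\pi/2$, which stays inside the common domain of holomorphy thanks to the geometry of $\Delta_0(r,R,\phi)$), the outer arc contributes $O((R')^{-n})$, exponentially negligible against the announced main term $r^{-n}n^{\theta w-1}$. On the remaining inner part, the change of variables $t=r(1+u/n)$ turns the integral of $(1-t/r)^{-\theta w}t^{-n-1}$ into the classical Hankel representation of $1/\Gamma$, yielding
$$[t^n]\big[(1-t/r)^{-\theta w}\big]=\frac{1}{r^n}\binom{n+\theta w-1}{n}=\frac{n^{\theta w-1}}{r^n\,\Gamma(\theta w)}\Big(1+O\!\big(\tfrac1n\big)\Big),$$
uniformly for $|w|\le\hat r$ by Stirling's formula; this produces the stated main term $\E^{Kw}S(r,w)r^{-n}n^{\theta w-1}/\Gamma(\theta w)$. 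The remainder involving $H$ is handled on the same contour: near $r$ one has $|(1-t/r)^{-\theta w}H(t,w)|=O(|1-t/r|^{1-\theta w})$, so its $n$th coefficient is $O(r^{-n}n^{\theta w-2})$ — split into the small arc at distance $1/n$ (where $|H|=O(1/n)$) and the two rectilinear segments (where $|1-t/r|$ is bounded below) — hence absorbed into the $O(1/n)$. Collecting the three pieces gives the theorem.

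The main obstacle is making all the $O$-terms genuinely uniform in $w$ over $|w|\le\hat r$: one must verify that the implied constants depend only on $r$, $R$, $\phi$, $\hat r$ and sup-norms of $g$, $S$, $H$ on the relevant compact sets, and in particular handle gracefully the regimes where $\Re(\theta w)\le 0$ (so the ``singular'' factor is actually bounded and the Hankel estimate must be adapted) or where $\theta w$ is close to a pole of $1/\Gamma$. One also has to check that the $O(t-r)$ in the hypothesis on $g$ can be combined with the holomorphy of $S$ on $\{(t,w):|t|\le r,\,|w|\le\hat r\}$ to produce the single holomorphic remainder $H$ with the claimed uniform bound; this is the step where the precise form of the class $\mathcal F(r,\theta)$ is essential.
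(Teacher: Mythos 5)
Your sketch takes exactly the Hankel-contour / singularity-analysis route that the paper alludes to, and the decomposition $F=\E^{Kw}S(r,w)(1-t/r)^{-\theta w}+\E^{Kw}(1-t/r)^{-\theta w}H(t,w)$, the uniform Stirling estimate for $\binom{n+\theta w-1}{n}$, and the transfer bound on the $H$-term are all the right ingredients, with the uniformity issues you flag (near poles of $1/\Gamma(\theta w)$, and making the $O(t-r)$ bound on $H$ uniform in $w$) being precisely the points that must be checked. Note, however, that the paper does not actually prove Theorem~\ref{thm:hwang94}: it is cited from Hwang's thesis \cite{Hwa94} and the paper gives only a one-sentence description (``taking a suitable Hankel contour and estimating the integral over each piece'', with a pointer to \cite{FSe09}, Chapter VI.5), which your proposal fills out in the expected way.
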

The idea of the proof consists in taking a suitable Hankel contour and to estimate the integral over each piece.  There exist several other versions of this theorem where one can replace $\log (1-t/r)$ by other functions and we refer the reader to the monograph \cite{FSe09}, chapter VI.5. As an application of Theorem \ref{thm:hwang94}, we obtain an asymptotic for $h_n$.

\begin{lemma}\label{lem:hn}
Let $\theta>0$ and assume that $g_\Theta(t)$ is of class $\mathcal F(r,\theta)$. We then have
$$h_n=\dfrac{\E^{K}n^{\theta-1}}{r^n\,\Gamma(\theta)}\left(1+O\!\left(\dfrac{1}{n}\right)\right).$$
\end{lemma}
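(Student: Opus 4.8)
The statement is an immediate specialization of Hwang's Theorem~\ref{thm:hwang94}. Recall that the partition functions $h_n$ are defined by the identity $\sum_{n\ge 0} h_n\,t^n = \exp(g_\Theta(t))$, so $h_n = [t^n][\exp(g_\Theta(t))]$. The plan is therefore to write $\exp(g_\Theta(t))$ in the form $F(t,w) = \exp(w\,g(t))\,S(t,w)$ required by Theorem~\ref{thm:hwang94}, and then simply read off the conclusion.

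Concretely, I would take $g = g_\Theta$, the trivial factor $S(t,w) \equiv 1$, and evaluate at $w = 1$; then $F(t,1) = \exp(g_\Theta(t)) = \sum_{n} h_n t^n$, so $[t^n][F(t,1)] = h_n$. The hypotheses of Theorem~\ref{thm:hwang94} are satisfied: hypothesis (i) is exactly the assumption that $g_\Theta$ is of class $\mathcal{F}(r,\theta)$, and hypothesis (ii) holds because the constant function $S \equiv 1$ is holomorphic on all of $\C^2$, in particular on any polydisc $\{|t|\le r,\ |w|\le \hat r\}$; moreover, since $S$ is entire we are free to take $\hat r \ge 1$, so that the value $w=1$ lies in the range of uniformity. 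Applying the theorem gives
$$
h_n = [t^n][F(t,1)] = \frac{e^{K}\,n^{\theta-1}}{r^n}\left(\frac{S(r,1)}{\Gamma(\theta)} + O\!\left(\frac{1}{n}\right)\right) = \frac{e^{K}\,n^{\theta-1}}{r^n}\left(\frac{1}{\Gamma(\theta)} + O\!\left(\frac{1}{n}\right)\right),
$$
with the same constant $K$ appearing in the definition of $\mathcal{F}(r,\theta)$. Since $\theta > 0$, the value $\Gamma(\theta)$ is finite and nonzero, so one may factor it out of the parenthesis to obtain
$$
h_n = \frac{e^{K}\,n^{\theta-1}}{r^n\,\Gamma(\theta)}\left(1 + O\!\left(\frac{1}{n}\right)\right),
$$
which is the claim.

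\textbf{Main point of care.} There is no substantial obstacle here — the work has already been done in establishing Theorem~\ref{thm:hwang94} (the Hankel contour estimates). The only things to check are the bookkeeping items above: that the choice $S\equiv 1$, $w=1$ is legitimate (entire $S$, so $\hat r$ can be enlarged to include $w=1$), and that $\theta>0$ guarantees $\Gamma(\theta)\ne 0$ so that the error term can be written multiplicatively as $1+O(1/n)$. One might also remark in passing that the same computation with general $w$ (still $S\equiv 1$) will be used subsequently to obtain the asymptotics of the moment generating function of $X_n$, namely $\esper[w^{X_n}] = [t^n][\exp(w g_\Theta(t))] / h_n$, which is why the lemma is isolated here.
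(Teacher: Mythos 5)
Your proof is correct and follows exactly the paper's argument: the paper also obtains the lemma by applying Theorem~\ref{thm:hwang94} with $g=g_\Theta$, $S(t,w)\equiv 1$ and $w=1$ to the identity $\sum_n h_n t^n=\exp(g_\Theta(t))$. Your additional bookkeeping remarks (enlarging $\hat r$ to cover $w=1$, and $\Gamma(\theta)\neq 0$ for $\theta>0$ to rewrite the error multiplicatively) are harmless elaborations of the same one-line argument.
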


\begin{proof}
We have already noted that $\sum_{n=1}^\infty h_n t^n=\exp(g_\Theta(t))$. We can apply Theorem \ref{thm:hwang94} with $g(t)=g_\Theta(t)$, $w=1$ and $S(t,w)=1$.
\end{proof}
\bigskip

Using identity \eqref{eq:combtheta} and Theorem \ref{thm:hwang94}, we can also obtain an asymptotic for the Laplace transform $\esper_{\Theta} [\exp(w X_n)]$:

\begin{theorem}[Nikeghbali-Zeindler, \cite{NZ13}]
If $g_\Theta(t)$ is of class $\mathcal F(r,\theta)$, then
$$ \esper_{\Theta} [\exp(z X_n)]=n^{\theta(\E^z-1)} \E^{K(\E^z-1)}\left(\dfrac{\Gamma(\theta)}{\Gamma(\theta \E^z)}+O\!\left(\dfrac{1}{n}\right)\right).$$
Consequently, the sequence $(X_n)_{n \in \N}$ converges in the mod-Poisson sense, with parameters $t_n=K+\theta \log n$ and limiting function $\psi(z)=\frac{\Gamma(\theta)}{\Gamma(\theta \E^z)}$. The convergence takes place of the whole complex plane with speed $O(1/n)$.
\label{thm:modpoissonNZ}
\end{theorem}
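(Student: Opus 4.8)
The statement to be proved is the theorem of Nikeghbali--Zeindler: if $g_\Theta(t)$ is of class $\mathcal F(r,\theta)$, then $\mathbb E_\Theta[\exp(wX_n)] = n^{\theta(e^w-1)}\,e^{K(e^w-1)}\bigl(\Gamma(\theta)/\Gamma(\theta e^w) + O(1/n)\bigr)$, and consequently $(X_n)$ converges mod-Poisson with parameters $K+\theta\log n$ and limiting function $\Gamma(\theta)/\Gamma(\theta e^w)$. The plan is to read off the moment generating function from the combinatorial identity \eqref{eq:combtheta} by applying Hwang's transfer theorem (Theorem \ref{thm:hwang94}) with the right choice of $g$ and $S$, and then to divide by the asymptotics of $h_n$ coming from Lemma \ref{lem:hn}.

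First I would rewrite \eqref{eq:combtheta} as $\sum_{n\ge 0} h_n\,\mathbb E_\Theta[\exp(wX_n)]\,t^n = \exp\!\bigl(e^w g_\Theta(t)\bigr)$ and observe that the right-hand side is exactly of the form $\exp(\widetilde w\, g_\Theta(t))\,S(t,\widetilde w)$ with $\widetilde w = e^w$ and $S\equiv 1$. Since $g_\Theta \in \mathcal F(r,\theta)$ by hypothesis, and since for $w$ ranging over a compact set (say $|\mathrm{Re}\,z|$ bounded, with $w=z$ in the final mod-$\phi$ statement) the parameter $\widetilde w = e^w$ stays in a disk $|\widetilde w|\le \hat r$, hypothesis (ii) of Theorem \ref{thm:hwang94} is met with the constant function $S=1$, which is trivially holomorphic on all of $\mathbb C^2$. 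Applying Theorem \ref{thm:hwang94} then gives, uniformly in $w$ on the relevant compact set,
\begin{equation*}
[t^n]\exp\!\bigl(e^w g_\Theta(t)\bigr) = \frac{e^{K e^w}\,n^{\theta e^w - 1}}{r^n}\left(\frac{1}{\Gamma(\theta e^w)} + O\!\left(\frac 1n\right)\right).
\end{equation*}
On the other hand, the left-hand side is $h_n\,\mathbb E_\Theta[\exp(wX_n)]$, and Lemma \ref{lem:hn} gives $h_n = e^K n^{\theta-1}(r^n \Gamma(\theta))^{-1}(1+O(1/n))$. Dividing the two asymptotic expansions, the factors $r^{-n}$ cancel, and $n^{\theta e^w-1}/n^{\theta-1} = n^{\theta(e^w-1)}$, $e^{Ke^w}/e^K = e^{K(e^w-1)}$; collecting the error terms (one inverts $1+O(1/n)$ and multiplies, which only produces another $O(1/n)$) yields
\begin{equation*}
\mathbb E_\Theta[\exp(wX_n)] = n^{\theta(e^w-1)}\,e^{K(e^w-1)}\left(\frac{\Gamma(\theta)}{\Gamma(\theta e^w)} + O\!\left(\frac 1n\right)\right),
\end{equation*}
which is the claimed estimate. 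For the mod-Poisson conclusion one recalls that the Poisson law of parameter $\lambda$ has $\eta(z) = e^z-1$, so $\exp(-t_n\eta(z))\,\varphi_n(z)$ with $t_n = K+\theta\log n$ equals $e^{-(K+\theta\log n)(e^z-1)}\,\mathbb E_\Theta[\exp(zX_n)] = \Gamma(\theta)/\Gamma(\theta e^z) + O(1/n)$, which converges locally uniformly to the nonvanishing analytic function $\psi(z) = \Gamma(\theta)/\Gamma(\theta e^z)$; the parameters $t_n = K + \theta\log n$ tend to $+\infty$, and $\psi(0)=1$ as required.

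The genuine work is entirely inside Theorem \ref{thm:hwang94} (Hankel-contour estimates, splitting the contour near the singularity $t=r$ and bounding the pieces), which we are allowed to invoke; so the main obstacle in \emph{this} argument is a bookkeeping one, namely making sure the $O(1/n)$ in Hwang's theorem is uniform for $w$ in the compact sets we need and that the division by the $h_n$-asymptotics preserves uniformity — both follow because $\hat r$ in Theorem \ref{thm:hwang94} can be taken to cover the image of any fixed compact $z$-set under $z\mapsto e^z$, and because $\Gamma(\theta e^z)^{-1}$ is bounded there. One should also check the harmless point that $g_\Theta\in\mathcal F(r,\theta)$ forces $\theta>0$ so that $\Gamma(\theta)$ and $\Gamma(\theta e^w)$ (for $w$ near $0$) are finite and nonzero, which is exactly the standing hypothesis in Lemma \ref{lem:hn} and in the theorem statement.
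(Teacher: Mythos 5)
Your proof is correct and follows the same route as the paper: apply Hwang's transfer theorem (Theorem \ref{thm:hwang94}) to $\exp(e^w g_\Theta(t))$ with $S\equiv 1$, extract the $n$-th coefficient, and divide by the asymptotics of $h_n$ from Lemma \ref{lem:hn}. The only tiny imprecision is the remark that membership in $\mathcal F(r,\theta)$ \emph{forces} $\theta>0$ — the class is defined for $\theta\geq 0$, so $\theta>0$ is a standing hypothesis (made explicit in Lemma \ref{lem:hn} and implicit in the appearance of $\Gamma(\theta)$), not a consequence; this does not affect the argument.
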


\begin{proof}
An application of Theorem \ref{thm:hwang94} yields
$$[t^n][\exp(\E^z g_\Theta(t))]=\dfrac{\E^{K\E^z}n^{\E^z\theta-1}}{r^n}\left(\dfrac{1}{\Gamma(\theta \E^z)}+O\!\left(\dfrac{1}{n}\right)\right),$$
with $O(\cdot)$ uniform for bounded $z\in\C$. Now a combination of identity (\ref{eq:combtheta}) and Lemma \ref{lem:hn} gives the desired result.
\end{proof}
\bigskip

The above theorem not only implies the central limit theorem, but also Poisson approximations and precise large deviations. We only state here the precise large deviation result which extends earlier work of Hwang in the case $\theta=1$ as a consequence of Theorem \ref{thm:mainlattice} and refer to \cite{NZ13} for the distributional approximations results.

\begin{proposition}[Nikeghbali-Zeindler, \cite{NZ13}]
Let $Y_n=X_n-1$ and let $x\in\mathbb R$ such that $t_nx\in\mathbb N$ with $t_n=K+\theta \log n$. We note $k=t_n x$. Then
$$\mathbb P[Y_n=x t_n]=\E^{-t_n}\dfrac{(t_n)^k}{k!}\left(\dfrac{\Gamma(\theta)}{x\,\Gamma(\theta x)}+O\!\left(\dfrac{1}{ t_n}\right)\right).$$
\end{proposition}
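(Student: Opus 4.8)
The plan is to apply Theorem~\ref{thm:mainlattice} to the mod-Poisson convergent sequence $(X_n)_{n\in\N}$ given by the preceding theorem, which states that $(X_n)$ converges mod-Poisson with parameters $t_n=K+\theta\log n$ and limiting function $\psi(z)=\frac{\Gamma(\theta)}{\Gamma(\theta\E^z)}$. Here the reference law is the standard Poisson law $\mathcal{P}(1)$, so $\eta(z)=\E^z-1$, which is a lattice distribution with minimal lattice $\Z$ and period $2\pi$; thus the hypotheses of Theorem~\ref{thm:mainlattice} (part (1)) are met, with speed of convergence $O(1/n)$, hence $o((t_n)^{-v})$ for every $v$ since $t_n \asymp \log n$.

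First I would recall from the Poisson computation in the excerpt (Legendre--Fenchel transforms section and Example~\ref{ex:randomcycle}) that for $\eta(z)=\E^z-1$ one has $h=\log x$, $\eta''(h)=\E^h=x$, and $F(x)=x\log x-(x-1)=x\log x-x+1$. Plugging these into the first formula of Theorem~\ref{thm:mainlattice} with $v=1$ (using only the leading term, since the error $O(1/t_n)$ absorbs everything else) gives
\begin{align*}
\proba[X_n=t_nx]&=\frac{\exp(-t_n(x\log x-x+1))}{\sqrt{2\pi t_n\,x}}\left(\psi(\log x)+O\!\left(\frac{1}{t_n}\right)\right)\\
&=\frac{\exp(-t_n(x\log x-x+1))}{\sqrt{2\pi t_n\,x}}\left(\frac{\Gamma(\theta)}{\Gamma(\theta x)}+O\!\left(\frac{1}{t_n}\right)\right),
\end{align*}
since $\psi(\log x)=\frac{\Gamma(\theta)}{\Gamma(\theta\,\E^{\log x})}=\frac{\Gamma(\theta)}{\Gamma(\theta x)}$.

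Next I would translate the left-hand side and the prefactor into the ``Poisson density'' form appearing in the statement. With $Y_n=X_n-1$ and $k=t_nx\in\N$, we have $\proba[Y_n=xt_n]=\proba[X_n=xt_n+1]=\proba[X_n=k+1]$; alternatively one notes that shifting by $1$ only changes $h$ and the constants by $O(1/t_n)$ amounts — but the cleanest route is to observe that the right-hand side of the proposition, $\E^{-t_n}\frac{(t_n)^k}{k!}\cdot\frac{\Gamma(\theta)}{x\,\Gamma(\theta x)}$, is exactly the leading term of $\proba[X_n=t_nx]$ after applying Stirling's formula $k!=\sqrt{2\pi k}\,(k/\E)^k(1+O(1/k))$ to rewrite $\E^{-t_n}\frac{(t_n)^k}{k!}$. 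Indeed $k=t_nx$ gives $\E^{-t_n}(t_n)^k/k! = \frac{\E^{-t_n}(t_n)^{t_nx}}{\sqrt{2\pi t_nx}\,(t_nx/\E)^{t_nx}}(1+O(1/t_n)) = \frac{\exp(-t_n+t_nx-t_nx\log x)}{\sqrt{2\pi t_nx}}(1+O(1/t_n)) = \frac{\exp(-t_nF(x))}{\sqrt{2\pi t_nx}}(1+O(1/t_n))$, so that $\E^{-t_n}\frac{(t_n)^k}{k!}\cdot\frac{\Gamma(\theta)}{x\Gamma(\theta x)}$ matches the expression above once one divides the leading term by the extra factor $x$ coming from $\psi(h)/(\text{normalization})$ — and this extra $x$ must be reconciled by carefully tracking whether the event is $\{X_n=k\}$ or $\{Y_n=k\}$, i.e.\ $\{X_n=k+1\}$.

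The main obstacle, and the only genuinely delicate point, will be getting this shift-by-one bookkeeping exactly right so that the factor $\frac{1}{x}$ (rather than $1$) appears in front of $\frac{\Gamma(\theta)}{\Gamma(\theta x)}$: one has to decide whether to apply Theorem~\ref{thm:mainlattice} directly to $X_n$ at the point $k+1$ or to note that $\proba[Y_n = xt_n] = \proba[X_n = xt_n + 1]$ and expand $F$, $\eta''$, $\psi$ around the shifted argument, checking that the shift contributes precisely the factor $t_n/(k+1) \sim 1/x$ at leading order while all other corrections are $O(1/t_n)$. Everything else — the identification of $F$, $\eta''$, $h$ for the Poisson law, the value $\psi(\log x)=\Gamma(\theta)/\Gamma(\theta x)$, and the Stirling rewriting converting $\exp(-t_nF(x))/\sqrt{2\pi t_n x}$ into $\E^{-t_n}(t_n)^k/k!$ — is routine bookkeeping that follows directly from results already established in the paper.
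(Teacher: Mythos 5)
You have the right overall approach, and it is the one the paper itself intends: feed the mod-Poisson convergence from the preceding theorem (parameters $t_n=K+\theta\log n$, limiting function $\Gamma(\theta)/\Gamma(\theta\E^w)$) into Theorem~\ref{thm:mainlattice} with $\eta(z)=\E^z-1$, $h=\log x$, $\eta''(h)=x$, $F(x)=x\log x-x+1$, and then use Stirling to rewrite $\E^{-t_nF(x)}/\sqrt{2\pi t_n x}$ as $\E^{-t_n}(t_n)^k/k!\,(1+O(1/t_n))$. All of this is correct. However, the one point you yourself flag as ``the main obstacle'' --- where the factor $1/x$ in front of $\Gamma(\theta)/\Gamma(\theta x)$ comes from --- is exactly the point you leave unproved: you only anticipate that the shift should contribute $t_n/(k+1)\sim 1/x$, and the two routes you sketch are not quite covered by the theorem as stated. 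Applying Theorem~\ref{thm:mainlattice} to $X_n$ at the point $k+1$ means evaluating the expansion at $x_n=x+1/t_n$, a point moving with $n$, whereas the theorem (and the control of its error terms) is stated for a fixed $x$; to make that route rigorous you would need uniformity of the expansion in a neighbourhood of $x$, which holds but is neither stated in the paper nor established in your write-up. So as written there is a genuine, though small, gap precisely at the factor $1/x$.

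The observation that closes it in one line: since $Y_n=X_n-1$, one has $\esper[\E^{wY_n}]=\E^{-w}\,\esper[\E^{wX_n}]$, so $Y_n$ itself converges mod-Poisson with the same parameters $t_n$ and limiting function $\psi_Y(w)=\E^{-w}\,\Gamma(\theta)/\Gamma(\theta\E^w)$, which is entire and non-vanishing on the real line, with the same speed $O(1/n)=o((t_n)^{-v})$. Applying Theorem~\ref{thm:mainlattice} to $Y_n$ at the \emph{fixed} point $x$ gives $\proba[Y_n=t_nx]=\frac{\E^{-t_nF(x)}}{\sqrt{2\pi t_n x}}\left(\psi_Y(\log x)+O(1/t_n)\right)$ with $\psi_Y(\log x)=\frac{1}{x}\,\frac{\Gamma(\theta)}{\Gamma(\theta x)}$: the $1/x$ is simply the value $\E^{-h}$ of the extra factor produced by the shift, and no expansion around a moving argument is needed. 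Combined with your Stirling step (legitimate here, since the leading constant is bounded and the $(1+O(1/t_n))$ can be absorbed into the error), this yields the stated formula; with this substitution your argument is complete and coincides with the intended application of Theorem~\ref{thm:mainlattice}.
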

\noindent In fact, an application of Theorem \ref{thm:mainlattice} would immediately yield an arbitrary long expansion for $\mathbb P[Y_n=x t_n]$ and also for $\mathbb P[Y_n\geq x t_n]$, since the speed of convergence is fast enough.
\bigskip

\begin{remark}
    Equations \eqref{eq:gf_hn} and \eqref{eq:combtheta} fit naturally in the framework of 
    {\em labelled combinatorial classes}~\cite[Chapter II]{FSe09}:
    a permutation is a {\em set} of cycles and the weighted exponential generating series of cycles is $g_{\Theta}(t)$.
    Using this framework, one could give a more general statement with the same proof.
    Let $\mathcal{A}$ be a (weighted) labelled combinatorial class and
    consider the class $\mathcal{B}=\Set(\mathcal{A})$.
    We denote $X_n$ the number of components in a random element of size $n$ of $\mathcal{B}$
    (the probability of taking an element of $\mathcal{B}$ being proportional to its weight).
    Assume that the generating series of $\mathcal{A}$ is in the class $\mathcal{F}(r,\theta)$.
    Then $(X_n)_{n \in \N}$ converges in the mod-Poisson sense as in Theorem \ref{thm:modpoissonNZ}.
    We refer to \cite[Section 4.1]{Hwa96} for a similar discussion.\medskip

   However, the hypothesis that the generating series of $\mathcal{A}$ is in the class $\mathcal{F}(r,\theta)$
   is mainly natural in the case of cycles and permutations, which explains why we
   focused on this example here.
\end{remark}

\begin{remark}
   Singularity analysis is closely related to the Selberg-Delange method 
   used in previous Section.
   In both cases we do not have a closed expression for the Laplace transform of our variable,
   but for an appropriate sum of them --- Dirichlet series for arithmetic functions
   and exponential generating functions for cycles in weighted permutations.
    From Perron's or Cauchy's formula, 
    we get an integral formula for the Laplace transform.
    Then choosing an appropriate integral contour allows to find the asymptotics.
    It is quite striking that these closely related methods both
    yield mod-Poisson convergence. However, we are not aware of a way to bring both under one roof.
\end{remark}
\bigskip

\subsection{Rises in random permutations}
\label{subsec:rises}
As illustrated in the previous section,
the methods of singularity analysis and the \emph{transfer theorems} between the singularities of a generating series and the asymptotic behavior of its coefficients (see \emph{e.g.} \cite[Chapter VI]{FSe09})
can be use to prove mod-$\phi$ convergence
(this is not surprising as they can be used to establish the related concept of {\em quasi-powers},
see Section~\ref{Subsect:QP}).
We examine here another example where we do not have strictly speaking mod-$\phi$ convergence 
since the reference law is not infinitely divisible,
but where our methods can nevertheless be used.

If $\sigma \in \sym(n)$ is a permutation, a \emph{rise} of $\sigma$ is an integer $i \in \lle 1,n-1\rre$ such that $\sigma(i)<\sigma(i+1)$. The number of rises $R(\sigma)$ is thus a quantity between $0$ and $n-1$, the two extremal values corresponding to the permutations $n(n-1)\ldots 2 1$ and $12\ldots (n-1)n$. We denote $R_n=R(\sigma_n)$ the random variable which counts the number of rises of a random permutation $\sigma_n \in \sym(n)$ chosen under the uniform measure. The double generating series
$$R(z,t) = \sum_{n=0}^\infty \esper[\E^{z R_n}]\,t^n$$
is computed in \cite[III.7, p. 209]{FSe09}:
$$R(z,t)=\frac{\E^{z}-1}{\E^z-\E^{t(\E^z-1)}}.$$
If $\Re (z)\leq \log 2$, then $|\E^{z}|\leq 2$ and the smallest (in modulus) root of the denominator is 
$$t=\rho(z)=\frac{z}{\E^z-1}.$$
We then have $R(z,t) = \frac{1}{\rho(z)-t} + (\E^z-1)\,S(z,t)$ with $S(z,t)=\frac{1}{\E^z - \E^{t(\E^z-1)}}-\frac{1}{z-t(\E^z-1)}$ analytic in $t$ and on the band $z \in \mathcal{S}_{(-\infty,\log 2)}$. This implies:

\begin{theorem}[\cite{FSe09}, p. 658]\label{thm:flajoletrises}
For $z \in \mathcal{S}_{(-\infty,\log 2)}$, one has locally uniformly
$$\esper[\E^{zR_n}] = \tau(z)^{n+1} + O(|z|\,2^{-n}),$$
where $\tau(z) = \frac{\E^z-1}{z}$.
\end{theorem}
\noindent Notice that $\tau(z) = \frac{\E^z-1}{z} = \int_0^1 \E^{zu}\,du$ is the Laplace transform of a uniform random variable $U$ in the interval $[0,1]$. 
Informally, the previous should be thought as follows: $(R_n)_{n \in \N}$ converges mod-$U$ on $\mathcal{S}_{(-\infty,\log 2)}$ with parameters $t_n=n+1$ and limit $\psi(z)=1$.
However, the uniform law $U$ is not infinitely divisible, and $\tau(z)$ vanishes on any $z \in 2\I \pi\Z$,
therefore the error term $O(|z|\,2^{-n})$ cannot be rewritten as a multiplicative error term,
as we did before.
Still, every computation of Section \ref{sec:nonlattice} can be performed in this case. 
Let us stress out the necessary adjustments:
\begin{itemize}
    \item Berry-Esseen estimates. Set $F_n(x) = \proba[R_n - \frac{n+1}{2}\geq \sqrt{\frac{n+1}{12}}\,x]$ and $G_n(x)=\int_{-\infty}^x g(y)\,dy$. We have by Feller's lemma
    \begin{align*}
    |F_n(x)-G_n(x)| &\leq \frac{24m}{\Delta\pi\sqrt{t_n}} + \frac{1}{\pi\sqrt{t_n}} \int_{[\pm\delta \sqrt{t_n}]} \left|\frac{f_n^*(\xi)-\E^{-\frac{\xi^2}{2}}}{\xi}\right|\,d\xi  \\
    &\quad+ \frac{1}{\pi\delta\sqrt{t_n}} \int_{[\pm \Delta \sqrt{t_n}]\setminus [\pm \delta \sqrt{t_n}]} \left|f_n^*(\xi)-\E^{-\frac{\xi^2}{2}}\right|\,d\xi,
    \end{align*}
    where as in the proof of Proposition \ref{prop:berryesseen}, $f_n^*(\xi)$ is the Fourier transform of $\frac{R_n-\frac{t_n}{2}}{\sqrt{\frac{t_n}{12}}}$.
    From Theorem \ref{thm:flajoletrises} and a Taylor expansion of $\tau(\I \xi)$, one has 
    $$f_n^*(\xi) = \E^{-\frac{\xi^2}{2}\left(1+O\!\left(\frac{\xi^2}{t_n}\right)\right)} + O(|\xi|\,2^{-t_n}).$$
    In the first integral of Feller's bound, we are integrating 
    $$\left|\frac{f_n^*(\xi)-g^*(\xi)}{\xi}\right| = O\left(\delta^2\,\E^{-\frac{\xi^2}{2}}\right)+O(2^{-t_n}),$$
    so we get a $o(\frac{1}{\sqrt{t_n}})$ as $\delta$ goes to zero. Then, regarding the second integral, we are integrating a $O((q_\delta)^{t_n})$ for some $q_\delta\in [\frac{1}{2},1)$, so it does not contribute to the asymptotics. Thus, we get the uniform estimate $|F_n(x)-G_n(x)|=o(\frac{1}{\sqrt{t_n}})$ as in Proposition \ref{prop:berryesseen}.\vspace{2mm}
    \item Tilting method and large deviation estimates. When tilting by $h > 0$, the new random variable $R_n^h$ with law $\proba[R_n^h \in dy]=\frac{\E^{hy}}{\esper[\E^{hR_n}]}\,\proba[R_n \in dy]$ has its Fourier transform that has asymptotics
    $$\frac{\tau(h+\I\xi)^{n+1}+O(2^{-n})}{\tau(h)^{n+1}+O(2^{-n})} = \left(\frac{\tau(h+\I \xi)}{\tau(h)}\right)^{n+1} (1+o(1))$$ 
    the second formula holding if $h$ and $\xi$ are small enough (more precisely, we want $0<h<\log 2$ and then $\xi$ small enough with bounds depending on $h$).
    Notice that the new exponent $\frac{\tau(h+\I \xi)}{\tau(h)}$ does not vanish anymore, so we can now use the same argument as in Theorem \ref{thm:mainnonlattice}. Thus, if $F$ is the Legendre-Fenchel transform of the uniform law on $[0,1]$ (not really explicit), and if $h$ is the solution of $\frac{1+\eps}{2}=\eta'(h)$ with $\eta(h) = \log\left(\frac{\E^h-1}{h}\right)$, then for 
    $$\eps \in(2\eta'(0)-1,2\eta'(\log 2)-1) = \left(0,\,3-\frac{2}{\log 2}\right)\supseteq (0,0.114),$$
     we obtain the large deviation estimate:
    $$\proba\!\left[R_n \geq \frac{(1+\eps)(n+1)}{2}\right] = \frac{\E^{-t_n\,F(\frac{1+\eps}{2})}}{h\sqrt{2\pi t_n\,\eta''(h)}} (1+o(1))$$
    since we have mod-$U$ convergence of $(R_n)_{n \in \N}$ with parameters $t_n=\frac{n+1}{2}$ and residue $\psi(x)=1$. Similarly, for the negative deviations, the estimate of the Fourier transform of the tilted random variable $R^h_n$ can be used if $h>h_0$, where $h_0 \approx -1.594$ is the solution of the equation $\tau(h)=\frac{1}{2}$. Therefore, for
    $$\eps \in \left(0,\,\frac{2}{h_0}-1\right) \supseteq (0,0.254),$$
    we have
    $$\proba\!\left[R_n \leq \frac{(1-\eps)(n+1)}{2}\right] = \frac{\E^{-t_n\,F(\frac{1-\eps}{2})}}{|h|\sqrt{2\pi t_n\,\eta''(h)}} (1+o(1)),$$
    where $h$ is the solution of $\frac{1-\eps}{2}=\eta'(h)$.\vspace{2mm}
    \item Extended central limit theorem. The result of Lemma \ref{lem:technicqdelta_nonlattice} holds, because
\begin{align*}
\left|\frac{\tau(h+\I u)}{\tau(h)}\right|^2 &= \left|\frac{\E^{h+\I u}-1}{\E^{h}-1}\,\frac{h}{h+\I u}\right|^2 \\
&=\left(1+\frac{2\E^h\,(1-\cos u)}{(\E^h-1)^2}\right) \,\frac{h^2}{h^2+u^2} = \frac{h^2+2t(h)\,(1-\cos(u))}{h^2+u^2},
\end{align*}
where $t(h) = \frac{h^2\E^h}{(\E^h-1)^2} \leq 1$ for any $h$. From this inequality, one deduces that
$$q_\delta = \max_{u \in \R\setminus [-\delta,\delta]} \left|\frac{\tau(h+\I u)}{\tau(h)}\right| \leq 1-D\delta^2 $$
for some constant $D>0$ and any $h \in (-\eps,\eps)$, with $\eps$ small enough.\medskip

Since $\eta(x)=\log (\tau(x))$ has Taylor expansion $\frac{x}{2}+\frac{x^2}{24}-\frac{x^4}{2880}+o(x^5)$, we thus get from Equation \eqref{EqCramerOrdreMNL}:
$$\proba\!\left[R_n \geq \frac{n+1}{2} + \sqrt{\frac{n+1}{12}}\,y\right] =\frac{(1+o(1))}{y\sqrt{2\pi}}\,\exp\left(-\frac{y^2}{2} + \frac{y^4}{240t_n}\right)$$
for any positive $y$ with $y = o((t_n)^{\frac{1}{2}-\frac{1}{6}}) = o((t_n)^{\frac{5}{12}})$. 
\end{itemize}
\bigskip

\subsection{Characteristic polynomials of random matrices in a compact Lie group}\label{subsec:rmtpol}
Introduce the classical compact Lie groups of type $\mathrm{A}$, $\mathrm{C}$, $\mathrm{D}$:
\begin{align*}
\mathrm{U}(n)&=\{g \in \mathrm{GL}(n,\C)\,\,|\,\,gg^{\dagger}=g^{\dagger}g=I_{n}\}\qquad\qquad\quad\,\,\,\,\,\,\text{(unitary group)}\\
\mathrm{USp}(n)&=\{g \in \mathrm{GL}(n,\mathbb{H})\,\,|\,\,gg^{\star}=g^{\star}g=I_{n}\}\qquad\qquad\quad\,\,\,\,\,\text{(compact symplectic group)}\\
\mathrm{SO}(2n)&=\{g \in \mathrm{GL}(2n,\R)\,\,|\,\,gg^{t}=g^{t}g=I_{2n} \,\,;\,\,\det g =1\}\quad\text{(special orthogonal group)}
\end{align*}
where for compact symplectic groups $g^{\star}$ denotes the transpose conjugate of a quaternionic matrix, the conjugate of a quaternionic number $q=a+\I b+\mathrm{j} c+\mathrm{k} d$ being $q^{\star}=a-\I b-\mathrm{j} b- \mathrm{k}d$. In the following we shall consider quaternionic matrices as complex matrices of size $2n \times 2n$ by using the map
$$
a+\I b+\mathrm{j} c+\mathrm{k} d \mapsto \begin{pmatrix} a+\I b & c+\I d \\ -c + \I d & a-\I b\end{pmatrix}.
$$
The eigenvalues of a matrix $g \in G=\mathrm{SO}(2n) \text{ or }\mathrm{U}(n)\text{ or }\mathrm{USp(n)}$ are on the unit circle $\mathbb{S}^{1}$, and the value of the characteristic polynomial $\det(1-g)$ factorizes as
$$\det(1-g) = \prod_{i=1}^n (1-\E^{\I \theta_i}) $$
if $G=\mathrm{U}(n)$ and $\mathrm{Sp}(g) = (\E^{\I \theta_1},\ldots,\E^{\I \theta_n})$, and
$$\det(1-g) = \prod_{i=1}^{n} (1-\E^{\I\theta_i})(1-\E^{-\I\theta_i}) = \prod_{i=1}^n |1-\E^{\I\theta_i}|^2 $$
if $G=\mathrm{SO}(2n)$ or $\mathrm{USp}(n)$ and $\mathrm{Sp}(g) = (\E^{\I \theta_1},\E^{-\I \theta_1},\ldots,\E^{\I \theta_n},\E^{-\I\theta_n})$.
We define
\[Y_{n}^{\mathrm{A},\mathrm{C},\mathrm{D}}=
\begin{cases}
    \Re \big[ \log\det(1-g) \big] & \text{ in type $\mathrm{A}$;}\\
    \log\det(1-g) & \text{ in types $\mathrm{C}$ and $\mathrm{D}$.}
\end{cases}\]
An exact formula for the Laplace transform of $\log \det(1-g)$, and the corresponding asymptotics have been given in \cite{KS2,KS1,HKO01}, see also \cite[Sections 3 and 4]{KN12}. Hence, in type A,
$$\esper\!\left[\E^{z_1\,\Re(\log \det(1-g)) +z_2\,\mathrm{Im}(\log \det(1-g))}\right] = \prod_{j=1}^n \frac{\Gamma(j)\,\Gamma(j+z_1)}{\Gamma(j+\frac{z_1+\I z_2}{2}) \,\Gamma(j+\frac{z_1-\I z_2}{2} )}$$
for every $z_1$ with $\Re(z_1)>-1$, and every $z_2 \in \C$ (see \emph{e.g.} \cite[Formula (71)]{KS1}. In particular, if $\Re(z)>-1$, then
\begin{align*}
\esper\!\left[\E^{z Y_n^{\mathrm{A}}}\right] &= \prod_{j=1}^n \frac{\Gamma(j)\,\Gamma(j+z)}{\Gamma(j+\frac{z}{2})^2} \\
&= \frac{G(1+\frac{z}{2})^2}{G(1+z)}\,n^{\frac{z^2}{4}}\,(1+o(1)).
\end{align*}
Here $G$ denotes Barnes' $G$-function, which is the entire solution of the functional equation $G(z+1)=\Gamma(z)\,G(z)$ with $G(1)=1$. To go from the exact formula with ratios of Gamma functions, to the asymptotic formula with Barnes' functions, one has to interpret the formula as a Toeplitz determinant with one Fisher-Hartwig singularity, see the details in the aforementioned paper \cite{KN12} (the arguments therein apply \emph{mutatis mutandi} to the case of a complex random variable in the Laplace transform $z$, since the exact formula holds as long as $\Re(z)>-1$, and the theory of Toeplitz determinants works from the outset with complex functions). Therefore, $(Y_n^{\mathrm{A}})_{n \in \N}$ converges in the mod-Gaussian sense with parameters $t_n=\frac{\log n}{2}$, limiting function
$$\psi^{\mathrm{A}}(z)= \frac{G(1+\frac{z}{2})^2}{G(1+z)},$$
and on the band $\mathcal{S}_{(-1,+\infty)}$.\bigskip

Similarly, in type $\mathrm{C}$ and $\mathrm{D}$, one has the exacts formulas
\begin{align*}
\esper\!\left[\E^{zY_n^{\mathrm{C}}}\right] &= 2^{2n\,z}\,\prod_{j=1}^n \frac{\Gamma(j+n+1)\,\Gamma(z+j+\frac{1}{2})}{\Gamma(j+\frac{1}{2})\,\Gamma(z+j+n+1)}\quad\text{for }\Re(z)>-\frac{3}{2};\\
\esper\!\left[\E^{zY_n^{\mathrm{D}}}\right] &= 2^{2n\,z}\,\prod_{j=1}^n \frac{\Gamma(j+n-1)\,\Gamma(z+j-\frac{1}{2})}{\Gamma(j-\frac{1}{2})\,\Gamma(z+j+n-1)}\quad\text{for }\Re(z)>-\frac{1}{2}
\end{align*}
as well as the asymptotic formulas
\begin{align*}
\esper\!\left[\E^{zY_n^{\mathrm{C}}}\right] &= \left(\frac{\pi n}{2}\right)^{\frac{z}{2}}\,\left(\frac{n}{2}\right)^{\frac{z^2}{2}}\,\frac{G(\frac{3}{2})}{G(\frac{3}{2}+z)}\,(1+o(1));\\
\esper\!\left[\E^{zY_n^{\mathrm{D}}}\right] &= \left(\frac{8\pi}{n}\right)^{\frac{z}{2}}\,\left(\frac{n}{2}\right)^{\frac{z^2}{2}}\,\frac{G(\frac{1}{2})}{G(\frac{1}{2}+z)}\,(1+o(1))
\end{align*}
which hold in the same range for $z$, locally uniformly. Therefore, setting
\begin{align*}
X_n^{\mathrm{C}} &= Y_n^{\mathrm{C}} - \frac{1}{2}\,\log \frac{\pi n}{2} \\
X_n^{\mathrm{D}} &= Y_n^{\mathrm{D}} - \frac{1}{2}\,\log \frac{8\pi}{n}
\end{align*}
then $(X_n^{\mathrm{C}})_{n \in \N}$ and $(X_n^{\mathrm{D}})_{n \in \N}$ converges in the mod-Gaussian sense, with parameters $t_n = \log \frac{n}{2}$, limiting functions
$$\psi^{\mathrm{C}}(z)= \frac{G(\frac{3}{2})}{G(\frac{3}{2}+z)}\quad\text{and}\quad \psi^{\mathrm{D}}(z)= \frac{G(\frac{1}{2})}{G(\frac{1}{2}+z)}, $$
and respectively on $\mathcal{S}_{(-\frac{3}{2},+\infty)}$ and on $\mathcal{S}_{(-\frac{1}{2},+\infty)}$.
\bigskip

Now, our large deviation theorems apply and one obtains:

\begin{theorem}\label{thm:characteristicpolynomial}
Over $\mathrm{U}(n)$, one has:
\begin{align*}
\forall x>0,\,\,\,\proba_{n}\!\left[\,|\det(1-g)| \geq n^{\frac{x}{2}}\right]
&= \frac{G(1+\frac{x}{2})^{2}}{G(1+x)}\,\frac{1}{x\,n^{\frac{x^2}{4}}\, \sqrt{\pi \log n}}\, (1+o(1)); \\
\forall x \in (0,1),\,\,\,\proba_{n}\!\left[\,|\det(1-g)| \leq n^{-\frac{x}{2}}\right]
&= \frac{G(1-\frac{x}{2})^{2}}{G(1-x)}\,\frac{1}{x\,n^{\frac{x^2}{4}}\, \sqrt{\pi \log n}}\, (1+o(1)).
\end{align*}
Over $\mathrm{USp}(n)$, one has:
\begin{align*}
\forall x>0,\,\,\,\proba_{n}\!\left[\frac{\det(1-g)}{\sqrt{\frac{\pi}{2}}} \geq n^{\frac{1}{2}+x}\right] & = \frac{G(\frac{3}{2})}{G(\frac{3}{2}+x)}\frac{1}{x\,n^{\frac{x^{2}}{2}}\,2^{\frac{x^2+x}{2}}\,\sqrt{2\pi \log n}}\, (1+o(1));\\
\forall x\in \left(\!0\,,\frac{3}{2}\right)\!,\,\,\,\proba_{n}\!\left[\frac{\det(1-g)}{\sqrt{\frac{\pi}{2}}} \geq n^{\frac{1}{2}-x}\right] & = \frac{G(\frac{3}{2})}{G(\frac{3}{2}-x)}\,\frac{1}{x\,n^{\frac{x^{2}}{2}}\,2^{\frac{x^2-x}{2}}\,\sqrt{2\pi \log n}}\, (1+o(1)).
\end{align*}
Finally, over $\mathrm{SO}(2n)$, one has: 
\begin{align*}\forall x>0,\,\,\,\proba_{n}\!\left[\frac{\det(1-g)}{\sqrt{8\pi}} \geq n^{-\frac{1}{2}+x}\right] &= \frac{G(\frac{1}{2})}{G(\frac{1}{2}+x)}\,\frac{1}{x\,n^{\frac{x^{2}}{2}}\,2^{\frac{x^2-x}{2}}\,\sqrt{2\pi \log n}}\, (1+o(1));\\
\forall x\in \left(\!0\,,\frac{1}{2}\right)\!,\,\,\,\proba_{n}\!\left[\frac{\det(1-g)}{\sqrt{8\pi}} \leq n^{-\frac{1}{2}-x}\right] &= \frac{G(\frac{1}{2})}{G(\frac{1}{2}-x)}\,\frac{1}{x\,n^{\frac{x^{2}}{2}}\,2^{\frac{x^2+x}{2}}\,\sqrt{2\pi \log n}}\, (1+o(1)).
\end{align*}
\end{theorem}
\medskip

\begin{proof}
These are immediate computations by using Theorem \ref{thm:mainnonlattice}.
\end{proof}\bigskip

\begin{remark}
From Proposition \ref{prop:normalityzone}, one also gets normality zones for the random variables $X_n/\sqrt{t_n}$. On the other hand, the computations performed in the unitary case hint at a phenomenon of $2$-dimensional mod-Gaussian convergence for the complex numbers $\log \det(1-g)$, with $g \in \mathrm{U}(n)$. In \cite{Multidim}, we shall prove this rigorously, and compute various probabilistic consequences, for instance, estimates of large deviations for the random vectors $\log \det (1-g)$.
\end{remark}

\begin{remark}
The analogue of Theorem \ref{thm:characteristicpolynomial} in the setting of random matrices in the $\beta$-ensembles, or of general Wigner matrices, has been studied in the recent paper \cite{DE13}. They can be easily restated in the mod-Gaussian language, since their proofs rely on the computation of the asymptotics of the cumulants of the random variables $X_{n}=\log|\det M_{n}|$, with for instance $(M_{n})_{n \in \N}$ random matrices of the Gaussian unitary ensembles.
\end{remark}
\bigskip
\bigskip

\section{\for{toc}{Mod-Gaussian convergence from a factorization of the PGF} \except{toc}{Mod-Gaussian convergence from a factorization\texorpdfstring{\\}{} of the probability generating function}}
\label{sec:zeros} 
Let $(X_n)_{n \in \N}$ be a sequence of bounded random variables with nonnegative integer values such that $\sigma_n^2:=\Var(X_n)$ tends to infinity. Denote $P_n(t)= \esper[t^{X_n}]$ the {\em probability generating function} of $X_n$.  Each $P_n(t)$ is a polynomial in $t$. Then, it is known that a sufficient condition so that $X_n$ is asymptotically Gaussian is that $P_n(t)$ has negative real roots (see references below). In this section, we prove that if the third cumulant $L_n^3:=\kappa^{(3)}(X_n)$ also tends to infinity such that $L_n=o(\sigma_n)$, then a suitable renormalized version of $X_n$ converges in the mod-Gaussian sense. We then give an application for the number of blocks in a uniform set-partition of $[n]$.
\bigskip

\subsection{Background: central limit theorem from location of zeros}
The idea of proving central limit theorem by looking at the zeros of the probability generating function originates from an article of Harper \cite{HarperCLTBlocks}. Harper was interested in the number of blocks of a random set-partition, which will be our main example below. The argument was then generalized by Haigh \cite{HaighTCLZeros}. Hwang and Steyaert \cite[Lemma 4]{HwangSteyaertCLTZeros} refined Haigh's result by giving a bound on the speed of convergence towards the normal distribution. Finally, let us mention a recent work of Lebowitz, Pittel, Ruelle and Speer~\cite{Lebowitz2014central}, where the authors prove central limit theorems and local limit laws for random variables under various assumptions on the location of the zeros of the probability generating function. 
Lebowitz, Pittel, Ruelle and Speer apply their theoretical results to graph counting polynomials
and to the Ising model in $\Z^d$ (using Lee-Yang's theorem). 
We plan to address the problem of mod-Gaussian convergence for these models in the future.

The presentation of the results here is inspired from the one in \cite{HwangSteyaertCLTZeros}.

\subsection{Mod-Gaussian convergence from non-negative factorizations}
We will prove the following statement, which is stronger than what was announced.
\begin{theorem}
Let $(X_n)_{n \in \N}$ be a sequence of bounded random variables with non-negative integer values, with mean $\mu_n$, variance $\sigma_n^2$ and third cumulant $L_n^3$. Suppose that the probability generating function $P_n(t)$ of $X_n$ can be factorized as
\begin{equation}
    P_n(t) = \prod_{1 \le j \le k_n} P_{n,j}(t), 
    \label{eq:factorizationPGF}
\end{equation}
where \vspace{2mm}    
\begin{itemize}
    \item $(k_n)_{n \in \N}$ is a sequence of positive integers;\vspace{2mm}
    \item each $P_{n,j}$ is a polynomial with non-negative coefficients;\vspace{2mm}
    \item we assume $L_n=o(\sigma_n)$ and $M_n=o((L_n)^2/\sigma_n)$, where $M_n=\max_{1 \le j \le k_n} \deg(P_{n,j})$.\vspace{2mm}
\end{itemize}
Then, the sequence 
\[\widetilde{X_n}=\frac{X_n - \mu_n}{L_n}\]
converges in the mod-Gaussian sense with limiting function $\psi=\exp(z^3/6)$ and parameters $t_n=\sigma_n^2/L_n^2$.
The convergence takes place of the whole complex plane, with speed of convergence $O(M_n/|L_n|)$.
\label{thm:modgaussianfromzeros}
\end{theorem}

\noindent Notice that under the hypotheses of Theorem \ref{thm:modgaussianfromzeros}, $M_n = o(|L_n|)$: indeed,
$$M_n \ll \frac{(L_n)^2}{\sigma_n} = |L_n|\, \frac{|L_n|}{\sigma_n} \ll |L_n| $$
since $L_n  = o(\sigma_n)$. On the other hand, in the case where $P_n$ has only real negative roots $-r_{n,j}$, then 
\[P_n(t)= C \prod_{1 \le j \le n} (t+r_{n,j}).\] Therefore, we can then apply the above theorem with $k_n=n$ and $M_n=1$. Besides, our theorem also contains mod-Gaussian convergence of a sum of i.i.d. bounded variables with a non-zero third cumulant (see Example \ref{ex:sumiid}).

\begin{proof}
Since $P_n(1)= \prod_{1 \le j \le k_n} P_{n,j}(1)=1$, we have
    \[P_n(t) = \prod_{1 \le j \le k_n} \frac{P_{n,j}(t)}{P_{n,j}(1)}. \]
Therefore, we can assume without loss of generality that $P_{n,j}(1)=1$ (otherwise replace $P_{n,j}(t)$ by $P_{n,j}(t)/P_{n,j}(1)$). Since $P_{n,j}(t)$ has non-negative coefficients, $P_{n,j}(t)$ is the probability generating function of a variable $X_{n,j}$ (for $1 \le j \le k_n$), defined by $\proba[X_{n,j}=k]=[t^k] P_{n,j}(t)$.
\bigskip

The factorization \eqref{eq:factorizationPGF} implies that $X_n$ can be represented as the sum of independent copies of the variables $X_{n,j}$. In particular, 
\begin{align*}
\mu_n&=\sum_{1 \le j \le k_n} \mu_{n,j};\\
\sigma_n^2&=\sum_{1 \le j \le k_n} \sigma_{n,j}^2; \\
L_n^3&= \sum_{1 \le j \le k_n} L_{n,j}^3
\end{align*}
where $\mu_{n,j}$, $\sigma_{n,j}^2$ and $L_{n,j}^3$ are the three first cumulants of $X_{n,j}$. Notice that each $X_{n,j}$, and hence $|X_{n,j}-\mu_{n,j}|$ is bounded by $M_n$. We will use this repeatedly below. In particular one has: $\sigma_{n,j}^2 \le (M_n)^2$ and $|L_{n,j}^3| \le (M_n)^3$.
\bigskip

Call $\widetilde{\varphi_n}$ the Laplace transform of $\widetilde{X_n}$. We have
\[ \widetilde{\varphi_n}(z) = \esper\!\left[ \exp\left(\frac{z}{L_n}\, (X_n-\mu_n)\right) \right] = \prod_{j=1}^{k_n} \,\esper\!\left[ \exp \left(\frac{z}{L_n} (X_{n,j}-\mu_{n,j}) \right) \right]. \]
Fix $K>0$ and assume that $\frac{M_n |z| }{|L_n|} \le K$. From the Taylor expansion 
\[\E^w=1+w+\frac{w^2}{2}+\frac{w^3}{6} +O(w^4) \text{ (uniformly for $|w| \le K$),}\] 
we have that, uniformly for $\frac{M_n |z| }{|L_n|} \le K$,
 \begin{equation}
    \esper\!\left[\exp \left(\frac{z}{L_n} (X_{n,j}-\mu_{n,j}) \right) \right] = 1+\frac{\sigma_{n,j}^2\,z^2}{2\,(L_n)^2} + \frac{L_{n,j}^3\,z^3}{6\,(L_n)^3} +  O\!\left(\esper[(X_{n,j}-\mu_{n,j})^4] \left(\frac{|z|}{|L_n|}\right)^{\!4} \right).
    \label{eq:techniclaplace}
\end{equation}
Note that, since $M_n |z| /|L_n| \le K$, the two first terms are bounded. Besides, we have the following bounds:
    \begin{align*}
        \left| \left(\frac{\sigma_{n,j}^2\,z^2}{2\,(L_n)^2} \right)^2 \right| &\le \frac{\esper[(X_{n,j}-\mu_{n,j})^4]}{4} \left(\frac{|z|}{|L_n|}\right)^{\!4} ;\\
        \left| \frac{\sigma_{n,j}^2\,z^2}{2\,(L_n)^2}  \times \frac{L_{n,j}^3\,z^3}{6\,(L_n)^3} \right| & \le \frac{\esper[|X_{n,j}-\mu_{n,j}|^5]}{12} \left(\frac{|z|}{|L_n|}\right)^{\!5} \\
        &\le \frac{\esper[(X_{n,j}-\mu_{n,j})^4]}{12} \left(\frac{|z|}{|L_n|}\right)^{\!4}\, \frac{M_n |z|}{|L_n|};\\
        \left| \left(\frac{L_{n,j}^3\,z^3}{6\,(L_n)^3}\right)^2 \right| &\le \frac{\esper[(X_{n,j}-\mu_{n,j})^6]}{36} \left(\frac{|z|}{|L_n|}\right)^{\!6}\\
        & \le \frac{\esper[(X_{n,j}-\mu_{n,j})^4]}{36} \left(\frac{|z|}{|L_n|}\right)^{\!4} \,\left( \frac{M_n |z|}{|L_n|} \right)^2.
    \end{align*}
Taking the logarithm of Equation \eqref{eq:techniclaplace} and using $\log(1+t)=t+O(t^2)$, we get, thanks to the above bounds, that
$$\log \esper\!\left[ \exp \left(\frac{z}{L_n} (X_{n,j}-\mu_{n,j}) \right) \right] = \frac{\sigma_{n,j}^2\,z^2}{2\,(L_n)^2} + \frac{L_{n,j}^3\,z^3}{6\,(L_n)^3} +  O\!\left(\esper[(X_{n,j}-\mu_{n,j})^4] \left(\frac{|z|}{|L_n|}\right)^{\!4} \right).$$
Summing these identities, we obtain
\[\log \widetilde{\varphi_n}(z) = \left(\frac{\sigma_n}{L_n}\right)^2\,\frac{z^2}{2}+ \frac{z^3}{6} + O\!\left(\left(\frac{|z|}{|L_n|}\right)^{\!4} \,\sum_{j=1}^{k_n}\esper[(X_{n,j}-\mu_{n,j})^4] \right).\]
The error term can be bounded as follows:
$$\left(\frac{|z|}{|L_n|}\right)^{\!4} \,\sum_{j=1}^{k_n}\esper[(X_{n,j}-\mu_{n,j})^4] \leq \left(\frac{|z|}{|L_n|}\right)^{\!4} (M_n)^2\,\sum_{j=1}^{k_n}\esper[(X_{n,j}-\mu_{n,j})^2] \leq |z|^4\,\frac{(M_n)^2(\sigma_n)^2}{|L_n|^4}.$$
By the assumption made on $M_n$, this is a $o(|z|^4)$. Finally, we get
\[\log \widetilde{\varphi_n}(z) = \left(\frac{\sigma_n}{L_n}\right)^2\,\frac{z^2}{2} + \frac{z^3}{6} + o(|z|^4),\]
which can be rewritten as
\[ \widetilde{\varphi_n}(z)\, \E^{-\left(\frac{\sigma_n}{L_n}\right)^2\,\frac{z^2}{2}} = \exp\left(\frac{z^3}{6}\right)\, \big(1+o(|z|^4)\big), \]
this being uniform for $M_n |z| /|L_n| \le K$, thus on compacts of $\C$. This ends the proof of the theorem.
\end{proof}

\begin{remark}
Suppose for instance that $M_n=1$, \emph{i.e.}, the probability generating function of $X_n$ has non-negative real roots. Then, the conditions on the first cumulants of $X_n$ in order to apply Theorem \ref{thm:modgaussianfromzeros} are
$$\sqrt{\sigma_n} \ll L_n \ll \sigma_n,$$
which are usually easy to check.
\end{remark}
\bigskip

\subsection{Two examples: uniform permutations and uniform set-partitions}\label{subsec:twoexamples}
The first example that fills in this framework is the number of disjoint cycles $X_n$ of a uniform random permutation in $\mathfrak{S}(n)$. As mod-convergence of $X_n$ has already been discussed in this article (Example \ref{ex:cycle}), we will skip details. Using Feller's coupling, it is easily seen that
\[P_n(t)= \prod_{i=1}^{n} \frac{t+i-1}{1+i-1}.\]
Moreover, a straight-forward computation yields $\mu_n=H_n+O(1)$, $\sigma^2_n=H_n+O(1)$ and $L^3_n=H_n+O(1)$, where $H_n$ is the $n$-th harmonic number as in Example \ref{ex:cycle}. Theorem \ref{thm:modgaussianfromzeros} implies that $(X_n-H_n)/(H_n)^{1/3}$ converges in the mod-Gaussian sense, as established at the end of Example \ref{ex:cycle}. Note however that Theorem \ref{thm:modgaussianfromzeros} does not give the stronger mod-Poisson convergence of $(X_n)_{n \in \N}$ without renormalization.
\bigskip

The second and more interesting example is the number of blocks in a random uniform set-partition of $[n]$.
By definition, a \emph{set-partition} of $[n]$ is a set of disjoint non-empty subsets of $[n]$, whose union is $[n]$.
These subsets are called \emph{blocks} or \emph{parts} of the set-partition. For instance, $\{\{2,4\},\{1\},\{3\}\}$ is a set-partition of $[4]$ with $3$ blocks. We denote $\qym(n)$ the set of all set-partitions of $[n]$. For each integer $n \ge 0$, we then consider a random uniform set-partition in $\qym(n)$, and denote $X_n$ its number of blocks.\bigskip

It was proved by Harper \cite[Lemma 1]{HarperCLTBlocks} that the probability generating function of $X_n$ has only real non-negative roots. Moreover, the asymptotic behaviour of $\mu_n=\esper[X_n]$ and $\sigma_n^2=\Var(X_n)$ are known  --- see {\it e.g.} \cite[Theorem 2.1]{DiaconisEtAlCLTSetPartitions} ---
\[ \mu_n = \frac{n}{\log n}\, (1+o(1)), \quad \sigma_n^2=\frac{n}{(\log n)^2}\,(1+o(1)).\]
We will prove in next subsection that
\[ L_n^3= \frac{2n}{(\log n)^3}\,(1+o(1)). \]
Since $M_n=1$ and
$$\sqrt{\sigma_n} = O\!\left(\frac{n^{1/4}}{(\log n)^{1/2}}\right) \ll L_n = O\!\left(\frac{n^{1/3}}{\log n}\right) \ll \sigma_n =  O\!\left(\frac{n^{1/2}}{\log n}\right),$$ 
we can apply Theorem \ref{thm:modgaussianfromzeros}: the variables $(X_n-\mu_n)/L_n$ converge in the mod-Gaussian sense with parameter $t_n=\sigma_n^2/L_n^2=(\frac{n}{4})^{1/3}\,(1+o(1))$ and limiting function $\psi(z)=\exp(z^3/6)$.
As a corollary, applying Theorem \ref{thm:mainnonlattice} and Proposition \ref{prop:normalityzone} yields the following precise deviation result.
\begin{proposition}
Let $X_n$ be the number of blocks in a uniform set-partition of $[n]$. Define $\mu_n$, $\sigma_n$, $L_n$ and $t_n$ as above. Then the random variable $\frac{X_n - \mu_n}{\sigma_n}$ converges towards a Gaussian law, with a normality zone of size $o(n^{1/6})$. Moreover, at the edge of this normality zone,
the deviation probabilities are given by: for any fixed $x>0$, 
\begin{align*}
\proba\!\left[\frac{X_n-\mu_n}{L_n} \ge t_n x\right] &= \frac{\exp(-t_n \frac{x^2}{2})}{x\sqrt{2\pi t_n}} \,\exp\left(\frac{x^3}{6}\right)\, (1+o(1)); \\
\proba\!\left[\frac{X_n-\mu_n}{L_n} \le -t_n x\right] &= \frac{\exp(-t_n \frac{x^2}{2})}{x\sqrt{2\pi t_n}} \,\exp\left(-\frac{x^3}{6}\right)\, (1+o(1)). 
\end{align*}
\end{proposition}
\bigskip

\subsection{Third cumulant of the number of blocks in uniform set-partitions}
Fix $n \ge 0$. Let $B_n$ be the number of set-partitions of $[n]$, known as the $n$-th Bell number. Dobinski's formula states that $B_n=\E^{-1} \sum_{k =0}^\infty k^n/k!$, which allows us to consider a random variable $M$ with the following distribution:
\[\proba[M=k] = \frac{1}{\E B_n} \frac{k^n}{k!}.\]
An easy observation, useful below, is that $\esper[M^r]=B_{n+r}/B_n$. As above, we denote $X_n$ the number of blocks in a uniform set-partition of size $[n]$. We also consider a Poisson variable $P$ of parameter $1$, independent from $X_n$.
Stam \cite{StamRandomPartition} proved the following relation (with a very nice probabilistic explanation).

\begin{lemma}
    \cite[Theorem 2]{StamRandomPartition}
    We have the following equality of random variables in law:
    \[M \stackrel{\text{law}}{=} X_n + P.\] 
\end{lemma}

\noindent Therefore, $\kappa^{(3)}(M) = \kappa^{(3)}(X_n) + \kappa^{(3)}(P)$. But $\kappa^{(3)}(P)=1$ is a constant (independent of $n$), whose value will not be relevant for the asymptotic of $\kappa^{(3)}(X_n)$. Let us consider $\kappa^{(3)}(M)$. It is given by:
\[\kappa^{(3)}(M) = \esper[M^3] -3 \,\esper[M^2] \esper[M] +2 (\esper[M])^3=\frac{B_{n+3}}{B_n} -3 \frac{B_{n+2}\, B_{n+1}}{(B_n)^2}
+2 \frac{(B_{n+1})^3}{(B_n)^3}.\]
In order to find the asymptotic of the above formula, we use the following estimate for Bell numbers. This is a variant of the Moser-Wyman formula \cite{MoserWymanFormula}, that can be found in an unpublished note of Mohr \cite{NoteMohr}.

\begin{lemma}
Let $\alpha_n$ be the positive real number defined by the equation $\alpha_n\, \E^{\alpha_n}=n$; in particular $\alpha_n=(\log n)\,(1-o(1))$. Then, one has, for any integer $r$,
    \begin{multline*}
        B_{n+r}=\frac{(n+r)!}{(\alpha_n)^{n+r}}\, \frac{\exp(\E^{\alpha_n}-1)}{\sqrt{2\pi \beta_n}} \\
        \times \left( 1+\frac{P_0+rP_1+r^2P_2}{\E^{\alpha_n}}+\frac{Q_0+rQ_1+r^2Q_2+r^3Q_3+r^4Q_4}{\E^{2\alpha_n}} +O(\E^{-3\alpha_n}) \right),
    \end{multline*}
where $\beta_n=((\alpha_n)^2+\alpha_n)\,\E^{\alpha_n}$, and the $P_i$'s and $Q_i$'s are explicit rational functions of $\alpha_n$, with $P_i=O((\alpha_n)^{-i})$ and $Q_i=O((\alpha_n)^{-i})$.
\end{lemma}

This lemma yields an estimate for quotients of Bell numbers (we substitute $\E^{\alpha_n}$ by $n/\alpha_n$):
\begin{multline*}
    (\alpha_n)^r \, \frac{B_{n+r}}{B_n}=(n+1) \cdots (n+r) \left( 
1+\alpha_n \, \frac{rP_1+r^2P_2}{n} \right.\\
\left.+(\alpha_n)^2 \, \frac{-P_0 (rP_1+r^2P_2)+rQ_1+r^2Q_2+r^3Q_3+r^4Q_4}{n^2} +O(\alpha_n^3/n^3)\right)
\end{multline*}
Consider the expression 
\[k^3:=\alpha_n^3 \left(\frac{B_{n+3}}{B_n} -3 \frac{B_{n+2}\, B_{n+1}}{B_n^2} +2 \frac{(B_{n+1})^3}{(B_n)^3} \right).\]
With the help of a computer algebra program, we find
\[k^3 = n \left( (-6 \, P_2\, (P_1+P_2) + 6 \, Q_3 + 36 \, Q_4 ) (\alpha_n)^2 + 12 P_2\, \alpha_n + 2\right) +O((\alpha_n)^3).\]
From the estimate $P_i=O((\alpha_n)^{-i})$ and $Q_i=O((\alpha_n)^{-i})$, we see that the dominant term in $k^3$ comes 
from the constant $2$ in the above equation. Namely,
\[k^3= 2 n + O\!\left(\frac{n}{\alpha_n}\right), \quad\text{ that is }\kappa^{(3)}(X_n)= \frac{2n}{(\alpha_n)^3} + O\!\left(\frac{n}{(\alpha_n)^4}\right),\]
as claimed in the previous subsection.
\bigskip
\bigskip

\section{Dependency graphs and mod-Gaussian convergence}\label{sec:depgraph}

Dependency graphs are a classical tool in the literature to prove convergence in distribution towards a Gaussian law of the sum of {\em partly} dependent random variables. They are used in various domains, such as random graphs~\cite[pages 147-152]{JLR00}, random polytopes~\cite{BV07}, patterns in random permutations~\cite{Bona10}. As dependency graphs give a natural framework to deal in a uniform way with different kinds of objects, a natural question is the following:  when we have a dependency graph with good properties, can we obtain more precise or other results than convergence in distribution? Here is a brief presentation of the literature around this question.\vspace{2mm}
\begin{itemize}
    \item In \cite{BR89}, P. Baldi and Y. Rinott give precise estimates for the total variation distance between the relevant sequence of random variables and the Gaussian distribution.\vspace{2mm}
  \item In \cite{Jan04}, S. Janson has established some large deviation result involving the fractional chromatic number of the dependency graph.\vspace{2mm}
  \item More recently, H. D\"oring and P. Eichelsbacher have shown how dependency graphs can be used to obtain some moderate deviation principles
      \cite[Section 2]{DE12}.\vspace{2mm}
\end{itemize}
Here, we shall see a link between dependency graphs and mod-Gaussian convergence. This gives us a large collection of examples, for which the material of this article gives automatically some {\em precise} moderate deviation results. Our deviation result has a larger domain of validity than the one of D{\"o}ring and Eichelsbacher --- see below.\bigskip

In this section, we establish a general result involving dependency graphs (Theorem~\ref{thm:dependencygraphsrefined}). In the next two sections, we focus on examples and derive the mod-Gaussian convergence of the following renormalized statistics:\vspace{2mm}
\begin{itemize}
  \item subgraph count statistics in Erd\"os-R\'enyi random graphs (Section \ref{sec:erdosrenyi});\vspace{2mm}
  \item random character values from central measures on partitions (Section \ref{sec:central}).
\end{itemize}
\medskip

\subsection{The theory of dependency graphs}
\label{subsec:dependencygraphs}
Let us consider a variable $X$, which writes as a sum
$$X=\sum_{\alpha \in V} \,Y_\alpha$$
of random variables $Y_\alpha$ indexed by a set $V$.

\begin{definition}
A graph $G$ with vertex set $V$ is called a {\em dependency graph} for the family of random variables $\{Y_\alpha,\,\, \alpha \in V\}$ if the following property is satisfied:\vspace{2mm}
\begin{quote}
If $V_1$ and $V_2$ are disjoint subsets of $V$ such that there are no edges in $G$ with one extremity in $V_1$ and one in $V_2$, then the sets of random variables $\{Y_\alpha\}_{\alpha \in V_1}$ and $\{Y_\alpha\}_{\alpha \in V_2}$ are independent ({\em i.e.}, the $\sigma$-algebras generated by these sets are independent).
\end{quote}
\end{definition}\medskip

\begin{example}
Let $(Y_1,\ldots,Y_7)$ be a family with dependency graph
\begin{center}
\begin{tikzpicture}[scale=2]
\draw (1,0.2) -- (0.4,0.8) -- (0,0) -- (1,0.2) -- (1.7,-0.1) -- (2,0.6);
\draw (3,0) -- (3.7,0.7);
\fill[color=white!50!black] (0,0) circle [radius=0.5mm];
\fill[color=white!50!black] (1,0.2) circle [radius=0.5mm];
\fill[color=white!50!black] (0.4,0.8) circle [radius=0.5mm];
\fill[color=white!50!black] (1.7,-0.1) circle [radius=0.5mm];
\fill[color=white!50!black] (2,0.6) circle [radius=0.5mm];
\fill[color=white!50!black] (3,0) circle [radius=0.5mm];
\fill[color=white!50!black] (3.7,0.7) circle [radius=0.5mm];
\draw (0,-0.17) node {$1$};
\draw (0.25,0.85) node {$2$};
\draw (1,0.03) node {$3$};
\draw (1.7,-0.27) node {$4$};
\draw (2.15,0.65) node {$5$};
\draw (2.8,0) node {$6$};
\draw (3.85,0.75) node {$7$};
\end{tikzpicture}
\end{center}
Then, $(Y_1,Y_2,Y_3,Y_4,Y_5)$ and $(Y_6,Y_7)$ are independent (obvious), but the vectors $(Y_1,Y_2)$ and $(Y_4,Y_5)$ are also independent: although they are in the same connected component of the graph $G$, they are not directly connected by an edge $e \in E$.
\end{example}
\medskip

\begin{remark}
Note that a family of random variables may admit several dependency graphs. In particular, the complete graph with vertex set $V$ is always a dependency graph. We are interested in dependency graphs with as few edges as possible. Note that a family of random variables does not always have a unique minimal dependency graph (minimal for edge-set inclusion), as shown by the following example.
\end{remark}
\medskip

\begin{example}
Consider three independent Bernoulli random variables $X_1,X_2,X_3$, and $Y_1=\mathbbm{1}_{(X_2=X_3)}$, $Y_2=\mathbbm{1}_{(X_1=X_3)}$ and $Y_3=\mathbbm{1}_{(X_1=X_2)}$. Then, the following graphs are minimal dependency graphs for $(Y_1,Y_2,Y_3)$:
\begin{center}
\begin{tikzpicture}[scale=2]
\draw (0,0) -- (0.5,0.7) -- (1,0);
\draw (2.5,0.7) -- (2,0) -- (3,0);
\draw (4.5,0.7) -- (5,0) -- (4,0);
\fill[color=white!50!black] (0,0) circle [radius=0.5mm];
\fill[color=white!50!black] (1,0) circle [radius=0.5mm];
\fill[color=white!50!black] (0.5,0.7) circle [radius=0.5mm];
\fill[color=white!50!black] (2,0) circle [radius=0.5mm];
\fill[color=white!50!black] (3,0) circle [radius=0.5mm];
\fill[color=white!50!black] (2.5,0.7) circle [radius=0.5mm];
\fill[color=white!50!black] (4,0) circle [radius=0.5mm];
\fill[color=white!50!black] (5,0) circle [radius=0.5mm];
\fill[color=white!50!black] (4.5,0.7) circle [radius=0.5mm];
\draw (-0.15,-0.1) node {$1$};
\draw (1.85,-0.1) node {$1$};
\draw (3.85,-0.1) node {$1$};
\draw (1.15,-0.1) node {$3$};
\draw (3.15,-0.1) node {$3$};
\draw (5.15,-0.1) node {$3$};
\draw (0.35,0.8) node {$2$};
\draw (2.35,0.8) node {$2$};
\draw (4.35,0.8) node {$2$};
\end{tikzpicture}
\end{center}
\end{example}

\begin{example}
Fix a finite graph $G=(V,E)$. Take a family of independent non-constant random variables $(Y_e)_{e \in E}$ 
indexed by the edge-set $E$ of $G$. For a vertex $v \in V$, define $X_v=\sum_e Y_e$ where the sum runs over incident edges to $v$. Then $G$ is a dependency graph for the family $(X_v)_{v \in V}$.  Moreover, it is minimal for edge-set inclusion and unique with this property. 
\end{example}
\bigskip

The following bound on cumulants of sums of random variables has been established by S. Janson~\cite[Lemma 4]{Jan88}.

\begin{theorem}\label{thm:dependencygraphs}
For any integer $r \ge 1$, there exists a constant $C_r$ with the following property. Let $\{Y_\alpha\}_{\alpha \in V}$ be a family of random variables with dependency graph $G$. We denote $N=|V|$ the number of vertices of $G$ and $D$ the maximal degree of $G$. 
Assume that the variables $Y_\alpha$ are uniformly bounded by a constant $A$. 
Then, if $X=\sum_{\alpha} Y_\alpha$, one has:
$$| \kappa^{(r)}(X)| \leq C_r \, N \, (D+1)^{r-1} \, A^r.$$
\end{theorem}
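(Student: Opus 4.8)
The plan is to expand the $r$-th cumulant of $X=\sum_{\alpha\in V}Y_\alpha$ by multilinearity, so that
$$
\kappa^{(r)}(X)=\sum_{(\alpha_1,\dots,\alpha_r)\in V^r}\kappa(Y_{\alpha_1},\dots,Y_{\alpha_r}),
$$
where $\kappa(Y_{\alpha_1},\dots,Y_{\alpha_r})$ denotes the joint (mixed) cumulant. The first key point is a vanishing property: the joint cumulant $\kappa(Y_{\alpha_1},\dots,Y_{\alpha_r})$ is zero as soon as the set $\{\alpha_1,\dots,\alpha_r\}$ can be split into two nonempty parts with no edge of $G$ between them, because mixed cumulants of a family of random variables vanish whenever that family can be partitioned into two mutually independent subfamilies (this is the classical characterization of cumulants, via the fact that the joint cumulant generating function factorizes and mixed partial derivatives drop out). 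Consequently, only those $r$-tuples $(\alpha_1,\dots,\alpha_r)$ contribute for which the induced subgraph of $G$ on $\{\alpha_1,\dots,\alpha_r\}$ is connected (more precisely, such that one cannot disconnect the index multiset).

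The second key point is to bound the number of contributing tuples. A tuple whose support spans a connected subgraph of $G$ is obtained by choosing a connected subgraph of $G$ on at most $r$ vertices and then an assignment of the $r$ labels onto it. The number of connected subgraphs of $G$ containing a fixed vertex and having exactly $k$ vertices is at most $(\E\,D)^{k-1}$ or some similar bound of the form $c^{k}D^{k-1}$ — this is a standard counting lemma (one explores a spanning tree, at each of the $k-1$ steps there are at most $D$ choices, up to a combinatorial factor bounded by $c^k$). Summing over the starting vertex gives at most $N\,(D+1)^{k-1}$ times a constant depending only on $r$; then there are at most $r^{r}$ ways to distribute the $r$ positions among the $k\le r$ chosen vertices. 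Altogether the number of contributing $r$-tuples is at most $C'_r\,N\,(D+1)^{r-1}$ for a constant $C'_r$ depending only on $r$.

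The third, more routine, ingredient is a uniform bound on each individual mixed cumulant: $|\kappa(Y_{\alpha_1},\dots,Y_{\alpha_r})|\le C''_r\prod_{i=1}^r\|Y_{\alpha_i}\|_r\le C''_r\,A^r$. This follows by writing the mixed cumulant as a signed sum over set partitions of products of moments, and bounding each moment by H\"older's inequality: $|\esper[Y_{\alpha_{i_1}}\cdots Y_{\alpha_{i_s}}]|\le\prod_j\|Y_{\alpha_{i_j}}\|_s\le\prod_j\|Y_{\alpha_{i_j}}\|_r$ (using that $\|\cdot\|_s\le\|\cdot\|_r$ for $s\le r$ since we may assume a finite underlying probability space or pass to the relevant marginals; in any case only moments of order $\le r$ appear). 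The number of set partitions of $[r]$ is the Bell number $B_r$, which is absorbed into $C''_r$.

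Combining the three ingredients: $|\kappa^{(r)}(X)|$ is at most (number of contributing tuples) times (bound per cumulant), i.e. $C'_r\,N\,(D+1)^{r-1}\cdot C''_r\,A^r=C_r\,N\,(D+1)^{r-1}\,A^r$ with $C_r=C'_rC''_r$ depending only on $r$, which is the claimed estimate. The main obstacle in making this rigorous is the second step — the clean combinatorial bound $C'_r\,N\,(D+1)^{r-1}$ on the number of $r$-tuples whose support induces a connected subgraph. One must argue carefully that the joint cumulant really vanishes unless the multiset of indices is ``connected'' in the appropriate sense (handling repeated indices correctly), and then count such multisets/tuples via a spanning-tree exploration argument without double-counting; this is exactly the content of Janson's Lemma~4 in \cite{Jan88}, and the rest of the proof is essentially bookkeeping with constants depending only on $r$.
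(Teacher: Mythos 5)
Your argument is correct and proves the theorem as stated, but it takes a genuinely different route from the paper's. You use the classical all-or-nothing property of joint cumulants: after the multilinear expansion $\kappa^{(r)}(X)=\sum_{\alpha_1,\dots,\alpha_r}\kappa(Y_{\alpha_1},\dots,Y_{\alpha_r})$, a summand vanishes unless the induced graph $G[\alpha_1,\dots,\alpha_r]$ (with equal indices merged, as you rightly insist) is connected; each surviving summand is bounded by $C''_r A^r$ via the moment--cumulant formula and H\"older; and the surviving tuples are counted by a spanning-tree exploration, giving $C'_r\,N\,(D+1)^{r-1}$. This is essentially Janson's original proof, which the paper only cites for Theorem \ref{thm:dependencygraphs}. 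The paper's own proof (of the refinement, Theorem \ref{thm:dependencygraphsrefined}) replaces the $0/1$ dichotomy by the quantitative per-tuple bound $|\kappa(Y_{\alpha_1},\dots,Y_{\alpha_r})|\le 2^{r-1}A^r\,\ST_{G[\alpha_1,\dots,\alpha_r]}$, obtained from the Tutte-type functional $\SF_H$, the contraction map $\phi_H$ in the moment--cumulant formula, and the bicolored-spanning-tree identity $2^{r-1}\ST_H=\sum_\pi \ST_\pi(H)\,\ST^\pi(H)$; it then counts pairs (tuple, spanning tree of the induced graph) via Cayley's formula, yielding the explicit constant $C_r=2^{r-1}r^{r-2}$. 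What the refined route buys matters downstream: the cumulant method (Example \ref{ex:modgaussfromcumulants}, Proposition \ref{prop:largedeviationscumulants}) needs bounds of the form $(Cr)^r$ with a fixed $C$, which $2^{r-1}r^{r-2}\le(2r)^r$ satisfies, whereas your constant --- roughly $r^r$ from the assignment of positions times $\sum_\pi|\mu(\pi)|=\sum_\pi(\#\pi-1)!$ from the moment--cumulant expansion (note this is of factorial growth, not the Bell number as you wrote) --- is of order $r^{2r}$ up to exponential factors; like D\"oring and Eichelsbacher's $(2\E)^r(r!)^3$, such a constant only yields deviation results at smaller scales than those obtained in Sections \ref{sec:erdosrenyi} and \ref{sec:central}. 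Two minor fixes: $\|\cdot\|_s\le\|\cdot\|_r$ for $s\le r$ is just Lyapunov's inequality on a probability space, so no finiteness or marginal argument is needed; and the vanishing criterion should be stated for the quotient graph in which positions carrying the same index are identified, since such positions can never be separated into two independent groups.
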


In most applications for counting substructures in random objects, the $Y_\alpha$ are indicator variables, so that the {\em uniformly bounded} assumption is not restrictive. This theorem is often used to prove some central limit theorem. In \cite{DE12}, D\"oring and  Eichelsbacher have analysed Janson's original proof and have established that the theorem holds with $C_r=(2\E)^r (r!)^3$. Then they have used this new bound to obtain some moderate deviation results. Here, we will give a new proof of Janson's result, with a smaller value of the constant $C_r$. Namely, we will prove:

\begin{theorem}\label{thm:dependencygraphsrefined}
Theorem \ref{thm:dependencygraphs} holds with $C_r=2^{r-1} \,r^{r-2}$.    
\end{theorem}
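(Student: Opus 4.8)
The plan is to follow the classical route — write the high cumulant of the sum $X=\sum_\alpha Y_\alpha$ as a sum of joint cumulants over tuples of indices, discard the tuples whose dependency pattern is disconnected, and count the connected ones — but to sharpen the two wasteful steps in Janson's and in D\"oring--Eichelsbacher's arguments: the tuple count (handled here by Cayley's formula) and the estimate of the M\"obius coefficients in the moment--cumulant expansion.

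\emph{Step 1 (linearisation).} By multilinearity of joint cumulants, $\kappa^{(r)}(X)=\sum_{(\alpha_1,\dots,\alpha_r)\in V^r}\kappa(Y_{\alpha_1},\dots,Y_{\alpha_r})$. To a tuple $\boldsymbol\alpha=(\alpha_1,\dots,\alpha_r)$ attach the graph $H_{\boldsymbol\alpha}$ on the index set $\{1,\dots,r\}$ with an edge $\{i,j\}$ whenever $\alpha_i=\alpha_j$ or $\{\alpha_i,\alpha_j\}\in E(G)$.

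\emph{Step 2 (vanishing on disconnected tuples).} If $H_{\boldsymbol\alpha}$ is disconnected, write $\{1,\dots,r\}=I\sqcup J$ with no $H_{\boldsymbol\alpha}$-edge between $I$ and $J$; then the index sets $\{\alpha_i:i\in I\}$ and $\{\alpha_j:j\in J\}$ are disjoint and carry no $G$-edge between them, so by the defining property of a dependency graph the families $(Y_{\alpha_i})_{i\in I}$ and $(Y_{\alpha_j})_{j\in J}$ are independent, and a joint cumulant of a family that splits into two independent sub-families is zero. Hence $|\kappa^{(r)}(X)|\le\sum_{\boldsymbol\alpha:\,H_{\boldsymbol\alpha}\text{ connected}}|\kappa(Y_{\alpha_1},\dots,Y_{\alpha_r})|$.

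\emph{Step 3 (counting and cumulant estimate).} For the size factor: each connected $H_{\boldsymbol\alpha}$ has a spanning tree, so, overcounting by the number of such trees, one bounds the sum over connected tuples by a sum over the $r^{r-2}$ labelled trees $T$ on $\{1,\dots,r\}$ (Cayley) of the sum over tuples $\boldsymbol\alpha$ with $T\subseteq H_{\boldsymbol\alpha}$; rooting $T$ and building $\boldsymbol\alpha$ from the root downwards gives at most $N(D+1)^{r-1}$ such tuples per tree ($N$ choices for the root label, at most $D+1$ for each further vertex, its label being equal or $G$-adjacent to its parent's). For the cumulant itself one starts from the moment--cumulant expansion $\kappa(Z_1,\dots,Z_r)=\sum_{\pi}(-1)^{|\pi|-1}(|\pi|-1)!\prod_{B\in\pi}\esper[\prod_{i\in B}Z_i]$ and H\"older's inequality $|\esper[\prod_{i\in B}Y_{\alpha_i}]|\le\prod_{i\in B}\|Y_{\alpha_i}\|_{|B|}\le A^{|B|}$, so $\prod_{B}|\esper[\cdots]|\le A^{r}$; the combinatorial weight is controlled by reorganising the double sum over $(\boldsymbol\alpha,\pi)$ along $T$, grouping the partitions $\pi$ according to which of the $r-1$ edges of $T$ join indices lying in a common block — there are $2^{r-1}$ such subsets, and one checks that for each of them the corresponding partial sum contributes at most $N(D+1)^{r-1}A^{r}$. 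Multiplying the $r^{r-2}$ trees, the $2^{r-1}$ subsets and the per-term bound $N(D+1)^{r-1}A^r$ gives $|\kappa^{(r)}(X)|\le 2^{r-1}r^{r-2}\,N(D+1)^{r-1}A^r$, i.e.\ $C_r=2^{r-1}r^{r-2}$.

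\emph{Main obstacle.} Everything in Steps 1--2, Cayley's count, the greedy $N(D+1)^{r-1}$ bound, and H\"older are routine; the real work is the bookkeeping at the end of Step 3. The moment--cumulant sum runs over \emph{all} set partitions of $\{1,\dots,r\}$, not only those compatible with a chosen spanning tree, and a naive $|\kappa|\le A^r\sum_\pi(|\pi|-1)!$ produces a Fubini-type factor, far worse than $2^{r-1}$. One must therefore either telescope the alternating sums over the "non-tree" mergings so that only the $2^{r-1}$ tree-admissible partitions survive with a bounded coefficient, or fix the spanning tree $T(\boldsymbol\alpha)$ by a deterministic rule and argue that the incompatible partitions cancel. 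Keeping the resulting constant exactly $2^{r-1}r^{r-2}$ — rather than a larger combinatorial factor — is precisely what improves on the bound $C_r=(2\mathrm{e})^r(r!)^3$ of D\"oring--Eichelsbacher and, through Example \ref{ex:modgaussfromcumulants} and Proposition \ref{prop:largedeviationscumulants}, enlarges the range of the moderate deviations obtained from a dependency graph.
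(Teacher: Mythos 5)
Your overall architecture is the same as the paper's: expand $\kappa^{(r)}(X)$ by multilinearity, note that the joint cumulant vanishes when the induced graph $H=G[\alpha_1,\dots,\alpha_r]$ is disconnected, and bound the number of relevant tuples by $r^{r-2}\,N\,(D+1)^{r-1}$ via Cayley's formula and the greedy root-to-leaves construction (this is exactly Corollary \ref{cor:countingcouplessequences_ST}). But the heart of the theorem is the per-tuple estimate $|\kappa(Y_{\alpha_1},\dots,Y_{\alpha_r})|\le 2^{r-1}A^r\,\ST_H$, where $\ST_H$ is the number of spanning trees of $H$ (Equation \eqref{eq:boundkappa} in the paper), and this is precisely what your Step 3 does not prove. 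You correctly identify the obstacle yourself: the moment--cumulant formula carries coefficients $(|\pi|-1)!$ over \emph{all} set partitions, so any argument that takes absolute values term by term and then counts partitions (your ``$2^{r-1}$ subsets of tree edges, each contributing at most $N(D+1)^{r-1}A^r$'') cannot work as stated --- the number of partitions compatible with a given subset of tree edges and the factorial Möbius weights are far too large, and no bound is given for why ``one checks'' each partial sum stays below $A^r$ per tuple. The cancellations coming from independence must be exploited \emph{before} taking absolute values, and your proposal only gestures at ``telescoping'' or ``a deterministic choice of spanning tree'' without carrying either out.

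For comparison, the paper resolves this in three steps that have no counterpart in your write-up: (i) it groups partitions $\pi$ with equal moment products $M_\pi$ via the refinement map $\phi_H$ (splitting each block along the connected components of the induced subgraph), which collapses the signed sum to $\sum_{\pi'}(-1)^{\ell(\pi')-1}M_{\pi'}\,\SF_{H/\pi'}\prod_i \mathbbm{1}_{H[\pi'_i]\ \mathrm{connected}}$, where $\SF$ is the alternating sum of Möbius values over partitions orthogonal to the contracted graph; (ii) it proves $0\le \SF_{H}\le \ST_{H}$ by a deletion--contraction induction (equivalently, both are evaluations of the Tutte polynomial, Corollary \ref{cor:ineq_SFST}); (iii) it establishes the identity $\sum_{\pi'}\ST_{H/\pi'}\prod_i\ST_{H[\pi'_i]}=2^{r-1}\ST_H$ by the bijection with spanning trees having bicolored edges (Equation \eqref{eq:coloredpseudotree}) --- this is where the factor $2^{r-1}$ really comes from, and it is close in spirit to, but not the same as, your ``$2^{r-1}$ subsets of tree edges'' heuristic. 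Without an argument replacing (i)--(iii), the claimed constant $C_r=2^{r-1}r^{r-2}$ is not justified, so the proposal as written has a genuine gap at its central step.
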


\noindent We shall see at the end of this section, and in the next Sections that this stronger version can be used to establish mod-Gaussian convergence and, thus, precise moderate deviation results. In fact, we prove a slightly more general statement.

\begin{theorem}
With the same assumptions as above, one has:
    \[ | \kappa^{(r)}(X)| \leq 2^{r-1} \,r^{r-2} \left( \sum_\alpha \esper[|Y_\alpha|] \right) (D+1)^{r-1} A^{r-1}.\]
    \label{thm:dependencygraphs_sumY}
\end{theorem}

\noindent Note that $\sum_\alpha \esper[|Y_\alpha|] \le N \, A$. On the other hand, if $Y_\alpha$ are Bernoulli variables indexed by $\alpha \in \{1,\dots,N\}$ with parameters $1/\alpha$, then $\sum_\alpha \esper[|Y_\alpha|] \sim \log N$. Thus the second bound is exponentially smaller.\medskip

The next few subsections are devoted to the proof of Theorem~\ref{thm:dependencygraphs_sumY}.
\medskip

\begin{remark}
The hypothesis of bounded random variables can sometimes be lifted by mean of truncation methods. Indeed, if $(Y_{\alpha})_{\alpha \in V}$ is a family of unbounded random variables with dependency graph $G$, then any truncated family $(Y_{\alpha}\,\mathbbm{1}_{|Y_{\alpha}|\leq L_{\alpha}})_{\alpha \in V}$ with fixed levels of truncation $L_{\alpha}$ has the same dependency graph $G$. Thus, in many situations, one can prove the mod-Gaussian convergence of (an adequate renormalization of) the truncated sum $S_{\text{truncated}}=\sum_{\alpha \in V} Y_{\alpha}\,\mathbbm{1}_{|Y_{\alpha}| \leq L_{\alpha}}$, and then to use \emph{ad-hoc} arguments in order to control the remainder $S_{\text{remainder}}=\sum_{\alpha \in V} Y_{\alpha}\,\mathbbm{1}_{|Y_{\alpha}| > L_{\alpha}}$, such as moments inequalities (Bienaymé-Chebyshev). We shall develop these arguments in details in the forthcoming paper \cite{FMN14}.
\end{remark}
\bigskip

\subsection{Joint cumulants}
There exists a multivariate version of cumulants, called joint cumulants, that we shall use to prove Theorem~\ref{thm:dependencygraphsrefined}. We present in this paragraph its definition and basic properties. Most of this material can be found in Leonov's and Shiryaev's paper~\cite{LS59} (see also \cite[Proposition 6.16]{JLR00}).
\medskip

\subsubsection{Preliminaries: set-partitions}
We denote by $[n]$ the set $\{1,\dots,n\}$. Recall from Section \ref{subsec:twoexamples} that a {\em set partition} of $[n]$ is a (non-ordered) family of non-empty disjoint subsets of $S$ (called parts of the partition), whose union is $[n]$. For instance,
$$\{\{1,3,8\},\{4,6,7\},\{2,5\}\}$$
is a set partition of $[8]$. Denote $\qym(n)$ the set of set partitions of $[n]$. Then $\qym(n)$ may be endowed with a natural partial order: the {\em refinement} order. We say that $\pi$ is {\em finer} than $\pi'$ or $\pi'$ {\em coarser} than $\pi$ (and denote $\pi \leq \pi'$) if every part of $\pi$ is included in a part of $\pi'$.\medskip

Lastly, denote $\mu$ the M\"obius function of the poset $\qym(n)$. In this paper, we only use evaluations of $\mu$ at pairs $(\pi,\{[n]\})$ (the second argument is the partition of $[n]$ in only one part, which is the maximum element of $\qym(n)$), so we shall use abusively the notation $\mu(\pi)$ for $\mu(\pi,\{[n]\})$.  In this case, the value of the M\"obius function is given by:
\begin{equation}\label{eq:valuemobius}
    \mu(\pi) = \mu(\pi, \{[n]\})=(-1)^{\#(\pi)-1} (\# (\pi)-1)!\,.
\end{equation}
\medskip

\subsubsection{Definition and properties of joint cumulants}
If $X_1, \ldots, X_r$ are random variables with finite moments on the same probability space (denote $\esper$ the expectation on this space), we define their joint cumulant by
\begin{equation}
    \kappa (X_1,\dots,X_r) = [t_1 \cdots t_r] \,\,\log 
    \bigg( \esper\!\left[ \E^{t_1 X_1 + \cdots + t_r X_r} \right] \bigg).
    \label{eq:defcumulant}
\end{equation}
As usual, $[t_1 \dots t_r] F$ stands for the coefficient of $t_1 \cdots t_r$  in the series expansion of $F$ in positive powers of $t_1, \dots, t_r$. Note that joint cumulants are multilinear functions. In the case where all the $X_i$'s are equal, we recover the $r$-th cumulant $\kappa^{(r)}(X)$ of a single variable. Using set-partitions, joint cumulants can be expressed in terms of joint moments, and {\em vice-versa}:

\begin{align}
    \esper [X_1 \cdots X_r] &= \sum_{\pi \in \qym(r)} \prod_{C \in \pi}
    \kappa(X_i\,;\, i \in C);
    \label{eq:cumulant2moment} \\
    \kappa (X_1,\dots,X_r) &= \sum_{\pi \in \qym(r)} \mu(\pi)
    \prod_{C \in \pi} \esper\!\left[ \prod_{i \in C} X_i \right].
    \label{eq:moment2cumulant}
\end{align}
In these equations, $C \in \pi$ shall be understood as ``$C$ is a part of the set partition $\pi$''. Recall that $\mu(\pi)$ has an explicit expression given by Equation~\eqref{eq:valuemobius}. For example the joint cumulants of one or two variables are simply the mean of a single random variable and the covariance of a couple of random variables:
$$\kappa(X_1)=\esper[X_1]\qquad;\qquad \kappa(X_1,X_2)=\esper[X_1 X_2] - \esper[X_1]\, \esper[X_2].$$
For three variables, one has
\begin{align*}
\kappa(X_1,X_2,X_3) &= \esper[X_1 X_2 X_3] - \esper[X_1 X_2]\, \esper[X_3] - \esper[X_1 X_3]\, \esper[X_2] \\
&\quad- \esper[X_2 X_3]\, \esper[X_1] +2\, \esper[X_1] \,\esper[X_2] \,\esper[X_3].
\end{align*}

\begin{remark}
    The most important property of cumulants is their relation with independence: if the variables $X_1,\dots,X_r$ can be split in two non-empty sets of variables which are independent with each other, then $\kappa (X_1,\dots,X_r)$ vanishes \cite[Proposition 6.16 (v)]{JLR00}. We will not need this property here. In fact, we will prove a more precise version of it, see Equation~\eqref{eq:boundkappa}.
\end{remark}\medskip

\subsubsection{Statement with joint cumulants}
Theorems~\ref{thm:dependencygraphs} and \ref{thm:dependencygraphsrefined} have some analog with joint cumulants. Let $\{Y_\alpha\}_{\alpha \in V}$ be a family of random variables with dependency graph $G$. As in Theorem \ref{thm:dependencygraphs}, we assume that the variables $Y_\alpha$ are uniformly bounded by a constant $A$: {\em i.e.}, for all $\alpha \in V$, 
$$ \|Y_\alpha\|_\infty \leq A.$$
Consider $r$ subsets $V_1, V_2,\ldots, V_r$ of $V$, non necessarily distinct and set $X_i=\sum_{\alpha \in V_i} Y_{\alpha}$ (for $i \in [r]$). We denote $D_i$ the maximal number of vertices in $V_i$ adjacent to a given vertex (not necessarily in $V_i$). Then one has the following result.

\begin{theorem} \label{thm:boundjointcumulant}
With the notation above,
$$ |\kappa(X_1,\dots,X_r)| \le 2^{r-1}\, r^{r-2} \,|V_1|\,(D_2 +1) \cdots (D_r +1)\, A^r .$$
\end{theorem}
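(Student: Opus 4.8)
The strategy is to expand $\kappa(X_1,\dots,X_r)$ by multilinearity into a sum over tuples $(\alpha_1,\dots,\alpha_r) \in V_1 \times \cdots \times V_r$ of joint cumulants $\kappa(Y_{\alpha_1},\dots,Y_{\alpha_r})$, and then to bound each such term together with a combinatorial count of how many tuples give a nonzero contribution. The key qualitative input is the refinement of the "independence kills cumulants" principle: if the dependency-graph-induced graph on the multiset $\{\alpha_1,\dots,\alpha_r\}$ (an edge between $i$ and $j$ whenever $\alpha_i \alpha_j$ is an edge of $G$ or $\alpha_i = \alpha_j$) is disconnected, then $\kappa(Y_{\alpha_1},\dots,Y_{\alpha_r}) = 0$. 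So only \emph{connected} tuples contribute.

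\textbf{Key steps.} First I would prove the quantitative bound on a single connected joint cumulant: using \eqref{eq:moment2cumulant}, the triangle inequality, H\"older's inequality (to get $|\esper[\prod_{i\in C} Y_{\alpha_i}]| \le \prod_{i \in C}\|Y_{\alpha_i}\|_r \le A^{|C|}$ when one controls all $\|\cdot\|_r$ norms — here the hypothesis $\|Y_\alpha\|_r \le A$ is exactly what is needed, noting $|C| \le r$), and the explicit M\"obius value \eqref{eq:valuemobius}, one gets $|\kappa(Y_{\alpha_1},\dots,Y_{\alpha_r})| \le A^r \sum_{\pi \in \qym(r)} (\#\pi - 1)! $; but this crude bound is too weak. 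The sharper route — this is the heart of the argument and where the constant $2^{r-1} r^{r-2}$ comes from — is to re-examine the moment-cumulant inversion and show that when the term is connected, many set-partitions cancel, leaving a bound of the form $|\kappa(Y_{\alpha_1},\dots,Y_{\alpha_r})| \le (\text{number of spanning trees / connected structures on } [r]) \cdot A^r$, and then invoke Cayley's formula giving $r^{r-2}$ spanning trees on $r$ labelled vertices, with an extra $2^{r-1}$ accounting for the choice of how tree edges are "used". Second, I would count connected tuples: fix $\alpha_1 \in V_1$ (that's the factor $|V_1|$); then build up a connected tuple by adding $\alpha_2,\dots,\alpha_r$ one at a time along a spanning tree structure, where each new vertex index $i$ is adjacent in $G$ to some already-placed $\alpha_j$, giving at most $D_i + 1$ choices for $\alpha_i$ (the $+1$ for the possibility $\alpha_i = \alpha_j$, or the degree bound for a genuine neighbour). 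Multiplying, the number of connected tuples with prescribed tree shape is at most $|V_1| (D_2+1)\cdots(D_r+1)$.

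\textbf{Assembling.} Combining the per-term bound with the count and summing over the (at most $2^{r-1} r^{r-2}$, by the spanning-tree/Cayley count) connected combinatorial types yields
$$|\kappa(X_1,\dots,X_r)| \le 2^{r-1}\, r^{r-2}\, |V_1|\,(D_2+1)\cdots(D_r+1)\, A^r,$$
which is the claim. Theorem~\ref{thm:dependencygraphsrefined} then follows as the special case $V_1 = \cdots = V_r = V$, $D_i = D$, giving $C_r = 2^{r-1} r^{r-2}$.

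\textbf{Main obstacle.} The delicate point is not the counting of tuples (routine, once one fixes a spanning-tree exploration order) but the sharp bound on an individual connected joint cumulant: the naive expansion via \eqref{eq:moment2cumulant} over all of $\qym(r)$ loses too much, and one must organize the sum so that the contribution is controlled by tree-like structures rather than arbitrary set-partitions. I expect this requires introducing an auxiliary bookkeeping — essentially grouping set-partitions according to a connected graph / forest on $[r]$ compatible with the dependency structure, and showing the resulting signed sum telescopes — together with a careful induction on $r$. Extracting precisely the constants $2^{r-1}$ and $r^{r-2}$ (as opposed to a cruder exponential-times-factorial bound) is the part that genuinely needs Cayley's formula and will be the most technical portion of the argument.
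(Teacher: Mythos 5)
Your outer scaffolding coincides with the paper's proof: expand $\kappa(X_1,\dots,X_r)$ by multilinearity, bound each joint cumulant $\kappa(Y_{\alpha_1},\dots,Y_{\alpha_r})$ by $A^r$ times a quantity controlled by spanning trees of the induced graph $G[\alpha_1,\dots,\alpha_r]$, and then count pairs (tuple, spanning tree) by fixing a Cayley tree on $[r]$ (Cayley's formula giving $r^{r-2}$), rooting it at vertex $1$ (factor $|V_1|$), and propagating along tree edges with at most $D_i+1$ choices at vertex $i$; this last counting step is correct and is exactly Corollary \ref{cor:countingcouplessequences_ST} adapted to several index sets. However, the heart of the argument — the per-tuple bound $|\kappa(Y_{\alpha_1},\dots,Y_{\alpha_r})| \le 2^{r-1}\,\ST_{H}\,A^r$ with $H=G[\alpha_1,\dots,\alpha_r]$, which is Inequality \eqref{eq:boundkappa} in the paper — is not proved in your proposal. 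You correctly observe that the naive estimate via \eqref{eq:moment2cumulant} and \eqref{eq:valuemobius} is too lossy, but what replaces it is only the hope that "many set-partitions cancel" and that "the resulting signed sum telescopes", with the factor $2^{r-1}$ attributed vaguely to "how tree edges are used". That is precisely the nontrivial content, and no mechanism for it is supplied.

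Concretely, the paper obtains this bound through a chain of steps none of which is present in your sketch: (i) group the set partitions $\pi$ in the moment--cumulant inversion according to the refinement $\phi_H(\pi)$ obtained by splitting each block into connected components of the induced subgraph, using the dependency-graph factorization of moments; (ii) identify the resulting signed weight $\sum_{\pi\in\phi_H^{-1}(\pi')}\mu(\pi)$ as $(-1)^{\ell(\pi')-1}\SF_{H/\pi'}$, where $\SF$ is the Tutte-polynomial specialization $T(1,0)$ counting connected edge subsets with signs; (iii) prove $0\le\SF\le\ST$ via deletion--contraction (Corollary \ref{cor:ineq_SFST}), and bound the indicator of connectedness of $H[\pi'_i]$ by $\ST_{H[\pi'_i]}$; (iv) establish the bicolored-spanning-tree identity $\sum_{\pi'}\ST_{H/\pi'}\,\ST^{\pi'}(H)=2^{r-1}\ST_H$ (Equation \eqref{eq:coloredpseudotree}), which is where $2^{r-1}$ actually comes from. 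Without (i)--(iv), or some substitute of equal strength, your argument does not yield the stated constant, and a "careful induction on $r$" is not an obvious substitute: the signed sum does not telescope in any straightforward sense, and it is not even clear from your sketch why the per-tuple bound should be proportional to a spanning-tree count of the \emph{induced} graph rather than to some larger connected-subgraph count. So the proposal correctly identifies the architecture of the proof and the location of the difficulty, but the quantitative core is missing.
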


\noindent The proof of this theorem is very similar to the one of Theorem \ref{thm:dependencygraphsrefined}.
However, to simplify notation, we only prove the latter here.\bigskip

\subsection{Useful combinatorial lemmas}
We start our proof of Theorem \ref{thm:dependencygraphsrefined} by stating a few lemmas on graphs and spanning trees.

\subsubsection{A functional on graphs}
In this section, we consider graphs $H$ with multiple edges and loops. We use the standard notation $V(H)$ and $E(H)$ for their vertex and edge sets. For a graph $H$ and a set partition $\pi$ of $V(H)$, we denote $\pi \perp H$ when the following holds: for any edge $\{i,j\} \in E(H)$, the elements $i$ and $j$ lie in different parts of $\pi$ (in this case we use the notation $i \nsim_\pi j$). We introduce the following functional on graphs $H$:
$$\SF_{H}=(-1)^{|V(H)|-1}\sum_{\pi \perp H}\mu(\pi).$$

\begin{lemma}
For any graph $H$, one has
$$\SF_{H}= \sum_{\substack{E \subset E(H) \\ (V(H),E)\text{ connected}}} (-1)^{|E|-|V(H)|+1}.$$
\end{lemma}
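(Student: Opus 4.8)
The plan is to prove the identity
$$\SF_H = \sum_{\substack{E \subset E(H) \\ (V(H),E)\text{ connected}}} (-1)^{|E|-|V(H)|+1}$$
by a M\"obius-inversion / sign-reversing argument. First I would recall that $\mu(\pi) = \mu(\pi,\{[n]\})$, whose value is $(-1)^{\#(\pi)-1}(\#(\pi)-1)!$ by Equation~\eqref{eq:valuemobius}, and that $(-1)^{\#(\pi)-1}(\#(\pi)-1)!$ counts (with sign) the cyclic orderings of the blocks of $\pi$; but more usefully, by the defining property of the M\"obius function of the partition lattice, $\mu(\pi,\{[n]\})$ can itself be expanded. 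The cleanest route is to rewrite $\mu(\pi)$ as a sum over graphs: for a set partition $\pi$ of a finite set $S$ with blocks $B_1,\dots,B_k$, one has
$$\mu(\pi,\{S\}) = \sum_{\substack{E \subset \binom{[k]}{2} \\ ([k],E)\text{ connected}}} (-1)^{|E|},$$
which is the classical expansion of the partition-lattice M\"obius function as an alternating sum over connected graphs on the blocks (equivalently, the statement that the reduced Euler characteristic of the order complex of $(\pi,\{S\})$ equals $\mu$). I would state this as a cited or easily-reproved lemma.

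The main step is then to substitute this expansion into $\SF_H = (-1)^{|V(H)|-1}\sum_{\pi \perp H}\mu(\pi)$ and interchange the two summations. A pair $(\pi, \Gamma)$, where $\pi \perp H$ and $\Gamma$ is a connected graph on the block set of $\pi$, is the same data as a pair $(\pi, E)$ where $E$ is a set of edges on $V(H)$ that (i) avoids putting both endpoints of any edge in a common block — but this is automatic once we track it correctly — such that the graph obtained by contracting each block of $\pi$ to a point is connected, and such that $E$ has no edge inside a block. I would reorganize the double sum by first summing over the edge set $E \subset \binom{V(H)}{2}$ (with possible multiplicities matching those coming from $H$ when relevant — here actually $E$ ranges over subsets of the potential edges, and the key point is which $E$ make $(V(H),E)$ connected). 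The sign collected is $(-1)^{|V(H)|-1}(-1)^{|E|}$, and after summing over the compatible $\pi$ for a fixed $E$, only the connected $E$ survive: indeed for $E$ fixed, the inner sum over partitions $\pi$ refining the edge-structure-compatibility reduces, via the characteristic property $\sum_{\pi: \pi \wedge (\text{something})}\dots = \delta$, to $1$ precisely when $(V(H),E)$ is connected and $E \subset E(H)$, and $0$ otherwise. Collecting the signs $(-1)^{|V(H)|-1+|E|} = (-1)^{|E|-|V(H)|+1}$ gives the claimed formula.

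The delicate bookkeeping — and the step I expect to be the main obstacle — is getting the compatibility conditions and multiplicities exactly right: the functional $\SF_H$ is defined with $\pi \perp H$ (no edge of $H$ inside a block of $\pi$), and one must be careful that in the reorganized sum the edge sets $E$ that appear really are subsets of $E(H)$, that loops and multiple edges of $H$ are handled consistently, and that the inner partition-sum collapses to an indicator of connectivity rather than something more complicated. I would handle this by induction on $|V(H)|$ as an alternative, cleaner approach: for $|V(H)|=1$ both sides equal $1$ (the empty edge set, connected); for the inductive step, one can either delete a vertex or use the deletion–contraction behaviour of $\SF_H$ (note $\SF_H$ depends only on which pairs of vertices are joined by at least one edge, since $\pi \perp H$ only sees the edge set as a relation, whereas the right-hand side a priori sees multiplicities — so a first reduction is to observe that if $H$ has a multiple edge, the right-hand side must also be shown to be multiplicity-independent, which follows from a binomial-sum collapse $\sum_{j}\binom{m}{j}(-1)^j$). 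Either way the heart is the same alternating-sum-over-connected-subgraphs identity for the M\"obius function of the partition lattice, and once that is in hand the rest is a finite interchange of summations.
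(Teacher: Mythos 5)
Your overall flavor (M\"obius inversion plus an interchange of summations) is the right one, but the way you set it up leaves a genuine gap at the central step, and it is precisely the step you yourself flag as ``the main obstacle''. You expand $\mu(\pi)$ as an alternating sum over connected graphs $\Gamma$ on the \emph{blocks} of $\pi$ (that identity is correct), and then claim that a pair $(\pi,\Gamma)$ ``is the same data as'' a pair $(\pi,E)$ with $E$ a set of edges on $V(H)$. It is not: an edge of $\Gamma$ joins two blocks, and between two blocks of a partition $\pi\perp H$ there may be zero, one, or many edges of $H$ (or of $\binom{V(H)}{2}$), so the correspondence is neither injective nor surjective, and nothing forces $E\subset E(H)$. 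Consequently the announced reorganization ``sum over $E$ first, then collapse the inner sum over $\pi$ via $\sum\mu=\delta$ to an indicator of connectivity and of $E\subset E(H)$'' has no mechanism behind it: once you have already spent $\mu(\pi)$ on the connected-graph expansion, there is no M\"obius sum left to produce that delta, and the compatibility relation you would sum over (``edge-structure-compatibility'', ``$\pi\wedge(\text{something})$'') is never defined. The paper's proof applies the inclusion--exclusion to the \emph{other} factor: it keeps $\mu(\pi)$ intact and expands the constraint $\pi\perp H$ as $\prod_{(i,j)\in E(H)}(1-\mathbbm{1}_{i\sim_\pi j})$, which after swapping sums leaves the inner sum $\sum_{\pi\geq\sigma(E)}\mu(\pi)$ over partitions coarser than the partition $\sigma(E)$ into connected components of $(V(H),E)$; the defining property of the M\"obius function then gives exactly the connectivity indicator, and the sum over $E$ is over subsets of $E(H)$ from the start. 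That is the move your proposal is missing.

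Your fallback route (induction via deletion--contraction) is also not carried out: in the paper, deletion--contraction for $\SF_H$ is a \emph{corollary} of this lemma, deduced from the subset-sum formula, so to use it here you would have to establish it directly from the partition definition $\SF_H=(-1)^{|V(H)|-1}\sum_{\pi\perp H}\mu(\pi)$. This can in fact be done (split the sum over $\pi\perp H\setminus e$ according to whether the endpoints of $e$ lie in the same block, and identify the latter partitions with partitions of $V(H/e)$, taking care of loops and multiple edges), but your sketch does not do it, and an induction ``on $|V(H)|$'' does not match deletion, which preserves the vertex set. Your side remark that the right-hand side is insensitive to edge multiplicities via $\sum_{j\ge1}\binom{m}{j}(-1)^j=-1$ is correct, but it does not repair either route. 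As written, neither argument proves the lemma.
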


\begin{proof}
To simplify notation, suppose $V(H)=[r]$. We denote $\mathbbm{1}_{(P)}$ the characteristic function of the property $(P)$. By inclusion-exclusion,
\begin{align*}
(-1)^{|V(H)|-1}\,\SF_{H}&= \sum_{\pi \in \qym(r)} \!\left( \prod_{(i,j) \in E(H)}
\mathbbm{1}_{i \nsim_{\pi} j} \right) \mu(\pi)=\sum_{\pi \in \qym(r)}\!
\left( \prod_{(i,j) \in E(H)} (1- \mathbbm{1}_{i \sim_{\pi} j})\right) \mu(\pi)\\
&= \sum_{E \subset E(H)} \sum_{\pi \in \qym(r)} (-1)^{|E|} \left( \prod_{(i,j) \in E}  \mathbbm{1}_{i \sim_{\pi} j} \right)  \mu(\pi) \\
&= \sum_{E \subset E(H)} (-1)^{|E|} \left[ \sum_{\substack{\pi \text{ such that } \\ \forall (i,j) \in E,\,\, i \sim_\pi j}} \mu(\pi) \right].
\end{align*}
But the quantity in the bracket is $0$ unless the only partition in the sum is the maximal partition $\big\{[r]\big\}$, in which case it is $1$. This corresponds to the case where the edges in $E$ form a connected subgraph of $H$.
\end{proof}
\medskip

\begin{corollary}
The functional $\SF_H$ fulfills the deletion-contraction induction, \emph{i.e.}, if $e$ is an edge of $H$ which is not a loop, then
$$\SF_H = \SF_{H / e} + \SF_{H \setminus e},$$
where $H \setminus e$ (respectively $H / e$) are the graphs obtained from $H$ by deleting (resp. contracting) the edge $e$.
\end{corollary}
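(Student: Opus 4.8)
The claim is the deletion--contraction relation $\SF_H = \SF_{H/e} + \SF_{H\setminus e}$ for an edge $e$ that is not a loop, and the plan is to prove it directly from the subgraph-sum formula
$$\SF_H = \sum_{\substack{E\subseteq E(H)\\ (V(H),E)\text{ connected}}} (-1)^{|E|-|V(H)|+1}$$
established in the preceding lemma. The natural first move is to split the sum over connected edge subsets $E\subseteq E(H)$ according to whether $e\in E$ or $e\notin E$, and to show that the $e\notin E$ part reconstructs $\SF_{H\setminus e}$ while the $e\in E$ part reconstructs $\SF_{H/e}$.

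For the term $e\notin E$: a subset $E\subseteq E(H)\setminus\{e\} = E(H\setminus e)$ makes $(V(H),E)$ connected if and only if it makes $(V(H\setminus e), E)$ connected (deleting $e$ does not change the vertex set, since $e$ is not a loop, but even if it were a loop the vertex set is unchanged), and the exponent $(-1)^{|E|-|V(H)|+1}$ is literally the same as the one appearing in $\SF_{H\setminus e}$ because $|V(H\setminus e)| = |V(H)|$. Hence this block of the sum equals $\SF_{H\setminus e}$ verbatim.

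For the term $e\in E$: write $e=\{u,v\}$ with $u\neq v$. Given a connected $E\ni e$ in $H$, form $\bar E = (E\setminus\{e\})/e$, the image of $E\setminus\{e\}$ in the contracted graph $H/e$ where $u$ and $v$ are identified to a single vertex (parallel edges and loops created by the contraction are kept, matching the multigraph convention already in force). The map $E\mapsto \bar E$ is a bijection between $\{E\subseteq E(H): e\in E,\ (V(H),E)\text{ connected}\}$ and $\{E'\subseteq E(H/e): (V(H/e),E')\text{ connected}\}$: connectivity of $(V(H),E)$ with $e\in E$ is equivalent to connectivity of the contracted edge set on $V(H/e)$, and the inverse adds $e$ back and lifts the edges. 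Under this bijection $|\bar E| = |E|-1$ and $|V(H/e)| = |V(H)|-1$, so $|\bar E| - |V(H/e)| + 1 = |E|-|V(H)|+1$ and the signs are preserved term by term. Therefore this block equals $\SF_{H/e}$, and adding the two blocks gives the corollary.

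The only point requiring a little care — and the step I expect to be the main (minor) obstacle — is bookkeeping the multigraph conventions under contraction: one must be sure that loops and multiple edges produced by identifying $u$ with $v$ are retained, so that $E(H/e)$ and the exponent count match, and that ``connected'' is interpreted on the full (unchanged, resp. reduced) vertex set rather than on the set of vertices actually touched by edges. Once these conventions are pinned down, both halves of the identification are immediate, and no nontrivial computation is needed.
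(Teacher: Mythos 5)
Your proof is correct and takes the same route as the paper: split the subgraph-sum formula for $\SF_H$ according to whether $e\in E$ or $e\notin E$, identifying the two blocks with $\SF_{H/e}$ and $\SF_{H\setminus e}$ respectively. The paper states this in one line; you simply spell out the bijection and the sign/size bookkeeping, which is the right thing to check.
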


\begin{proof}
The first term corresponds to sets of edges containing $e$, and the second to those that do not contain $e$.
\end{proof}
\bigskip

This induction (over-)determines $\SF_H$ together with the initial conditions:
$$\begin{cases}
&\SF_{\includegraphics{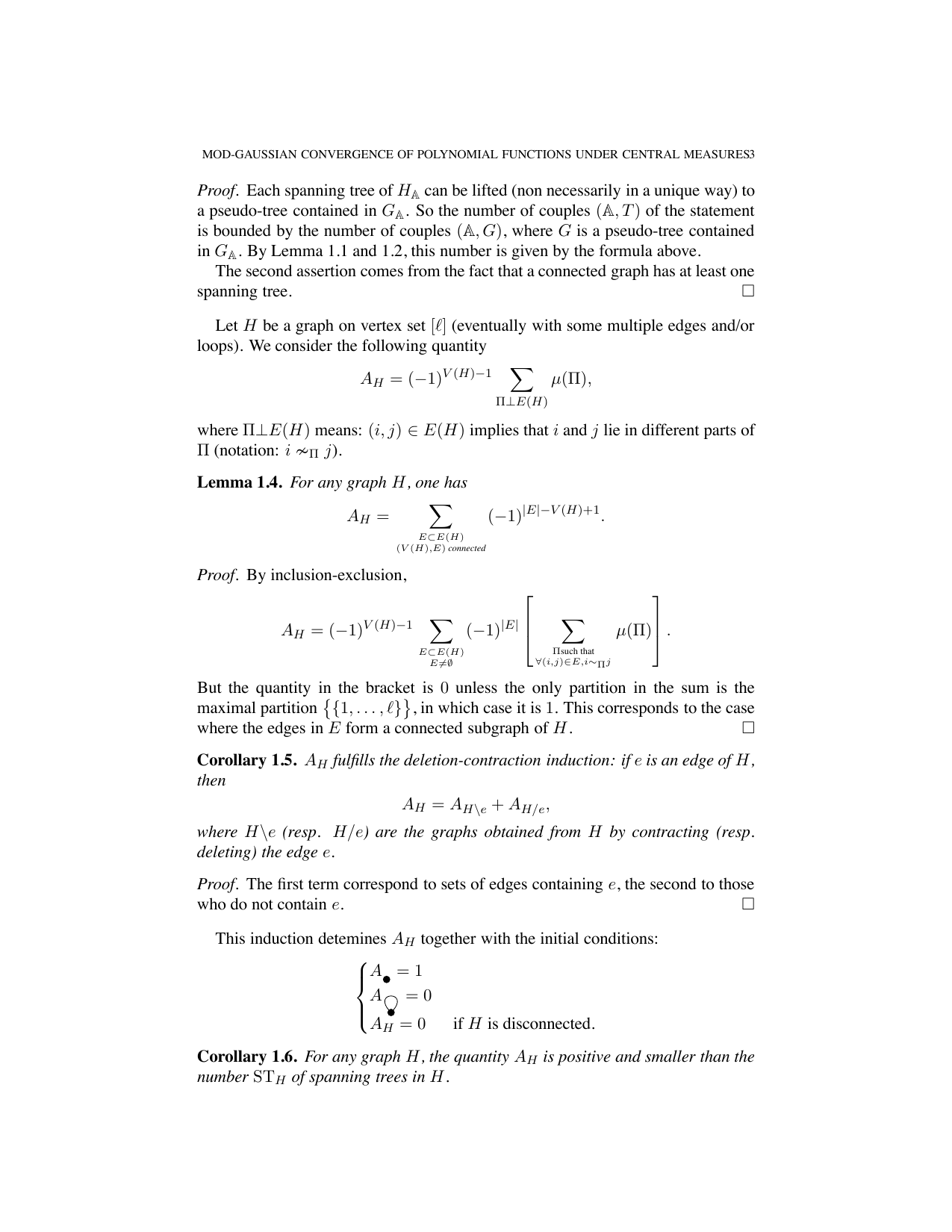}}=1, \\
&\SF_{\! \!\! \GraphOneLoop } \! =\SF_{\!  \!\! \!\GraphTwoLoops } \!=\cdots=0,\phantom{}\vspace{2mm}\\
&\SF_{H}=0\quad\text{ if $H$ is disconnected}.
\end{cases}$$

\begin{corollary}\label{cor:ineq_SFST}
For any graph $H$, the quantity $\SF_H$ is nonnegative and less or equal than the number $\ST_H$ of spanning trees in $H$.
\end{corollary}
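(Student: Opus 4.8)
The statement to be proved, Corollary~\ref{cor:ineq_SFST}, asserts two things about $\SF_H$: nonnegativity, and the upper bound $\SF_H \le \ST_H$. The natural strategy is induction on the number of edges of $H$, using the deletion-contraction relation $\SF_H = \SF_{H/e} + \SF_{H\setminus e}$ established in the previous corollary, together with the parallel deletion-contraction recursion satisfied by $\ST_H$, namely $\ST_H = \ST_{H/e} + \ST_{H\setminus e}$ for any edge $e$ which is not a loop. One must be careful to pick a good induction scheme because contracting an edge can create loops, and deletion-contraction applies only to non-loop edges.

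First I would reduce to the case where $H$ is connected and loopless: if $H$ is disconnected then $\SF_H = 0 \le \ST_H$ by the initial conditions, and if $H$ has a loop at a vertex, the loop plays no role in the condition $\pi \perp H$ (a loop edge $\{i,i\}$ can never have its endpoints in different blocks, so $\SF_H = 0$), hence again $\SF_H = 0 \le \ST_H$; moreover $\ST_H$ is unaffected by removing loops. Also, parallel edges: if $e, e'$ are parallel (same endpoints), then $\ST_H = \ST_{H'}$ where $H'$ has $e'$ removed, while for $\SF$ one uses deletion-contraction on $e$: $\SF_H = \SF_{H/e} + \SF_{H\setminus e}$, and since $H/e$ has a loop (coming from $e'$), $\SF_{H/e} = 0$, so $\SF_H = \SF_{H\setminus e}$. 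Thus both quantities are insensitive to loops and to multiplicities, and it suffices to treat simple connected graphs. For the base case, a single vertex gives $\SF = \ST = 1$, which is both nonnegative and satisfies the bound with equality.

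For the inductive step on a simple connected graph $H$ with at least one edge, pick any non-loop edge $e$. Then $H\setminus e$ is either connected or not, and $H/e$ is connected with fewer edges (possibly with loops/multi-edges, which by the above reductions contribute through their loop-free, simple quotient). By the deletion-contraction relations, $\SF_H = \SF_{H/e} + \SF_{H\setminus e}$ and $\ST_H = \ST_{H/e} + \ST_{H\setminus e}$. By induction each of $\SF_{H/e}, \SF_{H\setminus e}$ is nonnegative, hence so is $\SF_H$; and each is bounded by the corresponding $\ST$, hence $\SF_H \le \ST_{H/e} + \ST_{H\setminus e} = \ST_H$. The only subtlety is making the induction well-founded: contraction can increase edge multiplicity but strictly decreases the number of vertices, while deletion strictly decreases the number of edges, so an induction on the pair $(|V(H)|, |E(H)|)$ ordered lexicographically (or on $|E(H)| + |V(H)|$) closes the argument.

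\textbf{Main obstacle.} I expect the only real care needed is bookkeeping around loops and multiple edges created by contraction: one must verify that the reductions ``loops contribute $0$ to $\SF$ and nothing to $\ST$'' and ``parallel edges can be merged for both functionals'' are compatible with the recursion, so that the induction hypothesis can legitimately be applied to $H/e$ even when it is not simple. This is routine but must be stated cleanly. Everything else is a direct consequence of the two deletion-contraction identities plus the initial conditions already recorded in the excerpt.
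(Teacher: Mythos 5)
Your overall strategy coincides with the paper's: both $\SF$ and $\ST$ obey the deletion--contraction recursion, and one compares their recorded initial conditions by induction on $(|V(H)|,|E(H)|)$ lexicographically (or on $|V(H)|+|E(H)|$). The inductive step you give is sound.

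One of your preliminary reductions is, however, false: for parallel edges $e,e'$ with the same endpoints you claim $\ST_H=\ST_{H\setminus e'}$, but this fails already for two vertices joined by a double edge ($\ST=2$ versus $\ST=1$); a spanning tree may use either of the two parallel edges, so $\ST$ is genuinely sensitive to multiplicities (for connected $H$ it is the Tutte evaluation $T_H(1,1)$, which counts them). Similarly $\SF$ is not ``insensitive to loops'' --- adjoining a loop collapses $\SF_H$ to $0$. Fortunately neither slip affects the proof, because the parallel-edge reduction is superfluous: since contraction produces multi-edges and loops anyway, the induction must be carried out directly on multigraphs, and at that level of generality the only base cases needed are exactly those listed in the text --- a single vertex with $\geq 0$ loops, where $\SF\in\{0,1\}\leq 1=\ST$, and disconnected graphs, where $\SF=0=\ST$. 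Deleting the spurious parallel-edge paragraph both corrects and shortens the write-up without changing its substance.
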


\begin{proof}
The quantity $\ST_H$ fulfills the same induction as $\SF_H$ with initial conditions:
$$\hspace{2cm}\begin{cases}
&\ST_{\includegraphics{node.pdf}}=1, \\
&\ST_{\! \! \GraphOneLoop } \!=\ST_{ \! \! \GraphTwoLoops } \!=\cdots=1, \phantom{}\vspace{2mm}\\
&\ST_{H}=0\quad\text{ if $H$ is disconnected}.
\end{cases}\vspace*{-7mm}$$
\end{proof}
\medskip

\begin{remark}
If $H$ is connected, both $\SF_H$ and $\ST_{H}$ are actually specializations of the bivariate \emph{Tutte polynomial} $T_{H}(x,y)$ of $H$ (\emph{cf.} \cite[Chapter X]{Bol98}):
$$\SF_{H}=T_{H}(1,0)\qquad;\qquad\ST_{H}=T_{H}(1,1).$$
This explains the deletion-contraction relation. As the bivariate Tutte polynomials has non-negative coefficients, it also explains the inequality $0 \le \SF_H \le \ST_H$.
\end{remark}
\bigskip

\subsubsection{Induced graphs containing spanning trees}
Fix a graph $G$ (typically the dependency graphs of our family of variables). For a list $(v_1,\dots,v_r)$ of $r$ vertices of $G$, we define the induced graph $G[v_1,\dots,v_r]$ as follows:\vspace{2mm}
\begin{itemize}
    \item its vertex set is $[r]$;\vspace{2mm}
    \item there is an edge between $i$ and $j$ if and only if 
        $v_i=v_j$ or $v_i$ and $v_j$ are linked in $G$.\vspace{2mm}
\end{itemize}
We will be interested in spanning trees of induced graphs. As the vertex set is $[r]$, these spanning trees may be seen as {\em Cayley trees}. Recall that a Cayley tree of size $r$ is by definition a tree with vertex set $[r]$ (Cayley trees are neither rooted, nor embedded in the plane, they are only specified by an adjacency matrix). These objects are enumerated by the well-known Cayley formula established by C. Borchardt in \cite{Borchardt60}: there are exactly $r^{r-2}$ Cayley trees of size $r$.

\begin{lemma}
    Fix a Cayley tree $T$ of size $r$ and a graph $G$ with $N$ vertices and maximal degree $D$.
    Fix a vertex $v_1$ of $G$.
    The number of lists $(v_1,\dots,v_r)$ of $r$ vertices of $G$
    such that $T$ is contained in the induced subgraph $G[v_1,\dots,v_r]$ is bounded from above by
    \[ (D+1)^{r-1}. \]
\end{lemma}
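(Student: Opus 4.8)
The plan is to count the lists $(v_1,\dots,v_r)$ by building them up along the tree $T$, processing vertices in an order compatible with the tree structure, and bounding the number of choices at each step. First I would fix a root of $T$, say vertex $1$, and order the vertices $1,2,\dots,r$ so that each vertex $j\ge 2$ has a unique neighbour in $T$ that precedes it (its parent); such an order exists for any tree. The idea is then to choose $v_1, v_2, \dots, v_r$ one at a time in this order and observe how many admissible choices remain at each step given the earlier ones.

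For the first vertex, $v_1$ can be any of the $N$ vertices of $G$, giving the factor $N$. For each subsequent vertex $j$ with parent $p(j)$ in $T$, the requirement that $T$ be contained in $G[v_1,\dots,v_r]$ forces $\{p(j),j\}$ to be an edge of $G[v_1,\dots,v_r]$, which by definition of the induced graph means that either $v_j=v_{p(j)}$ or $v_j$ is adjacent to $v_{p(j)}$ in $G$. Since $G$ has maximal degree $D$, the vertex $v_{p(j)}$ has at most $D$ neighbours in $G$, and adding the possibility $v_j=v_{p(j)}$ gives at most $D+1$ choices for $v_j$. Crucially, at this stage $v_{p(j)}$ has already been determined (because $p(j)$ precedes $j$ in our chosen order), so this bound is legitimate. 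Multiplying over the $r-1$ non-root vertices yields the bound $N\,(D+1)^{r-1}$.

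The only point requiring a little care — and the closest thing to an obstacle — is making sure that the order of selection is consistent with the tree so that every vertex other than the root has exactly one already-chosen neighbour to which we can attach it; this is precisely the statement that a tree admits a linear order in which each non-minimal element has a unique predecessor-neighbour (equivalently, one may do a breadth-first or depth-first traversal from the root). Once this is granted, the counting argument above is an entirely routine step-by-step product bound, and no edge of $T$ other than the parent edges needs to be examined (extra edges of $T$ incident to $j$ go to later vertices and will be handled when those are processed; they only impose constraints, never create new choices, so they cannot increase the count). This establishes the claimed upper bound $N\,(D+1)^{r-1}$.
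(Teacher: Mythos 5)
Your proof is correct and follows essentially the same argument as the paper: choose $v_1$ freely ($N$ options) and then propagate outwards along the tree, each new vertex having at most $D+1$ admissible values (equal to or adjacent in $G$ to its already-chosen neighbour). Your explicit rooted ordering just formalizes the paper's "neighbors of $1$, then neighbors of neighbors, and so on".
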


\begin{proof}
    Lists $(v_1,\dots,v_r)$ as in the lemma are constructed as follows. First consider a neighbor $j$ of $1$ in $T$. As we require $G[v_1,\dots,v_r]$ to contain $T$, the vertices $1$ and $j$ must also be neighbors in $G[v_1,\dots,v_r]$, {\it i.e.} $v_j=v_1$ or $v_j$ is a neighbor of $v_1$ in $T$. Thus, once $v_1$ is fixed, there are at most $D+1$ possible values for $v_j$. The same is true for all neighbors of $1$ and then for all neighbors of neighbors of $1$ and so on.
\end{proof}
\medskip

We have the following immediate consequence.
\begin{corollary}
    \label{cor:countingcouplessequences_ST}
    Let $G$ be a graph with $n$ vertices and maximal degree $D$ and $r\ge 1$.
    Fix a vertex $v_1$ of $G$.
    The number of couples 
    $\big( (v_1,\dots,v_r), T  \big) $
    where each $v_i$ is a vertex of $V$ and $T$ a spanning tree of
    the induced subgraph $G[v_1,\dots,v_r]$ is bounded above by
    \[r^{r-2} \, (D+1)^{r-1}.\]
\end{corollary}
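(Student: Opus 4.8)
The statement to prove is Corollary~\ref{cor:countingcouplessequences_ST}, which counts couples $\big((v_1,\dots,v_r),T\big)$ where $T$ is a spanning tree of the induced subgraph $G[v_1,\dots,v_r]$.

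\medskip

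\textbf{Plan of proof.} The plan is to deduce the corollary immediately from the preceding lemma by a simple double-counting argument. First I would observe that any spanning tree $T$ of an induced subgraph $G[v_1,\dots,v_r]$ is, by construction, a tree on the vertex set $[r]$, hence a Cayley tree of size $r$. So the set of couples we want to enumerate can be partitioned according to the isomorphism-free ``shape'' of $T$ as a Cayley tree: for each fixed Cayley tree $T$ on $[r]$, we count the lists $(v_1,\dots,v_r)$ such that $G[v_1,\dots,v_r]$ contains $T$ as a (spanning) subgraph, and then sum over $T$. Since $T$ already spans $[r]$, ``contains $T$ as a subgraph'' and ``has $T$ as a spanning tree'' impose the same constraint on the lists, so no overcounting or undercounting occurs in this reorganization.

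\medskip

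Then I would invoke the lemma just proved: for each fixed Cayley tree $T$ of size $r$, the number of lists $(v_1,\dots,v_r)$ of vertices of $G$ such that $T \subseteq G[v_1,\dots,v_r]$ is at most $N\,(D+1)^{r-1}$. Summing this bound over all Cayley trees $T$ of size $r$, and using Borchardt's (Cayley's) formula that there are exactly $r^{r-2}$ such trees, gives the total bound
$$\sum_{T} N\,(D+1)^{r-1} = r^{r-2}\, N\, (D+1)^{r-1},$$
which is exactly the claimed inequality. (For $r=1$ the formula reads $N$, consistent with the single vertex $v_1$ and the trivial tree, and the convention $r^{r-2}=1$; for $r=2$ it reads $N(D+1)$, again consistent.)

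\medskip

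\textbf{Main obstacle.} There is essentially no obstacle here: the corollary is a one-line consequence of the lemma plus Cayley's enumeration formula, and the statement explicitly calls it an ``immediate consequence.'' The only point requiring a moment's care is the bookkeeping remark above --- that stratifying the couples by the shape of $T$ does not double count, because a spanning tree on $[r]$ is literally a Cayley tree and the containment condition is identical in both formulations --- but this is routine. The genuine combinatorial content (the $N(D+1)^{r-1}$ bound via a greedy/breadth-first construction of the list $(v_i)$ along the tree $T$) has already been handled in the lemma.
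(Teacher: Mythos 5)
Your proof is correct and is exactly what the paper intends: the corollary is stated in the paper as an "immediate consequence" of the preceding lemma with no further proof given, and your argument — stratify the couples by the Cayley tree $T$, apply the lemma's bound $N(D+1)^{r-1}$ for each fixed $T$, and multiply by the $r^{r-2}$ Cayley trees via Borchardt's formula — is precisely the intended deduction.
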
\bigskip

\subsubsection{Spanning trees and set partitions of vertices}
Recall that $\ST_H$ denotes the number of spanning trees of a graph $H$. Consider now a graph $H$ with vertex set $[r]$ and a set partition $\pi=(\pi_1,\dots,\pi_t)$ of $[r]$. For each $i$, we denote $\ST^{\pi_i}(H)=\ST_{H[\pi_i]}$ the number of spanning trees of the graph induced by $H$ on the vertex set $\pi_i$. We also use the multiplicative notation
$$\ST^\pi(H) = \prod_{j=1}^t \ST^{\pi_j}(H).$$
We can also consider the contraction $H/\pi$ of $H$ with respect to $\pi$. By definition, it is the multigraph ({\em i.e.} graph with multiple edges, but no loops) defined as follows. Its vertex set is the index set $[t]$ of the parts of $\pi$ and, for $i \neq j$, there are as many edges between $i$ and $j$ as  edges between a vertex of $\pi_i$ and a vertex of $\pi_j$ in $H$. Denote $\ST_{\pi}(H)=\ST_{H/\pi}$ the number of spanning trees of this contracted graph (multiple edges are here important). This should not be confused with $\ST^\pi(H)$: in the latter, $\pi$ is placed as an exponent because the quantity is multiplicative with respect to the part of $\pi$.
\bigskip

Note that the union of a spanning tree $\overline{T}$ of $H/\pi$ and  of spanning trees $T_i$ of $H[\pi_i]$ (one for each $1 \le i \le t$) gives a spanning tree $T$ of $H$. Conversely, take a spanning tree $T$ on $H$ and a bicoloration of its edges. Edges of color $1$ can be seen as a subgraph of $H$ with the same vertex set $[r]$. This graph is of course acyclic. Its connected components define a partition $\pi=\{\pi_1,\dots,\pi_t\}$ of $[r]$ and edges of color $1$ correspond to a collection of spanning trees $T_i$ of $H[\pi_i]$ (for $1 \le i \le t$). Besides, edges of color $2$ define a spanning tree $\overline{T}$ on $H/\pi$.\bigskip

Therefore, we have described a bijection between spanning trees $T_0$ of $H$ with a bicoloration of their edges and triples $(\pi,\overline{T},(T_i)_{1\le i \le t})$ where:\vspace{2mm}
\begin{itemize}
    \item $\pi$ is a set partition of the vertex set $[r]$ of $H$ (we denote $t$ its number of parts);\vspace{2mm}
    \item $\overline{T}$ is a spanning tree of the contracted graph $H/\pi$;\vspace{2mm}
    \item for each $1 \le i \le t$,  $T_i$ is a spanning tree of the induced graph $H[\pi_i]$.\vspace{2mm}
\end{itemize}
Before giving a detailed example, let us state the numerical corollary of this bijection:
\begin{equation}\label{eq:coloredpseudotree}
    2^{r-1} \,\ST_H = \sum_{\pi} \ST_\pi(H) \,\ST^\pi(H),
\end{equation}
where the sum runs over all set partitions $\pi$ of $[r]$.

\begin{center}
\begin{figure}[ht]
    $$T=\begin{array}{c}
        \begin{tikzpicture}
    \tikzstyle{vertex1}=[circle,draw,inner sep=0.5pt,minimum size=1mm]
    \tikzstyle{vertex2}=[rectangle,draw,inner sep=1.5pt,minimum size=1mm]
    \tikzstyle{rededge}=[red, dotted,thick,line width=1.2pt]
    \tikzstyle{blueedge}=[blue!90!white,thick,line width=1.2pt]
    \tikzstyle{greenedge}=[ForestGreen!80!white, dashed,thick,line width=1.2pt]
    \node (v1) at (0:1) [vertex1] {\footnotesize $1$};
    \node (v2) at (60:1) [vertex1] {\footnotesize $2$};
    \node (v3) at (120:1) [vertex1] {\footnotesize $3$};
    \node (v4) at (180:1) [vertex1] {\footnotesize $4$};
    \node (v5) at (240:1) [vertex1] {\footnotesize $5$};
    \node (v6) at (300:1) [vertex1] {\footnotesize $6$};
    \draw [blueedge] (v1) to (v2);
    \draw [blueedge] (v2) to (v3);
    \draw [blueedge] (v4) to (v5);
    \draw [blueedge] (v4) to (v6);
    \draw [greenedge] (v3) to (v5);
    \draw [rededge] (v3) to (v1);
    \draw [rededge] (v1) to (v4);
    \draw [rededge] (v2) to (v6);
    \draw [rededge] (v1) to (v6);
\end{tikzpicture} \end{array}\,\,\leftrightarrow\,\,
        \begin{cases}
        \,\,\,\,\pi=\{\pi_1,\pi_2\} \text{ with }\pi_1=\{1,2,3\},\ \pi_2=\{4,5,6\};\\
        \\
        \,\,\,\,T_1=\begin{array}{c}
        \begin{tikzpicture}
    \tikzstyle{vertex1}=[circle,draw,inner sep=0.5pt,minimum size=1mm]
    \tikzstyle{vertex2}=[rectangle,draw,inner sep=1.5pt,minimum size=1mm]
    \tikzstyle{rededge}=[red, dotted,thick,line width=1.2pt]
    \tikzstyle{blueedge}=[blue!90!white,thick,line width=1.2pt]
    \tikzstyle{greenedge}=[ForestGreen!80!white, dashed,thick,line width=1.2pt]
    \node (v1) at (0:1) [vertex1] {\footnotesize $1$};
    \node (v2) at (60:1) [vertex1] {\footnotesize $2$};
    \node (v3) at (120:1) [vertex1] {\footnotesize $3$};
    \draw [blueedge] (v1) to (v2);
    \draw [blueedge] (v2) to (v3);
    \draw [rededge] (v3) to (v1);
        \end{tikzpicture} 
        \end{array},\quad 
        T_2=\begin{array}{c}
        \begin{tikzpicture}
    \tikzstyle{vertex1}=[circle,draw,inner sep=0.5pt,minimum size=1mm]
    \tikzstyle{vertex2}=[rectangle,draw,inner sep=1.5pt,minimum size=1mm]
    \tikzstyle{rededge}=[red, dotted,thick,line width=1.2pt]
    \tikzstyle{blueedge}=[blue!90!white,thick,line width=1.2pt]
    \tikzstyle{greenedge}=[ForestGreen!80!white, dashed,thick,line width=1.2pt]
    \node (v4) at (180:1) [vertex1] {\footnotesize $4$};
    \node (v5) at (240:1) [vertex1] {\footnotesize $5$};
    \node (v6) at (300:1) [vertex1] {\footnotesize $6$};
    \draw [blueedge] (v4) to (v5);
    \draw [blueedge] (v4) to (v6);
        \end{tikzpicture} 
        \end{array};\\
        \\
        \,\,\,\,\overline{T}=\begin{array}{c}
        \begin{tikzpicture}
    \tikzstyle{vertex1}=[circle,draw,inner sep=0.5pt,minimum size=1mm]
    \tikzstyle{vertex2}=[rectangle,draw,inner sep=1.5pt,minimum size=1mm]
    \tikzstyle{rededge}=[red, dotted,thick,line width=1.2pt]
    \tikzstyle{blueedge}=[blue!90!white,thick,line width=1.2pt]
    \tikzstyle{greenedge}=[ForestGreen!80!white, dashed,thick,line width=1.2pt]
    \node (v123) at (60:1) [vertex1] {\footnotesize $\pi_1$};
    \node (v456) at (240:1) [vertex1] {\footnotesize $\pi_2$};
    \draw [rededge] (v123) to [bend right=-60] (v456);
    \draw [rededge] (v123) to [bend right=20] (v456);
    \draw [rededge] (v123) to [bend right=-20] (v456);
    \draw [greenedge] (v123) to [bend right=60] (v456);
\end{tikzpicture}
        \end{array}.
        \end{cases}
$$
    \caption{Bijection explaining Identity \eqref{eq:coloredpseudotree}.}
    \label{fig:partitionspanningtrees}
\end{figure}
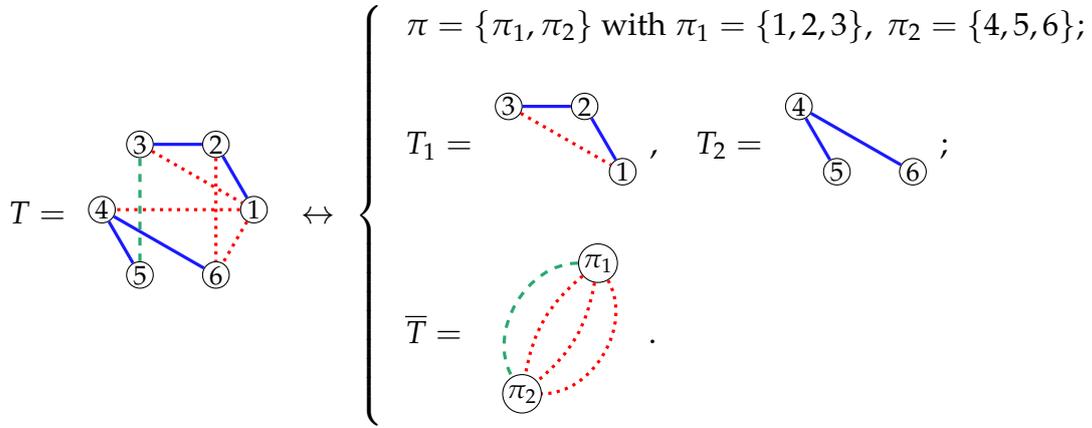
\end{center}
Our bijection is illustrated on Figure \ref{fig:partitionspanningtrees}, with the following conventions.\vspace{1.5mm}
\begin{itemize}
    \item On the left, blue plain edges are edges of color $1$ in the tree $T_0$; on the right, these blue plain edges are the edges of the spanning trees $T_i$.\vspace{1.5mm}
    \item On the left, green dashed edges are edges of color $2$ in the tree $T_0$; on the right, these green dashed edges are edges of the spanning tree $\overline{T}$.\vspace{1.5mm}
    \item Red dotted edges belong to the graphs $H$, $H/\pi$, $H[\pi_i]$ but not to their spanning tree $T_0,\overline{T},T_i$.\vspace{1.5mm}
\end{itemize}
Note that in this example the graph $H/\pi$ has two vertices linked by four edges. These four edges correspond to the edges $(1,4)$, $(1,6)$, $(2,6)$ and $(3,5)$ of $H$. In the example, the spanning tree $\overline{T}$ is the edge $\{(3,5)\}$. If we had chosen another edge, the tree $T$ on the left-hand side would have been different. Hence, in the Equality \eqref{eq:coloredpseudotree}, the multiple edges of $H/\pi$ must be taken into account: in our example, $\ST_\pi(H)=4$.\bigskip

\subsection{Proof of the bound on cumulants}
Recall that we want to find a bound for $\kappa^{(r)}(X)$. As $X$ writes $X=\sum_{\alpha \in V} Y_{\alpha}$, we may use joint cumulants and expand by multilinearity:
\begin{equation}\label{eq:expcumulant}
    \kappa^{(r)}(X) =
\sum_{\alpha_1,\dots,\alpha_r}
\kappa(Y_{\alpha_1},\dots,Y_{\alpha_r}).
\end{equation}
The sum runs over lists of $r$ elements in $V$, that is vertices of the dependency graph $G$. The proof consists in bounding each summand $\kappa(Y_{\alpha_1},\dots,Y_{\alpha_r})$, with a bound depending on the induced subgraph $G[\alpha_1,\dots,\alpha_r]$.

\subsubsection{Bringing terms together in joint cumulants.}
Recall the moment-cumulant formula \eqref{eq:moment2cumulant} which states $\kappa(Y_{\alpha_1},\dots,Y_{\alpha_r}) = \sum_\pi \mu(\pi) M_\pi$, where 
$$M_\pi = \prod_{B \in \pi} \,\esper\! \left[ \prod_{i \in B} Y_{\alpha_i} \right]. $$
We warn the reader that the notation $M_\pi$ is a little bit abusive as this quantity depends also on the list $(\alpha_1,\dots,\alpha_r)$. By hypothesis, $G$ is a dependency graph for the family $\{Y_{\alpha}\}_{\alpha \in V}$. Hence if some block $B$ of a partition $\pi$ can be split into two sub-blocks $B_1$ and $B_2$ such that the sets of vertices $\{\alpha_i\}_{i \in B_1}$ and $\{\alpha_i\}_{i \in B_2}$ are disjoint and do not share an edge, then
\begin{equation}\esper\! \left[ \prod_{i \in B} Y_{\alpha_i} \right] = 
\esper\! \left[ \prod_{i \in B_1} Y_{\alpha_i} \right] \times
\esper\! \left[ \prod_{i \in B_2} Y_{\alpha_i} \right]. \label{eq:alsoinnoncommutative}\end{equation}
Therefore, $M_\pi=M_{\phi_H(\pi)}$, where $H=G[\alpha_1,\dots,\alpha_r]$ and $\phi_H(\pi)$ is the refinement of $\pi$ obtained as follows: for each part $\pi_i$ of $\pi$, consider the induced graph $H[\pi_i]$ and replace $\pi_i$ by the collection of vertex sets of the connected components of $H[\pi_i]$. This construction is illustrated on Figure \ref{fig:phiH}.

\begin{figure}[ht]
    \hfill \begin{minipage}{.3\linewidth}
        \[H=\begin{array}{c}\begin{tikzpicture}
    \tikzstyle{vertex1}=[circle,blue!90!white,draw,inner sep=0.5pt,minimum size=1mm]
    \tikzstyle{vertex2}=[rectangle,red,draw,inner sep=1.5pt,minimum size=1mm]
    \node (v1) at (0:1) [vertex1] {\footnotesize $1$};
    \node (v2) at (60:1) [vertex1] {\footnotesize $2$};
    \node (v3) at (120:1) [vertex1] {\footnotesize $3$};
    \node (v4) at (180:1) [vertex1] {\footnotesize $4$};
    \node (v5) at (240:1) [vertex2] {\footnotesize $5$};
    \node (v6) at (300:1) [vertex2] {\footnotesize $6$};
    \draw [line width=1pt] (v1) to (v5);
    \draw [line width=1pt] (v1) to (v6);
    \draw [line width=1pt] (v2) to (v1);
    \draw [line width=1pt] (v3) to (v4);
    \draw [line width=1pt] (v3) to (v6);
    \draw [line width=1pt] (v5) to (v4);
\end{tikzpicture} \end{array}\]
    \end{minipage}\hfill \hfill
    \begin{minipage}{.6\linewidth}
For example, consider the graph $H$ here opposite and the partition $\pi=\{\pi_1,\pi_2\}$ with $\pi_1=\{1,2,3,4\}$ and $\pi_2=\{5,6\}$. Then $H[\pi_1]$ (respectively, $H[\pi_2]$) has two connected components with vertex sets $\{1,2\}$ and $\{3,4\}$ (resp. $\{5\}$ and $\{6\}$). Thus
\[\phi_H(\pi)=\{\{1,2\},\{3,4\},\{5\},\{6\}\}. \]
    \end{minipage} \hfill
    \vspace{3mm}
    \caption{Illustration of the definition of $\phi_H$.}
    \label{fig:phiH}
\end{figure}

We can thus write
$$\kappa(G)= \sum_{\pi'}\, M_{\pi'} \! \left( \sum_{\pi \in \phi_H^{-1}(\pi')} \mu(\pi) \right).$$
Fix $\pi'=(\pi'_1,\dots,\pi'_t)$ and let us have a closer look to the expression in the parentheses that we will call $\pa_{\pi'}$. To compute it, it is convenient to consider the contraction $H/\pi'$ of the graph $H$ with respect to the partition $\pi'$.
\begin{lemma} 
    Let $\pi'$ be a set partition of $[r]$. If one of the induced graph $H[\pi'_i]$ is disconnected, then $\pa_{\pi'}=0$. Otherwise, $\pa_{\pi'}=(-1)^{\ell(\pi')-1} \SF_{H/\pi'}$.
\end{lemma}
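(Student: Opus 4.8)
Fix the set partition $\pi'=(\pi'_1,\dots,\pi'_t)$ of $[r]$ and let $H=G[\alpha_1,\dots,\alpha_r]$. By definition,
\[
\pa_{\pi'}=\sum_{\pi\in\phi_H^{-1}(\pi')}\mu(\pi).
\]
The first step is to understand the fibre $\phi_H^{-1}(\pi')$. Recall that $\phi_H(\pi)$ is obtained from $\pi$ by splitting every part $\pi_i$ into the vertex sets of the connected components of $H[\pi_i]$. Hence $\phi_H(\pi)=\pi'$ forces $\pi\ge\pi'$ (each part of $\pi$ is a union of parts of $\pi'$), and moreover the induced graph $H[\pi_i]$ must be connected for every part $\pi_i$ of $\pi$ — otherwise $\phi_H$ would split $\pi_i$ further. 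Conversely, any $\pi\ge\pi'$ whose parts induce connected subgraphs of $H$ satisfies $\phi_H(\pi)=\pi'$: indeed if $\pi_i$ is a union of parts of $\pi'$ and $H[\pi_i]$ is connected, then $H[\pi_i]$ has a single connected component, namely $\pi_i$ itself. Thus I first observe that if some $H[\pi'_i]$ is disconnected, then \emph{no} $\pi$ in the fibre can have $\pi'_i$ contained in one of its (connected-inducing) parts, so in particular the identity $\phi_H(\pi')=\pi'$ fails and one checks directly that the fibre is empty; hence $\pa_{\pi'}=0$. (Slightly more carefully: if $H[\pi'_i]$ is disconnected, then for \emph{any} $\pi\ge\pi'$, the part of $\pi$ containing $\pi'_i$ induces a disconnected subgraph unless it is strictly larger and reconnects it, but then $\phi_H$ splits it back and cannot return $\pi'$; a clean way is to note that $\phi_H\circ\phi_H=\phi_H$ and $\pi'$ is in the image of $\phi_H$ iff every $H[\pi'_i]$ is connected.)

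The second step treats the case where every $H[\pi'_i]$ is connected. Here the fibre $\phi_H^{-1}(\pi')$ is exactly the set of $\pi$ in the interval $[\pi',\{[r]\}]$ of $\qym(r)$ whose parts induce connected subgraphs of $H$. The interval $[\pi',\{[r]\}]$ in the partition lattice is itself isomorphic to the partition lattice $\qym(t)$, via the map sending $\pi\ge\pi'$ to the partition of the index set $[t]$ of blocks of $\pi'$ it induces; and under this isomorphism, ``$\pi_i$ induces a connected subgraph of $H$'' becomes ``the corresponding block of the induced partition of $[t]$ induces a connected subgraph of the contracted graph $H/\pi'$''. Moreover the M\"obius function is compatible with this isomorphism: $\mu(\pi,\{[r]\})=\mu_{\qym(t)}(\bar\pi,\{[t]\})$ where $\bar\pi$ is the image. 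Therefore
\[
\pa_{\pi'}=\sum_{\substack{\bar\pi\in\qym(t)\\ \text{each block of }\bar\pi\text{ induces a connected}\\ \text{subgraph of }H/\pi'}}\mu(\bar\pi).
\]
I then want to recognise this as $(-1)^{t-1}\SF_{H/\pi'}$. Comparing with the definition $\SF_{K}=(-1)^{|V(K)|-1}\sum_{\sigma\perp K}\mu(\sigma)$: there the sum is over partitions $\sigma$ such that \emph{no} edge of $K$ joins two vertices in the same block, i.e. every block induces a graph with \emph{no} edges, which is the ``opposite'' condition. The reconciliation comes from the lemma already proved in the excerpt, namely
\[
\SF_{K}=\sum_{\substack{E\subset E(K)\\ (V(K),E)\text{ connected}}}(-1)^{|E|-|V(K)|+1},
\]
together with the analogous inclusion–exclusion identity for $\sum_{\bar\pi\text{ blocks connected}}\mu(\bar\pi)$. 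Concretely, I would run the same inclusion–exclusion computation as in the proof of that lemma, but now summing over \emph{subsets of non-edges} (equivalently over the condition that each block induces a connected subgraph rather than an edgeless one), and check it collapses to the same ``connected spanning subgraph'' sum up to the sign $(-1)^{t-1}$. This gives $\pa_{\pi'}=(-1)^{t-1}\SF_{H/\pi'}$, and since $t=\ell(\pi')$ is the number of parts of $\pi'$, this is exactly the claimed $(-1)^{\ell(\pi')-1}\SF_{H/\pi'}$.

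\textbf{Main obstacle.} The routine parts are the bookkeeping with $\phi_H$ and the lattice-isomorphism transfer of the M\"obius function. The one step that needs genuine care is the identification of $\sum_{\bar\pi:\text{ blocks connected}}\mu(\bar\pi)$ with $(-1)^{t-1}\SF_{H/\pi'}$, i.e. matching the ``connected blocks'' sum against the definition of $\SF$ which is phrased via ``$\perp$'' (edgeless blocks). I expect to do this by re-deriving the inclusion–exclusion formula of the earlier lemma in this dual form: expand the indicator that a partition refines the ``each block connected'' condition as an alternating sum over edge subsets forced to be cut, observe that the inner sum over partitions vanishes unless the cut edges disconnect the graph into singletons, and recover $\SF_{H/\pi'}$; the sign $(-1)^{t-1}$ is then forced by comparing the top-degree term. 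If one prefers, an alternative and perhaps cleaner route is to invoke directly the known formula for the M\"obius function of the lattice of connected partitions of a graph (Speed–Whittle type identities), which states precisely $\sum_{\bar\pi\text{ connected}}\mu(\bar\pi)=(-1)^{|V|-1}\SF_{H/\pi'}$ with $\SF$ the $T(1,0)$ Tutte evaluation already introduced. Either way the proof is short once the combinatorial dictionary is set up.
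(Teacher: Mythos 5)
There is a genuine gap, and it is at the crux of the argument: your characterization of the fibre $\phi_H^{-1}(\pi')$ is backwards. You claim $\pi\in\phi_H^{-1}(\pi')$ iff $\pi\ge\pi'$ and every part of $\pi$ induces a \emph{connected} subgraph of $H$. But recall $\phi_H$ \emph{refines} $\pi$ by splitting each part $\pi_i$ into the vertex sets of the connected components of $H[\pi_i]$; if $H[\pi_i]$ is connected then $\phi_H$ leaves $\pi_i$ intact, so your ``converse'' actually gives $\phi_H(\pi)=\pi$, which equals $\pi'$ only when $\pi=\pi'$. The correct description is the opposite one: writing each part $\pi_i$ of $\pi\ge\pi'$ as a union of parts $\pi'_{j_1},\dots,\pi'_{j_m}$ of $\pi'$, the condition $\phi_H(\pi)=\pi'$ says precisely that the connected components of $H[\pi_i]$ are the $\pi'_{j_l}$'s themselves, i.e.\ (given each $H[\pi'_j]$ connected) that there are \emph{no} edges of $H$ between distinct $\pi'_{j_l},\pi'_{j_{l'}}$. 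Passing to the index set $[t]$ of blocks of $\pi'$, this is exactly $\bar\pi\perp(H/\pi')$: blocks of $\bar\pi$ are independent sets in $H/\pi'$, not connected ones. (A concrete counterexample to your description: $H$ on $\{1,2,3,4\}$ with edges $\{1,2\}$ and $\{3,4\}$, $\pi'$ the discrete partition; then $\{\{1,3\},\{2,4\}\}$ is in the fibre despite having edgeless blocks, while $\{\{1,2\},\{3,4\}\}$ is not in the fibre despite having connected blocks.)

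This error propagates: you then try to reconcile a ``connected blocks'' sum with the $\perp$-definition of $\SF$ via a Tutte/Speed--Whittle identity, but that identity is itself false. For $K$ a single edge on two vertices, $\sum_{\bar\pi:\text{ blocks connected}}\mu(\bar\pi)=\mu(\{\{1,2\}\})+\mu(\{\{1\},\{2\}\})=1-1=0$, whereas $(-1)^{2-1}\SF_K=-1$. (What is true, and perhaps the source of the confusion, is that the M\"obius function of the \emph{bond lattice} of a connected graph at its top is $(-1)^{n-1}\,T(1,0)$; but your sum is taken in the full partition lattice, not in the bond lattice, so the M\"obius values are different and the identity does not apply.) Once the fibre is characterized correctly as $\{\bar\pi : \bar\pi\perp(H/\pi')\}$, the rest of your machinery is unnecessary: the lattice-interval isomorphism (which you set up correctly) transfers the M\"obius function, and then $\pa_{\pi'}=\sum_{\bar\pi\perp(H/\pi')}\mu(\bar\pi)=(-1)^{t-1}\SF_{H/\pi'}$ is literally the definition of $\SF$. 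Your treatment of the disconnected case is essentially right: $\pi'$ is in the image of $\phi_H$ iff every $H[\pi'_i]$ is connected, whence the fibre is empty otherwise.
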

\begin{proof}
The first part is immediate, as $\phi_H^{-1}(\pi')=\emptyset$ in this case.\bigskip

\noindent If all induced graphs are connected, let us try to describe $\phi_H^{-1}(\pi')$. All set partitions $\pi$ of this set are coarser than $\pi'$, so can be seen as set partitions of the index set $[r]$ of the parts of $\pi'$. This identification does not change their M\"obius functions, which depends only on the number of parts. Then, it is easy to see that $\pi$ lies in $\phi_H^{-1}(\pi')$ if and only if $\pi$ is coarser than $\pi'$ and  two elements in the same part of $\pi$ never share an edge in $H/\pi'$ (here, $\pi$ is seen as a set partition of $[r]$). In other words, $\pi$ lies in $\phi_H^{-1}(\pi')$ if and only if $\pi \perp (H/\pi')$. This implies the Lemma.
\end{proof}
\bigskip

\noindent Consequently,
\begin{equation}\label{eq:jointcumulantgroup}
\kappa(Y_{\alpha_1},\dots, Y_{\alpha_r}) =
\sum_{\pi'} (-1)^{\ell(\pi')-1} M_{\pi'}\, \SF_{H/\pi'}\left( \prod_{i=1}^t \mathbbm{1}_{H[\pi'_i]\text{ connected}}\right),
\end{equation}
where the sum runs over all set partitions $\pi'$ of $[r]$.\medskip

\subsubsection{Bounding all the relevant quantities}
Using $|Y_{\alpha_i}| \le A$ for all $i$, we get the inequality: 
$$|M_\pi| \le A^{r-1}\, \esper[|Y_{\alpha_1}|].$$
Finally, to bound each summand $\kappa(Y_{\alpha_1},\dots,Y_{\alpha_r})$, we shall use the following bounds:
\begin{align*}
    |\SF_{H/\pi'}| &\le \ST_{H/\pi'}  \qquad \text{by Corollary \ref{cor:ineq_SFST};}\\
    \mathbbm{1}_{H[\pi'_i]\text{ connected}} &\le \ST_{H[\pi'_i]}\,.
\end{align*}
Thus, Equation \eqref{eq:jointcumulantgroup} gives
\begin{equation}\label{eq:boundkappa}
    |\kappa(Y_{\alpha_1},\dots, Y_{\alpha_r})| \le 
    A^{r-1}\, \esper [|Y_{\alpha_1}|]
    \,\sum_{\pi'} \,\ST_{H/\pi'} \left(\prod_{i=1}^t \ST_{H[\pi'_i]}\right) =
    A^{r-1}\, \esper[|Y_{\alpha_1}|]
    \, 2^{r-1}\, \ST_H,
\end{equation}
the last equality corresponding to Equation \eqref{eq:coloredpseudotree}.
Recall that $H=G[\alpha_1,\dots,\alpha_r]$.
Summing over $\alpha_1,\dots,\alpha_r$ and using Equation \eqref{eq:expcumulant}, we get 
$$
|\kappa^{(r)}(X)| \le (2A)^{r-1} \sum_{\alpha_1} \esper[|Y_{\alpha_1}|] \left[ \sum_{\alpha_2,\dots,\alpha_r} \ST_{G[\alpha_1,\dots,\alpha_r]} \right].
$$

\noindent From Corollary \ref{cor:countingcouplessequences_ST}, we know that the expression in the bracket is bounded by the quantity \hbox{$r^{r-2} (D+1)^{r-1}$} (for any fixed $\alpha_1$). This completes the proof of Theorem \ref{thm:dependencygraphs_sumY}.
\bigskip

\subsection{Sums of random variables with a sparse dependency graph}
An immediate application of Theorem \ref{thm:dependencygraphsrefined} is the following general result on sums of weakly dependent random variables, to be compared with \cite[\S2.3]{Pen02}. Let $X_n=\sum_{i=1}^{N_n} Y_i$ be a sum of random variables, where the $Y_i$'s have a dependency graph of degree $D_n-1$, and satisfy $\|Y_i\|_\infty \leq A$ for some $A \geq 0$ (independent of $i$). We also assume that $X_n$ is not deterministic, so that its variance is non-zero.

\begin{theorem}\label{thm:modgaussiangeneralsparsegraph}
We assume that the dependency graph of $(Y_i)_{1\leq i \leq N_n}$ is sparse, in the sense that $\lim_{n \to \infty} \frac{D_n}{N_n} = 0 $.\vspace{2mm}
\begin{enumerate}
	\item There exists a positive constant $C$ such that, for all $r \geq 2$, 
$ \left|\kappa^{(r)}\!\left(\frac{X_n}{D_n}\right)\right| \leq (Cr)^r\,\frac{N_n}{D_n}.$\vspace{2mm}
	\item Consider the bounded sequences 
	$$\sigma^2_n=\frac{D_n}{N_n}\,\,\kappa^{(2)}\!\left(\frac{X_n}{D_n}\right)\qquad;\qquad L_n=\sigma_n^3\,\frac{D_n}{N_n}\,\,\kappa^{(3)}\!\left(\frac{X_n}{D_n}\right).$$
	If they have limits $\sigma^2>0$ and $L$, then $$\frac{X_n-\esper[X_n]}{D_n\,\sigma_n}\,\left(\frac{D_n}{N_n}\right)^{1/3}$$ converges in the mod-Gaussian sense with parameters $t_n=(N_n/D_n)^{1/3}$ and limiting function $\psi(z)=\exp(Lz^3/6)$.\vspace{2mm}
\item If furthermore,
    \[ \sigma^2_n = \sigma^2 \left(1 + o\!\left( (D_n/N_n)^{1/3}\right) \right), \]
    then the variable
    \[\frac{X_n-\esper[X_n]}{D_n\,\sigma}\,\left(\frac{D_n}{N_n}\right)^{1/3}\]
    converges in the mod-Gaussian sense with parameters $(N_n/D_n)^{1/3}$ and limiting function $\psi(z)=\exp(Lz^3/6)$
    (the difference with the previous item is that $\sigma_n$ has been replaced by $\sigma$).
\end{enumerate}
\end{theorem}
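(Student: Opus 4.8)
The plan is to reduce Theorem~\ref{thm:modgaussiangeneralsparsegraph} to the framework of Example~\ref{ex:modgaussfromcumulants}, with $\alpha_n = N_n/D_n$ and $\beta_n = 1$, and to $v=3$. For part $(1)$, set $Z_n = X_n/D_n = \sum_{i=1}^{N_n} (Y_i/D_n)$. The variables $Y_i/D_n$ still have the dependency graph $G$ of degree $D_n-1$, and $\|Y_i/D_n\|_r \le A/D_n$; so by Theorem~\ref{thm:dependencygraphsrefined}, $|\kappa^{(r)}(X_n)| \le 2^{r-1} r^{r-2} N_n D_n^{r-1} A^r$, hence
\[
\left|\kappa^{(r)}\!\left(\frac{X_n}{D_n}\right)\right| = \frac{|\kappa^{(r)}(X_n)|}{D_n^r} \le 2^{r-1}\,r^{r-2}\,\frac{N_n}{D_n}\,A^r \le (Cr)^r\,\frac{N_n}{D_n}
\]
for $C$ large enough (e.g. $C = 2A$). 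This is exactly the bound $|\kappa^{(r)}(S_n)| \le (Cr)^r \alpha_n (\beta_n)^r$ required in Example~\ref{ex:modgaussfromcumulants}, with $S_n = X_n/D_n$, $\alpha_n = N_n/D_n \to \infty$ (this is where the sparsity hypothesis $D_n/N_n \to 0$ enters), and $\beta_n = 1$.

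For part $(2)$, I would first recenter: replace $X_n$ by $X_n - \esper[X_n]$, which does not change any cumulant of order $\ge 2$ and makes $S_n := (X_n - \esper[X_n])/D_n$ centered. The hypotheses state that $\kappa^{(2)}(S_n) = \sigma_n^2\,(N_n/D_n)$ with $\sigma_n^2 \to \sigma^2 > 0$, and $\kappa^{(3)}(S_n) = (L_n/\sigma_n^3)(N_n/D_n)$ with $L_n \to L$; in the notation of Example~\ref{ex:modgaussfromcumulants} this reads $\kappa^{(2)}(S_n) = \sigma^2 \alpha_n (1 + o(1))$ and $\kappa^{(3)}(S_n) = L \alpha_n (1+o(1))$. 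Condition $(1)$ of that example ($\kappa^{(r)} = 0$ for $3 \le r < v$) is vacuous since $v=3$, and condition $(2)$ is precisely the displayed asymptotics --- except that Example~\ref{ex:modgaussfromcumulants} demands the sharper error $\kappa^{(2)}(S_n) = \sigma^2 \alpha_n (1 + o(\alpha_n^{-(v-2)/v})) = \sigma^2 \alpha_n(1+o(\alpha_n^{-1/3}))$. So part $(2)$ as stated only gives the $o(1)$ error on the second cumulant. I would therefore, in part $(2)$, not normalize by the deterministic $\sigma$ but by $\sigma_n$: setting $X_n' = (X_n - \esper[X_n])/\sigma_n$, one has $\kappa^{(2)}(X_n'/D_n) = \alpha_n$ exactly, so the required error condition holds trivially, and $\kappa^{(3)}(X_n'/D_n) = (L_n/\sigma_n^3)\alpha_n = (L/\sigma^3 + o(1))\,\sigma^{-3}\cdot\sigma^3\alpha_n$... more simply $\kappa^{(3)}((X_n-\esper X_n)/(D_n\sigma_n)) = (L_n)\alpha_n(1+o(1)) = L\alpha_n(1+o(1))$ after absorbing constants, so Example~\ref{ex:modgaussfromcumulants} applies verbatim with $\alpha_n = N_n/D_n$, $\beta_n = 1$, $v=3$: the rescaled variable
\[
X_n'' := \frac{X_n - \esper[X_n]}{D_n\,\sigma_n}\,\left(\frac{D_n}{N_n}\right)^{1/3} = \frac{(X_n - \esper[X_n])/\sigma_n}{(\alpha_n)^{1/3}}
\]
converges mod-Gaussian with parameters $t_n = \sigma^2\,(\alpha_n)^{(v-2)/v} = \sigma^2 (N_n/D_n)^{1/3}$... wait --- here I must be careful: Example~\ref{ex:modgaussfromcumulants} produces parameters $\sigma^2 \alpha_n^{(v-2)/v}$, but the statement claims parameters $(N_n/D_n)^{1/3} = \alpha_n^{1/3}$. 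Resolving this requires tracking the $\sigma^2$ factor carefully, and I would handle it by dividing by $\sigma$ (or $\sigma_n$) appropriately so that the effective variance constant becomes $1$; this is the mild bookkeeping subtlety of the proof. Concretely, with $S_n = (X_n - \esper[X_n])/\sigma_n$ one gets $\kappa^{(2)}(S_n) = 1\cdot \alpha_n$, i.e. the "$\sigma^2$" of Example~\ref{ex:modgaussfromcumulants} equals $1$, so the parameters are $\alpha_n^{1/3} = (N_n/D_n)^{1/3}$, and $\kappa^{(3)}(S_n)/\alpha_n \to L$ gives limiting function $\exp(Lz^3/6)$, as claimed.

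Part $(3)$ is then immediate: the strengthened hypothesis $\sigma_n^2 = \sigma^2(1 + o((D_n/N_n)^{1/3})) = \sigma^2(1 + o(\alpha_n^{-1/3}))$ is exactly the error bound $o(\alpha_n^{-(v-2)/v})$ with $v=3$ that Example~\ref{ex:modgaussfromcumulants} requires when one normalizes by the \emph{deterministic} constant $\sigma$ rather than $\sigma_n$. So one applies Example~\ref{ex:modgaussfromcumulants} directly to $S_n = (X_n - \esper[X_n])/\sigma$, getting that $(X_n - \esper[X_n])/(D_n\sigma)\cdot(D_n/N_n)^{1/3}$ converges mod-Gaussian with parameters $(N_n/D_n)^{1/3}$ and limiting function $\exp(Lz^3/6)$. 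The only genuine content is part $(1)$'s cumulant bound, which is handed to us by Theorem~\ref{thm:dependencygraphsrefined}; the main (minor) obstacle is the careful bookkeeping of the constants $\sigma$, $\sigma_n$ and the placement of $\sigma^2$ inside the "parameters" versus the normalization, which distinguishes parts $(2)$ and $(3)$ and must be matched precisely against the statement of Example~\ref{ex:modgaussfromcumulants}.
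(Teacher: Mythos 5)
Your route is the one the paper intends: the paper gives no written proof of this theorem, presenting it as an immediate consequence of Theorem~\ref{thm:dependencygraphsrefined} together with the cumulant method of Example~\ref{ex:modgaussfromcumulants} and \S\ref{subsec:deviationscumulant}, and your treatment of part~(1) (the bound of Theorem~\ref{thm:dependencygraphsrefined} with $D+1=D_n$, divided by $(D_n)^r$, with $C=2A$) and your explanation of the (2)/(3) dichotomy --- normalize by the empirical $\sigma_n$ so that the second cumulant of the rescaled variable is exactly $(N_n/D_n)^{1/3}$ and the error condition $o\big(\alpha_n^{-1/3}\big)$ of Example~\ref{ex:modgaussfromcumulants} is vacuous, versus normalize by the deterministic $\sigma$, which requires precisely the extra rate hypothesis of (3) --- is exactly the intended content. (One small point you should add: after dividing by $\sigma_n$, the uniform bound $|\kappa^{(r)}|\leq (C'r)^r\,\frac{N_n}{D_n}$ still holds with a constant independent of $n$ because $\sigma_n\to\sigma>0$, hence $\sigma_n$ is bounded below for $n$ large.)

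The step you yourself flag as ``bookkeeping'' is, however, not carried out correctly, and it is the only place where something must genuinely be checked. With $S_n=(X_n-\esper[X_n])/D_n$ and the definition of $L_n$ as printed, your own translation reads $\kappa^{(3)}(S_n)=\frac{L_n}{\sigma_n^3}\,\frac{N_n}{D_n}$; dividing by $\sigma_n$ then gives $\kappa^{(3)}\big(S_n/\sigma_n\big)=\frac{L_n}{\sigma_n^{6}}\,\frac{N_n}{D_n}$, not $\frac{L_n}{\sigma_n^{3}}\,\frac{N_n}{D_n}$ and certainly not $L_n\,\frac{N_n}{D_n}$ as you assert ``after absorbing constants''. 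Taken literally, your computation would therefore yield the limiting function $\exp\big(\frac{L}{\sigma^6}\,\frac{z^3}{6}\big)$ rather than $\exp\big(\frac{L z^3}{6}\big)$. The stated conclusion holds exactly when $L=\lim_n \sigma_n^{-3}\,\frac{D_n}{N_n}\,\kappa^{(3)}\big(\frac{X_n}{D_n}\big)$, i.e.\ the exponent of $\sigma_n$ in the displayed definition of $L_n$ must be read as $-3$ (this is consistent with Proposition~\ref{prop:largedeviationscumulants}, where dividing by $\sigma$ changes $L$ into $L/\sigma^3$, and with the deviation formula displayed just after the theorem). A correct write-up must either track this factor and observe the discrepancy with the printed definition, or redo the computation with the corrected definition, in which case $\kappa^{(3)}\big(S_n/\sigma_n\big)=L_n\,\frac{N_n}{D_n}$ comes out cleanly and the rest of your argument (and likewise part~(3), where $\kappa^{(2)}\big((X_n-\esper[X_n])/(D_n\sigma)\big)=\frac{\sigma_n^2}{\sigma^2}\,\frac{N_n}{D_n}=\frac{N_n}{D_n}\big(1+o\big((D_n/N_n)^{1/3}\big)\big)$ makes the Example's variance constant equal to $1$) goes through verbatim. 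As written, ``absorbing constants'' hides a genuine factor $\sigma^6$ and does not constitute a proof of the third-cumulant asymptotics.
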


Of course, if $(2)$ holds, then the results of this paper imply that 
\begin{itemize}
\item $X_n/(\sigma_n \sqrt{N_n D_n})$ satisfies a central limit
theorem, with a normality zone of size $o\!\left(\left(\frac{D_n}{N_n}\right)^{1/6}\right)$;
\item furthermore, for $x>0$,
\begin{equation}
    \label{EqDevProbAtTheEdgeInDepGraphs}
    \proba[X_n-\esper[X_n] \geq (N_n)^{2/3}\,(D_n)^{1/3}\,\sigma_n\,x]=\frac{\E^{-\left(\frac{N_n}{D_n}\right)^{1/3}\frac{x^2}{2}}}{x\sqrt{2\pi (N_n/D_n)^{1/3} }}\,\exp\left(\frac{Lx^3}{6}\right)\,(1+o(1))
\end{equation}
and a similar result holds for negative deviations.
\end{itemize}
The same holds replacing $\sigma_n$ by $\sigma$ if one has the assumption on the speed of convergence of $\sigma_n$ towards $\sigma$, given in the third item of Theorem \ref{thm:modgaussiangeneralsparsegraph}.
If, moreover, the cumulants of $X_n$ satisfy \eqref{eq:limitcumulant},
then \eqref{EqDevProbAtTheEdgeInDepGraphs} still holds when we replace $x$ by a positive sequence $x_n$
tending to infinity with $x_n=o(N_n/D_n)^{1/12}$ --- see Proposition \ref{prop:largedeviationscumulants}.

\begin{center}
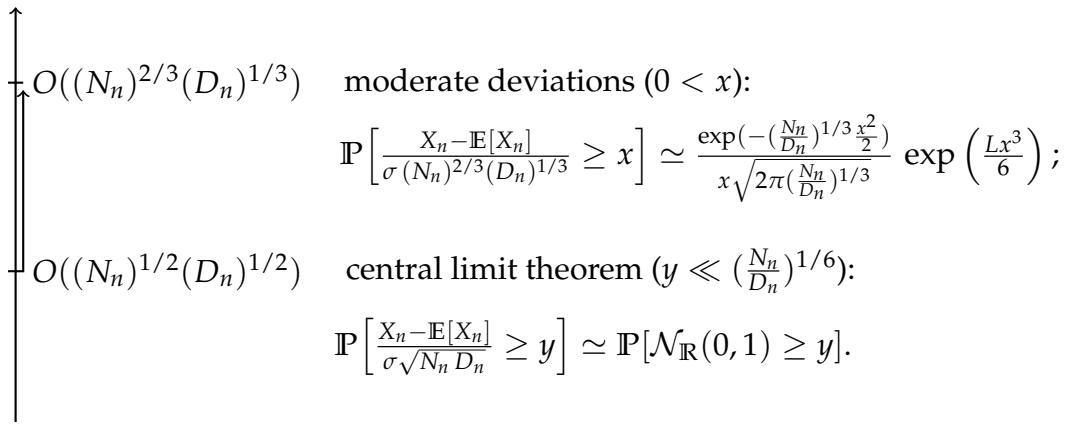
\begin{figure}[ht]
\begin{tikzpicture}
\draw (-1,1.5) node {order of fluctuations};
\draw (5,-0) node {moderate deviations ($0<x$):};
\draw (7,-1) node {$\proba\!\left[\frac{X_n-\esper[X_n]}{\sigma \,(N_n)^{2/3}(D_n)^{1/3}} \geq x\right] \simeq \frac{\exp(-(\frac{N_n}{D_n})^{1/3}\frac{x^2}{2})}{x\sqrt{2\pi(\frac{N_n}{D_n})^{1/3}}}\,\exp\left(\frac{Lx^3}{6}\right);$};
\draw (5.7,-2.5) node {central limit theorem ($y \ll (\frac{N_n}{D_n})^{1/6}$):};
\draw (5.6,-3.5) node {$\proba\!\left[\frac{X_n-\esper[X_n]}{\sigma \sqrt{N_n\,D_n}} \geq y\right] \simeq \proba[\mathcal{N}_{\R}(0,1)\geq y].$};
\draw[->,thick] (-2,-4.5) -- (-2,1) ;
\draw[thick] (-2.1,0) -- (-1.9,0);
\draw (0,0) node {$O((N_n)^{2/3}(D_n)^{1/3})$} ;
\draw[->,thick] (-2.1,-2.5) -- (-1.9,-2.5) -- (-1.9,-0.1) ;
\draw (0,-2.5) node {$O((N_n)^{1/2}(D_n)^{1/2})$} ;
\end{tikzpicture}
\caption{Panorama of the fluctuations of a sum of $N_n$ random variables that have a sparse dependency graph, of degree $D_n-1$ with $\lim_{n \to \infty}\frac{D_n}{N_n}=0$.}
\label{FigPanoramaSparseDepGraphs}
\end{figure}
\end{center}
Notice that in the case of independent random variables, $D_n=1$
and we recover the mod-Gaussian convergence of Example \ref{ex:sumiid}.
\bigskip

In the next Sections, we shall study some particular cases of Theorem \ref{thm:modgaussiangeneralsparsegraph}. 
We will see that most computations are related to the asymptotic analysis of the first cumulants of $X_n$.

\bigskip
\bigskip

\section{Subgraph count statistics in Erd\"os-R\'enyi random graphs}\label{sec:erdosrenyi}
In this section, we consider Erd\"os-R\'enyi model $\Gamma(n,p_n)$ of random graphs. A random graph $\Gamma$ with this distribution is described as follows. Its vertex set is $[n]$ and for each pair $\{i,j\} \subset [n]$  with $i \neq j$, there is an edge between $i$ and $j$ with probability $p_n$. Moreover, all these events are independent. We are then interested in the following random variables, called {\em subgraph count statistics}. If $\gamma$ is a fixed graph of size $k$, then $\SubGraph_\gamma$ is the number of copies of $\gamma$ {\em contained} in the graph $\Gamma(n,p_n)$ (a more formal definition is given in the next paragraph). This is a classical parameter in random graph theory; see, {\em e.g.}, the book of S. Janson, T. {\L}uczak and A. Ruci\'nski \cite{JLR00}. \bigskip

The first result on this parameter was obtained by P. Erd\"os and A. R\'enyi, \emph{cf.} \cite{ER60}. They proved that, if $\gamma$ belongs to some particular family of graphs (called {\em balanced}),
one has a threshold: namely,
$$\lim_{n \to \infty} \proba[\SubGraph_\gamma >0]=\begin{cases}
    0 & \text{if }p_n=o\left(n^{-1/m(\gamma)}\right); \\
1 & \text{if }n^{-1/m(\gamma)}=o(p_n), \end{cases}
$$
where $m(\gamma)=|E(\gamma)|/|V(\gamma)|$.
This result was then generalized to all graphs by B. Bollob\'as~\cite{Bol01}, but the parameter $m(\gamma)$ is in general more complicated than the quotient above. Consider the case $n^{-1/m(\gamma)}=o(p_n)$, when the graph $\Gamma(n,p_n)$ contains with high probability a copy of $\gamma$. It was then proved by A. Ruci\'nski (see \cite{Ruc88}) that, under the additional assumption $n^2 (1-p_n) \to \infty$, the fluctuations of $\SubGraph_\gamma$ are Gaussian. This result can be obtained using dependency graphs; see {\em e.g.}  \cite[pages 147-152]{JLR00}.\bigskip

Here, we consider the case where $p_n=p$ is a constant sequence ($0<p<1$). The possibility of relaxing this hypothesis is discussed in Subsection \ref{subsec:nonconstantproba}. Denote $\alphan=n^2$ and $\betan=n^{k-2}$, where $k$ is the number of vertices of $\gamma$. It is easy to check that 
$$\esper[\SubGraph_\gamma]=c \,\alphan \,\betan \quad;\quad \Var(\SubGraph_\gamma)=\sigma^2 \,\alphan \,(\betan)^2$$ 
for some positive constants $c$ and $\sigma$ --- see, {\em e.g.}, \cite[Lemma 3.5]{JLR00}. Hence, Ruci\'nski's central limit theorem asserts that,  if $T \sim x \sqrt{\alphan}$ for some fixed real $x$, then
$$ \lim_{n\to \infty} \proba \left[ \frac{\SubGraph_\gamma - \esper[\SubGraph_\gamma]}
\betan \geq T \right] = \frac{1}{ \sqrt{2\pi}}
\int_{-\infty}^{x/\sigma} e^{-u^2/2} du.$$
Using Theorem \ref{thm:dependencygraphsrefined}, we shall extend this result to a framework where $x$ tends to infinity, but not to quickly: $(\alphan)^{1/2} \ll T \ll (\alphan)^{3/4}$.

\begin{theorem} \label{thm:moddevgraphs}
    Let $0<p<1$ and $\gamma$ be a graph with $k$ vertices. We consider $\SubGraph_\gamma$ the number
    of copies of $\gamma$ contained in Erd\"os-R\'enyi random graph $\Gamma(n,p)$. Let $\alphan$ and $\betan$ be defined as above.\vspace{2mm}
\begin{enumerate}
\item The renormalized variable 
    $(\SubGraph_\gamma - \esper[\SubGraph_\gamma])/(\alphan^{1/3} \betan)$
    converges mod-Gaussian with parameters $t_{n}=\sigma^{2}\,\alphan^{1/3}$
    and limiting function $\psi(z)=\exp\big(\frac{L}{6} z^{3}\big)$,
    where $\sigma$ and $L$ are computed in Section \ref{subsec:sigma_L}.
The convergence takes place on the whole complex plane with speed of convergence $o(\alphan^{-1/3})$.
\vspace{2mm}

\item Therefore, $(\SubGraph_\gamma - \esper[\SubGraph_\gamma])/(\alphan^{1/2} \betan)$ converges in distribution
towards a Gaussian law, with normality zone $o(\alphan^{1/6})$.
Moreover, the deviation probabilities at the edge of the normality zone and at a slightly larger scale 
are given as follows:
is $x_n$ is a positive sequence bounded away from $0$ with $x_n=o\big(\alphan^{1/12})$, then
\begin{align*}
        \proba \left[ \frac{\SubGraph_\gamma - \esper[\SubGraph_\gamma]}    
        {\alphan^{1/2} \, \betan} \geq \sigma^2 \alphan^{1/6} x_n \right] &=
\frac{\E^{-\frac{(x_n)^2(\alphan)^{1/3}\sigma^2}{2}}}{x_n(\alphan)^{1/6}\sigma \sqrt{2\pi}}\,\E^{\frac{L(x_n)^3}{6}}\,(1+o(1)).\\
        \proba \left[ \frac{\SubGraph_\gamma - \esper[\SubGraph_\gamma]}    
        {\alphan^{1/2} \, \betan} \leq - \sigma^2 \alphan^{1/6} x_n \right] &=
\frac{\E^{-\frac{(x_n)^2(\alphan)^{1/3}\sigma^2}{2}}}{x_n(\alphan)^{1/6}\sigma \sqrt{2\pi}}\,\E^{\frac{-L(x_n)^3}{6}}\,(1+o(1)).
\end{align*}
    \end{enumerate}
\end{theorem}
\smallskip

A similar result has been obtained by H. D\"{o}ring and P. Eichelsbacher in \cite[Theorem 2.3]{DE12}. However, \vspace{2mm}
\begin{itemize}
    \item their result is less precise as they only obtain the equivalence of the logarithms of the relevant quantities (in particular, when we look at the logarithm, the second factor of the right-hand side is negligible);\vspace{2mm}
    \item and they cover a smaller zone of deviation.\vspace{2mm}
\end{itemize}
Unfortunately, we cannot get deviation results when $x_n \sim t \alphan^{1/3}$ for some real number $t$;
this would amount to evaluate $\proba [\SubGraph_\gamma > (1+\varepsilon)\,\esper[\SubGraph_\gamma]]$. For large deviations equivalents of
$$ \log \, \proba [\SubGraph_\gamma >  (1+\varepsilon)\,\esper[\SubGraph_\gamma]], $$
there is a quite large literature, see \cite[Theorem 4.1]{CV11} and \cite{Cha12} for recent results in this field. As we consider deviations of a different scale, our result is neither implied by, nor implies these results. Note, however, that their large deviation results are equivalents of the logarithm of the probability, while our statement is an equivalent for the probability itself.
\bigskip

\subsection{A bound on cumulants}
\subsubsection{Subgraph count statistics}
In the following we denote $\arr(n,k)$ the set of \emph{arrangements} in $[n]$ of length $k$, \emph{i.e.}, lists of $k$ distinct elements in $[n]$. 
The cardinality of $\arr(n,k)$ is  the falling factorial 
$n\fall{k}=n(n-1) \cdots (n-k+1)$.
Let $A=(a_1,\dots,a_k)$ be an arrangement in $[n]$ of length $k$, and $\gamma$ be a fixed graph with vertex set $[k]$. Recall that $\Gamma=\Gamma(n,p_{n})$ is a random Erd\"os-R\'enyi graph on $[n]$. We denote $\delta_\gamma(A)$ the following random variable:
\begin{equation}\delta_\gamma(A) = \begin{cases}
     1 & \text{ if }\gamma \subseteq \Gamma[a_1,\dots,a_k];\\
     0 & \text{ else.}
\end{cases}\label{eq:deltadef}
\end{equation}
Here $\Gamma[a_1,\dots,a_k]$ denotes the graph induced by $\Gamma$ on vertex set $\{a_1,\dots,a_k\}$.
As our data is an {\em ordered} list $(a_1,\dots,a_k)$, this graph can canonically be seen as a graph on vertex set $[k]$, that is the same vertex set as $\gamma$. Then the inclusion should be understood as inclusion of edge sets.\medskip

\noindent For any graph $\gamma$ with $k$ vertices
and any integer $n \ge 1$, we then define the random variable $\SubGraph_{\gamma}$ by
$$\SubGraph_{\gamma} = \sum_{A \in \arr(n,k)} \delta_\gamma(A).$$

\begin{remark}
It would also be natural to replace in Definition \eqref{eq:deltadef} the inclusion by an equality $\gamma = \Gamma[a_1,\dots,a_k]$. This would lead to other random variables $Y_\gamma^{(n)}$, called {\em induced} subgraph count statistics. Their asymptotic behavior is harder to study (in particular, fluctuations are not always Gaussian; see \cite[Theorem 6.52]{JLR00}). Notice that if $\gamma$ is a complete graph, then both definitions coincide.
\end{remark}\medskip

\subsubsection{A dependency graph of the subgraph count statistics}
Fix some graph $\gamma$ with vertex set $[k]$. By definition, the variable we are interested in writes as a sum
$$\SubGraph_{\gamma} = \sum_{A \in \arr(n,k)} \delta_\gamma(A).$$
We shall describe a dependency graph for the variables $\{\delta_\gamma(A)\}_{A \in \arr(n,k)}$.\bigskip

For each pair $e=\{v,v'\} \subset [n]$, denote $I_e$ the indicator function of the event: {\em $e$ is in the graph $\Gamma(n,p)$}. By definition of the model $\Gamma(n,p)$, the random variables $I_e$ are independent Bernouilli variables of parameter $p$. Then, for an arrangement $A$, denote $E(A)$ the set of pairs $\{v,v'\}$ where $v$ and $v'$ appear in the arrangement $A$. One has
\[\delta_\gamma(A) = \prod_{e \in E'(A)} I_e, \]
where $E'(A)$ is a subset of $E(A)$ determined by the graph $\gamma$. In particular, if, for two arrangements $A$ and $A'$, one has $|E(A) \cap E(A')| =\emptyset$ (equivalently, $|A \cap A'| \le 1$), then the variables $\delta_\gamma(A)$ and $\delta_\gamma(A')$ are defined using different variables $I_e$ (and, hence, are independent). This implies that the following graph denoted $\B$ is a dependency graph for the family of variables $\{\delta_\gamma(A)\}_{A \in \arr(n,k)}$:\vspace{2mm}
\begin{itemize}
    \item its vertex set is $\arr(n,k)$;\vspace{2mm}
    \item there is an edge between $A$ and $A'$ if  $|A \cap A'| \ge 2$.\vspace{2mm}
\end{itemize}
Considering this dependency graph is quite classical --- see, {\em e.g.}, \cite[Example 1.6]{JLR00}.\bigskip

All variables in this graph are Bernoulli variables and, hence, bounded by $1$. Besides the graph $\B$ has $N=n\fall{ k}$ vertices, and is regular of degree $D$ smaller than 
$$\binom{k}{2}^2  \,2\,  (n-2)(n-3)\cdots(n-k+1) < k^4 \,n^{k-2}.$$ 
Indeed, a neighbour $A'$ of a fixed arrangement $A \in \arr(n,k)$ is given as follows:\vspace{2mm}
\begin{itemize}
    \item choose a pair $\{a_i,a_j\}$ in $A$ that will appear in $A'$;\vspace{2mm}
    \item choose indices $i'$ and $j'$ such that $a'_{i'}=a_i$ and $a'_{j'}=a_j$
        (these indices are different but their order matters);\vspace{2mm}
    \item choose the other values in the arrangement $A'$.\vspace{2mm}
\end{itemize}
So, we may apply Theorem \ref{thm:dependencygraphsrefined} and we get:
\begin{proposition} \label{prop:boundcumulantsubgraph}
Fix a graph $\gamma$ of vertex set $[k]$. For any $r\le 1$, one has
$$ \big| \kappa^{(r)}(\SubGraph_\gamma) \big| \le 2^{r-1} r^{r-2}\,  n^k\, (k^4 \,n^{k-2})^{r-1}. $$
\end{proposition}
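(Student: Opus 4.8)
The proof is an immediate application of Theorem~\ref{thm:dependencygraphsrefined} to the decomposition $\SubGraph_\gamma = \sum_{A \in \arr(n,k)} \delta_\gamma(A)$, once we have identified the three quantities $N$, $D$ and $A$ appearing in that theorem. First I would recall that the indicator variables $\delta_\gamma(A)$ are Bernoulli, hence bounded by $1$, so one can take $A=1$ in the notation of Theorem~\ref{thm:dependencygraphsrefined}: indeed $\|\delta_\gamma(A)\|_r = (\esper[\delta_\gamma(A)^r])^{1/r} = (\esper[\delta_\gamma(A)])^{1/r} \le 1$ for every $r \ge 1$.

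Next I would count the vertices of the dependency graph $\B$ constructed above: its vertex set is $\arr(n,k)$, so $N=n\fall{k}=n(n-1)\cdots(n-k+1) \le n^k$. Then I would bound its maximal degree $D$: a neighbour $A'$ of a fixed arrangement $A$ must share at least two values with $A$, and such an $A'$ is obtained by choosing a pair $\{a_i,a_j\}$ of entries of $A$ (at most $\binom{k}{2}$ choices), choosing the two positions $i',j'$ in $A'$ that carry these two values (at most $k(k-1)$ ordered choices, which is $2\binom{k}{2}$), and then filling the remaining $k-2$ positions of $A'$ with arbitrary values among the $n$ available ones (at most $n^{k-2}$ choices, crudely bounding the falling factorial $(n-2)\fall{k-2}$ by $n^{k-2}$). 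This gives
$$D \le \binom{k}{2}\cdot 2\binom{k}{2}\cdot n^{k-2} = \binom{k}{2}^2 \cdot 2\, n^{k-2} < k^4\, n^{k-2},$$
using $\binom{k}{2}^2 \cdot 2 = \tfrac{k^2(k-1)^2}{2} < k^4$.

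Finally I would plug these into Theorem~\ref{thm:dependencygraphsrefined}, which states that $|\kappa^{(r)}(\SubGraph_\gamma)| \le C_r\, N\, (D+1)^{r-1}\, A^r$ with $C_r = 2^{r-1} r^{r-2}$. Using $N \le n^k$, $A=1$, and $D+1 \le k^4 n^{k-2}$ (for $n$ large enough that $k^4 n^{k-2} \ge D+1$; alternatively one absorbs the $+1$ into the constant, but in fact the bound $D < k^4 n^{k-2}$ already gives $D+1 \le k^4 n^{k-2}$ once $n^{k-2} \ge 1$, which holds for $k \ge 2$), we obtain
$$\big|\kappa^{(r)}(\SubGraph_\gamma)\big| \le 2^{r-1} r^{r-2}\, n^k\, (k^4\, n^{k-2})^{r-1},$$
which is exactly the claimed inequality. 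There is no real obstacle here: the only point requiring a little care is the honest verification that the described graph $\B$ is genuinely a dependency graph for $\{\delta_\gamma(A)\}_A$, which was already argued above via the factorisation $\delta_\gamma(A)=\prod_{e\in E'(A)} I_e$ through independent edge-indicators $I_e$ — two arrangements with $|A\cap A'|\le 1$ share no pair, hence no edge-indicator, hence yield independent $\delta_\gamma$'s. Everything else is the degree count, which is routine combinatorics.
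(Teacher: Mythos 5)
Your proof is correct and follows essentially the same route as the paper: the same dependency graph $\B$ on $\arr(n,k)$ with edges when $|A\cap A'|\ge 2$, the same degree count $D<\binom{k}{2}^2\cdot 2\cdot(n-2)\cdots(n-k-1)<k^4 n^{k-2}$, and the same direct application of Theorem~\ref{thm:dependencygraphsrefined} with $N=n\fall{k}\le n^k$ and $A=1$. Your extra remarks (the factorisation through the independent edge-indicators $I_e$ and the handling of $D+1\le k^4 n^{k-2}$) are exactly the points the paper also relies on, so there is nothing to add.
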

\bigskip

\subsection{Polynomiality of cumulants}
\subsubsection{Dealing with several arrangements}
Consider a list $(A^1,\dots,A^r)$ of arrangements. We associate to this data two graphs  (unless said explicitly, we always consider loopless simple graphs, and $V(G)$ and $E(G)$ denote respectively the edge and vertex sets of a graph $G$):\vspace{2mm}
\begin{itemize}
 \item the graph $G_\AA$ has vertex set
   $$V_{\Bbbk}=V(G_{\AA})= \{ (t,i) \,\,|\,\, 1 \le i \le r, 1 \le t \le k_i \}$$
and an edge between $(t,i)$ and $(s,j)$ if and only if $a^i_t=a^j_s$. It is always a disjoint union of cliques. If the $A^i$'s are arrangements, then the graph $G_\AA$ is endowed with a natural proper $r$-coloring, $(t,i)$ being of color $i$.\vspace{2mm}
\item the graph $H^m_\AA$  has vertex set $[r]$ and an edge between $i$ and $j$ if $|A^i \cap A^j| \geq m$. \vspace{2mm}
\end{itemize}
Notice that $H^1_{\AA}$ is the \emph{contraction} of the graph $G_{\AA}$ by the map $\varphi : (t,i) \mapsto i$ from the vertex set of $G_\AA$ to the vertex of $H^1_\AA$. Indeed,
$$(i,j) \in E(H^1_\AA) \ \Leftrightarrow \ \exists v_i \in \varphi^{-1}(i), v_j  \in \varphi^{-1}(j) \text{ such that } (v_i,v_j) \in E(G_\AA).$$
An example of a graph $G_\AA$ and of its 1- and 2-contractions $H^1_\AA$ and $H^2_\AA$ is drawn on Figure \ref{fig:graphsGandH}. For $m \ge 2$, the definition of $H^m_\AA$ is less common. We call it the $m$-contraction of $G_\AA$. The $2$-contraction is interesting for us. It corresponds exactly to the graph induced by the dependency graph $\B$ on the list of arrangement $\AA$, considered in the proof of Theorem~\ref{thm:dependencygraphsrefined}.

\begin{figure}[ht]
\includegraphics{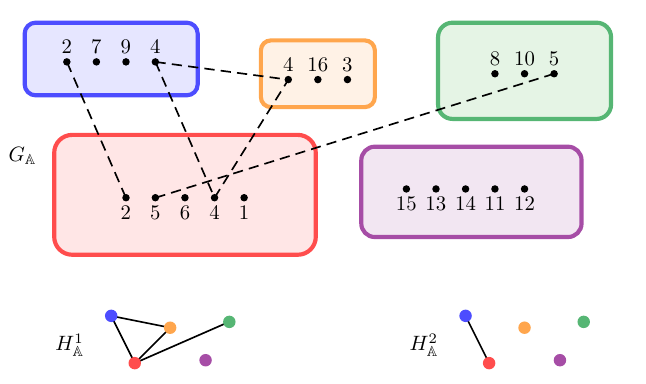}
\caption{The graphs $G_\AA$, $H^1_\AA$ and $H^{2}_{\AA}$ corresponding to the family of arrangements $(15,13,14,11,12)$, $(2,5,6,4,1)$, $(2,7,9,4)$, $(4,16,3)$,  and $(8,10,5)$.}
\label{fig:graphsGandH}
\end{figure}

\begin{remark}
Graphs associated to families of arrangements are a practical way to encode some information and should not be confused with the random graphs or their induced subgraphs. Therefore we used greek letters for the latter and latin letter for graphs $G_\AA$ and their contractions. The dependency graph will always be called $\B$ to avoid confusions.
\end{remark}
\bigskip

\subsubsection{Exploiting symmetries}
The dependency graph of our model has much more structure than a general dependency graph. In particular, all variables $\delta_\gamma(A)$ are identically distributed. More generally, the joint distribution of
\[ \left( \delta_\gamma(A^1),\dots,\delta_\gamma(A^r) \right) \]
depends only on $G_\AA$. Here, we state a few consequences of this invariance property that will be useful
in the next Section.

\begin{lemma}\label{lem:indjointmoments}
Fix a graph $\gamma$ of size $k$, the quantity
$$\esper\!\left[ \delta_{\gamma}(A^1) \cdots \delta_{\gamma}(A^r) \right]$$
depends only on the graph $G_\AA$ associated to the family of arrangements $(A^1,\dots,A^r)$. The same is true for the joint cumulant $\kappa\left( \delta_{\gamma}(A^1), \dots, \delta_{\gamma}(A^r) \right).$
\end{lemma}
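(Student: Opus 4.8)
The statement to prove is Lemma~\ref{lem:indjointmoments}: the joint moment $\esper[\delta_\gamma(A^1)\cdots\delta_\gamma(A^r)]$, and consequently the joint cumulant, depends only on the graph $G_\AA$.

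My plan is to exploit the symmetry of the Erd\H{o}s--R\'enyi model under the natural action of the symmetric group $\sym(n)$ on vertices, combined with the fact that $\delta_\gamma(A)$ only depends on a small family of the independent edge-indicators $I_e$. Let me sketch the steps.

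\textbf{Step 1: Reduce $\delta_\gamma(A^i)$ to a product of edge-indicators.} As already observed in the paragraph above the lemma, for an arrangement $A=(a_1,\dots,a_k)$ one has $\delta_\gamma(A)=\prod_{e\in E'(A)} I_e$, where $E'(A)$ is the set of pairs $\{a_s,a_t\}$ such that $\{s,t\}$ is an edge of $\gamma$. Hence the product $\delta_\gamma(A^1)\cdots\delta_\gamma(A^r)$ equals $\prod_{e\in \mathcal E} I_e^{m_e}$ for some multiset of pairs, but since $I_e$ is $\{0,1\}$-valued, $I_e^{m_e}=I_e$ for $m_e\ge 1$; so it equals $\prod_{e\in\mathcal E(\AA)} I_e$ where $\mathcal E(\AA)=\bigcup_i E'(A^i)$ is an honest set of pairs of $[n]$. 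By independence of the $I_e$'s, $\esper[\prod_{e\in\mathcal E(\AA)} I_e]=p^{|\mathcal E(\AA)|}$.

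\textbf{Step 2: Show $|\mathcal E(\AA)|$ depends only on $G_\AA$.} This is the key point. Define a graph $\Gamma_\AA$ on the vertex set $V(G_\AA)=\{(t,i)\}$ by putting an edge between $(s,i)$ and $(t,i)$ whenever $\{s,t\}\in E(\gamma)$ (i.e.\ a disjoint union of $r$ copies of $\gamma$, one per ``color'' $i$). The set $\mathcal E(\AA)$ is exactly the image of $E(\Gamma_\AA)$ under the map $(t,i)\mapsto a^i_t$ from $V(G_\AA)$ to $[n]$; and two vertices of $G_\AA$ have the same image under this map if and only if they are joined by an edge in $G_\AA$ (by definition of $G_\AA$, since $G_\AA$ is a disjoint union of cliques, being ``same value'' is an equivalence relation). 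So $\mathcal E(\AA)$ is in bijection with the set of edges of the quotient graph $\Gamma_\AA/G_\AA$ obtained by identifying $G_\AA$-equivalent vertices (deleting resulting loops and multiplicities). Since $\gamma$ (hence $\Gamma_\AA$, with its labelling by colors) is fixed, this quotient, and in particular its number of edges $|\mathcal E(\AA)|$, is a function of $G_\AA$ alone. This proves the claim for joint moments: $\esper[\delta_\gamma(A^1)\cdots\delta_\gamma(A^r)] = p^{\,e(G_\AA)}$ where $e(G_\AA)$ is the number of edges of $\Gamma_\AA/G_\AA$.

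\textbf{Step 3: Pass to joint cumulants.} Apply the moment-to-cumulant formula \eqref{eq:moment2cumulant}:
\[
\kappa\big(\delta_\gamma(A^1),\dots,\delta_\gamma(A^r)\big)=\sum_{\pi\in\qym(r)}\mu(\pi)\prod_{C\in\pi}\esper\Big[\prod_{i\in C}\delta_\gamma(A^i)\Big].
\]
For each part $C=\{i_1,\dots,i_\ell\}$, the factor $\esper[\prod_{i\in C}\delta_\gamma(A^i)]$ depends, by Step 2 applied to the subfamily $(A^{i})_{i\in C}$, only on the induced subgraph $G_\AA[\,V_C\,]$ where $V_C=\{(t,i): i\in C\}$ --- and this induced subgraph is determined by $G_\AA$ together with the part $C$. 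Since the sum over $\pi$ is a fixed finite combinatorial expression, the whole right-hand side is a function of $G_\AA$ alone, which is the second assertion of the lemma.

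I do not expect a genuine obstacle here; the only point requiring a little care is Step 2 --- making precise that the identifications forced by ``$a^i_t=a^j_s$'' are recorded faithfully by $G_\AA$ (which holds because $G_\AA$ is by construction a disjoint union of cliques, equivalently the edge relation is transitive), so that the multiset of pairs $\mathcal E(\AA)$ collapses to a set depending only on the combinatorial type $G_\AA$ and not on the actual vertex labels in $[n]$. Everything else is bookkeeping via the standard moment--cumulant inversion already recalled in the text.
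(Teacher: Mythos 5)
Your proof is correct, but it proceeds differently from the paper's. The paper observes that the law of the Erd\H{o}s--R\'enyi graph $\Gamma(n,p)$ is invariant under permuting $[n]$; consequently, if two families of arrangements $\AA$ and $\AA'$ have the same associated graph $G_\AA=G_{\AA'}$, there is a bijection between the supports of $\AA$ and $\AA'$ compatible with the equalities recorded by $G_\AA$, and extending it to a permutation of $[n]$ maps the random variable $\delta_\gamma(A^1)\cdots\delta_\gamma(A^r)$ to $\delta_\gamma(A'^1)\cdots\delta_\gamma(A'^r)$ in law; the moment statement follows at once, and the cumulant statement from \eqref{eq:moment2cumulant}. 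You instead compute the moment explicitly as $p^{e(G_\AA)}$ by expanding each $\delta_\gamma(A^i)$ into edge indicators, collapsing by idempotence, and identifying $|\mathcal E(\AA)|$ with the edge count of a quotient graph determined by $\gamma$ and $G_\AA$. Both arguments hinge on the same underlying exchangeability, but yours buys an explicit closed form for the moment (useful if one wants to evaluate, say, $\sigma^2$ and $L$ later), at the cost of being specific to the Bernoulli structure of the $I_e$'s; the paper's version is shorter and would work verbatim for any vertex-exchangeable random graph model. One microscopic remark: in your Step 2 no loops can actually arise in $\Gamma_\AA/G_\AA$, since each edge of $\Gamma_\AA$ joins two vertices of the same color $i$ and $A^i$ has distinct entries, so the precaution of ``deleting resulting loops'' is vacuous.
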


\begin{proof}
The first statement follows immediately  from the invariance of the model $\Gamma(n,p)$ by relabelling of the vertices. The second is a corollary, using the moment-cumulant relation \eqref{eq:moment2cumulant}.
\end{proof}
\medskip 

\begin{corollary}
Fix some graph $\gamma$.
Then the joint cumulant $\kappa(\SubGraph_{\gamma},\dots,\SubGraph_{\gamma})$ is a polynomial in $n$.
\label{cor:jointcumulantpoly}
\end{corollary}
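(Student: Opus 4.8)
The plan is to start from the multilinear expansion \eqref{eq:expcumulant}, which in the present case (with $\SubGraph_\gamma = \sum_{A \in \arr(n,k)} \delta_\gamma(A)$ and $\gamma$ a graph on $k$ vertices) reads
\[ \kappa^{(r)}(\SubGraph_\gamma) = \sum_{(A^1,\dots,A^r) \in \arr(n,k)^r} \kappa\big(\delta_\gamma(A^1),\dots,\delta_\gamma(A^r)\big). \]
By Lemma \ref{lem:indjointmoments}, each summand depends only on the graph $G_\AA$ attached to the list $\AA = (A^1,\dots,A^r)$. Recall that $G_\AA$ is a disjoint union of cliques on the $rk$-element vertex set $\{(t,i) : 1\le i\le r,\ 1\le t\le k\}$, hence carries exactly the same information as a set partition $\pi(\AA)$ of that set; and because each $A^i$ is an arrangement, $G_\AA$ is endowed with the proper colouring $(t,i)\mapsto i$, that is, every block of $\pi(\AA)$ contains at most one vertex of each colour. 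Call a set partition of $\{(t,i)\}$ with this last property \emph{admissible}; the admissible partitions are partitions of a finite set of size $rk$, so there are finitely many of them and their number does not depend on $n$.

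Next I would regroup the sum according to the value of $\pi = \pi(\AA)$:
\[ \kappa^{(r)}(\SubGraph_\gamma) = \sum_{\pi\ \mathrm{admissible}} c_\gamma(\pi)\ \#\big\{\AA \in \arr(n,k)^r : \pi(\AA)=\pi\big\}, \]
where $c_\gamma(\pi)$ is the common value of the joint cumulant over all lists realizing $\pi$. This value is a well-defined real number depending only on $\gamma$, $p$ and $\pi$, and not on $n$: indeed each $\delta_\gamma(A^i)$ is a product of i.i.d.\ Bernoulli$(p)$ edge-indicators, and the joint law of $(\delta_\gamma(A^1),\dots,\delta_\gamma(A^r))$ is determined by which of these indicators are shared, i.e.\ by $G_\AA$. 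The combinatorial heart of the argument is the identity
\[ \#\big\{\AA \in \arr(n,k)^r : \pi(\AA)=\pi\big\} = n\fall{\#(\pi)} = n(n-1)\cdots\big(n-\#(\pi)+1\big). \]
To see it, note that a list $\AA$ with $\pi(\AA)=\pi$ is the same thing as an assignment of a value in $[n]$ to each block of $\pi$, done \emph{injectively} (distinct blocks must get distinct values, for otherwise $\pi(\AA)$ would be strictly coarser than $\pi$); and, conversely, any such injective assignment yields a list of genuine arrangements precisely because $\pi$ is admissible (within a fixed colour $i$ the vertices $(1,i),\dots,(k,i)$ lie in distinct blocks, hence receive distinct values). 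The number of injective maps from the $\#(\pi)$ blocks to $[n]$ is $n\fall{\#(\pi)}$.

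Putting these together, $\kappa^{(r)}(\SubGraph_\gamma) = \sum_{\pi} c_\gamma(\pi)\, n\fall{\#(\pi)}$ is a finite $\R$-linear combination of the falling-factorial polynomials $n\fall{b}$, hence a polynomial in $n$ (of degree at most $rk$ and at least $k$, since every admissible $\pi$ satisfies $k \le \#(\pi) \le rk$, and the displayed identity holds for every $n\ge 1$, both sides vanishing when $n < \#(\pi)$). The only point requiring genuine care is the counting identity above --- checking that injectivity of the value assignment exactly characterizes the lists with $\pi(\AA)=\pi$, and that admissibility of $\pi$ is what guarantees that the resulting $A^i$ are arrangements --- but this becomes immediate once the dictionary between $G_\AA$ and set partitions of the colour--vertex set is written out carefully.
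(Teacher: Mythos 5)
Your proof is correct and follows essentially the same route as the paper: expand the cumulant by multilinearity, invoke Lemma \ref{lem:indjointmoments} to group lists of arrangements according to the associated graph $G_\AA$ (your admissible set partitions are exactly the paper's disjoint unions of cliques with no monochromatic edge), and observe that the number of lists realizing a fixed graph is the falling factorial $n\fall{c}$ where $c$ is the number of blocks/connected components, so the cumulant is a finite linear combination of falling factorials. The only blemish is the parenthetical claim that the degree is at least $k$, which does not follow merely from $\#(\pi)\ge k$ (leading coefficients could in principle cancel) and is anyway not part of the statement.
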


\begin{proof}
Using Lemma \ref{lem:indjointmoments}, we can rewrite the expansion \eqref{eq:expcumulant} as
\begin{equation}
    \kappa(\SubGraph_{\gamma},\dots,\SubGraph_{\gamma}) = \sum_G \kappa(G) N_G,
    \label{eq:expansioncumulantgraphs}
\end{equation}
where:\vspace{2mm}
\begin{itemize}
    \item the sum runs over graphs $G$ of vertex set $V_\Bbbk$ that correspond to some arrangements (that is $G$ is a disjoint union of cliques and, for any $s$, $t$ and $i$, there is no edge between $(s,i)$ and $(t,i)$);\vspace{2mm}
    \item $\kappa(G)$ is the common value of $\kappa\left( \delta_{\gamma}(A^1), \dots, \delta_{\gamma}(A^r) \right)$, where $(A^1,\dots,A^r)$ is any list of arrangements with associated graph $G$;\vspace{2mm}
    \item $N_G$ is the number of lists of arrangements with associated graph $G$.\vspace{2mm}
\end{itemize}
But it is clear that the sum index is finite and that neither the summation index nor the quantity $\kappa(G)$ depend on $n$. Besides, the number $N_G$ is simply the falling factorial $n(n-1)\dots(n-c_G+1)$, where $c_G$ is the number of connected components of $G$. The corollary follows from these observations.
\end{proof}
\bigskip

\subsection{Moderate deviations for subgraph count statistics}
\subsubsection{End of the proof of Theorem \ref{thm:moddevgraphs}}
We would like to apply Proposition \ref{prop:largedeviationscumulants} to the sequence $S_n=\SubGraph_\gamma-\esper[\SubGraph_\gamma]$ with $\alphan=n^2$ and $\betan=n^{k-2}$. Let us check that $S_n$ indeed fulfills the hypothesis.\vspace{2mm}
\begin{enumerate}
\item The uniform bound $|\kappa^{(r)}(S_n)| \leq (C r)^r \alphan (\betan)^r$, where $C$ does not depend on $n$,  corresponds to Proposition~\ref{prop:boundcumulantsubgraph}; we may even choose $C=2k^4$.\vspace{2mm}
\item We also have to check the speed of convergence:
\begin{equation}\label{eq:speedconvK2K3}
    \kappa^{(2)}(S_{n})= \sigma^{2}\,\alphan\,(\betan)^2\,(1+O(\alphan^{-1/2}))
\quad;\quad
\kappa^{(3)}(S_{n}) = L \,\alphan\, (\betan)^3\,(1+O(\alphan^{-1/4})).
\end{equation}
But these estimates follow directly from the bound above for $r=2,3$ and the fact that $\kappa^{(r)}(S_{n})$ is always a polynomial in $n$ --- see Corollary \ref{cor:jointcumulantpoly}.\vspace{2mm}
\end{enumerate}
Finally, the mod-Gaussian convergence follows from the observations in Section \ref{sec:cumulantechnic}.
The normality zone result follows from Proposition \ref{prop:normalityzone} and we can apply Proposition \ref{prop:largedeviationscumulants} to get the moderate deviation statement.
 This ends the proof of Theorem \ref{thm:moddevgraphs}. \qed
\medskip

\begin{remark}
    Using Theorem \ref{thm:boundjointcumulant}, we could obtain a bound for joint cumulants of subgraph count statistics. Hence, it would be possible to derive mod-Gaussian convergence and moderate deviation results for {\em linear combinations} of subgraph count statistics. However, since we do not have a specific motivation for that and since the statement for a single subgraph count statistics is already quite technical, we have chosen not to present such a result.
\end{remark}\bigskip

\subsubsection{Computing $\sigma^2$ and $L$}
\label{subsec:sigma_L}
The proof above does not give an explicit value for $\sigma^2$ and $L$. Yet, these values can be obtained by analyzing the graphs $G$ that contribute to the highest degree term of $\kappa^{(2)}$ and $\kappa^{(3)}$.

\begin{lemma} 
Let $\gamma$ be a graph with $k$ vertices and $h$ edges. Then the positive number $\sigma$ appearing in Theorem \ref{thm:moddevgraphs} is given by $$\sigma^2 = 2 h^2 p^{2h-1} (1-p). $$
\end{lemma}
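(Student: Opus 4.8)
The formula for $\sigma^2$ is the variance of the leading-order behaviour of $\SubGraph_\gamma$, so the plan is to compute $\Var(\SubGraph_\gamma)$ directly and extract the coefficient of $\alpha_n(\betan)^2 = n^2\,n^{2(k-2)} = n^{2k-2}$. Write $\SubGraph_\gamma = \sum_{A \in \arr(n,k)} \delta_\gamma(A)$ and recall from the previous paragraphs that, for an arrangement $A=(a_1,\dots,a_k)$, one has $\delta_\gamma(A) = \prod_{e \in E'(A)} I_e$, where $E'(A)$ is the set of $h=|E(\gamma)|$ pairs of vertices of $A$ corresponding to the edges of $\gamma$, and the $I_e$ are i.i.d.\ Bernoulli$(p)$. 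Hence $\esper[\delta_\gamma(A)] = p^h$ and, more generally, for two arrangements $A,A'$, $\esper[\delta_\gamma(A)\delta_\gamma(A')] = p^{|E'(A) \cup E'(A')|}$, which depends only on $j := |E'(A)\cap E'(A')|$ via $\esper[\delta_\gamma(A)\delta_\gamma(A')] = p^{2h-j}$.

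First I would expand
\begin{equation*}
\Var(\SubGraph_\gamma) = \sum_{A,A'} \big( \esper[\delta_\gamma(A)\delta_\gamma(A')] - \esper[\delta_\gamma(A)]\esper[\delta_\gamma(A')] \big) = \sum_{A,A'} \big( p^{2h - |E'(A)\cap E'(A')|} - p^{2h} \big).
\end{equation*}
The summand vanishes unless $E'(A) \cap E'(A') \neq \emptyset$, i.e.\ unless $A$ and $A'$ share at least two vertices \emph{and} the shared vertices carry a common edge of $\gamma$. The key combinatorial point is to count, to leading order in $n$, the number of ordered pairs $(A,A')$ sharing exactly one edge (i.e.\ with $|E'(A)\cap E'(A')| = 1$): sharing more than one edge forces $A,A'$ to have at least $3$ vertices in common, which costs an extra factor $n^{-1}$ and is therefore negligible. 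To count pairs sharing exactly one edge: choose $A$ freely ($n^{\downarrow k} \sim n^k$ ways); choose which edge $\{t,s\}$ of $\gamma$ (inside $A$) is the shared one ($h$ ways); choose which edge $\{t',s'\}$ of the copy $\gamma$ sitting on $A'$ it corresponds to ($h$ ways), together with the two possible identifications of the ordered endpoints with $a_t,a_s$ (a factor $2$); then choose the remaining $k-2$ vertices of $A'$ outside $A$ ($\sim n^{k-2}$ ways). This gives $\sim 2h^2\, n^{2k-2}$ such ordered pairs, and for each of them the summand equals $p^{2h-1} - p^{2h} = p^{2h-1}(1-p)$.

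Putting this together, $\Var(\SubGraph_\gamma) = 2h^2\, p^{2h-1}(1-p)\, n^{2k-2} + O(n^{2k-3})$. Since $\Var(\SubGraph_\gamma) = \sigma^2\,\alphan(\betan)^2 (1+o(1)) = \sigma^2\, n^{2k-2}(1+o(1))$ by Theorem~\ref{thm:moddevgraphs} (the $o(1)$ comes from the polynomiality of $\kappa^{(2)}$ in $n$, Corollary~\ref{cor:jointcumulantpoly}), comparing leading coefficients yields $\sigma^2 = 2h^2\,p^{2h-1}(1-p)$, as claimed. The only genuinely delicate step is the bookkeeping in the pair count: one must check carefully that all pairs $(A,A')$ with $|E'(A)\cap E'(A')|\ge 2$ contribute at a strictly lower order in $n$ (so that they do not affect the leading coefficient, though they do contribute to lower-order terms of the polynomial $\kappa^{(2)}$), and that the factor $2$ from the two endpoint-identifications of a single shared edge is correctly included and not double-counted against the unordered nature of edges of $\gamma$. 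I would organize this via the graphs $G_\AA$ from Corollary~\ref{cor:jointcumulantpoly}: the leading term corresponds precisely to the graphs $G$ on $V_{\{k,k\}}$ consisting of a single non-trivial clique of size $2$ (identifying one vertex of $A^1$ with one vertex of $A^2$) plus — wait, more precisely, to configurations identifying exactly two vertices of $A^1$ with two vertices of $A^2$ in a way that matches an edge of $\gamma$; such graphs $G$ have $2k-2$ connected components, hence $N_G \sim n^{2k-2}$, and all other contributing $G$ have fewer components.
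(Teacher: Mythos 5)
Your proposal is correct and is essentially the paper's own argument: the paper writes $\kappa^{(2)}(\SubGraph_\gamma)=\sum_G \kappa(G)\,N_G$ and keeps only the graphs $G$ with $2k-2$ connected components and connected $2$-contraction (two identified vertex pairs, $2$ matchings, both pairs carrying an edge of $\gamma$, giving $2h^2$ graphs with $\kappa(G)=p^{2h-1}(1-p)$ and $N_G\sim n^{2k-2}$), which for $r=2$ is exactly your direct covariance count of ordered pairs $(A,A')$ sharing exactly one edge. Your leading-order bookkeeping (pairs sharing $\ge 2$ edges need $\ge 3$ common vertices, hence $O(n^{2k-3})$) and the factor $2h^2$ match the paper's computation.
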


\begin{proof}
By definition, $\sigma^2$ is the coefficient of $n^{2k-2}$ in $\kappa^{(2)}(\SubGraph_\gamma)$. As seen in Equation \eqref{eq:expansioncumulantgraphs}, the quantity $\kappa^{(2)}(\SubGraph_\gamma)$ can be written as 
$$\sum_{G} \kappa(G) N_G,$$
where the sum runs over some graphs $G$ with vertex set $V \sqcup V$. However, we have seen that $\kappa(G)=0$ unless the $2$-contraction $H^2$ of $G$ is connected --- see Inequality \eqref{eq:boundkappa} --- and on the other hand, $N_G$ is a polynomial in $n$, whose degree is the number $c_G$ of connected component of $G$.
\bigskip

\noindent As we are interested in the coefficient of $n^{2k-2}$, we should consider only graphs $G$ with at least $2k-2$ connected components and a connected $2$-contraction. These graphs are represented on Figure~\ref{fig:computationsigma}.
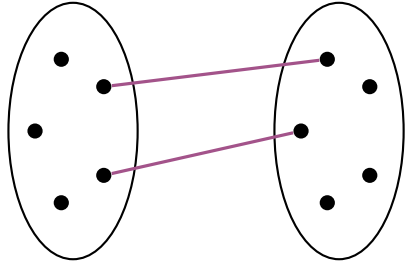
\begin{figure}[ht]
    $$\begin{array}{c} 
        \begin{tikzpicture}
    \tikzstyle{vertex}=[circle,fill=black,inner sep=0pt,minimum size=2mm]
    \begin{scope}[xscale=.5]
        \node (v1) at (36:1) [vertex] {};
        \node (v2) at (108:1) [vertex] {};
        \node (v3) at (180:1) [vertex] {};
        \node (v4) at (-108:1) [vertex] {};
        \node (v5) at (-36:1) [vertex] {};
        \draw [line width=0.8pt] (0,0) circle (1.7);
    \end{scope}
    \begin{scope}[xshift= 3.5cm,xscale=.5]
        \node (v1b) at (36:1) [vertex] {};
        \node (v2b) at (108:1) [vertex] {};
        \node (v3b) at (180:1) [vertex] {};
        \node (v4b) at (-108:1) [vertex] {};
        \node (v5b) at (-36:1) [vertex] {};
        \draw [line width=0.8pt] (0,0) circle (1.7);
    \end{scope}
    \draw [line width=1.2pt,DarkOrchid] (v1) to (v2b);
    \draw [line width=1.2pt,DarkOrchid] (v5) to (v3b);
\end{tikzpicture}\end{array}$$
\caption{Graphs involved in the computation of the main term in $\kappa^{(2)}(\SubGraph_{\gamma})$.}
\label{fig:computationsigma}
\end{figure}

\noindent Namely, we have to choose a pair of vertices on each side and connect each of these vertices to one vertex of the other pair (there are $2$ ways to make this connection, if both pairs are fixed). A quick computation shows that, for such a graph $G$,
\[\kappa(G)=\begin{cases}
    p^{2h-1} (1-p) &\text{ if both pairs correspond to an edge of }\gamma;\\
    0 & \text{ else.}
\end{cases}\]
Finally there are $2 h^2$ graphs with a non-zero contribution to the coefficient of $n^{2k-2}$ in $\kappa^{(2)}(\SubGraph_\gamma)$.
For each of these graphs, $N_G=n(n-1)\cdots(n-2k+3)=n^{2k-2}(1+o(1))$. Therefore, the coefficient of $n^{2k-2}$ in $\kappa^{(2)}(\SubGraph_\gamma)$ is $2 h^2 p^{2h-1} (1-p)$, as claimed.
\end{proof}\bigskip

The number $L$ can be computed by the same method.
\begin{lemma}
Let $\gamma$ be a graph with $k$ vertices and $h$ edges. Then the number $L$ appearing in Theorem \ref{thm:moddevgraphs} is given by
$$L =12 h^3 (h-1) p^{3h-2} (1-p)^2 + 4 h^3 p^{3h-2} (1-p) (1-2p).$$
\end{lemma}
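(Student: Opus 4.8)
The strategy is to repeat, one order higher, the computation of $\sigma^2$ carried out above. Since cumulants are shift-invariant, $\kappa^{(3)}(S_n)=\kappa^{(3)}(\SubGraph_\gamma)$, and with $\alphan=n^2$, $\betan=n^{k-2}$ the relation $\kappa^{(3)}(S_n)=L\,\alphan\,(\betan)^3(1+o(1))$ identifies $L$ with the coefficient of $n^{3k-4}$ in the polynomial $\kappa^{(3)}(\SubGraph_\gamma)$ (polynomiality being Corollary~\ref{cor:jointcumulantpoly}). I would start from the graph expansion \eqref{eq:expansioncumulantgraphs},
$$\kappa^{(3)}(\SubGraph_\gamma)=\sum_{G}\kappa(G)\,N_G,$$
the sum running over admissible graphs $G$ on the vertex set $[k]\sqcup[k]\sqcup[k]$ (disjoint unions of cliques, no edge inside a copy), where $N_G=n(n-1)\cdots(n-c_G+1)$ with $c_G$ the number of connected components of $G$, and $\kappa(G)$ is the common value of $\kappa\big(\delta_\gamma(A^1),\delta_\gamma(A^2),\delta_\gamma(A^3)\big)$ over lists of arrangements with graph $G$ (Lemma~\ref{lem:indjointmoments}).

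The first step is to locate the graphs $G$ with maximal $c_G$ among those with $\kappa(G)\neq 0$. Writing $E_i=E'(A^i)$ for the set of $h$ edges of the Erd\"os-R\'enyi graph used in $\delta_\gamma(A^i)$, recall that $\delta_\gamma(A^i)$ is independent of $\delta_\gamma(A^j)$ as soon as $E_i\cap E_j=\emptyset$; hence, joint cumulants vanishing on families that split into two mutually independent subfamilies, $\kappa(G)=0$ unless the graph $P_G$ on $\{1,2,3\}$ with an edge $\{i,j\}$ whenever $E_i\cap E_j\neq\emptyset$ is connected. Now each clique of $G$ has at most $3$ vertices because of the $3$-colouring, so if $s_1,\dots,s_{c_G}$ are the component sizes then $\sum_i(s_i-1)=3k-c_G$; a short case analysis shows that making $P_G$ connected forces $3k-c_G\geq 4$. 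Indeed, a shared edge of $\gamma$ between two of the arrangements costs two units of gluing, and connecting $\{1,2,3\}$ in $P_G$ requires either \emph{(P1)} one edge of $\gamma$ glued across all three copies (two triple points, $P_G$ a triangle) or \emph{(P2)} two distinct edges of $\gamma$ glued along a path $A^i-A^j-A^\ell$ ($P_G$ a path), both configurations costing exactly $4$. Consequently $c_G\leq 3k-4$, with $N_G=n^{3k-4}(1+o(1))$ for the extremal graphs and $N_G=o(n^{3k-4})$ otherwise, so only the graphs of types (P1) and (P2) contribute to $L$.

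Next I would evaluate $\kappa(G)$ on the two types using the moment--cumulant formula \eqref{eq:moment2cumulant} together with $\esper[\delta_\gamma(A^i)]=p^h$, $\esper[\delta_\gamma(A^i)\delta_\gamma(A^j)]=p^{2h-|E_i\cap E_j|}$ and $\esper[\delta_\gamma(A^1)\delta_\gamma(A^2)\delta_\gamma(A^3)]=p^{3h-|E_1\cap E_2|-|E_1\cap E_3|-|E_2\cap E_3|+|E_1\cap E_2\cap E_3|}$. For type (P1) one has $|E_i\cap E_j|=|E_1\cap E_2\cap E_3|=1$, giving
$$\kappa(G)=p^{3h}\big(p^{-2}-3p^{-1}+2\big)=p^{3h-2}(1-p)(1-2p),$$
while for type (P2), say with the two glued edges shared by $(A^1,A^2)$ and $(A^1,A^3)$, one has $|E_1\cap E_2|=|E_1\cap E_3|=1$, $|E_2\cap E_3|=0$ and all triple intersections trivial, giving
$$\kappa(G)=p^{3h}\big(p^{-2}-2p^{-1}+1\big)=p^{3h-2}(1-p)^2.$$
It then remains to compute the total weight $\sum_G N_G$ over each type. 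For (P1): choose $A^1$ ($n^{k}(1+o(1))$ ways), choose which of its $h$ edges of $\gamma$ is the glued one, then embed that unordered value-pair as an edge of $\gamma$ in each of $A^2$ and $A^3$, which costs $2h\cdot n^{k-2}(1+o(1))$ each (a factor $h$ for the edge of $\gamma$, a factor $2$ for its orientation, and the fresh completion), yielding $4h^3\,n^{3k-4}(1+o(1))$. For (P2): choose the centre among $\{1,2,3\}$ ($3$ ways), choose $A^1$ (the centre) and two \emph{distinct} edges of $\gamma$ in it glued respectively with $A^2$ and $A^3$ ($h(h-1)$ ordered ways, distinctness being precisely what prevents $A^2$ and $A^3$ from sharing an edge), and embed each in the corresponding arrangement ($2h\cdot n^{k-2}$ ways), yielding $12\,h^3(h-1)\,n^{3k-4}(1+o(1))$. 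Adding the two contributions gives
$$L=4h^3\,p^{3h-2}(1-p)(1-2p)+12h^3(h-1)\,p^{3h-2}(1-p)^2,$$
which is the announced formula.

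The main obstacle is the combinatorial bookkeeping behind these two steps: proving rigorously that \emph{no} admissible graph $G$ with $c_G=3k-4$ other than types (P1) and (P2) has $\kappa(G)\neq 0$ (the ``budget'' argument, which must handle precisely the interplay between triple points, the $3$-colouring of $G$, and the requirement that $P_G$ be connected), and pinning down the multiplicities $4h^3$ and $12h^3(h-1)$ exactly --- in particular the orientation factors $2$ coming from embedding an unordered value-pair as an edge of $\gamma$, and the distinctness condition in case (P2). These are the same subtleties that appear one degree lower in the computation of $\sigma^2$ in \S\ref{subsec:sigma_L}, so the proof should follow that template.
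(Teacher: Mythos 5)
Your proposal is correct and follows essentially the same route as the paper: you identify the two families of configurations contributing at order $n^{3k-4}$ (a single edge of $\gamma$ glued across all three copies, versus two distinct edges of $\gamma$ glued along a chain through a central copy), compute the cumulants $p^{3h-2}(1-p)(1-2p)$ and $p^{3h-2}(1-p)^2$ exactly as in \S\ref{subsec:sigma_L}, and recover the multiplicities $4h^3$ and $12h^3(h-1)$, matching the paper's count (``$3$ choices for the central copy, $h^3(h-1)$ for the edges, $4$ ways to link''). The only cosmetic differences are that you count arrangement-triples directly instead of graphs $G$ weighted by $N_G$, and you use the sharper vanishing criterion via $E_i\cap E_j\neq\emptyset$ rather than the connected $2$-contraction, which also absorbs correctly the ``triangle-completion'' subcase the paper mentions.
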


\begin{proof}
Here, we have to consider graphs $G$ on vertex set $V \sqcup V \sqcup V$ with at least $3k-4$ connected components and with a connected $2$-contraction. These graphs are of two kinds, see Figure~\ref{fig:computationL}.
 \begin{figure}[ht]
 $$\begin{array}{c|c}
     \begin{tikzpicture}[scale=.8]
    \tikzstyle{vertex}=[circle,fill=black,inner sep=0pt,minimum size=2mm]
    \begin{scope}[xscale=.5]
        \node (v1) at (36:1) [vertex] {};
        \node (v2) at (108:1) [vertex] {};
        \node (v3) at (180:1) [vertex] {};
        \node (v4) at (-108:1) [vertex] {};
        \node (v5) at (-36:1) [vertex] {};
        \draw [line width=0.8pt] (0,0) circle (1.7);
    \end{scope}
    \begin{scope}[xshift= 3cm,xscale=.5]
        \node (v1b) at (36:1) [vertex] {};
        \node (v2b) at (108:1) [vertex] {};
        \node (v3b) at (180:1) [vertex] {};
        \node (v4b) at (-108:1) [vertex] {};
        \node (v5b) at (-36:1) [vertex] {};
        \draw [line width=0.8pt] (0,0) circle (1.7);
    \end{scope}
    \begin{scope}[xshift= 6cm,xscale=.5]
        \node (v1t) at (36:1) [vertex] {};
        \node (v2t) at (108:1) [vertex] {};
        \node (v3t) at (180:1) [vertex] {};
        \node (v4t) at (-108:1) [vertex] {};
        \node (v5t) at (-36:1) [vertex] {};
        \draw [line width=0.8pt] (0,0) circle (1.7);
    \end{scope}
    \draw [line width=1pt,DarkOrchid] (v1) to (v2b);
    \draw [line width=1pt,DarkOrchid] (v5) to (v3b);
    \draw [line width=1pt,DarkOrchid] (v1b) to (v3t);
    \draw [line width=1pt,DarkOrchid] (v4b) to (v5t);
\end{tikzpicture}
\qquad & \qquad
     \begin{tikzpicture}[scale=.8]
    \tikzstyle{vertex}=[circle,fill=black,inner sep=0pt,minimum size=2mm]
    \begin{scope}[xscale=.5]
        \node (v1) at (36:1) [vertex] {};
        \node (v2) at (108:1) [vertex] {};
        \node (v3) at (180:1) [vertex] {};
        \node (v4) at (-108:1) [vertex] {};
        \node (v5) at (-36:1) [vertex] {};
        \draw [line width=0.8pt] (0,0) circle (1.7);
    \end{scope}
    \begin{scope}[xshift= 3cm,xscale=.5]
        \node (v1b) at (36:1) [vertex] {};
        \node (v2b) at (108:1) [vertex] {};
        \node (v3b) at (180:1) [vertex] {};
        \node (v4b) at (-108:1) [vertex] {};
        \node (v5b) at (-36:1) [vertex] {};
        \draw [line width=0.8pt] (0,0) circle (1.7);
    \end{scope}
    \begin{scope}[xshift= 6cm,xscale=.5]
        \node (v1t) at (36:1) [vertex] {};
        \node (v2t) at (108:1) [vertex] {};
        \node (v3t) at (180:1) [vertex] {};
        \node (v4t) at (-108:1) [vertex] {};
        \node (v5t) at (-36:1) [vertex] {};
        \draw [line width=0.8pt] (0,0) circle (1.7);
    \end{scope}
    \draw [line width=1pt,DarkOrchid] (v1) to (v2b);
    \draw [line width=1pt,DarkOrchid] (v5) to (v3b);
    \draw [line width=1pt,DarkOrchid] (v2b) to (v3t);
    \draw [line width=1pt,DarkOrchid] (v3b) to (v4t);
    \draw [line width=1pt,DarkOrchid] (v1) to (v3t);
    \draw [line width=1pt,DarkOrchid] (v5) to (v4t);
\end{tikzpicture}
 \end{array}$$
 \caption{Graphs involved in the computation of the main term in $\kappa^{(3)}(\SubGraph_{\gamma})$.\label{fig:computationL}}
\end{figure}
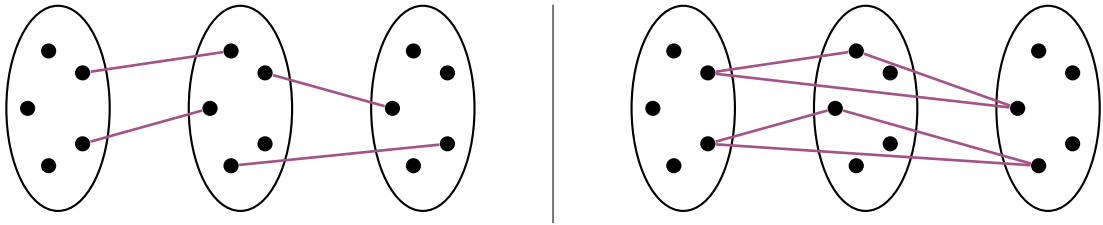\bigskip

In the first case (left-hand side picture),  an edge on the left can possibly have an extremity in common with an edge on the right. In this case, one has to add an edge to complete the triangle (indeed, all graphs $G$ are disjoint unions of cliques).  But this cannot happen for both edges on the left simultaneously, otherwise the graph belong to the second family.\bigskip
 
The following is now easy to check. There are $12 h^3 (h-1)$ graphs of the first kind with a non-zero cumulant $\kappa(G)$ --- 3 choices for which copy of $V$ plays the central role, $h^3 (h-1)$ for pairs of vertices and $4$ ways to link the chosen vertices --- and, for these graphs, the corresponding cumulant is always $\kappa(G) = p^{3h-2} (1-p)^2$. Similarly, there are $4 h^3$ graphs of the second kind with a non-zero cumulant $\kappa(G)$. For these graphs, $\kappa(G)=p^{3h-2}(1-p)(1-2p)$.
In both cases, $N_G=n^{3k-4}(1+o(1))$. This completes the proof.
\end{proof}
\bigskip

\begin{example}
Denote $T_{n}$ the number of triangles in a random Erd\"os-R\'enyi graph $\Gamma(n,p)$ (each triangle being counted $6$ times). According to the previous Lemmas, the parameters $\sigma^{2}$ and $L$ are respectively
$$\sigma^{2}=18\,p^{5}\,(1-p)\quad \text{and}\quad L=108\,p^{7}\,(1-p)(7 -8 p).$$
Moreover, $\esper[T_{n}]=n\fall{3}\,p^{3}=n^{3}\,p^{3}-3\,n^{2}\,p^{3}+O(n)$. So,
$$
\proba \!\left[ T_{n} \geq n^{3}p^{3}+n^{2}(v-3p^{3}) \right] \simeq \sqrt{\frac{ 9p^{5}(1-p)}{\pi\, v^{2}}}\, \exp\!\left(-\frac{v^{2}}{36\,p^{5}(1-p) }+\frac{(7-8p)\,v^{3}}{324 \,n \,p^8(1-p)^2 } \right) 
$$
for $1 \ll v\ll n^{1/2}$.
\end{example}
\medskip

\subsubsection{Case of a non-constant sequence $p_n$}
\label{subsec:nonconstantproba}
Proposition \ref{prop:boundcumulantsubgraph} still holds when $p_n$ is a non-constant sequence (the particularly interesting case is $p_n \to 0$). Applying Theorem \ref{thm:dependencygraphs_sumY} instead of Theorem \ref{thm:dependencygraphsrefined}, one gets a slightly sharper bound 
$$\left| \kappa^{(r)} (\SubGraph_\gamma) \right| \le C^{r} \,r^{r-2} \,n^{r(k-2)+2}\, (p_n)^h.$$
But, unlike in the case $p_n=p$ constant, this bound is not always optimal for a fixed $r$ (even up to a multiplicative constant) . Indeed, \cite[Lemma 6.17]{JLR00} gives stronger bounds than ours (see also Example 6.19 in \cite{JLR00}). Finding a uniform bound for cumulants, whose dependence in $r$ is of order $(Cr)^r$  (so that we have mod-Gaussian convergence), and which is optimal for fixed $r$ is an open problem.\bigskip

Yet, we can still give a lower bound on the normality zone. Let $\alphan$ and $\betan$ be defined as follows:
\begin{align*}
    \alphan &= n^2 \,(p_n)^{4h-3} \,(1-p_n)^3;\\
    \betan &=n^{k-2}\, (p_n)^{1-h}\, (1-p_{n})^{-1}.
\end{align*}
With these choices, one has
$$\left| \kappa^{(r)} (\SubGraph_\gamma) \right| \le (Cr)^r\, \alphan (\betan)^r.$$
Notice that $\kappa^{(2)}(\SubGraph_\gamma)$ and $\kappa^{(3)}(\SubGraph_\gamma)$ are polynomials in $n$ and $p_n$, of degree $2h$ and $3h$ in $p_n$. Thanks to this observation, if $p_n \to 0$, then
\begin{align*}
\kappa^{(2)}(\SubGraph_\gamma) &= 2h^2\,n^{2k-2}\,(p_n)^{2h-1}(1-p_n) + O(n^{2k-3}) \\
&= 2h^2\,\alphan\,(\betan)^2 \left(1+O\!\left(\frac{1}{n\,(p_n)^{2h-1}}\right)\right) ;\\
\kappa^{(3)}(\SubGraph_\gamma) &= h^3\,n^{3k-4}\,(p_n)^{3h-2}(1-p_n)\,(12 (h-1) (1-p_n) + 4 (1-2p_n)) + O(n^{3k-5})\\
&=\alphan\,(\betan)^3\,\left(O((p_n)^{2h-2})+O\!\left(\frac{1}{n\,(p_n)^{h}}\right)\right).
\end{align*}
From the discussion of Section \ref{subsec:modgaussfromcumulants}, we conclude that we have mod-Gaussian convergence 
if $(\alphan)^{1/3}=o(n\,(p_n)^{2h-1})$ and $n\,(p_n)^h \to +\infty$. The first condition is equivalent to $n\,(p_n)^{2h} \to +\infty$, so, if
$$ n^{-\frac{1}{2h}} \ll p_n \ll 1 ,$$
then Conditions \eqref{eq:cv_sndthrd_cumulants} are met, and one has a mod-Gaussian convergence of 
$$\frac{\SubGraph_\gamma-\esper[\SubGraph_\gamma]}{n^{k-\frac{4}{3}}\,(p_n)^{\frac{h}{3}}}$$
with parameters $t_n = 2h^2\,n^{\frac{2}{3}}\,(p_n)^{\frac{4h}{3}-1}\,(1-p_n)$ and limit $\psi(z)=1$. Therefore, in this setting, 
$$\frac{\SubGraph_\gamma-\esper[\SubGraph_\gamma]}{h\,n^{k-1}\,\sqrt{2(p_n)^{2h-1}(1-p_n)}}$$
has normality zone at least $O(n^{\frac{1}{3}}\,(p_n)^{\frac{2h}{3}-\frac{1}{2}})$.
Note that we only have a lower bound on the normality zone,
since the limit in the mod-Gaussian convergence is trivial.

\begin{remark}
Unfortunately the convergence speed hypotheses \eqref{eq:speedconvK2K3} are not satisfied, so one cannot apply Proposition \ref{prop:largedeviationscumulants}.
\end{remark}
\bigskip
\bigskip

\section{Random character values from central measures on partitions}\label{sec:central}
Our Theorem~\ref{thm:dependencygraphsrefined} can also be used to study certain models of random integer partitions. Recall that if $G$ is a finite group and if $\tau$ is a function $G \to \C$ with $\tau(e_{G})=1$ and $\tau(gh)=\tau(hg)$ (such a function is called a \emph{trace} on $G$), then $\tau$ can be expanded uniquely as a linear combination of normalized irreducible characters:
$$\tau=\sum_{\lambda \in \widehat{G}} \,\proba_{\tau}[\lambda]\,\widehat{\chi}^{\lambda},$$
where $\widehat{G}$ is the (finite) set of isomorphism classes of irreducible representations of $G$ and $\widehat{\chi}^{\lambda}$ the normalized ({\em i.e.} divided by the dimension of the space)
character of the irreducible representation associated to $\lambda$.
\begin{definition}
The map $\lambda \mapsto \proba_{\tau}[\lambda]$ is called the spectral measure of the trace $\tau$. It takes non-negative values if and only if, for every family $(g_1,\ldots,g_n)$ of elements of $G$, the matrix $(\tau(g_i(g_j)^{-1}))_{1 \leq i,j \leq n}$ is Hermitian non-negative definite. Then, the spectral measure is a probability measure on $\widehat{G}$.
\end{definition}
\bigskip

When $G=\sym(n)$ is the symmetric group of order $n$, the irreducible representations are indexed by \emph{integer partitions} of size $n$, that is non-increasing sequences of positive integers $\lambda=(\lambda_{1},\ldots,\lambda_{\ell})$ with $\sum_{i=1}^{\ell}\lambda_{i}=n$. In the following we denote $\pym(n)=\widehat{\sym}(n)$ the set of integer partitions of size $n$, and $\ell(\lambda)=\ell$ the length of a partition. We shall also consider the infinite symmetric group $\sym(\infty)=\bigcup_{n \ge 1} \sym(n)$, which is the group of permutations of the set of natural numbers that have a finite support.

\begin{definition}
A \emph{central measure} on partitions is a family $(\proba_{\tau,n})_{n \in \N}$ of spectral measures on the sets $\pym(n)$ that come from the same trace of the infinite symmetric group $\sym(\infty)$. In other words $(\proba_{\tau,n})_{n \in \N}$ is a central measure if there exists a trace $\tau : \sym(\infty) \to \C$ such that
$$\tau_{|\sym(n)}=\sum_{\lambda \in \pym(n)} \proba_{\tau,n}[\lambda]\,\widehat{\chi}^{\lambda}.$$
\end{definition}

\begin{example}
The regular trace $\tau(\sigma)=\mathbbm{1}_{\sigma=\mathrm{id}}$ corresponds to the Plancherel measures of the symmetric groups, given by the formula $\proba_{n}[\lambda]=(\dim V^\lambda)^{2}/n!$, where $V^{\lambda}$ is the $\sym(n)$-irreducible module of label $\lambda$. They have been extensively studied in connection with Ulam's problem of the longest increasing subsequence and with random matrix theory, see \emph{e.g.} \cite{BDJ99,BOO00,Oko00,IO02}.
\end{example}
\bigskip

A central measure $(\proba_{\tau,n})_{n \in \N}$ is non-negative if and only if $(\tau(\rho_{i}\rho_{j}^{-1}))_{1\leq i,j \leq n}$ is Hermitian non-negative definite for any finite family of permutations $\rho_{1},\ldots,\rho_{n}$. The set of non-negative central measures, \emph{i.e.}, coherent systems of probability measures on partitions has been identified in \cite{Tho64} and later studied in \cite{KV81}. Call extremal a non-negative trace on $\sym(\infty)$  that is not a positive linear combination of non-negative traces. Then, extremal central measures are indexed by the infinite-dimensional \emph{Thoma simplex}
$$\Omega=\bigg\{\omega=(\alpha,\beta)=\big((\alpha_{1}\geq \alpha_{2}\geq \cdots \geq 0),(\beta_{1}\geq \beta_{2}\geq \cdots \geq 0)\big)\,\,\bigg|\,\,\sum_{i=1}^{\infty}\alpha_{i}+\beta_{i}\leq 1\bigg\}.$$
The trace on the infinite symmetric group corresponding to a parameter $\omega$ is given by

\begin{equation}\label{eq:def_tau_omega}
\tau_{\omega}(\sigma)=\prod_{c \in C(\sigma)} p_{|c|}(\omega) \quad\text{with }p_{1}(\omega)=1,\,\,\,p_{k\geq 2}(\omega)=\sum_{i=1}^{\infty}(\alpha_{i})^{k}+(-1)^{k-1}(\beta_{i})^{k},
\end{equation}
$C(\sigma)$ denoting the set of cycles of $\sigma$. Kerov and Vershik have shown that if $\omega \in \Omega$ and $\rho \in \sym(\infty)$ are fixed, then the random character value $\widehat{\chi}^{\lambda}(\rho)$ with $\lambda$ chosen according to the central measure $\proba_{\omega,n}$ converges in probability towards the trace $\tau_{\omega}(\rho)$. Informally, central measures on partitions and extremal traces of $\sym(\infty)$ are {\em concentrated}. More recently, it was shown by F\'eray and M\'eliot that this concentration is Gaussian, see \cite{FM12,Mel12}. The aim of this section is to use the techniques of Sections \ref{sec:depgraph}-\ref{sec:erdosrenyi} in order to prove the following:

\begin{theorem}\label{thm:modgaussianrcv}
Fix a parameter $\omega \in \Omega$, and denote $\rho$ a $k$-cycle in $\sym(\infty)$ (for $k \ge 2$). Assume $p_{2k-1}(\omega)-(p_{k}(\omega))^{2}\neq 0$. Denote then $\RCV$ the random character value $\widehat{\chi}^{\lambda}(\rho)$, where:\vspace{2mm}
\begin{itemize}
\item $\lambda \in \pym(n)$ is picked randomly according to the central measure $\proba_{\omega,n}$, \vspace{2mm}
\item and $\widehat{\chi}^{\lambda}$ denotes the normalized irreducible character indexed by $\lambda$ of $\sym(n)$, that is 
$$\widehat{\chi}^{\lambda}(\rho)=\frac{\mathrm{tr}\, \Pi^{\lambda}(\rho)}{\dim V^{\lambda}}\qquad \text{ with }(V^{\lambda},\Pi^{\lambda}) \text{ irreducible representation of }\sym(n).$$
\end{itemize}
The rescaled random variable $n^{2/3}\,\big[\RCVk-p_{k}(\omega)\big]$ converges in the mod-Gaussian sense with parameters $t_{n}=n^{1/3}\,\sigma^{2}$ and limiting function $\psi(z)=\exp(L\,\frac{z^{3}}{6})$, where
\begin{align*}
\sigma^{2}&=k^{2}\,\big(p_{2k-1}(\omega)-p_{k}(\omega)^{2}\big);\\
L&= k^{3}\,\big((3k-2)\,p_{3k-2}(\omega)-(6k-3)\,p_{2k-1}(\omega)\,p_{k}(\omega)+(3k-1)\,p_{k}(\omega)^{3}\big).
\end{align*}
\end{theorem}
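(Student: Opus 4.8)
The plan is to realize the random character value $\RCV$ as a statistic of a random combinatorial object carrying a sparse dependency graph, so that the machinery of Section~\ref{sec:depgraph} (Theorem~\ref{thm:dependencygraphsrefined}, or rather its consequence Theorem~\ref{thm:modgaussiangeneralsparsegraph}) applies. The standard way to do this in the theory of central measures is the following: for the extremal trace $\tau_\omega$ with parameter $\omega=(\alpha,\beta)$, the partition $\lambda$ sampled under $\proba_{\omega,n}$ is the Young diagram obtained from a sequence of $n$ i.i.d.~coordinates $(\xi_1,\dots,\xi_n)$, where each $\xi_i$ takes value in the alphabet $\{\alpha_1,\alpha_2,\dots\}\sqcup\{\beta_1,\beta_2,\dots\}$ (plus a continuous ``dust'' part of total weight $1-\sum(\alpha_i+\beta_i)$), with $\proba[\xi_i=\alpha_j]=\alpha_j$ and $\proba[\xi_i=\beta_j]=\beta_j$ --- this is the Kerov--Vershik / RSK description of extremal central measures, see the references \cite{KV81,Tho64} already cited. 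The key algebraic input is then the formula expressing the normalized character $\widehat\chi^\lambda(\rho)$ on a $k$-cycle as a \emph{polynomial in the free cumulants of $\lambda$}, equivalently as a shifted symmetric function of $\lambda$; in the concentrated regime this quantity is, up to lower-order corrections, a symmetric polynomial of degree $k$ in the empirical measure of the $\xi_i$'s. Concretely, one uses the expansion $\widehat\chi^\lambda(k\text{-cycle})$ in terms of the $p_j$-type power sums of the content alphabet, and one shows that $\RCV - \tau_\omega(\rho)$ can be written, modulo negligible terms, as $N_n^{-1}$ times a sum $\sum_{A} Y_A$ over arrangements $A$ of length $\le k$ in $[n]$ of bounded random variables, with a dependency graph of the type considered in \S\ref{subsec:dependencygraphs}: two arrangements interact only if they share an index.

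\textbf{Main steps.} First I would recall precisely the combinatorial model: write $\lambda = \lambda(\xi_1,\dots,\xi_n)$ via RSK, and express $\widehat\chi^\lambda(\rho)$ through the (normalized) moments $p_j^{(n)} := \frac1n\sum_{i=1}^n f(\xi_i)^{\,j-1}\cdot(\pm 1)$ matching the definition \eqref{eq:def_tau_omega} of $p_j(\omega)$; the law of large numbers gives $p_j^{(n)}\to p_j(\omega)$ and one wants the fluctuations. Second, I would invoke the polynomiality of characters to write $n^{2/3}(\RCV-\tau_\omega(\rho))$ as a fixed polynomial (with coefficients depending only on $k$) in the centered and rescaled quantities $\sqrt n\,(p_j^{(n)}-p_j(\omega))$, $j\le 2k-1$ or so, plus an error $O(n^{-1/3})$ that is negligible for mod-Gaussian purposes; one then checks that to leading order only the linear part in the single quantity $\sqrt n\,(p_k^{(n)}-p_k(\omega))$ survives in the relevant scaling, which pins down $\sigma^2$ and $L$ as the second and third cumulants of the limiting object. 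Third --- this is where Theorem~\ref{thm:dependencygraphsrefined} enters --- I would expand each power sum statistic over arrangements of $[n]$, so that $S_n := n\cdot n^{2/3}(\RCV-\tau_\omega(\rho))$ is a sum of $N_n \asymp n^{k}$ bounded summands $Y_A$ (up to the error term, which must be controlled separately), each $Y_A$ measurable with respect to $\{\xi_i : i\in A\}$, hence with a dependency graph where $A\sim A'$ iff $A\cap A'\ne\emptyset$; this graph has $N_n\asymp n^k$ vertices and maximal degree $D_n\asymp n^{k-1}$, so $D_n/N_n\to 0$ and Theorem~\ref{thm:modgaussiangeneralsparsegraph} applies with $\alpha_n\asymp n$, $\beta_n\asymp n^{k-1}$: it yields $|\kappa^{(r)}(S_n)|\le (Cr)^r\,\alpha_n\,\beta_n^r$ and, provided the second and third cumulants have the right asymptotics, mod-Gaussian convergence of $S_n/(\beta_n(\alpha_n)^{1/3}\sigma_n)\cdot(\alpha_n/N_n)^{1/3}$ --- which after unwinding the normalizations is exactly the statement that $n^{2/3}(\RCV-\tau_\omega(\rho))$ converges mod-Gaussian with parameters $t_n = \sigma^2 n^{1/3}$ and limiting function $\exp(Lz^3/6)$.

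\textbf{Computing $\sigma^2$ and $L$.} The values of $\sigma^2$ and $L$ are then the limits of $\alpha_n\beta_n^{-2}\kappa^{(2)}$ and (rescaled) $\alpha_n\beta_n^{-3}\kappa^{(3)}$ of $S_n$. As in \S\ref{subsec:sigma_L}, I would compute these by identifying which configurations of $2$ (resp.~$3$) arrangements contribute to the top-degree term: for the variance this forces the two arrangements to overlap in a single index and each to ``carry a $k$-cycle'', giving the covariance of the linearized statistic, namely $k^2(p_{2k-1}(\omega)-p_k(\omega)^2)$ after collecting the combinatorial factors ($k$ ways to place the shared index in a cyclic structure on each side); the cubic cumulant similarly reduces to a sum over the two overlap patterns (chain vs.~triangle of shared indices among the three arrangements), producing the three terms $(3k-2)p_{3k-2}(\omega)$, $-(6k-3)p_{2k-1}(\omega)p_k(\omega)$, $(3k-1)p_k(\omega)^3$ with the overall factor $k^3$. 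These are finite computations in the symmetric-function algebra and I would present them as lemmas paralleling those of \S\ref{subsec:sigma_L}.

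\textbf{Main obstacle.} The genuinely delicate point is not the dependency-graph bound but the \emph{reduction step}: one must show that $\widehat\chi^\lambda(\rho)$, expressed through the approximate RSK model, differs from a clean degree-$k$ polynomial statistic of the $\xi_i$'s by an error that is $o(n^{-2/3})$ \emph{at the level of cumulants}, not just in probability --- i.e.~the error term, when multiplied by $n\cdot n^{2/3}$ and added to $S_n$, does not spoil the bound $|\kappa^{(r)}|\le (Cr)^r\alpha_n\beta_n^r$ nor the asymptotics of $\kappa^{(2)},\kappa^{(3)}$. This requires either (i) a precise version of the character polynomiality with explicit control of the subleading shifted-symmetric terms (using that lower-degree free cumulants contribute lower powers of $n$), or (ii) an argument showing these corrections are themselves dependency-graph statistics with even sparser graphs, hence absorbed. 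The approach of \cite{FM12,Mel12} already contains the Gaussian fluctuation result, so the new content here is purely the upgrade to the cumulant bounds of Theorem~\ref{thm:dependencygraphsrefined}; I would follow their setup for the reduction and only need to recheck that every intermediate random variable appearing is bounded and carries the overlap dependency graph, which is the one technical verification I expect to take real work.
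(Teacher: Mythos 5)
The paper's proof proceeds entirely inside the non-commutative probability space $(\C\sym(n), \tau_\omega)$ and never invokes RSK or any classical realization of the central measure. Its key observation (beginning of \S\ref{subsec:boundcumulantchmu}) is that the multiplicativity of $\tau_\omega$ over disjoint cycles forces $\C\sym(I)$ and $\C\sym(J)$ to be tensor-independent whenever $I,J \subset [n]$ are disjoint; consequently the family of non-commutative random variables $\{\rho_\mu(A)\}_{A \in \arr(n,k)}$ carries the overlap dependency graph (edge iff $|A \cap A'| \geq 1$), and Theorem~\ref{thm:dependencygraphsrefined} --- whose proof uses only the factorization identity \eqref{eq:alsoinnoncommutative}, which holds verbatim for tensor independence --- applies in this non-commutative setting. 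Since the discrete Fourier transform is an algebra morphism on the center of $\C\sym(n)$, the cumulants of the classical random variable $\RCV$ coincide with those of the central element $\varSigma_{\mu,n}$, giving Proposition~\ref{prop:boundcumulantcharacter} directly; Lemma~\ref{lem:cumulantcharacterpoly} (polynomiality in $n$) supplies the speed of convergence for the second and third cumulants, and $\sigma^2$ and $L$ fall out of the configuration analysis of \S\ref{subsubsec:limitchar_K2K3}. There is no reduction step and no error term to control: the dependency graph is intrinsic to the group algebra.

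Your proposed route is genuinely different, and the gap you yourself flag --- reducing $\widehat\chi^\lambda(\rho)$ to a local polynomial statistic of the RSK input $(\xi_1,\ldots,\xi_n)$ --- is not a deferrable technical chore but a structural obstruction. Under RSK the shape $\lambda$ is a highly non-local function of the whole sequence, and there is no decomposition of $\widehat\chi^\lambda(\rho)$ (or of the moments $p_j(\lambda)$) as a sum $\sum_A Y_A$ with each $Y_A$ measurable with respect to $\{\xi_i : i \in A\}$: shifted-symmetric polynomiality is a statement about the shape $\lambda$, not about the tableau entries, and the RSK map destroys locality. Moreover, the naive classical realization of $\tau_\omega$ by indicator variables $Z_\sigma = \mathbbm{1}[\xi_i = \xi_{\sigma(i)}\,\forall i]$ for i.i.d.~$\xi_i$'s does reproduce first moments $\esper[Z_\sigma]=\tau_\omega(\sigma)$ but not products: for instance $\tau_\omega((12)\cdot(12))=\tau_\omega(\mathrm{id})=1$ while $\esper[Z_{(12)}^2]=p_2(\omega)$, so a classical overlap dependency graph on such indicators cannot capture the cumulants of the character values. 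The paper's passage to a non-commutative expectation is exactly what circumvents this; without it, your proposal still has to carry out the entire reduction of \cite{FM12,Mel12} and then, separately, establish that the error terms respect the cumulant bounds --- neither of which is done.
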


As in Section \ref{sec:erdosrenyi}, the mod-Gaussian convergence is proved by bounds on cumulants of type \eqref{eq:superbound}.
Furthermore, the hypothesis \eqref{eq:limitcumulant} on the third and second cumulant are ensured
by the polynomiality of joint cumulants --- Lemma \ref{lem:cumulantcharacterpoly}.
Therefore, we can apply Propositions \ref{prop:largedeviationscumulants} and \ref{prop:normalityzone}.
\begin{corollary}\label{cor:moderatedeviationcharacter}
Let $\SubGraph_{k}=\RCVk$ be the random character value on a $k$-cycle as defined above.
Then $n^{1/2}\, (\SubGraph_{k}-p_{k}(\omega))$ satisfies a central limit theorem with normality zone 
$o(n^{1/6})$.
Moreover, at the edge of this normality zone and at a slightly larger scale, the deviation probabilities are given by:
for any sequence $x$ of positive numbers bounded away from $0$ with $x_n=o\big( n^{1/12} \big)$,
one has
\begin{align*}
    \proba\left[n^{1/2} (\SubGraph_{k}-p_{k}(\omega))\geq n^{1/6} \sigma^2 x_n\right]&=
\frac{\E^{-\frac{(x_n)^2\, n^{1/3}\sigma^2}{2}}}{x_n\, n^{1/6}\sigma \sqrt{2\pi}}\,\E^{\frac{L(x_n)^3}{6}}\,(1+o(1)).\\
    \proba\left[n^{1/2} (\SubGraph_{k}-p_{k}(\omega))\leq - n^{1/6} \sigma^2 x_n\right]&=
\frac{\E^{-\frac{(x_n)^2\, n^{1/3}\sigma^2}{2}}}{x_n\, n^{1/6}\sigma \sqrt{2\pi}}\,\E^{\frac{-L(x_n)^3}{6}}\,(1+o(1)).
\end{align*}
where $\sigma^{2}$ and $L$ as in Theorem \ref{thm:modgaussianrcv}.
\end{corollary}\bigskip

\begin{remark}\label{rem:limitvarianceispositive}
The condition $\sigma^{2}>0$ is satisfied as soon as the sequence $\alpha \sqcup \beta$ associated to the Thoma parameter $\omega$ contains two different non-zero coordinates. Indeed, for any summable non-increasing non-negative sequence $\gamma=(\gamma_{1},\gamma_{2},\ldots)$ with $\gamma_{i}>\gamma_{i+1}$ for some $i$, one has
$$\left(\sum_{i=1}^{\infty} (\gamma_{i})^{k-\eps} \right)\left(\sum_{i=1}^{\infty} (\gamma_{i})^{k+\eps} \right) \geq \left(\sum_{i=1}^{\infty} (\gamma_{i})^{k} \right)^{2}$$
with equality if and only if $\eps=0$. Indeed, the derivative of the function of $\eps$ on the left-hand side is
$$\sum_{i<j} \left(\gamma_{i}\gamma_{j}\right)^{k}\,\log\!\left(\frac{\gamma_{i}}{\gamma_{j}}\right)\,\left(\left(\frac{\gamma_{i}}{\gamma_{j}}\right)^{\!\eps}-\left(\frac{\gamma_{j}}{\gamma_{i}}\right)^{\!\eps}\,\right).$$
Applying the result to $\gamma=\alpha \sqcup \beta$ and $\eps=k-1$, we obtain on the left-hand side
$$\left(\sum_{i=1}^{\infty} \gamma_{i} \right)\left(\sum_{i=1}^{\infty} (\gamma_{i})^{2k-1} \right) = \left(\sum_{i=1}^{\infty} \alpha_{i}+\beta_{i} \right)\,p_{2k-1}(\omega)\leq p_{2k-1}(\omega)$$
and on the right-hand side
$$\left(\sum_{i=1}^{\infty} (\gamma_{i})^{k}\right)^{2} \geq \left(\sum_{i=1}^{\infty} (\alpha_{i})^{k} + (-1)^{k-1}(\beta_{i})^{k}\right)^{2}=(p_{k}(\omega))^{2}.$$
This proof shows that the condition $\sigma^{2}>0$ is also satisfied if $0<\sum_{i=1}^{\infty} \alpha_{i}+\beta_{i}<1$.
\end{remark}

This Section is organized as follows. In Section \ref{subsec:prelim_chmu}, we present the necessary material. In Section \ref{subsec:boundcumulantchmu}, we then prove bounds on these cumulants similar to those of Proposition \ref{prop:boundcumulantsubgraph}, and we compute the limits of the second and third cumulants. This will allow us to use in Section \ref{subsec:asymptoticrcv} the framework of Section \ref{subsec:deviationscumulant} in order to prove the results stated above. We shall also detail some consequences of these results for the shapes of the random partitions $\lambda \sim \proba_{n,\omega}$ viewed as Young diagrams, in the spirit of \cite{FM12,Mel12}. \bigskip

\subsection{Preliminaries}\label{subsec:prelim_chmu}
\subsubsection{Non-commutative probability theory}
The originality of this section is that, although the problem is formulated in a classical probability space,
it is natural to work in the setting of non-commutative probability theory.
\begin{definition}
    A {\em non-commutative probability space} is a complex unital $\star$-algebra $\mathscr{A}$ 
    with some linear functional $\varphi:\mathscr{A} \to \C$ such that $\varphi(1)=1$ and,
    for any $a \in \mathscr{A}$, $\varphi(a^\star a) \ge 0$.
\end{definition}
This generalizes the notion of probability space: elements of $\mathscr{A}$ should be thought as random variables, while $\varphi$ should be thought as the expectation. The difference is that, unlike random variables in usual probability theory, elements of $\mathscr{A}$ are not assumed to commute. Usually one also assumes that $\varphi$ is tracial, \emph{i.e.}, $\varphi(ab)=\varphi(ba)$ for any $a,b\in\mathscr{A}$.\bigskip

There are five natural analogues of the notion of independence in non-commutative probability theory, see~\cite{Muraki03}. In our context, the relevant one is the following one, sometimes called {\em tensor independence} \cite[Definition 1.1]{HS10}.
\begin{definition}
    Two subalgebras $\mathscr{A}_1$ and $\mathscr{A}_2$ of $\mathscr{A}$ are {\em tensor independent} if and only if, for any sequence $a_1,\dots,a_r$ with, for each $i$, $a_i \in \mathscr{A}_1$ or $a_i \in \mathscr{A}_2$, one has
    \[\varphi(a_1 \dots a_r) = 
    \varphi \left(\prod_{\substack{1\le i \le r \\ a_i \in \mathscr{A}_1}}^{\rightarrow} a_i\right)
    \varphi \left(\prod_{\substack{1\le i \le r \\ a_i \in \mathscr{A}_2}}^{\rightarrow} a_i\right)
    .\]
    The arrow on the product sign means that the $a_i$ in the product appear in the same order as in $a_1,\dots,a_r$.
\end{definition}

With this definition of independence, the notion of dependency graph presented in Section \ref{subsec:dependencygraphs} is immediately extended to the non-commutative framework. One can also define joint cumulants as follows: if $a_1,\dots,a_r$ are elements in $\mathscr{A}$, we set
$$\kappa(a_1,\dots,a_r) = \sum_{\pi} \,\mu(\pi) \,\prod_{B \in \pi} \varphi \!\left( 
\prod^{\rightarrow}_{i \in B} a_i \right).$$
Note that, in the proof of Theorem~\ref{thm:dependencygraphsrefined}, independence is only used in equation~\eqref{eq:alsoinnoncommutative}. By definition of tensor independence, equation~\eqref{eq:alsoinnoncommutative} also holds in the non-commutative setting and hence, so does Theorem~\ref{thm:dependencygraphsrefined}.
\bigskip

\subsubsection{Two probability spaces}
From now on, we fix an element $\omega$ in the Thomas simplex $\Omega$. Denote $\C\sym(n)$  the group algebra of $\sym(n)$. The function $\tau_\omega$, defined by Equation \eqref{eq:def_tau_omega}, can be linearly extended to $\C\sym(n)$. Then $(\C\sym(n),\tau_{\omega})$ is a non-commutative probability space.
\bigskip

Note that we are now working with two probability spaces: the non-commutative probability space $(\C\sym(n),\tau_{\omega})$ and the usual probability space that we want to study (the set of Young diagrams of size $n$ with the probability measure $\proba_{\omega,n}$). They are related as follows. Consider an element $y$ in $\C\sym(n)$ and define the random variable (for the usual probability space) $X_y$ by $X_y(\lambda)=\widehat{\chi}(y)$.
Then one has
\[\esper_{\proba_{\omega,n}}(X_y) = \sum_{\lambda \vdash n} \proba_{\omega,n}(\lambda)
\widehat{\chi}(y) = \tau_\omega(y).\]
In other words, the {\em expectations} of $y$ (in the non-commutative probability space) and of $X_y$ (in the usual probability space) coincide (recall that the trace of a non-commutative probability space, here $\tau_\omega$, is considered as an expectation).
\bigskip

Besides, if we restrict to the {\em center} of $(\C\sym(n),\tau_{\omega})$, then the map $y \mapsto X_y$ is an {\em algebra morphism}, called {\em discrete Fourier transform}. As a consequence, if $y_1,\dots,y_r$ lie in the center of $(\C\sym(n),\tau_{\omega})$, their joint moments (and joint cumulants) are the same as those of $X_{y_1},\dots,X_{y_r}$.

\subsubsection{Renormalized conjugacy classes and polynomiality of cumulants}
Given a partition $\mu=(\mu_{1},\ldots,\mu_{\ell})$ of size $|\mu|=\sum_{i=1}^{\ell}\mu_{i}=k$, we denote 
$$\varSigma_{\mu,n}=\sum_{A} \rho_\mu(A) \text{ where } \rho_\mu(A)=(a_1,\ldots,a_{\mu_{1}})(a_{\mu_1+1},\ldots,a_{\mu_1+\mu_{2}})\cdots.$$
In the equation above, the formal sum is taken over arrangements in $\arr(n,k)$ and is considered as an element of the group algebra $\C\sym(n)$. It clearly lies in its center. Besides, $\varSigma_{\mu,n}$ is the sum of $n\fall{k}$ elements of cycle-type $\mu$, hence, if $\rho$ is a fixed permutation of type $\mu$, one has:
\[X_{\varSigma_{\mu,n}} = n\fall{k} X_\rho.\]
Note that considering these elements $\varSigma_{\mu,n}$ and their normalized characters is a classical trick in the study of central measures on Young diagrams; see \cite{IO02,Sni06b,FM12}.
\bigskip

Fix some permutations $\rho_1,\dots,\rho_r$ of respective size $k_1,\dots,k_r$. Denote $\mu^{1},\ldots,\mu^{r}$ their cycle-types.
\begin{align}
n\fall{k_{1}}\cdots n\fall{k_{r}}\,\,\kappa\!\left(\SubGraph_{\rho_{\mu^{1}}},\ldots,\SubGraph_{\rho_{\mu^{r}}}\right)&=\kappa(\varSigma_{\mu^{1}},\ldots,\varSigma_{\mu^{r}})\nonumber\\
&=\sum_{\substack{A^{1}\in \arr(n,k_{1}) \vspace{-1mm} \\ \vdots  \vspace{1mm}\\ A^{r} \in \arr(n,k_{r})}}
\kappa\left(\rho_{\mu^{1}}(A^{1}),\ldots,\rho_{\mu^{r}}(A^{r})\right).\label{eq:listequality}
\end{align}
As in the framework of subgraph count statistics,
the invariance of $\tau_\omega$ by conjugacy of its argument
implies that the joint cumulant $\kappa\left(\rho_{\mu^{1}}(A^{1}),\ldots,\rho_{\mu^{r}}(A^{r})\right)$
depends only on the graph $G_{\mathbb{A}}$ associated to the family of arrangement
$\mathbb{A}=(A^{1},\dots,A^{r})$.
Copying the proof of Corollary~\ref{cor:jointcumulantpoly},
we get:
\begin{lemma}\label{lem:cumulantcharacterpoly}
Fix some integer partitions  $\mu^1,\dots,\mu^r$. Then the rescaled joint cumulant 
$$\kappa(\varSigma_{\mu^{1}},\ldots,\varSigma_{\mu^{r}})=n\fall{k_{1}}\cdots n\fall{k_{r}}\,\,\kappa\!\left(\SubGraph_{\rho_{\mu^{1}}},\ldots,\SubGraph_{\rho_{\mu^{r}}}\right)
$$ 
is a polynomial in $n$.
\end{lemma}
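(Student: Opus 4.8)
The plan is to mimic exactly the proof of Corollary~\ref{cor:jointcumulantpoly}, replacing subgraph count statistics by renormalized conjugacy classes. The starting point is Equation~\eqref{eq:listequality}, which expands the rescaled joint cumulant $\kappa(\varSigma_{\mu^1,n},\dots,\varSigma_{\mu^r,n})$ as a sum over lists $\mathbb{A}=(A^1,\dots,A^r)$ of arrangements (with $A^i\in\arr(n,k_i)$, $k_i=|\mu^i|$) of the individual joint cumulants $\kappa(\rho_{\mu^1}(A^1),\dots,\rho_{\mu^r}(A^r))$. The key structural input, already noted just before the statement, is that $\tau_\omega$ is invariant under conjugation of its argument (this is immediate from the formula \eqref{eq:def_tau_omega}, since $\tau_\omega(\sigma)$ depends only on the cycle-type of $\sigma$), and therefore so is every joint moment and every joint cumulant built from $\tau_\omega$. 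Consequently $\kappa(\rho_{\mu^1}(A^1),\dots,\rho_{\mu^r}(A^r))$ depends on the family $\mathbb{A}$ only through the graph $G_{\mathbb{A}}$ that records which pairs $(t,i)$, $(s,j)$ satisfy $a^i_t=a^j_s$ — this is the exact analogue of Lemma~\ref{lem:indjointmoments}, and the proof is the same (relabelling the underlying ground set).

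Granting this, I would rewrite \eqref{eq:listequality} by grouping lists of arrangements according to their graph $G$:
\begin{equation*}
\kappa(\varSigma_{\mu^1,n},\dots,\varSigma_{\mu^r,n}) = \sum_{G} \kappa(G)\, N_G,
\end{equation*}
where the sum runs over the (finite, $n$-independent) set of graphs $G$ on the vertex set $V_{\Bbbk}=\{(t,i): 1\le i\le r,\ 1\le t\le k_i\}$ that arise from some list of arrangements — that is, $G$ is a disjoint union of cliques with no edge between $(s,i)$ and $(t,i)$ for $s\neq t$; here $\kappa(G)$ is the common value of $\kappa(\rho_{\mu^1}(A^1),\dots,\rho_{\mu^r}(A^r))$ over all $\mathbb{A}$ with associated graph $G$, which does not depend on $n$; and $N_G$ is the number of such lists $\mathbb{A}$. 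As in Corollary~\ref{cor:jointcumulantpoly}, $N_G$ is simply the falling factorial $n\fall{c_G}=n(n-1)\cdots(n-c_G+1)$, where $c_G$ is the number of connected components of $G$ (choosing, for each component, a distinct value in $[n]$ for the common vertex). Since the index set is finite and independent of $n$, and each $\kappa(G)$ is independent of $n$, the whole sum is a polynomial in $n$. This proves that $\kappa(\varSigma_{\mu^1,n},\dots,\varSigma_{\mu^r,n})$ is polynomial in $n$, and by \eqref{eq:listequality} the same holds — after division by the polynomial $n\fall{k_1}\cdots n\fall{k_r}$ — for $n\fall{k_1}\cdots n\fall{k_r}\,\kappa(\SubGraph_{\rho_{\mu^1}},\dots,\SubGraph_{\rho_{\mu^r}})$, which is the content of the lemma.

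There is essentially no hard obstacle here; the statement is a direct transcription of Corollary~\ref{cor:jointcumulantpoly}, and the only point that deserves a word of care is that one is now working in the non-commutative probability space $(\C\sym(n),\tau_\omega)$, so that in the moment–cumulant formula the products $\prod^{\rightarrow}_{i\in B}\rho_{\mu^i}(A^i)$ must be read as ordered products; but this is harmless, because conjugation invariance of $\tau_\omega$ still forces each such ordered joint moment to depend only on $G_{\mathbb{A}}$ (the order of the factors within a block affects the group element obtained, but $\tau_\omega$ only sees its cycle-type, which in turn is determined by $G_{\mathbb{A}}$). I would therefore present the proof in two short steps: (i) invoke the non-commutative analogue of Lemma~\ref{lem:indjointmoments} to reduce to the graph-indexed sum; (ii) observe $N_G=n\fall{c_G}$ and conclude polynomiality, exactly as in Corollary~\ref{cor:jointcumulantpoly}.
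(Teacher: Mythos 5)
Your proof is correct and follows exactly the route the paper takes: the paper's own argument is precisely to copy the proof of Corollary~\ref{cor:jointcumulantpoly}, using the conjugacy-invariance of $\tau_{\omega}$ to reduce the expansion \eqref{eq:listequality} to a finite, $n$-independent sum $\sum_G \kappa(G)\,N_G$ with $N_G=n\fall{c_G}$, and your remark about ordered products in the non-commutative setting is the same harmless observation the paper makes implicitly. (Only a cosmetic quibble: no ``division'' is needed at the end, since $n\fall{k_1}\cdots n\fall{k_r}\,\kappa(\SubGraph_{\rho_{\mu^{1}}},\ldots,\SubGraph_{\rho_{\mu^{r}}})$ \emph{is} $\kappa(\varSigma_{\mu^{1}},\ldots,\varSigma_{\mu^{r}})$, which you have just shown to be polynomial.)
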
\bigskip

\subsection{Bounds and limits of the cumulants}\label{subsec:boundcumulantchmu} 
\subsubsection{The dependency graph for random character values} If $k=|\mu|$, we are interested in the non-commutative random variables
$$\varSigma_{\mu,n}=\sum_{A \in \arr(n,k)} \rho_{\mu}(A) \in \C\sym(n).$$
Again, to control the cumulants, we shall exhibit a dependency graph for the families of random variables $\{\rho_{\mu}(A)\}_{A \in \arr(n,k)}$.

Due to the multiplicative form of Equation \eqref{eq:def_tau_omega}, if $I$ and $J$ have disjoint subsets of $[n]$, the subalgebras $\C\sym(I)$ and $\C\sym(J)$ are tensor independent (here, $\sym(I)$ denotes the group of permutations of $I$, canonically included in $\sym(n)$). Therefore, one can associate to $\{\rho_{\mu}(A)\}_{A \in \arr(n,k)}$
the dependency graph $\B$ defined by: \vspace{2mm}
\begin{itemize}
    \item its vertex set is $\arr(n,k)$;\vspace{2mm}
    \item there is an edge between $A$ and $A'$ if  $|A \cap A'| \ge 1$.\vspace{2mm}
\end{itemize}
The graph $\B$ is obviously regular with degree strictly smaller than $k^{2}\,n\fall{k-1}$. On the other hand, all joint moments of the family $(\rho_{\mu}(A))_{A \in \arr(n,k)}$ are normalized characters of single permutations and hence bounded by $1$ in absolute value. So one can once again apply Theorem \ref{thm:dependencygraphsrefined} and we get:

\begin{proposition}\label{prop:boundcumulantcharacter} 
Fix a partition $\mu$ of size $k$. For any $r\le 1$, one has
\begin{align*} 
\big| \kappa^{(r)}(\varSigma_\mu) \big| &\le 2^{r-1} r^{r-2}\,  n\fall k\, (k^{2} n\fall{k-1})^{r-1};\\
\big| \kappa^{(r)}(\SubGraph_{\rho_\mu}) \big| &\le r^{r-2}\,  \left(\frac{2k^{2}}{n} \right)^{r-1}.
\end{align*}
\end{proposition}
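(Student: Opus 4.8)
First I would set up the dependency graph for the family of non-commutative random variables $\{\rho_\mu(A)\}_{A \in \arr(n,k)}$, exactly as in the subgraph-count case (Section~\ref{sec:erdosrenyi}). The key structural input is the multiplicative form of the trace $\tau_\omega$ in Equation~\eqref{eq:def_tau_omega}: if $I$ and $J$ are disjoint subsets of $[n]$, then the subalgebras $\C\sym(I)$ and $\C\sym(J)$ of $\C\sym(n)$ are tensor independent, because a permutation supported on $I \cup J$ factors as a product of a permutation of $I$ and a permutation of $J$, and the trace of the product is the product of the traces (each cycle of the product lives entirely in $I$ or entirely in $J$). Consequently, for two arrangements $A, A'$ with $A \cap A' = \emptyset$, the variables $\rho_\mu(A)$ and $\rho_\mu(A')$ generate independent subalgebras. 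This shows that the graph $\B$ on vertex set $\arr(n,k)$, with an edge between $A$ and $A'$ whenever $|A \cap A'| \ge 1$, is a dependency graph for the family.

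Next I would estimate the two combinatorial parameters needed to feed into Theorem~\ref{thm:dependencygraphsrefined}. The number of vertices is $N = n\fall{k} \le n^k$. For the maximal degree $D$: a neighbour $A'$ of a fixed arrangement $A$ is obtained by choosing which position of $A$ gives the shared value ($k$ choices), choosing in which position of $A'$ that value appears ($k$ choices), and then filling the remaining $k-1$ slots of $A'$ with distinct values from $[n]$ (at most $n\fall{k-1}$ choices); hence $D < k^2\, n\fall{k-1}$. Since each $\rho_\mu(A)$ is a single group element, its image $X_{\rho_\mu(A)}$ under the discrete Fourier transform is a normalized character value $\widehat\chi^\lambda(\rho_\mu(A))$, which is bounded by $1$ in absolute value; thus every joint moment of the family is bounded by $1$, and we may take $A = 1$ in the notation of Theorem~\ref{thm:dependencygraphsrefined}. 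Applying that theorem with $C_r = 2^{r-1} r^{r-2}$ gives
$$\big|\kappa^{(r)}(\varSigma_{\mu,n})\big| \le 2^{r-1}\, r^{r-2}\, n\fall{k}\, \big(k^2\, n\fall{k-1}\big)^{r-1}.$$

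Finally I would transfer this to the random character value $\RCV = \widehat\chi^\lambda(\rho_\mu)$. Since $\varSigma_{\mu,n}$ lies in the center of $\C\sym(n)$ and is the sum of $n\fall{k}$ permutations of cycle type $\mu$, the discrete Fourier transform being an algebra morphism on the center gives $X_{\varSigma_{\mu,n}} = n\fall{k}\, X_{\rho_\mu}$, so $\kappa^{(r)}(\varSigma_{\mu,n}) = (n\fall{k})^r\, \kappa^{(r)}(\RCV)$ by homogeneity of cumulants (more precisely, by the polynomiality statement of Lemma~\ref{lem:cumulantcharacterpoly} together with the multilinearity of joint cumulants). Dividing through,
$$\big|\kappa^{(r)}(\RCV)\big| \le 2^{r-1}\, r^{r-2}\, \frac{\big(k^2\, n\fall{k-1}\big)^{r-1}}{(n\fall{k})^{r-1}} \le r^{r-2}\, \left(\frac{2k^2}{n}\right)^{r-1},$$
where the last inequality uses $n\fall{k-1}/n\fall{k} = 1/(n-k+1) \le 1/n$ for $n$ large (one may absorb the constant for small $n$, or note $n\fall{k-1} \le n\fall{k}/(n-k+1)$). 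This is precisely the claimed bound. The only mild subtlety — and the step requiring a line of care rather than a genuine obstacle — is the bookkeeping in the degree estimate and the passage between $\varSigma_{\mu,n}$ and $\RCV$ via the Fourier transform on the center; everything else is a direct invocation of Theorem~\ref{thm:dependencygraphsrefined} with the dependency graph $\B$.
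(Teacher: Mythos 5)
Your approach matches the paper's exactly: the multiplicative form of $\tau_\omega$ over cycles gives tensor independence of $\C\sym(I)$ and $\C\sym(J)$ for disjoint $I,J$, so $\B$ is a dependency graph for $\{\rho_\mu(A)\}_A$; every joint moment is a normalized character value bounded by $1$, so one may take the uniform bound equal to $1$ in Theorem~\ref{thm:dependencygraphsrefined}; and the transfer to $\SubGraph_{\rho_\mu}$ via $X_{\varSigma_{\mu,n}}=n\fall{k}\,X_{\rho_\mu}$ and homogeneity of cumulants is the intended route.

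One line of arithmetic is wrong as written, though. You use $n\fall{k-1}/n\fall{k}=1/(n-k+1)\le 1/n$, but the inequality goes the other way ($n-k+1\le n$, so $1/(n-k+1)\ge 1/n$), and ``$n$ large'' does not repair it. The clean fix is to tighten the degree count: after choosing the position in $A$ of the shared value and the slot of $A'$ that receives it ($k^2$ choices in all), the remaining $k-1$ slots of $A'$ must be filled with distinct values from $[n]$ \emph{excluding the shared value}, which gives $(n-1)\fall{k-1}$ possibilities, not $n\fall{k-1}$. Hence $D+1\le k^2\,(n-1)\fall{k-1}=k^2\,n\fall{k}/n$, so $(D+1)/n\fall{k}\le k^2/n$ exactly, and
\[
\big|\kappa^{(r)}(\SubGraph_{\rho_\mu})\big|
\le 2^{r-1}\,r^{r-2}\,\Big(\tfrac{D+1}{n\fall{k}}\Big)^{r-1}
\le r^{r-2}\,\Big(\tfrac{2k^2}{n}\Big)^{r-1}
\]
with no asymptotic hand-waving. (For the first bound of the proposition you may of course keep the looser $n\fall{k-1}$, since $(n-1)\fall{k-1}\le n\fall{k-1}$.)
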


\begin{remark}
Using Theorem~\ref{thm:boundjointcumulant}, one can also obtain a bound for joint cumulants of the $\SubGraph_{\rho_\mu}$, namely,
$$\left| \kappa\!\left (\SubGraph_{\rho_{\mu^{1}}},\dots,\SubGraph_{\rho_{\mu^{r}}}\right) \right| \le k_1 \cdots k_r\, \big(r \cdot \max_{1 \le i \le r} k_i \big)^{r-2} \, \left(\frac{2}{n}\right)^{r-1}$$
for integer partitions $\mu^1,\dots,\mu^r$ of sizes $k_1,\ldots,k_r$. In the following, we shall focus on the case of simple random variables $\SubGraph_{\rho_{\mu}}$, though most results also hold in the multi-dimensional setting. Actually, in order to compute the asymptotics of the first cumulants of $X_{\rho_{\mu}}$, it will be a bit clearer to manipulate joint cumulants of variables $\varSigma_{\mu^{1}},\ldots,\varSigma_{\mu^{r}}$ with arbitrary integer partitions.
\end{remark}
\bigskip

\subsubsection{Limits of the second and third cumulants}\label{subsubsec:limitchar_K2K3}
Because of Lemma \ref{lem:cumulantcharacterpoly} and Proposition \ref{prop:boundcumulantcharacter}, for any fixed integer partitions, $$\kappa\left(\SubGraph_{\rho_{\mu^{1}}},\dots,\SubGraph_{\rho_{\mu^{r}}}\right)\,n^{1-r}\simeq \kappa\left(\varSigma_{\mu^{1}},\ldots,\varSigma_{\mu^{r}}\right)\,n^{k_{1}+\cdots+k_{r}-(r-1)}$$ converges to a constant. Let us compute this limit when $r=2$ or $3$; we use the same reasoning as in Section \ref{subsec:sigma_L}.\bigskip

As $\kappa$ is invariant by simultaneous conjugacy of its arguments, the summand in Equation \eqref{eq:listequality} depends only on the graph $G=G_{\AA}$ associated to the collection $\AA=(A^1,\dots,A^r)$, and we shall denote it $\kappa(G)$.  We fix partitions $\mu^{1},\ldots,\mu^{r}$ of respective sizes $k_{1},\ldots,k_{r}$, and write $$\kappa(\varSigma_{\mu^{1}},\ldots,\varSigma_{\mu^{r}})=\sum_{G}\, \kappa(G)\,N_{G}.$$ 
Here, as in Section~\ref{sec:erdosrenyi}, $N_G$ denotes the number of list $\AA$ of arrangements with associated graph $G$.\medskip

When $r=2$, we have to look for graphs $G$ on vertex set $V_{\Bbbk}=[k_{1}]\sqcup [k_{2}]$ with $1$-contraction connected and at least $k_{1}+k_{2}-1$ connected components, because these are the ones that will give a contribution for the coefficient of $n^{k_{1}+k_{2}-1}$. For $i \in [\ell(\mu^{1})]$ and $j \in [\ell(\mu^{2})]$, denote $$(\mu^{1} \Join \mu^{2} )(i,j)=(\mu^{1} \setminus \mu_{i}^{1})\sqcup(\mu^{2} \setminus \mu_{j}^{2}) \sqcup \{\mu_{i}^{1}+\mu_{j}^{2}-1\}.$$ This is the cycle type of a permutation $\rho_{\mu^{1}}(A^{1})\,\rho_{\mu^{2}}(A^{2})$, where $G_{\AA}$ is the graph with one edge joining an element of $A^{1}$ in the cycle of length $\mu^{1}_{i}$ with an element of $A^{2}$ in the cycle of length $\mu^{2}_{j}$. These graphs are the only ones involved in our computation, and they yield 
$$\kappa(G)=p_{(\mu^{1} \Join \mu^{2} )(i,j)}(\omega)-p_{\mu^{1}\sqcup\mu^{2}}(\omega),$$
where for a partition $\mu$ we denote $p_{\mu}(\omega)$ the product $\prod_{i=1}^{\ell(\mu)}p_{\mu_{i}}(\omega)$. So,

\begin{proposition}
For any partitions $\mu$ and $\nu$, the limit of $n\,\kappa(\SubGraph_{\rho_\mu},\SubGraph_{\rho_\nu})$ is 
$$\sum_{i=1}^{\ell(\mu)}\sum_{j=1}^{\ell(\nu)} \,\mu_{i}\,\nu_{j}\,\big(p_{(\mu\Join \nu)(i,j)}(\omega)-p_{\mu\sqcup\nu}(\omega)\big).$$
\end{proposition}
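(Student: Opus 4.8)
The statement to prove is the formula for $\lim_{n\to\infty} n\,\kappa\big(X^{(n)}_{\rho_\mu},X^{(n)}_{\rho_\nu}\big)$, and the whole strategy is already set up in the paragraphs preceding it. The plan is to expand the (joint) cumulant of the central elements $\varSigma_{\mu,n},\varSigma_{\nu,n}$ over the graphs $G=G_{\mathbb A}$ encoding the intersection pattern of the underlying pair of arrangements $(A^1,A^2)$, to isolate the graphs $G$ that contribute to the top-degree coefficient in $n$, and to compute $\kappa(G)$ for exactly those graphs.

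First I would write, using Equation~\eqref{eq:listequality} and the fact (already recorded just above the statement, via the invariance of $\tau_\omega$ under conjugation and the discrete Fourier transform identification) that $\kappa\big(\rho_{\mu}(A^1),\rho_\nu(A^2)\big)$ depends only on $G=G_{\mathbb A}$,
\begin{equation*}
n^{\downarrow |\mu|}\,n^{\downarrow|\nu|}\,\kappa\big(X^{(n)}_{\rho_\mu},X^{(n)}_{\rho_\nu}\big)=\kappa(\varSigma_{\mu,n},\varSigma_{\nu,n})=\sum_{G}\kappa(G)\,N_G,
\end{equation*}
where $G$ runs over graphs on vertex set $[|\mu|]\sqcup[|\nu|]$ that are disjoint unions of cliques with no edge inside $[|\mu|]$ or inside $[|\nu|]$, $N_G$ is the number of pairs of arrangements realizing $G$, and (as in Corollary~\ref{cor:jointcumulantpoly}) $N_G=n^{\downarrow c_G}$ with $c_G$ the number of connected components of $G$. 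Next I would use the key vanishing from Inequality~\eqref{eq:boundkappa}: $\kappa(G)=0$ unless the $1$-contraction $H^1_{\mathbb A}$ is connected, i.e.\ unless $A^1$ and $A^2$ share at least one vertex. Combining "at least one shared vertex'' with "maximize $c_G$'' forces $G$ to have exactly one edge between the two sides and to be otherwise a union of singletons, so $c_G=|\mu|+|\nu|-1$; all other admissible $G$ give strictly smaller degree in $n$. Since $n^{\downarrow|\mu|}n^{\downarrow|\nu|}=n^{|\mu|+|\nu|}(1+o(1))$ and $N_G=n^{|\mu|+|\nu|-1}(1+o(1))$ for such $G$, only these graphs survive after multiplying by $n$ and letting $n\to\infty$, giving $\lim n\,\kappa(X^{(n)}_{\rho_\mu},X^{(n)}_{\rho_\nu})=\sum_{G}\kappa(G)$ over the one-edge graphs.

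It then remains to enumerate the one-edge graphs and evaluate $\kappa(G)$ on each. An edge joins a vertex of $A^1$ lying in the $i$-th cycle (of length $\mu_i$) to a vertex of $A^2$ lying in the $j$-th cycle (of length $\nu_j$); there are $\mu_i\nu_j$ ways to pick the pair of glued vertices for fixed $(i,j)$, and after gluing, the permutation $\rho_\mu(A^1)\rho_\nu(A^2)$ has cycle type $(\mu\Join\nu)(i,j)=(\mu\setminus\mu_i)\sqcup(\nu\setminus\nu_j)\sqcup\{\mu_i+\nu_j-1\}$. For such a two-point list the cumulant is just $\kappa(G)=\tau_\omega(\rho_\mu(A^1)\rho_\nu(A^2))-\tau_\omega(\rho_\mu(A^1))\tau_\omega(\rho_\nu(A^2))$, and by the product formula~\eqref{eq:def_tau_omega} this equals $p_{(\mu\Join\nu)(i,j)}(\omega)-p_{\mu\sqcup\nu}(\omega)$ with the notation $p_\lambda(\omega)=\prod_i p_{\lambda_i}(\omega)$. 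Summing over $i\in[\ell(\mu)]$, $j\in[\ell(\nu)]$ with multiplicity $\mu_i\nu_j$ yields exactly the claimed formula. The only mildly delicate point — the "main obstacle'' such as it is — is the bookkeeping that no graph other than the one-edge graphs contributes to the coefficient of $n^{|\mu|+|\nu|-1}$: one must check that any admissible $G$ with a connected $1$-contraction and $c_G=|\mu|+|\nu|-1$ really has a single cross edge (a second cross edge, or a longer cluster, necessarily drops $c_G$), and that the gluing of two cycles indeed produces a single cycle of the stated length; both are elementary but deserve a clean one-line justification in the final write-up.
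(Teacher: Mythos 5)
Your proposal is correct and follows essentially the same route as the paper: expand $\kappa(\varSigma_{\mu},\varSigma_{\nu})=\sum_G \kappa(G)\,N_G$ with $N_G=n^{\downarrow c_G}$, use the vanishing of $\kappa(G)$ when the $1$-contraction is disconnected to see that only the graphs with a single cross edge (and all other vertices isolated) reach $c_G=|\mu|+|\nu|-1$, and then evaluate $\kappa(G)=p_{(\mu\Join\nu)(i,j)}(\omega)-p_{\mu\sqcup\nu}(\omega)$ with multiplicity $\mu_i\nu_j$. The only differences are cosmetic (you spell out the degree bookkeeping and the cycle-gluing fact slightly more explicitly than the paper does), so nothing further is needed.
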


\noindent In particular, for cycles $\mu=(k)$ and $\nu=(l)$,
$$\lim_{n \to \infty} n\,\,\kappa\!\left(\SubGraph_{k},\SubGraph_{l}\right)=kl\,\big(p_{k+l-1}(\omega)-p_{k,l}(\omega)\big).$$
On the other hand, if $\mu=\nu$, then the limit of $n\,\kappa^{(2)}(X^{(n)}_{\rho_{\mu}})$ is
$$(p_{\mu}(\omega))^{2}\sum_{1 \leq i,j\leq \ell(\mu)} \mu_{i}\,\mu_{j}\,\left(\frac{p_{(\mu_{i} \mu_{j}-1)}(\omega)}{p_{\mu_{i}}(\omega)\,p_{\mu_{j}}(\omega)}-1\right).$$
\bigskip

When $r=3$, we look for graphs $G$ on vertex set $V_{\Bbbk}=[k_{1}]\sqcup [k_{2}]\sqcup [k_{3}]$ with $1$-contraction connected and at least $k_{1}+k_{2}+k_{3}-2$ connected components. They are of three kinds: \vspace{2mm}
\begin{enumerate}
\item One cycle in $\rho_{\mu^{2}}(A^{2})$ is connected to two cycles in $\rho_{\mu^{1}}(A^{1})$ and $\rho_{\mu^{3}}(A^{3})$, but not by the same point in this cycle of $\rho_{\mu^{2}}(A^{2})$. This gives for $\rho_{\mu^{1}}(A^{1})\,\rho_{\mu^{2}}(A^{2})\,\rho_{\mu^{3}}(A^{3})$ a permutation of cycle type
$$( \mu^{1} \Join \mu^{2} \Join \mu^{3}) (i,j,k)=(\mu^{1}\setminus \mu^{1}_{i})\sqcup (\mu^{2}\setminus \mu^{2}_{j})\sqcup (\mu^{3}\setminus \mu^{3}_{k})\sqcup\{\mu^{1}_{i}+\mu^{2}_{j}+\mu^{3}_{k}-2\};$$
and the corresponding cumulant is
\begin{align*}\kappa(G)&=p_{\mu^{1}\sqcup \mu^{2}\sqcup \mu^{3}}(\omega)+p_{(\mu^{1}\Join \mu^{2}\Join \mu^{3})(i,j,k)}(\omega) \\
&\,\,\,\,\,\,- p_{((\mu^{1}\Join \mu^{2})(i,j)) \sqcup \mu^{3}}(\omega)-p_{((\mu^{2}\Join \mu^{3})(j,k)) \sqcup \mu^{1}}(\omega).\end{align*}
In this description, one can permute cyclically the indices $1,2,3$, and this gives $3$ different graphs.\vspace{2mm}
\item One cycle in $\rho_{\mu^{2}}(A^{2})$ is connected to two cycles in $\rho_{\mu^{1}}(A^{1})$ and $\rho_{\mu^{3}}(A^{3})$, and by the same point in this cycle of $\rho_{\mu^{2}}(A^{2})$. In other words, there is an identity  $a^{1}_{s}=a^{2}_{t}=a^{3}_{u}$. This gives again for $\rho_{\mu^{1}}(A^{1})\,\rho_{\mu^{2}}(A^{2})\,\rho_{\mu^{3}}(A^{3})$ a permutation of cycle type $( \mu^{1} \Join \mu^{2} \Join \mu^{3}) (i,j,k)$, but the corresponding cumulant takes now the form
\begin{align*}\kappa(G)&=2\,p_{\mu^{1}\sqcup \mu^{2}\sqcup \mu^{3}}(\omega)+p_{(\mu^{1}\Join \mu^{2}\Join \mu^{3})(i,j,k)}(\omega) - p_{((\mu^{1}\Join \mu^{2})(i,j)) \sqcup \mu^{3}}(\omega)\\
&\,\,\,\,\,\,-p_{((\mu^{2}\Join \mu^{3})(j,k)) \sqcup \mu^{1}}(\omega)-p_{((\mu^{1}\Join \mu^{3})(i,k)) \sqcup \mu^{2}}(\omega).\end{align*}
Here, there is no need to permute cyclically the indices in the enumeration for $N_{G}$.\vspace{2mm}
\item Two distinct cycles in $\rho_{\mu^{2}}(A^{2})$ are connected to a cycle of $\rho_{\mu^{1}}(A^{1})$ and to a cycle of $\rho_{\mu^{3}}(A^{3})$, which gives a permutation of cycle type
\begin{align*}
&( \mu^{1} \Join \mu^{2} \Join \mu^{3}) (i,j;k,l)\\
&=(\mu^{1}\setminus \mu^{1}_{i})\sqcup (\mu^{2}\setminus \{\mu^{2}_{j},\mu^{2}_{k}\})\sqcup (\mu^{3}\setminus \mu^{3}_{l})\sqcup\{\mu^{1}_{i}+\mu^{2}_{j}-1,\mu^{2}_{k}+\mu^{3}_{l}-1\}.
\end{align*}
The cumulant corresponding to this last  case is
\begin{align*}\kappa(G)&=p_{\mu^{1}\sqcup \mu^{2}\sqcup \mu^{3}}(\omega)+p_{(\mu^{1}\Join \mu^{2}\Join \mu^{3})(i,j;k,l)}(\omega)\\
&\,\,\,\,\,\,-p_{((\mu^{1}\Join \mu^{2})(i,j)) \sqcup \mu^{3}}(\omega)-p_{((\mu^{2}\Join \mu^{3})(k,l)) \sqcup \mu^{1}}(\omega),\end{align*}
and again one can permute cyclically the indices $1,2,3$ to get $3$ different graphs.\vspace{2mm}
\end{enumerate}
Consequently:

\begin{proposition}
For any partitions $\mu$, $\nu$ and $\delta$, the limit of $n^{2}\,\kappa(\SubGraph_{\rho_\mu},\SubGraph_{\rho_\nu},\SubGraph_{\rho_\delta})$ is 
\begin{align*}\sum_{\Z/3\Z}&\left(\sum_{i=1}^{\ell(\mu)}\sum_{j=1}^{\ell(\nu)}\sum_{k =1}^{\ell(\delta)} \mu_{i}\,\nu_{j}\,(\nu_{j}-1)\,\delta_{k}\left(\substack{p_{\mu\sqcup \nu\sqcup \delta}(\omega)+p_{(\mu\Join \nu\Join \delta)(i,j,k)}(\omega)\\-p_{((\mu\Join \nu)(i,j)) \sqcup \delta}(\omega)-p_{((\nu\Join \delta)(j,k)) \sqcup \mu}(\omega)}\right)\right.\\
&\left.+\sum_{i=1}^{\ell(\mu)}\sum_{(j\neq k) =1}^{\ell(\nu)} \sum_{l=1}^{\ell(\delta)}\mu_{i}\,\nu_{j}\,\nu_{k}\,\delta_{l}\left(\substack{ p_{\mu\sqcup \nu\sqcup \delta}(\omega)+p_{(\mu\Join \nu\Join \delta)(i,j;k,l)}(\omega)\\-p_{((\mu\Join \nu)(i,j)) \sqcup \delta}(\omega)-p_{((\nu\Join \delta)(k,l)) \sqcup \mu}(\omega)}\right) \right)\\
&\!\!\!\!\!\!\!\!\!\!\!\!+ \sum_{i=1}^{\ell(\mu)}\sum_{j=1}^{\ell(\nu)}\sum_{k =1}^{\ell(\delta)} \mu_{i}\,\nu_{j}\,\delta_{k}\left(\substack{2\,p_{\mu\sqcup \nu\sqcup \delta}(\omega)+p_{(\mu\Join \nu\Join \delta)(i,j,k)}(\omega)-p_{((\mu\Join \nu)(i,j)) \sqcup \delta}(\omega)\\
-p_{((\nu\Join \delta)(j,k)) \sqcup \mu}(\omega)-p_{((\mu\Join \delta)(i,k))\sqcup \nu }(\omega)}\right),\end{align*}
where $\sum_{\Z/3\Z}$ means that one permutes cyclically the partitions $\mu,\nu,\delta$.
\end{proposition}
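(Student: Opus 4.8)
The statement to prove gives the limit of $n^{2}\,\kappa(\SubGraph_{\rho_\mu},\SubGraph_{\rho_\nu},\SubGraph_{\rho_\delta})$ as an explicit combinatorial sum. The strategy mirrors exactly the computation already carried out for the second cumulant in \S\ref{subsubsec:limitchar_K2K3}, only with one more layer of bookkeeping. First I would recall the expansion $\kappa(\varSigma_{\mu}, \varSigma_{\nu}, \varSigma_{\delta}) = \sum_G \kappa(G)\, N_G$ from Equation \eqref{eq:listequality}, where $G$ runs over graphs on the vertex set $V_{\Bbbk}=[k_1]\sqcup[k_2]\sqcup[k_3]$ that are disjoint unions of cliques with no edge inside any one block, $\kappa(G)$ is the common value of the joint cumulant of the three permutations $\rho_\mu(A^1),\rho_\nu(A^2),\rho_\delta(A^3)$ when $G_{\mathbb A}=G$ (which depends only on $G$ by conjugacy-invariance of $\tau_\omega$), and $N_G = n(n-1)\cdots(n-c_G+1)$ is a falling factorial, $c_G$ being the number of connected components of $G$. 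Since $k_1+k_2+k_3$ is fixed and $N_G$ is a polynomial of degree $c_G$ in $n$, the coefficient of $n^{k_1+k_2+k_3-2}$ — which is what we want after dividing by $n^{k_1+k_2+k_3}\cdot n^{-2}$, using Lemma~\ref{lem:cumulantcharacterpoly} — receives contributions only from graphs $G$ with $c_G \ge k_1+k_2+k_3-2$.

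The next step is to combine this constraint with the vanishing criterion from the proof of Theorem~\ref{thm:dependencygraphsrefined}: by Inequality~\eqref{eq:boundkappa}, $\kappa(G)=0$ unless the $1$-contraction (equivalently, the contraction $H^1_{\mathbb A}$) of $G$ is connected. A connected $1$-contraction on $3$ vertices with at most $2$ ``merges'' among the $k_1+k_2+k_3$ points forces the three cliques to be joined using exactly two identifications of points across different blocks, and a short case analysis — identical to the one sketched in the enumeration items (1), (2), (3) of \S\ref{subsubsec:limitchar_K2K3} for the text preceding the Proposition — shows that the relevant $G$'s fall into exactly three families: (1) a single cycle of the middle permutation is hooked to one cycle of each of the two outer permutations, at two distinct points of that cycle; (2) the same, but at the same point (a triple identification $a^1_s=a^2_t=a^3_u$); (3) two distinct cycles of the middle permutation are each hooked to one cycle of an outer permutation. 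For each family one computes $\kappa(G)$ directly from the moment–cumulant formula~\eqref{eq:moment2cumulant} and the product formula~\eqref{eq:def_tau_omega} for $\tau_\omega$: every joint moment $\tau_\omega$ of products of the three permutations factors over cycles and is a product of $p_j(\omega)$'s, so $\kappa(G)$ becomes the signed combination of $p$-products displayed in items (1)–(3). The multiplicities come from counting: $3$ cyclic choices of which permutation plays the middle role (for families (1) and (3)), a factor $\mu_i\nu_j\delta_k$ (resp. $\mu_i\nu_j\nu_k\delta_l$ in family (3)) for choosing the hooked cycles and the hook points, and $N_G = n^{k_1+k_2+k_3-2}(1+o(1))$. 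Assembling the three families and dividing by $n^{k_1+k_2+k_3-2}$ (equivalently, multiplying $\kappa(\SubGraph_{\rho_\mu},\SubGraph_{\rho_\nu},\SubGraph_{\rho_\delta})$ by $n^2$ and passing to the limit, using $n\fall{k}=n^k(1+o(1))$) gives precisely the claimed formula, with $\sum_{\Z/3\Z}$ encoding the cyclic permutation of the roles.

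The main obstacle is purely organizational rather than conceptual: one must be scrupulous about (i) which graphs $G$ actually have $c_G=k_1+k_2+k_3-2$ \emph{and} connected $1$-contraction — it is easy to miss a configuration or to double-count one that appears in two families — and (ii) getting the signs and the arguments of the $p$-products right, since $\kappa(G)$ is an alternating sum over the set partitions $\pi$ refining the ``hook pattern'' of $G$, and each such $\pi$ corresponds to grouping some of the three permutations together before applying $\tau_\omega$. A clean way to avoid errors is to note that for these sparse graphs only the partitions $\pi$ with at most three blocks contribute nontrivially, so~\eqref{eq:moment2cumulant} reduces to the $r=3$ instance $\kappa(X_1,X_2,X_3)=\esper[X_1X_2X_3]-\sum \esper[X_iX_j]\esper[X_k]+2\esper[X_1]\esper[X_2]\esper[X_3]$, and one simply substitutes $\esper=\tau_\omega$ and the cycle-type identities $(\mu\Join\nu)(i,j)$, $(\mu\Join\nu\Join\delta)(i,j,k)$, $(\mu\Join\nu\Join\delta)(i,j;k,l)$ already introduced; in case (2) the triple point identification removes one of the ``pair'' terms which is why a $2\,p_{\mu\sqcup\nu\sqcup\delta}$ appears instead of $p_{\mu\sqcup\nu\sqcup\delta}$, and in case (3) the extra middle cycle being available means no term is removed. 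Once this dictionary is set up the Proposition follows by direct substitution and summation.
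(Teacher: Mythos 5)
Your plan follows essentially the same route as the paper: expand $\kappa(\varSigma_{\mu},\varSigma_{\nu},\varSigma_{\delta})=\sum_G \kappa(G)\,N_G$, keep only the graphs with at least $k_1+k_2+k_3-2$ connected components and connected $1$-contraction, classify them into the three families, evaluate $\kappa(G)$ via the $r=3$ moment--cumulant formula together with the cycle-type identities $(\mu\Join\nu)(i,j)$, $(\mu\Join\nu\Join\delta)(i,j,k)$, $(\mu\Join\nu\Join\delta)(i,j;k,l)$, and sum with the combinatorial multiplicities. The only detail to repair in a full writeup is the multiplicity of family (1), which is $\mu_i\,\nu_j(\nu_j-1)\,\delta_k$ (two \emph{distinct} attachment points on the middle cycle), the factor $\mu_i\,\nu_j\,\delta_k$ being the count for the triple-point family (2); with that correction your assembly yields exactly the stated formula.
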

\noindent In particular, for cycles $\mu=(k)$, $\nu=(l)$ and $\delta=(m)$, $\lim_{n \to \infty} n^{2}\,\kappa(\SubGraph_{k},\SubGraph_{l},\SubGraph_{m})$ is equal to
\begin{align*}
klm&\big((k+l+m-1)\,p_{k,l,m}(\omega)+(k+l+m-2)\,p_{k+l+m-2}(\omega)\\
&-(k+l-1)\,p_{k+l-1,m}(\omega)-(l+m-1)\,p_{l+m-1,k}(\omega)-(l+m-1)\,p_{k+m-1,l}(\omega)\big).
\end{align*}
One recovers for $k=l=m$ the values of $\sigma^{2}$ and $L$ announced in Theorem \ref{thm:modgaussianrcv}. On the other hand, if $\mu=\nu=\delta$, then the limit of $\frac{n^{2}\,\kappa^{(3)}(X_{\rho_{\mu}}^{(n)})}{(p_{\mu}(\omega))^{3}}$ is 
\begin{align*}&\sum_{1 \leq  i,j,k \leq \ell(\mu)} 3\,\mu_{i}\,\mu_{j}^{2}\,\mu_{k}\left(1+\frac{p_{\mu_{i}+\mu_{j}+\mu_{k}-2}(\omega)}{p_{\mu_{i}}(\omega)\,p_{\mu_{j}}(\omega)\,p_{\mu_{k}}(\omega)}-\frac{p_{\mu_{i}+\mu_{j}-1}(\omega)}{p_{\mu_{i}}(\omega)\,p_{\mu_{j}}(\omega)}-\frac{p_{\mu_{j}+\mu_{k}-1}(\omega)}{p_{\mu_{j}}(\omega)\,p_{\mu_{k}}(\omega)}\right)\\
&\quad+\sum_{1\leq i,j\neq k,l \leq \ell(\mu)}3\,\mu_{i}\,\mu_{j}\,\mu_{k}\,\mu_{l}\left( 1- \frac{p_{\mu_{i}+\mu_{j}-1}(\omega)}{p_{\mu_{i}}(\omega)\,p_{\mu_{j}}(\omega)}\right)\left(1 - \frac{p_{\mu_{k}+\mu_{l}-1}(\omega)}{p_{\mu_{k}}(\omega)\,p_{\mu_{l}}(\omega)}\right) \\
&\quad+\sum_{1 \leq  i,j,k \leq \ell(\mu)} \mu_{i}\,\mu_{j}\,\mu_{k}\left(3\,\frac{p_{\mu_{i}+\mu_{j}-1}(\omega)}{p_{\mu_{i}}(\omega)\,p_{\mu_{j}}(\omega)}-2\,\frac{p_{\mu_{i}+\mu_{j}+\mu_{k}-2}(\omega)}{p_{\mu_{i}}(\omega)\,p_{\mu_{j}}(\omega)\,p_{\mu_{k}}(\omega)}-1\right).\end{align*}
\bigskip

\subsection{Asymptotics of the random character values and partitions}\label{subsec:asymptoticrcv}
Fix an integer $k \ge 2$, and consider the random variable
$$V_{n,k}=n^{2/3}\,(\SubGraph_{\rho}-p_{k}(\omega)),$$ 
where $\rho$ is a $k$-cycle. If $p_{2k-1}(\omega)-(p_{k}(\omega))^{2}>0$, then the previous results show that  
$$\esper[\E^{zV_{n,k}}]=\exp\left(\frac{n^{1/3}}{2}\,\sigma^2\,z^2 + \frac{1}{6} \,L \,z^3\right)
(1+o(1))\!,$$ 
where $\sigma$ and $L$ are the quantities given in the statement of Theorem~\ref{thm:modgaussianrcv}.
This ends the proof of Theorem \ref{thm:modgaussianrcv} and Corollary \ref{cor:moderatedeviationcharacter}. Note that the speed of convergence of the cumulants is each time a $O((\alpha_{n})^{-1})$ because of the polynomial behavior established in Lemma \ref{lem:cumulantcharacterpoly}; therefore, one can indeed apply Proposition \ref{prop:largedeviationscumulants} with $\alpha_{n}=n$ and $\beta_{n}=n^{-1}$. The theorem can be extended to other permutations $\rho$ of $\sym(\infty)$
than cycles: if $\rho$ has cycle-type $\mu$, then define
\[V_{n,\mu} = n^{2/3}\,(\SubGraph_{\rho}-p_{\mu}(\omega)).\]
Then the  generating series of $V_{n,\mu}$ is asymptotically given by:
$$\esper[\E^{zV_{n,\mu}}]=\exp\left(\frac{n^{1/3}}{2}\,\sigma^2(\mu)\,z^2 + \frac{1}{6} \,L(\mu) \,z^3\right)\!,$$ 
where $\sigma^2(\mu)=\lim_{n \to \infty}n\,\kappa^{(2)}(\SubGraph_{\rho_{\mu}})$ and $L(\mu)=\lim_{n \to \infty}n^{2}\,\kappa^{(3)}(\SubGraph_{\rho_{\mu}})$ are the limiting quantities given in Section \ref{subsubsec:limitchar_K2K3}.
Hence, \emph{provided that $\sigma^{2}(\mu)>0$}, one has mod-Gaussian convergence of $V_{n,\mu}$, and the limiting variance $\sigma^{2}(\mu)$ is non-zero under the same conditions as those given in Remark \ref{rem:limitvarianceispositive}. Under these conditions, one can also easily establish mod-Gaussian convergence for every vector of 
renormalized random character values $(V_{n,\mu_1},\dots, V_{n,\mu_\ell})$
\bigskip

\begin{remark} There is one case which is not covered by our theorem,
but is of particular interest: the case $\omega=((0,0,\ldots),(0,0,\ldots))$.
This parameter of the Thoma simplex corresponds to the Plancherel measures of the symmetric groups, and in this case, since $p_{2}(\omega)=p_{3}(\omega)=\cdots=0$, the parameters of the mod-Gaussian convergence are all equal to $0$. Indeed, the random character values under Plancherel measures do not have fluctuations of order $n^{-1/2}$. For instance, Kerov's central limit theorem (\emph{cf.} \cite{Hora98,IO02}) ensures that the random character values
$$\frac{n^{k/2}\,\widehat{\chi}^{\lambda}(c_{k})}{\sqrt{k}}$$
on cycles $c_{k}$ of lengths $k \geq 2$ converges in law towards independent Gaussian variables; so the fluctuations are of order $n^{-k/2}$ instead of $n^{-1/2}$. One still expects a mod-Gaussian convergence for adequate renormalizations of the random character values; however, the combinatorics underlying the asymptotics of Plancherel measures are much more complex than those of general central measures --- see \cite{Sni06a} --- and we have not been able to prove mod-Gaussian convergence here.\bigskip
\end{remark}

From the estimates on the laws of the random character values, one can prove many estimates for the parts $\lambda_{1},\lambda_{2},\ldots$ of the random partitions taken under central measures. The arguments of algebraic combinatorics involved in these deductions are detailed in \cite{FM12,Mel12}, so here we shall only state results. Given a partition $\lambda=(\lambda_{1},\ldots,\lambda_{\ell})$ of size $n$, the Frobenius coordinates of $\lambda$ are the two sequences
$$\left(\lambda_{1}-\frac{1}{2},\lambda_{2}-\frac{3}{2},\ldots,\lambda_{d}-d+\frac{1}{2}\right),\left(\lambda_{1}'-\frac{1}{2},\lambda_{2}'-\frac{3}{2},\ldots,\lambda_{d}'-d+\frac{1}{2}\right)$$
where $\lambda_{1}'$, $\lambda_{2}'$, \emph{etc.} are the sizes of the columns of the Young diagram of $\lambda$, and $d$ is the size of the diagonal of the Young diagram. Denote $(a_{1},\ldots,a_{d}),(b_{1},\ldots,b_{d})$ these coordinates, and
$$X_{\lambda}=\sum_{i=1}^{d} \frac{a_{i}}{n}\,\delta_{\left(\frac{a_{i}}{n}\right)}+\sum_{i=1}^{d}\frac{b_{i}}{n}\,\delta_{\left(-\frac{b_{i}}{n}\right)}.$$
This is a (random) discrete probability measure on $[-1,1]$ whose moments
$$p_{k}(\lambda)=n^{k}\,\int_{-1}^{1} x^{k-1}\, X_{\lambda}(dx)$$
are also the moments of the Frobenius coordinates, so $X_{\lambda}$ encodes the geometry of the Young diagram $\lambda$. We shall also need
$$X_{\omega}=\sum_{i=1}^{d} \alpha_{i}\,\delta_{\left(\alpha_{i}\right)}+\sum_{i=1}^{d}\beta_{i}\,\delta_{\left(-\beta_{i}\right)}+\gamma\,\delta_{\left(0\right)},$$
which will appear in a moment as the limit of the random measures $X_{\lambda}$. Here, $\gamma=1-\sum_{i=1}^{\infty}\alpha_{i}-\sum_{i=1}^{\infty}\beta_{i}$. Notice that $\esper_{n,\omega}[\SubGraph_{k}]=\tau_{\omega}(c_{k})=X_{\omega}(x^{k-1})$.\bigskip

It is shown in \cite{IO02} that for any partition $\lambda$ of size $n$ and for any $k$,
$$p_{k}(\lambda)=\varSigma_{k}(\lambda)+\text{remainder},$$
where the remainder is a linear combination of symbols $\varSigma_{\mu}$ with $|\mu|<k$. It follows that the cumulants of the $p_{k}$'s satisfy the same estimates as the cumulants of the $\varSigma_{k}$'s. Therefore, the rescaled random variable 
$\nabla(x^{k-1})=n^{2/3}\,(X_{\lambda}(x^{k-1})-X_{\omega}(x^{k-1}))$
converges in the mod-Gaussian sense with parameters $n^{1/3}\,\sigma^{2}$ and limiting function $\psi(z)=\exp(L\,\frac{z^{3}}{6})$, where $\sigma^{2}$ and $L$ are given by the same formula as in the case of the random character value $\SubGraph_{k}$, that is to say
\begin{align*}
\sigma^{2}&=k^{2}\,\big(p_{2k-1}(\omega)-p_{k}(\omega)^{2}\big);\\
L&= k^{3}\,\big((3k-2)\,p_{3k-2}(\omega)-(6k-3)\,p_{2k-1}(\omega)\,p_{k}(\omega)+(3k-1)\,p_{k}(\omega)^{3}\big).
\end{align*}
Actually, one has mod-Gaussian convergence for any finite vector of random variables $\nabla(x^{k-1})$, $k \geq 1$.
\bigskip

We don't know how to obtain from there moderate deviations for the parts $\lambda_{1},\lambda_{2},\ldots$ of the partition; but one has at least a central limit theorem when one has strict inequalities $\alpha_{1} > \alpha_{2} > \cdots > \alpha_{i} > \cdots$ and $\beta_{1} > \beta_{2} > \cdots > \beta_{i} > \cdots$ (see \cite{Mel12}, and also \cite{Buf12}). Indeed, for any smooth test function $\psi_{i}$ equal to $1$  around $\alpha_{i}$ and to $0$ outside a neighborhood of this point, one has
\begin{equation}
X_{\lambda}(\psi_{i})-X_{\omega}(\psi_{i}) = \frac{a_{i}}{n}-\alpha_{i} \label{eq:almostalwaystrue}
\end{equation}
with probability going to $1$, and then the quantities in the left-hand side renomalized by $\sqrt{n}$ converge jointly towards a Gaussian vector with covariance
\begin{equation}\kappa(i,j)=\delta_{ij}\,\alpha_{i}-\alpha_{i}\alpha_{j}.\label{eq:last}\end{equation}
So, the fluctuations $$\sqrt{n}\left(\frac{\lambda_{i}}{n}-\alpha_{i}\right)$$ of the rows of the random partitions taken under central measures $\proba_{n,\omega}$ converge jointly towards a Gaussian vector with covariances given by Equation \eqref{eq:last}, and one can include in this result the fluctuations
$$\sqrt{n}\left(\frac{\lambda_{j}'}{n}-\beta_{j}\right)$$ of the columns of the random partitions, with a similar formula for their covariances. \bigskip

The reason why it becomes difficult to get by the same technique the moderate deviations of the rows and columns is that in Equation \eqref{eq:almostalwaystrue}, one throws away an event of probability going to zero (because of the law of large numbers satisfied by the rows and the columns, see \emph{e.g.} \cite{KV81}). However, one cannot \emph{a priori} neglect this event in comparison to rare events such as $\{ a_{i}-n\alpha_{i} \geq n^{2/3}\,x\};$ indeed, these rare events are themselves of probability going exponentially fast to zero. Also, there is the problem of approximation of smooth test functions by polynomials, which one has to control precisely when doing these computations. One still conjectures these moderate deviations to hold, and $n^{2/3}\left(\frac{a_{i}}{n}-\alpha_{i}\right)$ to converge in the mod-Gaussian sense with parameters $n^{1/3}(\alpha_{i}-\alpha_{i}^{2})$ and limiting function $$\psi(z)=\exp\left(\frac{\alpha_{i}-3\alpha_{i}^{2}+2\alpha_{i}^{3}}{6}\,z^{3}\right)$$ --- this is what one obtains if we ignore the previous caveats, and still suppose the $\alpha_{i}$ and $\beta_{j}$ all distinct. As explained in \cite{Mel12} (see also \cite{KV86}), this would give moderate deviations for the lengths of the longest increasing subsequences in a random permutation obtained by generalized riffle shuffle.\bigskip
\bigskip
\bigskip

\newpage

\bibliographystyle{alpha}
\bibliography{deviant}

\end{document}